\makeindex \setcounter{tocdepth}{2}
\theoremstyle{plain}
\newtheorem{theorem}{Theorem}[section]
\newtheorem{conjecture}[theorem]{Conjecture}
\newtheorem{proposition}[theorem]{Proposition}
\newtheorem{corollary}[theorem]{Corollary}
\newtheorem*{assumption*}{Assumption}
\newtheorem{lemma}[theorem]{Lemma}
\theoremstyle{definition}
\newtheorem{definition}[theorem]{Definition}
\newtheorem{remark}[theorem]{Remark}
\newtheorem*{goal*}{Goal}
\newtheorem*{problem*}{Comment}
\newtheorem{notation}{Notation}[section]
\def\lra{{\longrightarrow \, }}
\def\SL{{\rm SL}}
\def\GL{{\rm GL}}
\def\X{\mathbf{X}}
\def\A{\mathbb{A}}
\DeclareMathOperator{\Gr}{Gr}
\DeclareMathOperator{\cW}{\mathcal{W}}
\DeclareMathOperator{\Vol}{Vol}
\DeclareMathOperator{\Fil}{Fil}
\DeclareMathOperator{\sym}{Sym}
\DeclareMathOperator{\dR}{dR} \DeclareMathOperator{\pr}{pr}
\DeclareMathOperator{\id}{id}
\DeclareMathOperator{\Pet}{Pet}
 \DeclareMathOperator{\cl}{cl}
\DeclareMathOperator{\Cl}{Cl}
\DeclareMathOperator{\et}{et} 
\DeclareMathOperator{\Hom}{Hom}
\DeclareMathOperator{\End}{End} \DeclareMathOperator{\CH}{CH}
\DeclareMathOperator{\AJ}{AJ} \DeclareMathOperator{\Aut}{Aut}
\DeclareMathOperator{\spec}{Spec}
\DeclareMathOperator{\ord}{ord}
\DeclareMathOperator{\Disc}{Disc}
\DeclareMathOperator{\cE}{\mathcal{E}}
\DeclareMathOperator{\tcE}{\tilde{\mathcal{E}}}
\DeclareMathOperator{\cV}{\mathcal{V}}
\DeclareMathOperator{\bL}{\mathbb{L}}
\DeclareMathOperator{\cL}{\mathcal{L}}
\DeclareMathOperator{\tW}{\tilde{W}}
\DeclareMathOperator{\tX}{\tilde{X}}
\DeclareMathOperator{\an}{an}
\DeclareMathOperator{\GS}{BB}
\DeclareMathOperator{\new}{new}
\def\fa{\mathfrak{a}}
\def\fc{\mathfrak{c}}
\def\fp{\mathfrak{p}}
\def\fm{\mathfrak{m}}
\def\fn{\mathfrak{n}}
\def\fd{\mathfrak{d}}
\def\fq{\mathfrak{q}}
\def\d{\mathrm{d}}
\def\Z{\mathbb{Z}}
\def\F{\mathbb{F}}
\def\Q{\mathbb{Q}}
\def\W{\mathbf{W}}
\def\C{\mathbb{C}}
\def\R{\mathbb{R}}
\def\bdf{\begin{defn}}
\def\edf{\end{defn}}
\def\cL{\mathcal{L}}
\def\cH{\mathcal{H}}
\def\cO{\mathcal{O}}
\def\cA{\mathcal{A}}
\def\cV{\mathcal{V}}
\def\cW{\mathcal{W}}
\def\cE{\mathcal{E}}
\def\Gal{{\rm Gal}}
\def\corr{{\rm Corr}}
\def\cB{{\mathcal B}}
\def\cC{{\mathcal C}}
\def\d1{d^{(1)}}
\def\d{\mathbf{d}}
\def\Y{\mathbf{Y}}
\def\bZ{\mathbf{Z}}
\def\cA{\mathcal{A}}
\def\fraka{\mathfrak{a}}
\def\SO{{\bf SO}}
\def\oh{\mathcal{O}}
\tikzset{
commutative diagrams/.cd,
arrow style=tikz,
diagrams={>=latex}}
\let\@wraptoccontribs\wraptoccontribs
\begin{document} 

 \title[Derivatives and heights]{Derivatives of Rankin--Selberg $L$-functions and heights of generalized Heegner cycles}
\author{David T.-B. G. Lilienfeldt}
\address{Mathematical Institute, Leiden University, The Netherlands} 
\email{d.t.b.g.lilienfeldt@math.leidenuniv.nl}
\author{Ari Shnidman}
\address{Einstein Institute of Mathematics, Hebrew University of Jerusalem, Israel}
\email{ariel.shnidman@mail.huji.ac.il}
\date{\today}
\subjclass[2020]{11G40, 11G15, 14C25}
\keywords{Rankin--Selberg $L$-series, generalized Heegner cycles, Beilinson--Bloch height, Beilinson--Bloch conjecture, complex multiplication, Bloch--Kato Selmer group, Green's kernel}

\begin{abstract}
Let $f$ be a newform of weight $2k$ and let $\chi$ be an unramified imaginary quadratic Hecke character of infinity type $(2t, 0)$, for some integer $0 < t \leq k-1$.  We show that the central derivative of the Rankin--Selberg $L$-function $L(f,\chi,s)$ is, up to an explicit positive constant, equal  to the Beilinson--Bloch height of a generalized Heegner cycle. This generalizes the Gross--Zagier formula (the case $k = 1$) and Zhang's higher weight formula (the case $t=0$).
\end{abstract}

\maketitle

\tableofcontents

\section{Introduction}

\subsection{Main result}

Let $0<t\leq k-1$ be integers, and let $K$ be an imaginary quadratic field of odd discriminant. We consider the Rankin--Selberg $L$-function $L(f, \chi, s)$ associated to a newform $f\in S_{2k}(\Gamma_0(N))$ and an unramified Hecke character $\chi$ of $K$ of infinity type $(2t,0)$. 
This $L$-series admits analytic continuation and a functional equation with global root number $-\epsilon_K(N)$, where $\epsilon_K$ is the quadratic Dirichlet character associated to $K$. We further impose the Heegner hypothesis:
\begin{equation}\label{HH}
p \mid N \implies p \text{ splits in } K, 
\end{equation}
so that $-\epsilon_K(N) = -1$ and $L(f,\chi,s)$ vanishes to odd order at its center $s=k+t$. 

Our main result (which is also Theorem \ref{thm: main result}) is the following formula for the central derivative in terms of the Beilinson--Bloch height of a certain algebraic cycle $\Delta'_{\chi,f}$:

\begin{theorem}\label{thm:intro}
    With the notations and assumptions as above, we have 
    \[
    L'(f,\chi,k+t)=\frac{4^{k+t}\pi^{2k} (f,f)_{\mathrm{Pet}}}{(k-t-1)!(k+t-1)!  h u^2 \vert D\vert^{k-t-1/2}}  \langle \Delta'_{\chi, f}, \Delta'_{\chi, f}\rangle^{\GS},
    \]
where $h=\# \mathrm{Cl}_K$, $D = \Disc(K)$, $2u=\# \oh_K^\times$, and $(f,f)_{\Pet}$ is the Petersson inner product.
\end{theorem}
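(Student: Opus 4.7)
The plan is to follow the Gross--Zagier--Zhang strategy: produce an \emph{analytic kernel} computing the derivative $L'(f,\chi,k+t)$ and compare it term-by-term with a local decomposition of the Beilinson--Bloch height. First, on the analytic side, I would establish a Rankin--Selberg integral representation expressing $L(f,\chi,s)$, up to explicit gamma-factors and normalizing constants, as the Petersson inner product of $f$ against a product $\theta_\chi \cdot E(s)$, where $\theta_\chi$ is the theta series attached to $\chi$ and $E(s)$ is an Eisenstein series of appropriate weight, level, and character. The Heegner hypothesis \eqref{HH} forces $E$ to be \emph{incoherent} at the central point $s=k+t$, so $E(k+t)=0$ identically; consequently $L'(f,\chi,k+t)$ is, up to those explicit constants, the Petersson product of $f$ against $\theta_\chi\cdot E'(k+t)$, whose Fourier coefficients decompose as sums of explicit local Whittaker integrals indexed by the places of $K$.

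Next, on the geometric side, I would use the Beilinson--Bloch formalism to decompose $\langle \Delta'_{\chi,f}, \Delta'_{\chi,f}\rangle^{\GS} = \sum_v \langle \cdot,\cdot\rangle_v$ into local heights over places $v$ of $K$, after first moving one of the two cycles within its rational-equivalence class so that the cycles meet properly on a regular integral model. The archimedean local heights are then pairings of Green's currents, while the non-archimedean local heights are arithmetic intersection numbers on an integral model of the ambient variety, which is a fiber product of a Kuga--Sato variety over the CM modular curve with the $2t$-fold symmetric power of a CM elliptic curve.

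The core of the proof is the place-by-place matching of these two decompositions. At a finite split place, the two cycles have disjoint support in the geometric fiber (by the Heegner hypothesis and the CM structure), matching the vanishing of the corresponding Whittaker coefficient. At archimedean places I would use an explicit formula for the fibral Green's function, extending Zhang's computation in the $t=0$ case and the archimedean analysis of Bertolini--Darmon--Prasanna for generalized Heegner cycles in weight $2$. The hard part is the non-archimedean matching at inert and ramified primes: the two cycles meet only in the supersingular locus, and I would compute the intersection multiplicities via Serre--Tate deformation theory and Gross's quasi-canonical liftings, carefully tracking the $(k-1)$-fold symmetric power of the relative $H^1$ of the universal elliptic curve and the $2t$-fold symmetric power on the CM elliptic factor, and then identifying the result with the non-archimedean Whittaker coefficients. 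A parallel, less conceptual difficulty is tracking all normalizations -- of $f$, $\theta_\chi$, $E(s)$, the cycle, the Petersson product, and the height pairing -- to recover the precise constant $\tfrac{4^{k+t}\pi^{2k}}{(k-t-1)!(k+t-1)!\,hu^2|D|^{k-t-1/2}}$ in the statement.
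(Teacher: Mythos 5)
Your overall skeleton — an analytic kernel for $L'(f,\chi,k+t)$ compared term-by-term with a local decomposition of the height — is the right one, and it is indeed the strategy of the paper. But in two places your plan diverges from the paper in ways that either lose content or introduce genuine gaps.

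On the analytic side, you propose the incoherent-Eisenstein-series framework, with $L'(f,\chi,k+t)$ computed against $\theta_\chi\cdot E'(k+t)$ and the Fourier coefficients of the latter decomposed as local Whittaker integrals indexed by places of $K$. The paper does not do this. It follows the classical Gross--Zagier method: a Rankin unfolding of $L_{\cA}(f,\chi,s)$ against a non-holomorphic Eisenstein series $\mathcal E_s$, evaluation of $\partial_s$ at the functional-equation center $s=1-(k-t)$ (not $s=k+t$), and then a holomorphic projection (Proposition~\ref{fourier}, Theorem~\ref{fourcoeff}) to produce an explicit cusp form $g_\cA\in S_{2k}(\Gamma_0(N))$ whose Fourier coefficients $a_m(\cA)$ are elementary arithmetic sums involving $\sigma_\cA$, $r_{\cA,\chi}$, and the Jacobi functions $P_{k,t}$, $Q_{k,t}$. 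The two routes are related but not interchangeable, and what they buy is quite different: the classical route gives closed-form coefficient identities that can be matched by hand against the explicit geometric computations, at the cost of having to prove a separate holomorphic projection lemma and show convergence (Lemma~\ref{lem:est}); the Kudla-style route is representation-theoretically cleaner but would require an a priori modularity statement for the geometric generating series, which the paper does not have (footnote in \S1.3: injectivity of $\AJ_X^{k+t}$ is unknown, and modularity is a \emph{conclusion}, not a hypothesis). Your plan does not address this.

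The step that would fail as written is ``moving one of the two cycles within its rational-equivalence class so that the cycles meet properly.'' The paper goes out of its way to avoid a moving lemma (see \S4.2): the whole comparison hinges on computing the Fourier coefficients $\langle\Delta_\chi, T_m\Delta_\chi\rangle^{\GS}$ for \emph{all} $(m,N)=1$, including $m$ with $r_\cA(m)\neq 0$, where $\Delta_\chi$ and $T_m\Delta_\chi$ share support in a common CM fiber. If you move one cycle, you lose the explicit fibral description that makes all the local computations possible, so the right-hand side becomes uncomputable. The paper instead defines the archimedean and non-archimedean local heights directly in the improperly-intersecting case, via Zhang's limiting construction (Theorem~\ref{thm:ari}, \eqref{locint}), and this is where its main technical content lies: the construction of the Green's kernel for the weight-$0$ PVHS $W_{k,t}$ using Brylinski's formalism, an explicit solution $Q_{k,t}$ of a second-order ODE (Sections~\ref{s:PVHS}--\ref{s:green}), plus the asymptotic expansion of $Q_{k,t}$ near~$1$ (Lemma~\ref{lem:asy}) to evaluate the self-intersection term. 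Your sketch flattens all of this to ``pairings of Green's currents, extending BDP's archimedean analysis,'' but BDP do $p$-adic, not archimedean, computations; the relevant precedents are Gross--Zagier, Brylinski, Nekov\'a\v{r} and Zhang, and the extension to $t>0$ is not a routine generalization — it is the bulk of the paper. A final small slip: the ambient variety is $W_{2k-2}\times A^{2t}$, a $2t$-fold \emph{product} (not symmetric power) of the CM elliptic curve, and the local decomposition is over places of $H$ (or a further extension $F$), not of $K$.
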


The class $\Delta'_{\chi,f}$ is constructed from an algebraic cycle $\Delta_\chi \in \CH^{k+t}(X)_\C,$ of middle arithmetic  codimension on the generalized Kuga--Sato variety $X = W_{2k-2} \times A^{2t}$ defined over the Hilbert class field $H$ of $K$. Here, $W_{2k-2} \lra X(N)$ is the Kuga--Sato variety realizing the motive of $f$ \cite{scholl} and $A$ is an elliptic curve over $H$ with CM by $\oh_K$. The cycles $\Delta_\chi$ are weighted sums of  the {\it generalized Heegner cycles} defined by Bertolini--Darmon--Prasanna \cite{bdp1}, and will be described in more detail in Section \ref{subsec:gen Heeg cycles}. Roughly speaking, they are built out of graphs of isogenies living in CM fibers of $X \lra X(N)$, isogenies which ``mix'' the Kuga--Sato components with the constant components of the fiber.  Let $\CH^i(X)_0 \subset \CH^i(X)$ denote the subgroup of homologically trivial algebraic cycles. The conjectural Beilinson--Bloch height pairing \cite{beilinsonheight, blochheight}
\[
\langle \;\; , \;\; \rangle^{\GS} \colon \CH^{k+t}(X)_0 \times \CH^{k+t}(X)_0 \lra \R
\]
is well-defined on a certain subgroup $G \subset \CH^{k+t}(X)_0$, and $\Delta_\chi \in G \otimes \C$. 
Let $\Delta_{\chi}'$ be the image of $\Delta_\chi$ in the quotient of $\mathrm{Span}\{ T_m\Delta_\chi \colon (m,N)=1 \}$ by the kernel of the height pairing (extended to a $\C$-valued hermitian pairing). The cycle $\Delta'_{\chi, f}$ in Theorem \ref{thm:intro} is the $f$-isotypic component of $\Delta_{\chi}'$ with respect to the Hecke action. 

Theorem \ref{thm:intro} generalizes the formula of Zhang \cite{zhang} (the case $t = 0$), which is itself a generalization of the Gross--Zagier formula  in the case $k = 1$ (and hence $t = 0$) \cite{GZ}. The second author proved a $p$-adic version of Theorem \ref{thm:intro} in \cite{pGZshnidman}, which was subsequently generalized by Disegni in \cite{disegni}. 

As in \cite{GZ} and \cite{zhang}, we deduce the following result about the analytic ranks 
\[r_{\an}(f \otimes \chi) := \ord_{s=k+t} L(f,\chi,s)\] 
of the complex $L$-functions $L(f,\chi,s)$.

\begin{corollary}
For $\sigma \in \Gal_\Q$, we have 
\[r_{\an}(f \otimes \chi) = 1 \iff r_{\an}(f^\sigma \otimes \chi^\sigma) = 1.\] 
\end{corollary}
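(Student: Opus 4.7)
The plan is to reduce the corollary to a Galois-equivariance statement for the nonvanishing of the height pairing that appears in Theorem \ref{thm:intro}. The functional equation with global root number $-1$ forces $L(f,\chi,s)$ to vanish to odd order at $s=k+t$, so $r_{\an}(f\otimes\chi)\geq 1$, and the same argument applied to the pair $(f^\sigma,\chi^\sigma)$ (whose root number is Galois-invariant) gives $r_{\an}(f^\sigma\otimes\chi^\sigma)\geq 1$. Hence the statement reduces to
\[
L'(f,\chi,k+t)\neq 0\iff L'(f^\sigma,\chi^\sigma,k+t)\neq 0.
\]

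First, I would feed both sides of this equivalence into Theorem \ref{thm:intro}. The constant
\[
C(f,\chi)=\frac{4^{k+t}\pi^{2k}(f,f)_{\Pet}}{(k-t-1)!(k+t-1)!\,hu^2|D|^{k-t-1/2}}
\]
is strictly positive, and the analogous constant $C(f^\sigma,\chi^\sigma)$ is likewise strictly positive, since $(f^\sigma,f^\sigma)_{\Pet}>0$ and $h$, $u$, $D$ are intrinsic to $K$. Consequently, $L'(f,\chi,k+t)$ vanishes iff $\langle\Delta'_{\chi,f},\Delta'_{\chi,f}\rangle^{\GS}=0$, and by construction of $\Delta'_{\chi,f}$ as the image in the quotient by the kernel of the height pairing, this happens iff $\Delta'_{\chi,f}=0$ in that quotient.

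Next, I would exploit the algebraicity of the construction. The generalized Heegner cycles $\Delta_\chi$ are defined over $H$ as weighted sums indexed by $\Cl_K$ with weights from the values of $\chi$, so $\sigma\in\Gal_\Q$ carries $\Delta_\chi$ to $\Delta_{\chi^\sigma}$ (up to the finite Galois action that permutes class group representatives, which is absorbed into the weighted sum). Since the Hecke correspondences on $X$ are defined over $\Q$, the $f$-isotypic projector with coefficients in $\Q(f)$ is carried to the $f^\sigma$-isotypic projector by $\sigma$. Thus $\sigma$ sends $\Delta'_{\chi,f}$ to $\Delta'_{\chi^\sigma,f^\sigma}$ inside the corresponding quotient, and in particular the vanishing of one is equivalent to the vanishing of the other.

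The only subtlety is checking that the kernel of the height pairing behaves well under $\sigma$; this follows because the Beilinson--Bloch pairing on $\CH^{k+t}(X)_0$ is defined purely in terms of the geometry of $X/H$ and its Galois conjugate $X^\sigma$, and the pairing transforms equivariantly under field isomorphisms. Combining these steps gives the chain of equivalences
\[
r_{\an}(f\otimes\chi)=1\iff \Delta'_{\chi,f}\neq 0\iff \Delta'_{\chi^\sigma,f^\sigma}\neq 0\iff r_{\an}(f^\sigma\otimes\chi^\sigma)=1,
\]
which is the desired corollary. The main obstacle, should one wish to make the argument fully rigorous, is tracking the Galois action on the $f$-isotypic projection carefully, since the Hecke eigenvalues of $f$ live in a number field $\Q(f)$ and one must verify that $\sigma$ intertwines the projectors for $f$ and $f^\sigma$; this is standard but requires a careful bookkeeping of field of definition of the cycle $\Delta_{\chi,f}'$ before taking the quotient by the height-pairing kernel.
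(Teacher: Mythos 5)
Your overall strategy—translate the nonvanishing of $L'$ via Theorem~\ref{thm:intro} into a statement about the cycle $\Delta'_{\chi,f}$, then use Galois-equivariance of the cycle construction—is the intended one, and steps (1), (2), and (4) are fine modulo the bookkeeping you acknowledge. But there is a genuine gap at the middle step, where you assert that
\[
\langle\Delta'_{\chi,f},\Delta'_{\chi,f}\rangle^{\GS}=0\iff\Delta'_{\chi,f}=0,
\]
justifying this merely ``by construction of $\Delta'_{\chi,f}$ as the image in the quotient by the kernel of the height pairing.'' Quotienting by the kernel only makes the pairing \emph{nondegenerate}; a nondegenerate Hermitian form can still have isotropic vectors (already in signature $(1,1)$), so nondegeneracy does not by itself give $\langle x,x\rangle=0\Rightarrow x=0$. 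And unlike in the original Gross--Zagier setting, the Beilinson--Bloch pairing here is not known to be positive definite, so you also cannot appeal to positivity.

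The fix requires two facts from the paper that your proposal does not invoke. First, Proposition~\ref{prop:heckecompat} shows that $W'$ is a Hecke-module subquotient of $S_{2k}(\Gamma_0(N))$; since newforms occur there with multiplicity one, the $f$-isotypic component $W'_f$ has dimension at most one. Second, the proof of Theorem~\ref{thm: main result} establishes that the components $\Delta'_{\chi,f_i}$ for distinct eigenforms are pairwise orthogonal, because the Hecke operators are self-adjoint for the height pairing and have distinct systems of eigenvalues. These two facts together imply that the restriction of the nondegenerate Hermitian pairing on $W'$ to the at-most-one-dimensional orthogonal summand $W'_f$ is itself nondegenerate, and a nondegenerate Hermitian form on a one-dimensional $\C$-space is anisotropic. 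That is what actually yields $\langle\Delta'_{\chi,f},\Delta'_{\chi,f}\rangle^{\GS}\neq0\iff\Delta'_{\chi,f}\neq0$; once this is in place, the remainder of your argument goes through.
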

If the height pairing on generalized Heegner cycles is positive-definite, as is expected, then we also deduce that $L'(f,\chi,k+t)\geq 0$, as is predicted by the Riemann hypothesis for $L(f,\chi,s)$.

\subsection{The Beilinson--Bloch and Bloch--Kato conjectures}
Let $Y$ be a smooth and proper variety over a number field $F$. Taken together, the Beilinson--Bloch and Bloch--Kato conjectures \cite{bloch, blochkato} predict that for each integer $i \geq 0$ and prime $\ell$, the higher Abel--Jacobi map \begin{equation}\label{eq:AJ}
    \AJ_{Y}^{i} \colon \CH^{i}(Y)_0 \otimes \Q_\ell \longrightarrow H^1_f(\Gal_F, H_{\et}^{2i-1}(\overline{Y},\Q_\ell(i)))
    \end{equation}
to the Bloch--Kato Selmer group is an isomorphism of $\Q_\ell$-vector spaces, and moreover these spaces have dimension $\ord_{s = i} L(H^{2i-1}(Y),s)$,
where $L(H^{2i-1}(Y),s)$ is the $L$-function attached to the associated system of $\ell$-adic $\Gal_F$-representations. 

In our setting, $L(f,\chi,s)$ is a factor of $L(H^{2k+2t-1}(X),s)$, and so Theorem \ref{thm:intro} yields the following implication predicted by Beilinson--Bloch--Kato:

\begin{corollary}
 With the notations and assumptions as above, we have 
\[
r_{\an}(f \otimes \chi)=1 \implies \dim_{\Q} \CH^{k+t}(X)_{0} \geq 1.
\]
\end{corollary}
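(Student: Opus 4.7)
The plan is to deduce this corollary directly from Theorem \ref{thm:intro} by a nonvanishing argument. Assume $r_{\an}(f \otimes \chi) = 1$, i.e., $L'(f,\chi,k+t) \neq 0$. The explicit formula in Theorem \ref{thm:intro} expresses this derivative as a product of an explicit constant and the Beilinson--Bloch height $\langle \Delta'_{\chi, f}, \Delta'_{\chi, f}\rangle^{\GS}$. The constant
\[
\frac{4^{k+t}\pi^{2k} (f,f)_{\mathrm{Pet}}}{(k-t-1)!(k+t-1)!\, h u^2 \vert D\vert^{k-t-1/2}}
\]
is manifestly positive: the Petersson norm $(f,f)_{\Pet}$ is strictly positive for a nonzero cuspform, while $h, u, |D|$ and the factorials are positive integers. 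Hence the height $\langle \Delta'_{\chi, f}, \Delta'_{\chi, f}\rangle^{\GS}$ must be nonzero.

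Next, I would unpack what this nonvanishing of the self-pairing tells us about the cycle itself. By the very construction of $\Delta'_{\chi, f}$ recalled before the statement of Theorem \ref{thm:intro}, it is the $f$-isotypic component of the image $\Delta'_\chi$ of $\Delta_\chi$ inside the quotient
\[
\mathrm{Span}\{ T_m\Delta_\chi \colon (m,N)=1 \} \big/ \ker\langle\cdot,\cdot\rangle^{\GS}.
\]
A nonzero self-pairing precludes $\Delta'_{\chi, f}$ from being zero in that quotient, and in particular $\Delta_\chi$ has a nonzero $f$-isotypic component in $\mathrm{Span}\{ T_m\Delta_\chi \colon (m,N)=1 \} \otimes \C \subset \CH^{k+t}(X)_0 \otimes \C$. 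Thus $\CH^{k+t}(X)_0 \otimes \C \neq 0$.

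Finally, I would translate this complex nonvanishing into the desired $\Q$-rational statement. Since $\CH^{k+t}(X)_0 \otimes \C = (\CH^{k+t}(X)_0 \otimes \Q) \otimes_\Q \C$, the vector space $\CH^{k+t}(X)_0 \otimes \Q$ is nonzero, which means precisely that $\dim_\Q \CH^{k+t}(X)_0 \geq 1$. There is no real obstacle in this deduction: everything is an essentially formal consequence of Theorem \ref{thm:intro} combined with the positivity of the Petersson inner product. The only point worth emphasizing in the write-up is the careful bookkeeping that the $f$-isotypic projection preserves rationality in the sense needed to pass from $\C$-coefficients back to $\Q$-coefficients, which is automatic here because extending scalars from $\Q$ to $\C$ is faithfully flat.
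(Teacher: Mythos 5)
Your proof is correct and reflects exactly the (implicit) deduction the paper intends: the paper states this corollary without proof immediately as a consequence of Theorem \ref{thm:intro}, and the chain you give — nonvanishing of $L'$, positivity of the explicit constant, hence nonvanishing of the self-pairing, hence nonvanishing of $\Delta'_{\chi,f}$ in the quotient, hence nontriviality of $\CH^{k+t}(X)_{0}\otimes\C$, hence of $\CH^{k+t}(X)_0$ itself since Chow groups carry $\Q$-coefficients by convention and extension of scalars is faithfully flat — is precisely what is needed. No gaps.
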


The Beilinson--Bloch--Kato conjecture is compatible with algebraic correspondences: if $M  \subset h(X)$ is a rational Chow submotive of $X$, determined by an idempotent correspondence $\epsilon$, then we expect that $\ord_{s = i}L(H^{2i-1}(M),s) = \dim_\Q \CH^i(M)_0 = \dim_\Q \epsilon\CH^i(X)_0$.  Combining Theorem \ref{thm:intro} with work of Castella--Hsieh \cite{CastellaHsieh} and Elias \cite{elias} on Euler systems of generalized Heegner cycles, we obtain the following conditional result for the motive $h(f) \otimes h(\chi) \subset h(X)$:

\begin{corollary}\label{cor: combine with Yara}
    Suppose the map $\AJ_X^{k+t}$ $(\ref{eq:AJ})$ is injective. Then 
    \[
    r_{\an}(f \otimes \chi) =  1 \implies \dim_{K(\chi)} \epsilon_{\chi,f}\CH^{k+t}(X)_{0} = 1.
    \]
Here, $\epsilon_{\chi,f}$ is the idempotent defining the rational Chow motive $h(f) \otimes h(\chi)$ inside $h(X)$.\footnote{Scholl constructs $h(f) \subset h(W_{2k-2})$ in the category of Chow motives modulo homological equivalence \cite{scholl}, but injectivity of (\ref{eq:AJ}) implies that $h(f)$ has the desired properties even in the category of motives modulo rational equivalence.} 
\end{corollary}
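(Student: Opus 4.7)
The plan is to establish the inequalities $\dim_{K(\chi)} \epsilon_{\chi,f}\CH^{k+t}(X)_0 \geq 1$ and $\dim_{K(\chi)} \epsilon_{\chi,f}\CH^{k+t}(X)_0 \leq 1$ separately and then combine them.

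For the lower bound, I invoke Theorem \ref{thm:intro} directly. The hypothesis $r_{\an}(f \otimes \chi) = 1$ means $L'(f,\chi,k+t) \neq 0$, so the formula in Theorem \ref{thm:intro} forces the Beilinson--Bloch height $\langle \Delta'_{\chi,f}, \Delta'_{\chi,f}\rangle^{\GS}$ to be nonzero. In particular $\Delta'_{\chi,f}$ is a nonzero class, and since by construction it lies in the $(\chi,f)$-isotypic component $\epsilon_{\chi,f}(\CH^{k+t}(X)_0 \otimes \C)$, we obtain $\dim_{K(\chi)} \epsilon_{\chi,f}\CH^{k+t}(X)_0 \geq 1$.

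For the upper bound, the strategy is to invoke the Euler system of generalized Heegner cycles of Castella--Hsieh \cite{CastellaHsieh} and Elias \cite{elias}. Composing the nonvanishing of $\Delta'_{\chi,f}$ from the previous step with the Abel--Jacobi map \eqref{eq:AJ} (where injectivity is now part of the hypothesis) yields a nonzero Abel--Jacobi image in the $(\chi,f)$-isotypic part of the Bloch--Kato Selmer group for $H^{2(k+t)-1}_{\et}(\overline{X},\Q_\ell(k+t))$. The Kolyvagin-type argument in \cite{CastellaHsieh, elias}, fed with this nonvanishing basis class at some auxiliary prime $\ell$, bounds the relevant isotypic Selmer group to be exactly one-dimensional over the appropriate coefficient ring. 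The injectivity of $\AJ_X^{k+t}$ then transfers this Selmer bound into a bound on $\epsilon_{\chi,f}\CH^{k+t}(X)_0 \otimes \Q_\ell$, and extracting dimensions over $K(\chi)$ gives the upper bound.

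The main obstacle is a bookkeeping one: verifying that the hypotheses of \cite{CastellaHsieh, elias} (on the newform $f$, on the character $\chi$, on the splitting behavior of $\ell$ in $K$, on residual big-image conditions, and on the Heegner hypothesis \eqref{HH}) are compatible with the setup here, and checking that their normalization of the generalized Heegner classes agrees with $\Delta_\chi$ up to a nonzero scalar under the projector $\epsilon_{\chi,f}$. One may also need to choose the auxiliary prime $\ell$ so that both the Euler system divisibility and the injectivity of $\AJ_X^{k+t}$ are available at $\ell$. Once these compatibilities are in place, the argument is the standard Gross--Zagier / Kolyvagin template: Theorem \ref{thm:intro} supplies the analytic-to-arithmetic input, and the Euler system converts it into the desired dimension statement.
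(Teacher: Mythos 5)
Your outline follows the same two-pronged strategy as the paper: Theorem~\ref{thm:intro} supplies non-vanishing of $\Delta'_{\chi,f}$ (the lower bound), and the Castella--Hsieh rank-one Selmer bound, accessed through the Abel--Jacobi map, supplies the upper bound. So the approach is right. But what you call ``a bookkeeping one'' is in fact the crux of the argument, and you have not identified the idea that makes it work. The Castella--Hsieh theorem (their Theorem~B) is stated for Heegner classes on the variety $X' = W_{2k-2}(\Gamma_1(N)) \times A^{2k-2}$ with the Hecke character $\chi' = \chi\cdot\mathrm{Nm}^{-t}$, whereas the cycle $\Delta_{\chi,f}$ whose non-vanishing you obtain from Theorem~\ref{thm:intro} lives on $X = W_{2k-2} \times A^{2t}$. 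These are genuinely different varieties with different level structures, and it is not just a matter of checking that two normalizations of the same class agree up to a scalar.

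The missing ingredient is the algebraic correspondence from \cite[Prop.~4.1.1]{bdp3}, which sends the Castella--Hsieh cycle $\Upsilon$ on $X'$ to an integral multiple of the $t$-dependent cycle $\Upsilon^t_{[\oh_K]}$ on $\tilde{X} = W_{2k-2}(\Gamma_1(N)) \times A^{2t}$, from which one can then pass to the cycles $Z_\cA$, $\bar Z_\cA$ on $X$ using the projector relations $\kappa_{2t}\subset\epsilon_A$. The paper then uses \cite[Lem.~4.8]{pGZshnidman} to relate the Abel--Jacobi images of $Z$ and $\bar Z$ via complex conjugation (this is equation~\eqref{zchi}), which is needed because $\Delta_\chi$ is a sum $\sum_\cA(Z_\cA + \bar Z_{\bar\cA})$ rather than a single class; the injectivity of $\AJ_X^{k+t}$ is then used twice, once to transfer non-vanishing of $\Delta_{\chi,f}$ to a non-vanishing Selmer class and once at the end to pull the Selmer bound back to a bound on the Chow group. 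Your proposal would benefit from spelling out this chain explicitly, and in particular naming the \cite{bdp3} correspondence, which is not merely a normalization check but the geometric device that lets you invoke Castella--Hsieh at all.
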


Castella--Hsieh \cite{CastellaHsieh} proved an analogous result in analytic rank $0$, under some technical hypotheses. See \cite[Rem. 1.2.2]{Castella24} for further applications of our results to the Tamagawa number conjecture.

As the Bloch--Kato Selmer group of a rational Chow motive $M$ depends only on its $\ell$-adic realizations $M_\ell$, we also deduce some applications to an a priori larger class of motives which should have useful applications. 
Let $V_{f,\chi,\ell}$ be the $4$-dimensional $\ell$-adic representations of $\Gal_\Q$ attached to $f \otimes \chi$. 
\begin{corollary}\label{cor: general motive}
    Let $M$ be a rational Chow motive over $\Q$ such that $M_\ell \simeq V_{f,\chi,\ell}$ for all but finitely many $\ell$. Suppose that $\AJ_X^{k+t}$ is injective. Then for primes $\ell \nmid 2(2k-1)!N\varphi(N)$ split in $K$, we have  
    \[r_{\an}(f \otimes \chi) = 1 \implies \dim_{\Q_\ell} H^1_f(\Gal_\Q, M_\ell(k+t)) = 1.\]  
\end{corollary}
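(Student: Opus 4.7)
The plan is to reduce Corollary \ref{cor: general motive} to the setting of Corollary \ref{cor: combine with Yara} by exploiting the fact that the Bloch--Kato Selmer group $H^1_f(\Gal_\Q, M_\ell(k+t))$ is a functor of the underlying $\ell$-adic representation of $\Gal_\Q$. For every $\ell$ with $M_\ell \simeq V_{f,\chi,\ell}$, the Selmer group in the statement is therefore canonically identified with $H^1_f(\Gal_\Q, V_{f,\chi,\ell}(k+t))$, and it suffices to compute its dimension; here $V_{f,\chi,\ell}$ is cut out of $H^{2k+2t-1}_{\et}(\overline{X}, \Q_\ell(k+t))$ by the idempotent $\epsilon_{\chi,f}$ defining the Chow motive $h(f) \otimes h(\chi)$ of Corollary \ref{cor: combine with Yara}.

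For the lower bound, I would invoke Theorem \ref{thm:intro}: since $r_{\an}(f\otimes\chi) = 1$ forces $L'(f,\chi,k+t) \neq 0$, the height $\langle \Delta'_{\chi,f}, \Delta'_{\chi,f}\rangle^{\GS}$ is nonzero, so $\Delta'_{\chi,f}$ is a nonzero element of the $(\chi,f)$-isotypic component of $\CH^{k+t}(X)_0 \otimes \C$. The assumed injectivity of $\AJ_X^{k+t}$ then maps it to a nonzero Selmer class for $X_{/H}$, and a standard descent/Shapiro argument transfers it to a nonzero element of $H^1_f(\Gal_\Q, V_{f,\chi,\ell}(k+t))$, giving $\dim_{\Q_\ell} \geq 1$.

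For the matching upper bound, I would invoke the Kolyvagin-type Euler system arguments of Castella--Hsieh \cite{CastellaHsieh} and Elias \cite{elias}. The conditions $\ell \nmid 2(2k-1)!N\varphi(N)$ and $\ell$ split in $K$ are exactly the standard hypotheses for their machinery (ensuring freeness of the residual representation, good behavior of derivative classes, and compatibility at primes dividing the level), and their main theorem states that the existence of a nonzero generalized Heegner class in $H^1_f(\Gal_\Q, V_{f,\chi,\ell}(k+t))$ forces the whole Selmer group to have dimension at most one over $\Q_\ell$. Since the previous step exhibits exactly such a class, we obtain the reverse inequality and conclude.

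The main obstacle is the bookkeeping in the lower bound: precisely matching the $\epsilon_{\chi,f}$-isotypic summand of the Chow/Selmer group of $X_{/H}$ with the Selmer group of $V_{f,\chi,\ell}$ over $\Q$, and verifying that the Abel--Jacobi image of $\Delta'_{\chi,f}$ coincides, up to a nonzero scalar, with the input class required by the Castella--Hsieh/Elias machinery. Once this compatibility is in place, the result is a formal consequence of Theorem \ref{thm:intro} together with the existing Euler system bounds.
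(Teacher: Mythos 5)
Your proposal matches the paper's (implicit) argument. The paper does not write out a separate proof of this corollary; it states just before the corollary that the Bloch--Kato Selmer group of a rational Chow motive depends only on its $\ell$-adic realizations, so that the identity $M_\ell \simeq V_{f,\chi,\ell}$ reduces the claim to the Selmer computation carried out in the proof of Corollary~\ref{cor: combine with Yara}, which is exactly the chain $L'\neq 0 \Rightarrow \langle\Delta'_{\chi,f},\Delta'_{\chi,f}\rangle^{\GS}\neq 0 \Rightarrow \Delta_{\chi,f}\neq 0 \Rightarrow \AJ_X^{k+t}(\Delta_{\chi,f})\neq 0 \Rightarrow z_{f,\chi'}\neq 0 \Rightarrow \dim H^1_f = 1$, with the last implication coming from Castella--Hsieh's Theorem B (the Euler-system upper bound), and with Shapiro's lemma implicitly identifying the Selmer group over $\Q$ for $V_{f,\chi,\ell}(k+t)$ with the Selmer group over $K$ for $V_{f,\chi',\ell}$. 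You have correctly identified all of these steps and the role of the hypotheses on $\ell$. One small inaccuracy: the nonvanishing of the height gives that $\Delta'_{\chi,f}$ is nonzero in the quotient $W'$ of the span of Hecke translates by the null space of the pairing, and hence that the lift $\Delta_{\chi,f}$ is nonzero in $\CH^{k+t}(X)_{0,\C}$; it is the latter that one feeds into $\AJ_X^{k+t}$, not $\Delta'_{\chi,f}$ itself.
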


The technical hypotheses on $\ell$ again come from \cite{CastellaHsieh}.

\begin{remark}\label{rem: concrete examples}
    {
    If $Y$ is a rigid Calabi--Yau $(2k-1)$-fold and $A^{2t}$ is a power of a CM elliptic curve, then $h(Y \times A^{2t})$ contains a motive $M$ such that $M_\ell \simeq V_{f,\chi,\ell}$, by \cite{GouveaYui}. More elementary examples come from abelian varieties $J$ isogenous to $E^{2k-1} \times A^{2t}$, for some CM elliptic curve $E$. When $J$ is moreover a Jacobian, there is a canonical Ceresa-type cycle in codimension $k+t$, which one expects is typically non-zero in $\CH^{k+t}(M)$ and hence in the Bloch--Kato Selmer group as well. See \cite{lilienfeldtshnidman} or \cite{bdp3} for 3-dimensional examples of this type.  Part of our interest in these examples is that they tend to give  extremal behavior: e.g.,\ the Jacobian $J$  of the modular curve $X(8)$, which is the genus $5$ curve with largest  automorphism group. In these examples, $\CH^{k+t}(M)$ is the $0$-th graded piece of the coniveau filtration on the Griffiths group $\Gr^{k+t}(Z)$, where $Z = Y \times A^{2t}$ or $Z = J$.  Note that the Tate conjecture implies that $M \simeq h(f) \otimes h(\chi)$, but the Tate conjecture is open in these cases.  
    }
\end{remark}
     
One can deduce $p$-adic variants of Corollary \ref{cor: general motive} from  \cite{disegni,pGZshnidman} or \cite{bdpCM} respectively, with hypothesis  that a certain $p$-adic $L$-function  vanishes to order $1$ (resp.\  0), and where the conclusion requires taking $\ell = p$. These $p$-adic results are unconditional since Nekov\'a\v{r}'s $p$-adic height factors through the Abel--Jacobi map. Corollary \ref{cor: general motive} is the first complex result for motives of this type, but it seems difficult to make it unconditional at the moment.

\subsection{Strategy of proof}

As in \cite{GZ}, the proof of Theorem \ref{thm:intro} is divided into four steps. 

First we compute the Fourier coefficients of a newform $g=\sum_{m\geq 1} a_{g}(m) q^m \in S_{2k}(\Gamma_0(N))$ satisfying
\begin{equation}\label{intro:eq}
(f,g)=\frac{\sqrt{\vert D\vert D^t (2k-2)!}}{2^{4k-1} \pi^{2k}} L'(f,\chi, k+t), \qquad \forall \; f\in S_{2k}^{\mathrm{new}}(\Gamma_0(N)).
\end{equation}
This step is carried out using Rankin's method to arrive at an integral representation of $L(f,\chi,s)$ as the Petersson product of $f$ with the product of $\Theta_{\chi}$ with a non-holomorphic Eisenstein series. Using holomorphic projection, we compute the desired Fourier coefficients (Theorem \ref{fourcoeff}). 

The second and third steps are concerned with the calculation of the geometric $q$-expansion
\[
\sum_m \langle \Delta_{\chi}, T_m \Delta_{\chi} \rangle^{\GS} q^m.
\]
Injectivity of $\AJ_X^{k+t}$ would imply that this $q$-expansion is a modular form as well, however this is not known and is only deduced as a result of our proof.\footnote{In the $p$-adic setting, the injectivity of $\AJ_X^{k+t}$ is not necessary to deduce modularity of the analogous $q$-expansion of $p$-adic heights, which is one reason why this paper is technically more involved than \cite{pGZshnidman}.} This means we must compute Fourier coefficients even in cases where $\Delta_\chi$ and $T_m\Delta_\chi$ intersect in the generic fiber. For each $m$, the height pairing admits a decomposition as a sum of local height pairings indexed by the places of $H$. The second and third steps are the calculations of the archimedean and non-archimedean contributions, respectively. Both steps are carried out using Brylinski's pairing for local systems on curves \cite{brylinski}. The second step requires us to identify the Green's kernel attached to the polarized variation of Hodge structures $W_{k,t}$ described below. This occupies a large part of the paper (Sections \ref{s:PVHS} -- \ref{s:green}). The third step can be extracted from local height calculations performed in \cite{pGZshnidman}, at least when there is no intersection in the generic fiber. In both steps, the case of intersection in the generic fiber uses Zhang's work \cite{zhang} on limits of local height pairings. The outcome of the second and third steps is the equality
\begin{equation}\label{fouriereq}
\langle \Delta_{\chi}, T_m \Delta_{\chi} \rangle^{\GS} = \kappa \cdot a_g(m), \qquad \forall \; (m,N)=1,
\end{equation}
where $\kappa$ is an explicit non-zero constant independent of $m$ (Theorem \ref{THM}).

In the fourth and final step, we derive Theorem \ref{thm:intro} from \eqref{intro:eq} and \eqref{fouriereq} using standard arguments.

\subsection{Green's kernels}

 Generalized Heegner cycles give rise to elements in fibers of the weight $0$ polarized variation of Hodge structures
    \[
    W_{k,t}:=(\sym^{2k-2} R^1 \pi_* \Z)(k-1) \otimes \kappa_{2t} H^{2t}(A^{2t}(\C), \Z)(t),
    \]
    where $\pi \colon \mathcal{E} \to Y(N)$ is the universal elliptic curve and $\kappa_{2t}$ is the projector \eqref{eq:kappam}. 
    In Lemma \ref{def:Green} we show that the Green's kernel associated to $W_{k,t}$ (defined for $z = x + iy$ and $z' = x' + iy'$ distinct in the upper half-plane) is obtained by averaging the function
\begin{equation*}
g_{k,t}(z,z'):=-Q_{k,t}\left(1+\frac{\vert z-z'\vert^2}{2yy'}\right),
\end{equation*}
where
\begin{equation*}
Q_{k,t}(x):=\int_{-\infty}^\infty \frac{2^{2t}dw}{(x+\sqrt{x^2-1}\cosh(w))^{k-t}(x+1+\sqrt{x^2-1}e^w)^{2t}}
\end{equation*}
is a Jacobi function of the second kind.
As a result, the archimedean contribution to the height $\langle \Delta_\chi, T_m \Delta_\chi\rangle^{\mathrm{BB}}$ is a linear combination of sums of the form
\[
\mathcal{G}^m_{k, t}(z,z'):=\frac{1}{2}\sum_{\substack{a,b,c,d \in \Z \\ ad-bc=m, N\mid c}} g_{k,t}\bigg(z, \frac{az'+b}{cz'+d}\bigg) (c\bar{z}z'+d\bar{z}-az'-b)^{2t}.
\]
Let $\lambda=\{\lambda_m\}_{\geq 1}$ be a relation for $S_{2k}(\Gamma_0(N))$, in the sense of \cite[p. 316]{GZ}, so that:
\begin{itemize}
    \item[i)] $\lambda_m\in \Z$, $\lambda_m=0$ for all but finitely many $m$;
    \item[ii)] $\sum_{m\geq 1} \lambda_m a_m=0$ for all $\sum_{m\geq 1} a_m q^m \in S_{2k}(\Gamma_0(N))$;
    \item[iii)] $\lambda_m=0$ if $(m,N)\neq 1$ or $r_{\cA, \chi}(m)\neq 0$ for some $\cA\in \Cl_K$.
\end{itemize}
As a consequence of our local height and Fourier coefficient calculations, we show that for points $z$ and $z'$ in the upper half-plane with complex multiplication by $\oh_K$, the value
\[
\sum_{m\geq 1} m^{k-t-1} \lambda_m \mathcal{G}^m_{k, t}(z,z')
\]
is an algebraic multiple of the logarithm of an algebraic number, conditional on the non-degeneracy of the Beilinson--Bloch height pairing.  
See Theorems \ref{thm:algebraicity} and \ref{thm:alg2} for the precise statements. This extends the algebraicity results \cite[V (4.3)]{GZ} and \cite[Thm. 5.2.2]{zhang} of Gross--Zagier and Zhang, which  were recently generalized in a different direction by Bruinier--Ehlen--Yang \cite{BruinierEhlanYang} and Li \cite{Li23}.    
\subsection{Assumptions}

It would be desirable to relax some of the hypotheses in Theorem \ref{thm:intro}, especially in light of Remark \ref{rem: concrete examples}. Among other things, this would entail re-doing Gross and Zagier's deformation theoretic computations on the modular curve $X_0(N)$ in the presence of extra ramification. As the computations in the ``minimally ramified'' case turned out to be quite long on their own, we decided to focus in this paper on what is really new when $\chi$ is no longer of finite order. 

For the future, it would be worthwhile to prove analogous formulas for odd weight modular forms, in which case $\chi$ must be ramified as well, in order for $L(f,\chi,s)$ to be critical and self-dual in the sense of \cite{bdp1}. One could also try to allow modular forms $f$ with ``deeper'' ramification, by combining the method of \cite{qiu} with our work. It would also be desirable to relax the Heegner hypothesis \eqref{HH}, in which case one should work over a compact Shimura curve (corresponding to an indefinite quaternion algebra) as in \cite{yzz}. 
Generalized Heegner cycles over Shimura curves are well-studied \cite{brooks, masdeu}, but there are obstacles to overcome in order to compute their Beilinson--Bloch heights (see \cite{boya}).  Finally, the case $t \geq k$ (with global root number still $-1$) is quite mysterious. The arithmetic Gan--Gross--Prasad program for $\SO(3) \times \SO(2)$ (and the results of Saito--Tunnel) suggest that one should work over the Shimura variety corresponding to a definite quaternion algebra. But the latter is $0$-dimensional, so this case seems to require a completely different construction of algebraic cycles.  One promising approach would be to relate $L(f,\chi,s)$ to modified diagonal cycles, as in \cite{CastellaDo}, by generalizing the triple product formula of Yuan--Zhang--Zhang \cite{yzztriple}. However, this type of Gross--Zagier formula would necessarily involve other auxiliary $L$-values.    

\subsection{Notations and conventions}\label{s:conv}

By default, Chow groups are with $\Q$-coefficients.
Given a $\Z$-module $M$ and a group $G$, we write $G_M$ for $G\otimes_{\Z} M$. Likewise, if $E/F$ is an extension of fields and $Y$ is a variety defined over $F$, then we write $Y_E$ for the base-change $Y\times_{\spec(F)} \spec(E)$. We use a similar notation for base-change of morphisms or algebraic cycles, though we sometimes suppress the subscript when the meaning is unambiguous. Starting from Section \ref{s:GHC}, we adopt the following conventions.
In general, we will use the notation $z$ for a point of $\cH$, while the notation $\tau$ is reserved for CM points. Points on $X(M)$ will generally be denoted using the letter $Q$, while points on $X_0(N)$ will be denoted using the letter $P$. The letters $x$ and $y$ will generally be used to denote real and imaginary parts of complex numbers respectively.

\subsection{Outline}
Section \ref{s:Jacobi} contains preliminaries on Jacobi functions, which will turn out to compute the relevant Green's kernel for the height pairing.
In Section \ref{s:Lfct}, we carry out the $L$-function calculations. More precisely, we define the Rankin--Selberg convolution $L$-function, derive its functional equation, and compute the Fourier coefficients of the cuspform $g$ (Theorem \ref{fourcoeff}). Section \ref{sec:height} recalls generalities about height pairings on algebraic cycles. Section \ref{s:GHC} introduces generalized Kuga--Sato varieties and generalized Heegner cycles. 
Sections \ref{s:PVHS} and \ref{s:laplacian} contain preliminaries on variations of Hodge structures and calculations of Laplacians, respectively. Section \ref{s:green} is dedicated to constructing the relevant Green's kernel needed for the archimedean Brylinski pairing calculations that are subsequently carried out in Section \ref{s:archii}. Section \ref{s:hodge} computes the cycle classes of generalized Heegner cycles, paving the way to the calculation of the archimedean local heights of generalized Heegner cycles in Section \ref{s:archiih}. Section \ref{s:nonarchii} is concerned with the non-archimedean local height calculations. Section \ref{s:GZ} puts everything together, giving a proof of Theorem \ref{thm:intro}. Section \ref{s:algebra} contains algebraicity results for special values of Green's kernels. Section \ref{s:coro} provides a proof of Corollary \ref{cor: combine with Yara}.

\subsection{Acknowledgements}
The authors thank Kartik Prasanna for suggesting this problem many years ago, and Francesc Castella for helpful comments.  
Both authors were funded by the European Research Council (ERC, CurveArithmetic, 101078157) and the Israel
Science Foundation (grant No.\ 2301/20). Some of the local computations in this paper appear in the last two chapters of \cite{AriThesis}, which was written while AS was partially supported
by National Science Foundation RTG grant DMS-0943832. DTBGL was supported by an Emily Erskine Endowment Fund Post-Doctoral Researcher Fellowship while at the Hebrew University of Jerusalem, and by an Edixhoven Post-Doctoral Fellowship while at Leiden University.

\section{Jacobi functions}\label{s:Jacobi}

We collect some standard results about Jacobi functions using \cite{HTFII, szego} as our main sources. We then prove an asymptotic formula near $1$ for Jacobi functions of the second kind. 

\subsection{Hypergeometric funtions}

Given real numbers $a,b,c,x$ with $\vert x\vert < 1$ and $c>0$, the associated hypergeometric function is defined by
\begin{equation}\label{def:hyp}
F(a,b,c;x):= \sum_{n=0}^\infty \frac{(a)_n(b)_n}{(c)_n}\frac{x^n}{n!},
\end{equation}
with $(a)_0=1$ and $(a)_n=\Gamma(a+n)/\Gamma(a)=a(a+1)\ldots (a+n-1)$ being the rising Pochhammer symbol. These functions satisfy Euler's transformation law 
\begin{equation}\label{eulertl}
F(a,b,c;x)=(1-x)^{c-a-b}F(c-a,c-b,c;x).
\end{equation}

\subsection{Jacobi polynomials}

Let $\alpha$ and $\beta$ be real numbers, and let $n\geq 0$ be an integer. The corresponding Jacobi polynomial $P_n^{(\alpha, \beta)}$ is defined by the formula
\begin{equation}\label{defjac}
P_n^{(\alpha, \beta)}(x):=\frac{(\alpha+1)_n}{n!} F\left(-n, n+\alpha+\beta+1, \alpha+1; \frac{1-x}{2}\right).
\end{equation}
Explicitly, we have 
\begin{equation}\label{expjac}
P_n^{(\alpha, \beta)}(x)=\frac{\Gamma(n+\alpha+1)}{n! \Gamma(n+\alpha+\beta+1)}\sum_{j=0}^n \binom{n}{j} \frac{\Gamma(n+\alpha+\beta+j+1)}{\Gamma(\alpha+j+1)} \left( \frac{x-1}{2}\right)^j.
\end{equation}
The following formula is known as Rodrigues' formula \cite[Vol. II 10.8.(10)]{HTFII}:
\begin{equation}\label{rodrig}
P_n^{(\alpha, \beta)}(x)=\frac{(-1)^n}{2^n n!(1-x)^\alpha (1+x)^{\beta}} \left( \frac{d}{dx} \right)^{n} [(1-x)^{n+\alpha}(1+x)^{n+\beta}].
\end{equation}
The Jacobi polynomials are solutions of the second order differential equation \cite[Vol. II 10.8.(14)]{HTFII}
\begin{equation}\label{diffeq1}
(1-x^2)f''(x)+[\beta-\alpha-(\alpha+\beta+2)x]f'(x)+n(n+\alpha+\beta+1)f(x)=0.
\end{equation}

For $0\leq t\leq k-1$, define the function
\begin{equation}\label{Pkt}
P_{k,t}(x)=\frac{1}{2^{k-t-1}(k+t-1)!}\left( \frac{d}{dx} \right)^{k+t-1}[(x^2-1)^{k-t-1}(x-1)^{2t}].
\end{equation}

\begin{proposition}
    For $0\leq t\leq k-1$, we have $P_{k,t}(x)=P_{k-t-1}^{(0, 2t)}(x)$.
\end{proposition}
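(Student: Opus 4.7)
The plan is to expand $(x^2-1)^{k-t-1}(x-1)^{2t}$ as a polynomial in $(x-1)$ via the binomial theorem, apply the derivative $(d/dx)^{k+t-1}$ term-by-term, and match the result against the explicit formula \eqref{expjac} for the Jacobi polynomial $P_{k-t-1}^{(0,2t)}(x)$.

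First, I would rewrite the expression being differentiated by combining powers:
\[
(x^2-1)^{k-t-1}(x-1)^{2t}=(x-1)^{k+t-1}(x+1)^{k-t-1}.
\]
Then, writing $x+1=(x-1)+2$ and applying the binomial theorem gives an explicit expansion of the form $\sum_{j=0}^{k-t-1}\binom{k-t-1}{j}2^{k-t-1-j}(x-1)^{k+t-1+j}$. The key feature of this expansion is that every summand is a pure power of $(x-1)$, so the differentiation is straightforward: each term satisfies $(d/dx)^{k+t-1}(x-1)^{k+t-1+j}=\frac{(k+t-1+j)!}{j!}(x-1)^j$. Dividing by the normalizing constant $2^{k-t-1}(k+t-1)!$ then yields a closed-form expression for $P_{k,t}(x)$ as a polynomial in $(x-1)/2$.

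Next, I would compare this coefficient-by-coefficient with the explicit formula \eqref{expjac} specialized to $(n,\alpha,\beta)=(k-t-1,0,2t)$. Using $\Gamma(k-t)/[(k-t-1)!\,\Gamma(k+t)]=1/(k+t-1)!$ for the prefactor, the resulting expansion for $P_{k-t-1}^{(0,2t)}(x)$ matches the one obtained above term-for-term, which completes the proof.

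\textbf{Main obstacle.} There is no real obstacle: the proposition reduces to a routine bookkeeping computation combining the binomial theorem with repeated differentiation of a single power of $(x-1)$. An alternative route would be to verify that $P_{k,t}(x)$ satisfies the Jacobi ODE \eqref{diffeq1} with $(n,\alpha,\beta)=(k-t-1,0,2t)$ and normalize by its leading coefficient, invoking uniqueness of polynomial solutions; however, this indirect method requires at least as much computation as the direct coefficient comparison, so the latter is the cleanest presentation.
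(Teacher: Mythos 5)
Your proof is correct, and it takes a genuinely different route from the paper's. The paper's argument applies Rodrigues' formula \eqref{rodrig} with the negative parameter $\beta=-2t$ to recognize that $P_{k+t-1}^{(0,-2t)}(x)=\frac{(1+x)^{2t}}{2^{2t}}P_{k,t}(x)$, and then uses the hypergeometric representation \eqref{defjac} together with Euler's transformation law \eqref{eulertl} to convert the negative-parameter Jacobi polynomial into $P_{k-t-1}^{(0,2t)}(x)$. Your approach bypasses Rodrigues, the negative-parameter detour, and the hypergeometric machinery entirely: you rewrite the differentiand as $(x-1)^{k+t-1}(x+1)^{k-t-1}$, expand $(x+1)^{k-t-1}=((x-1)+2)^{k-t-1}$ binomially so that every term is a pure power of $(x-1)$, differentiate term-by-term, and compare coefficients directly against the explicit formula \eqref{expjac} with $(n,\alpha,\beta)=(k-t-1,0,2t)$ (using $\Gamma(k-t)/[(k-t-1)!\,\Gamma(k+t)]=1/(k+t-1)!$). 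I verified the resulting expansion $P_{k,t}(x)=\sum_{j=0}^{k-t-1}\binom{k-t-1}{j}\frac{(k+t-1+j)!}{(k+t-1)!\,j!}\bigl(\frac{x-1}{2}\bigr)^j$ does indeed agree with \eqref{expjac}. Your route is more elementary and self-contained; the paper's buys a little extra insight into the relation between the parameter sets $(k+t-1,0,-2t)$ and $(k-t-1,0,2t)$ via Euler's transformation, which is a useful structural remark when one is already working systematically with Jacobi functions. Both arguments are short and either would be acceptable.
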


\begin{proof}
Observe, using Rodrigues's formula \eqref{rodrig}, that
\[
P_{k+t-1}^{(0, -2t)}(x)=\frac{(-1)^{k+t-1}(1+x)^{2t}}{2^{k+t-1} (k+t-1)! } \left( \frac{d}{dx} \right)^{k+t-1} [(1-x)^{k+t-1}(1+x)^{k-t-1}]
=\frac{(1+x)^{2t}}{2^{2t}}P_{k,t}(x).
\]
Thus, using \eqref{expjac} and Euler's tranformation law \eqref{eulertl}, we see that 
\[
P_{k,t}(x)=\frac{2^{2t}}{(1+x)^{2t}} F\left(-k-t+1, k-t, 1; \frac{1-x}{2}\right)
=F\left(k+t, -k+t+1, 1; \frac{1-x}{2}\right)
=P_{k-t-1}^{(0,2t)}(x).
\]
\end{proof}

Associated with the polynomials $P_n^{(\alpha, \beta)}$ are the Jacobi polynomials of the second kind\footnote{We depart here from the notations of \cite{HTFII} where these polynomials are denoted $q_{n}^{(\alpha, \beta)}(x)$ \cite[Vol. II 10.8.(24)]{HTFII}. Instead, we adopt the notations of \cite[(9)]{grosjean}, except that the polynomials therein are normalized by a factor of $\frac{\Gamma(\alpha+\beta+2)}{2^{\alpha+\beta+1} \Gamma(\alpha+1)\Gamma(\beta+1)}$ which we choose to drop.}
\begin{equation}\label{jacpol2}
W_{n-1}^{(\alpha, \beta)}(x):=\int_{-1}^1 \frac{P^{(\alpha, \beta)}_{n}(x)-P^{(\alpha, \beta)}_{n}(u)}{x-u}(1-u)^\alpha (1+u)^\beta du,
\end{equation}
defined for all $n\in \mathbb{Z}_{\geq 1}$, all $x\in \C$, and $\alpha, \beta > -1$. The function \eqref{jacpol2} is the degree $n-1$ polynomial solution of the inhomogeneous linear differential equation
\[
(1-x^2)f''(x)+[\alpha-\beta+(\alpha + \beta - 2)x] f'(x)+(n+1)(n+\alpha + \beta)f(x)=2(\alpha +\beta +1)\frac{d}{dx} P_{n}^{(\alpha, \beta)}(x).
\]

\subsection{Jacobi functions of the second kind}

The Jacobi functions of the second kind are denoted by $Q^{(\alpha, \beta)}_n$ and defined, for $x$ in the complex plane cut along the segment $[-1,1]$, by the formula 
\cite[(4.61.1))]{szego}
\begin{equation}\label{intrep}
Q^{(\alpha, \beta)}_n(x):=\frac{2^{-n-1}}{(x-1)^\alpha (x+1)^{\beta}} \int_{-1}^1 \frac{(1-u)^{n+\alpha}(1+u)^{n+\beta}}{(x-u)^{n+1}}du.
\end{equation}
In the special case $\alpha=\beta=0$, the function $Q_n(x):=Q_n^{(0,0)}(x)$ is the Legendre function of the second kind. The function $Q_{n}^{(\alpha, \beta)}(x)$ can be defined for $-1< x < 1$ by the formula \cite[(4.62.9)]{szego}, but this definition will not be needed here. The function $Q_{n}^{(\alpha, \beta)}(x)$ is a non-polynomial solution of the differential equation \eqref{diffeq1}, which is linearly independent of $P_n^{(\alpha, \beta)}(x)$ \cite[Thm. 4.61.1]{szego}. According to \cite[Thm. 4.61.2]{szego}, the following representations hold for $x$ in the complex plane cut along the segment $[-1,1]$: 
\begin{equation}\label{jacpol3}
Q_{n}^{(\alpha, \beta)}(x)=\frac{2^{-1}}{(x-1)^\alpha (x+1)^{\beta}}\int_{-1}^1 \frac{P^{(\alpha, \beta)}_{n}(u)}{x-u}(1-u)^\alpha (1+u)^\beta du,
\end{equation}
and
\begin{equation}\label{hyperg}
Q_{n}^{(\alpha, \beta)}(x):=\frac{2^{n+\alpha+\beta}\Gamma(n+\alpha+1)\Gamma(n+\beta+1)}{\Gamma(2n+\alpha+\beta+2)(x-1)^{n+\alpha+1}(x+1)^\beta} F\left( n+1, n+\alpha+1, 2n+\alpha+\beta+2; \frac{2}{1-x} \right).
\end{equation}
Observe, using \eqref{jacpol3} and \eqref{jacpol2}, that for $x$ in the complex plane cut along the segment $[-1,1]$ we have
    \begin{align*}
        Q_{n}^{(\alpha, \beta)}(x)&=\frac{2^{-1}}{(x-1)^\alpha (x+1)^{\beta}}\int_{-1}^1 \frac{P^{(\alpha, \beta)}_{n}(u)-P^{(\alpha, \beta)}_{n}(x)+P^{(\alpha, \beta)}_{n}(x)}{x-u}(1-u)^\alpha (1+u)^\beta du \\
        &=\frac{2^{-1}P^{(\alpha, \beta)}_{n}(x)}{(x-1)^\alpha (x+1)^{\beta}}\int_{-1}^1 \frac{(1-u)^\alpha (1+u)^\beta}{x-u} du - \frac{2^{-1}W_{n-1}^{(\alpha, \beta)}(x)}{(x-1)^\alpha (x+1)^{\beta}}. \\
    \end{align*}
    Using \eqref{intrep}, we obtain
    \begin{equation}\label{eqQ0}
    Q_{n}^{(\alpha, \beta)}(x)=P^{(\alpha, \beta)}_{n}(x)Q_0^{(\alpha, \beta)}(x)- \frac{2^{-1}W_{n-1}^{(\alpha, \beta)}(x)}{(x-1)^\alpha (x+1)^{\beta}}, \qquad \forall x\not\in [-1,1].
    \end{equation}
    Equation \eqref{eqQ0} is \cite[Vol. II 10.8.(25)]{HTFII}, given our notations and normalizations.

For all $0\leq t\leq k-1$, we define the function
\begin{equation}\label{Qkt1}
Q_{k,t}(x)=\int_{-\infty}^\infty \frac{2^{2t}dw}{(x+\sqrt{x^2-1}\cosh(w))^{k-t}(x+1+\sqrt{x^2-1}e^w)^{2t}},
\end{equation}
where $x$ is in the complex plane cut along the segment $[-1,1]$.

\begin{proposition}\label{prop:Jacobi}
For $0\leq t\leq k-1$, we have 
\[
Q_{k,t}(x)=2Q^{(0,2t)}_{k-t-1}(x).
\]
\end{proposition}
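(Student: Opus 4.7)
The plan is to prove the identity by a direct change of variables, reducing the integral defining $Q_{k,t}(x)$ to the integral representation \eqref{intrep} of $Q^{(0,2t)}_{k-t-1}(x)$. The proof proceeds in two substitution steps.

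First, I would substitute $v = \sqrt{x^2-1}\, e^{w}$, so that $dw = dv/v$ and $v$ ranges over $(0,\infty)$. The key algebraic simplification is the factorization
\[
x + \sqrt{x^2-1}\cosh(w) \;=\; x + \tfrac{1}{2}\bigl(v + (x^2-1)/v\bigr) \;=\; \frac{(v+x-1)(v+x+1)}{2v},
\]
together with $x+1+\sqrt{x^2-1}e^w = v+x+1$. After simplification, this transforms $Q_{k,t}(x)$ into
\[
Q_{k,t}(x) \;=\; 2^{k+t}\int_{0}^{\infty} \frac{v^{k-t-1}\,dv}{(v+x-1)^{k-t}(v+x+1)^{k+t}}.
\]

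Next, I would apply the Möbius substitution $v = (x+1)\dfrac{1-u}{1+u}$, which bijects $(0,\infty)$ with $(-1,1)$ (with reversed orientation). A direct computation gives the clean identities
\[
v + x - 1 \;=\; \frac{2(x-u)}{1+u}, \qquad v + x + 1 \;=\; \frac{2(x+1)}{1+u}, \qquad dv \;=\; \frac{-2(x+1)}{(1+u)^2}\,du,
\]
and $v^{k-t-1} = (x+1)^{k-t-1}(1-u)^{k-t-1}(1+u)^{-(k-t-1)}$. Substituting these into the integral and bookkeeping the powers of $(x+1)$, of $(1+u)$, and of $2$ yields
\[
Q_{k,t}(x) \;=\; \frac{2^{t-k+1}}{(x+1)^{2t}}\int_{-1}^{1}\frac{(1-u)^{k-t-1}(1+u)^{k+t-1}}{(x-u)^{k-t}}\,du.
\]

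Finally, I would compare with \eqref{intrep} specialized to $\alpha=0$, $\beta = 2t$, $n = k-t-1$, which gives
\[
Q^{(0,2t)}_{k-t-1}(x) \;=\; \frac{2^{t-k}}{(x+1)^{2t}}\int_{-1}^{1}\frac{(1-u)^{k-t-1}(1+u)^{k+t-1}}{(x-u)^{k-t}}\,du,
\]
so the identity $Q_{k,t}(x) = 2Q^{(0,2t)}_{k-t-1}(x)$ follows. The only real obstacle is tracking the exponents of $(x+1)$, $(1+u)$, and the powers of $2$ carefully through the double substitution; everything else is a mechanical change of variables. One may assume $x>1$ real throughout, then extend to $x$ in the cut plane by analytic continuation since both sides are analytic there.
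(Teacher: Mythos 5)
Your proof is correct. It uses the same general strategy as the paper --- a double change of variables to bring the defining integral into the form \eqref{intrep} --- but your choice of substitutions is more efficient and lands directly on the target. The paper substitutes $v=\sqrt{\tfrac{x+1}{x-1}}e^w$ and then $v=\tfrac{1+u}{1-u}$, which produces the integral representation of $\tfrac{2^{2t+1}}{(x+1)^{2t}}Q_{k+t-1}^{(0,-2t)}(x)$; it then needs two more steps (the hypergeometric formula \eqref{hyperg} and Euler's transformation law \eqref{eulertl}) to convert $Q_{k+t-1}^{(0,-2t)}$ into $2Q_{k-t-1}^{(0,2t)}$. Your substitutions $v=\sqrt{x^2-1}\,e^w$ followed by $v=(x+1)\tfrac{1-u}{1+u}$ --- whose exponent bookkeeping I checked and it is right --- arrive immediately at
\[
Q_{k,t}(x)=\frac{2^{t-k+1}}{(x+1)^{2t}}\int_{-1}^{1}\frac{(1-u)^{k-t-1}(1+u)^{k+t-1}}{(x-u)^{k-t}}\,du,
\]
which is exactly \eqref{intrep} with $(\alpha,\beta,n)=(0,2t,k-t-1)$ up to the factor $2$, so the hypergeometric detour is unnecessary. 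The only mild point of care is the orientation of the M\"obius substitution (the sign of $dv$ flips the limits, which you handled correctly), and the passage from real $x>1$ to the cut plane by analytic continuation, which you also noted.
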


\begin{proof}
Replacing $\cosh(w)=(e^{2w}+1)/(2e^w)$ in \eqref{Qkt1} and performing the change of variables $v=\sqrt{\frac{x+1}{x-1}}e^w$ gives the formula
\[
Q_{k,t}(x)=2^{k+t}\int_{0}^\infty \frac{v^{k-t-1}dv}{(2xv+(x-1)v^2+x+1)^{k-t}(x+1+(x-1)v)^{2t}}.
\]
Rewriting $2xv+(x-1)v^2+x+1=(v+1)(x(v+1)+1-v)$ and performing the change of variables $v=\frac{1+u}{1-u}$ yields
\[
Q_{k,t}(x)=2^{-k+t+1}\int_{-1}^1 \frac{(1+u)^{k-t-1}(1-u)^{k+t-1} du}{(x-u)^{k+t}}.
\]
By comparing with \eqref{intrep}, we observe that 
\[
Q_{k,t}(x)=\frac{2^{2t+1}}{(x+1)^{2t}} Q_{k+t-1}^{(0, -2t)}(x).
\]
Using the defining equation \eqref{hyperg} leads to 
\[
Q_{k,t}(x)=2^{k+t}\frac{\Gamma(k+t)\Gamma(k-t)}{\Gamma(2k)(x-1)^{k+t}} F\left( k+t, k+t, 2k; \frac{2}{1-x} \right).
\]
Applying Euler's transformation law \eqref{eulertl} yields
\begin{align*}
Q_{k,t}(x) & =2^{k+t}\frac{\Gamma(k+t)\Gamma(k-t)}{\Gamma(2k)(x-1)^{k+t}} \left( 1- \frac{2}{1-x}\right)^{-2t}F\left( k-t, k-t, 2k; \frac{2}{1-x}\right) \\
&=2 \left( 2^{k+t-1}\frac{\Gamma(k+t)\Gamma(k-t)}{\Gamma(2k)(x-1)^{k-t}(x+1)^{2t}}F\left( k-t, k-t, 2k; \frac{2}{1-x}\right)\right) \\
& = 2Q_{k-t-1}^{(0, 2t)}(x),
\end{align*}
where we used \eqref{hyperg} in the last equality.
\end{proof}

\begin{corollary}\label{coro:difeq}
The function $Q_{k,t}(x)$ is a solution of the second order differential equation
\[
(1-x^2)f''(x)+[2t-(2t+2)x]f'(x)+(k-t-1)(k+t)f(x)=0.
\]
\end{corollary}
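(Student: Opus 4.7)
The plan is to reduce the statement directly to facts already recorded in the preceding subsection. By Proposition~\ref{prop:Jacobi}, we have the identification $Q_{k,t}(x) = 2Q_{k-t-1}^{(0,2t)}(x)$, so it suffices to verify that $Q_{k-t-1}^{(0,2t)}$ satisfies the indicated ODE; the factor of $2$ is irrelevant since the equation is linear and homogeneous.

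The key input is the fact recalled just after \eqref{intrep}: for every admissible triple $(n,\alpha,\beta)$, the function $Q_n^{(\alpha,\beta)}(x)$ is a (non-polynomial) solution of the Jacobi differential equation \eqref{diffeq1}, which is linearly independent from $P_n^{(\alpha,\beta)}(x)$. Specializing \eqref{diffeq1} to the parameters $n = k-t-1$, $\alpha = 0$, $\beta = 2t$, the coefficient of $f'$ becomes
\[
\beta - \alpha - (\alpha+\beta+2)x = 2t - (2t+2)x,
\]
and the coefficient of $f$ becomes
\[
n(n+\alpha+\beta+1) = (k-t-1)(k-t-1+2t+1) = (k-t-1)(k+t),
\]
which exactly matches the equation in the statement.

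Thus the proof is essentially a bookkeeping check: invoke Proposition~\ref{prop:Jacobi}, substitute $(n,\alpha,\beta)=(k-t-1,0,2t)$ into \eqref{diffeq1}, and observe that the resulting equation coincides with the one claimed. There is no real obstacle here; the only thing to be careful about is the sign and index conventions for $\alpha,\beta$ in \eqref{diffeq1} versus those in other references, but since the paper has fixed its conventions in the previous subsection, direct substitution completes the proof.
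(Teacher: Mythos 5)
Your proof is correct and is essentially identical to the paper's own argument: both invoke Proposition~\ref{prop:Jacobi} to reduce to $Q^{(0,2t)}_{k-t-1}$ and then substitute $(n,\alpha,\beta)=(k-t-1,0,2t)$ into \eqref{diffeq1}. The paper states this in one sentence; you have merely spelled out the bookkeeping.
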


\begin{proof}
It follows from Proposition \ref{prop:Jacobi} and the fact that $Q_n^{(\alpha, \beta)}(x)$ is a solution of \eqref{diffeq1}.
\end{proof}

\subsection{Asymptotics for Jacobi functions}\label{s:asymp}

\begin{lemma}\label{lem:asyinfty2}
    For $0 \leq t \leq k-1$, we have 
    \[
    Q_{k,t}(x)=O(x^{-k-t}), \qquad \text{ as } x\to +\infty.
    \]
\end{lemma}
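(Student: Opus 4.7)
The plan is to leverage Proposition \ref{prop:Jacobi}, which identifies $Q_{k,t}(x)$ with a Jacobi function of the second kind, together with the hypergeometric representation \eqref{hyperg}. This converts the asymptotic question into an elementary estimate of a product of powers of $(x-1)$ and $(x+1)$ times a hypergeometric function evaluated near zero.

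Specifically, by Proposition \ref{prop:Jacobi} we have $Q_{k,t}(x) = 2 Q^{(0,2t)}_{k-t-1}(x)$. Applying the formula \eqref{hyperg} with $n = k-t-1$, $\alpha = 0$, $\beta = 2t$, and using $n+\alpha+1 = k-t$, $n+\beta+1 = k+t$, and $2n+\alpha+\beta+2 = 2k$, I would write
\[
Q_{k,t}(x) = \frac{2^{k+t}\,\Gamma(k-t)\,\Gamma(k+t)}{\Gamma(2k)\,(x-1)^{k-t}(x+1)^{2t}}\, F\!\left(k-t,\,k-t,\,2k;\,\frac{2}{1-x}\right).
\]
As $x \to +\infty$, the argument $\frac{2}{1-x}$ tends to $0$, which lies well inside the disk of convergence of the hypergeometric series \eqref{def:hyp}. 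Since $F$ is continuous at $0$ with $F(k-t, k-t, 2k; 0) = 1$, the hypergeometric factor tends to $1$. The prefactor satisfies $(x-1)^{-(k-t)}(x+1)^{-2t} = O(x^{-(k-t)-2t}) = O(x^{-(k+t)})$, yielding the claim.

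The proof is straightforward once Proposition \ref{prop:Jacobi} is in hand, so there is no real obstacle. As a sanity check, one could also verify the asymptotic directly from the integral definition \eqref{Qkt1}: for large $x$, both $x$ and $\sqrt{x^2-1}$ are asymptotic to $x$, so bounding the integrand uniformly in $w$ by $2^{2t} x^{-(k+t)}(1+\cosh w)^{-(k-t)}(1+e^w)^{-2t}$ (after factoring out $x^{k-t}$ and $x^{2t}$) and integrating over $w \in \mathbb{R}$ recovers the same bound, since the resulting $w$-integral is finite (the integrand decays exponentially as $w \to \pm\infty$ because $k-t \ge 1$).
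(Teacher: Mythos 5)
Your argument is correct and is essentially the paper's: both proofs pass through Proposition \ref{prop:Jacobi} to write $Q_{k,t}=2Q^{(0,2t)}_{k-t-1}$ and then read off the decay rate $x^{-n-\alpha-\beta-1}$ from the hypergeometric representation \eqref{hyperg}. The only cosmetic difference is that the paper expands the hypergeometric series in powers of $2/(1-x)$ to see the leading power explicitly, whereas you invoke continuity of $F$ at $0$ with $F(\cdot,\cdot,\cdot;0)=1$, which is equivalent and a bit cleaner. Your secondary direct estimate from the integral definition \eqref{Qkt1} is a nice independent check (one just needs lower bounds on the denominators rather than asymptotics; note that integrability at $w\to-\infty$ is exactly where the hypothesis $k-t\ge 1$ is used, as you indicate).
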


\begin{proof}
    By \eqref{hyperg}, and using the definition of the hypergeometric function \eqref{def:hyp}, we have 
    \[
    Q_{n}^{(\alpha, \beta)}(x)=\frac{(-1)^{n+1}2^{\alpha+\beta-1}}{(x-1)^{\alpha}(x+1)^\beta} \sum_{j\geq 0} \binom{n+j}{j} \frac{\Gamma(n+\alpha+j+1)\Gamma(n+\beta+1)}{\Gamma(2n+\alpha+\beta+j+2)} \left( \frac{2}{1-x} \right)^{n+j+1}.
    \]
    Thus, $Q_{n}^{(\alpha, \beta)}(x)=O(x^{-n-\alpha-\beta-1})$ as $x\to +\infty$. In particular, using Proposition \ref{prop:Jacobi}, we see that $Q_{k,t}(x)=O(x^{-(k-t-1+2t+1)})=O(x^{-(k+t)})$ as $x\to +\infty$.
\end{proof}

\begin{lemma}\label{lem:asy}
For $0\leq t\leq k-1$, we have
\[
Q_{k,t}(x)=\log \left(\frac{x+1}{x-1}\right)-\left( \frac{\Gamma'}{\Gamma}(k+t)-\frac{\Gamma'}{\Gamma}(1) + \frac{\Gamma'}{\Gamma}(k-t)-\frac{\Gamma'}{\Gamma}(1) \right)+o(1), \qquad \text{ as } x\to 1^+.
\]
\end{lemma}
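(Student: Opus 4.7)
The plan is to reduce the statement to the classical logarithmic asymptotic for Gauss's hypergeometric function $F(a,b,a+b;w)$ near $w=1$, by combining Proposition \ref{prop:Jacobi} with the hypergeometric representation \eqref{hyperg} and a Pfaff transformation that maps the argument into the desired region.

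First I would invoke Proposition \ref{prop:Jacobi} to rewrite $Q_{k,t}(x)=2Q^{(0,2t)}_{k-t-1}(x)$, and then apply \eqref{hyperg} with parameters $n=k-t-1$, $\alpha=0$, $\beta=2t$ to obtain
\[
Q_{k,t}(x)=\frac{2^{k+t}\Gamma(k-t)\Gamma(k+t)}{\Gamma(2k)\,(x-1)^{k-t}(x+1)^{2t}}\,F\!\left(k-t,\,k-t,\,2k;\,\tfrac{2}{1-x}\right).
\]
As $x\to 1^{+}$ the argument $2/(1-x)\to -\infty$, lying outside the disk of convergence, so I would apply the Pfaff transformation $F(a,b,c;z)=(1-z)^{-a}F(a,c-b,c;z/(z-1))$ with $a=b=k-t$, $c=2k$. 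A direct check gives $1-z=(x+1)/(x-1)$ and $z/(z-1)=2/(1+x)$, which leads (after the $(x-1)^{k-t}$ factors cancel) to the clean expression
\[
Q_{k,t}(x)=\frac{2^{k+t}\Gamma(k-t)\Gamma(k+t)}{\Gamma(2k)\,(x+1)^{k+t}}\,F\!\left(k-t,\,k+t,\,2k;\,\tfrac{2}{1+x}\right).
\]
Now the argument tends to $1$ from below, and the parameters satisfy the balance condition $(k-t)+(k+t)=2k$.

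With this reduction in place, the second step is to invoke the classical expansion of $F(a,b,a+b;w)$ around $w=1$ (the logarithmic case, when $c-a-b=0$):
\[
F(a,b,a+b;w)=\frac{\Gamma(a+b)}{\Gamma(a)\Gamma(b)}\bigl[\,2\psi(1)-\psi(a)-\psi(b)-\log(1-w)\bigr]+O\!\bigl((1-w)\log(1-w)\bigr),
\]
where $\psi=\Gamma'/\Gamma$. Setting $a=k-t$, $b=k+t$, $w=2/(1+x)$, we have $1-w=(x-1)/(x+1)$ and hence $-\log(1-w)=\log\frac{x+1}{x-1}$. Substituting in, the prefactor $\Gamma(k-t)\Gamma(k+t)/\Gamma(2k)$ exactly cancels the $\Gamma(2k)/[\Gamma(k-t)\Gamma(k+t)]$ coming from the expansion, together with the $2^{k+t}/(x+1)^{k+t}$ factor evaluated at $x=1$, yielding precisely
\[
Q_{k,t}(x)=\log\!\frac{x+1}{x-1}+2\psi(1)-\psi(k+t)-\psi(k-t)+o(1).
\]

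The only subtlety is bookkeeping: checking that the Pfaff transformation is applicable on the real interval $x>1$ (all quantities are positive so no branch issues arise), and quoting the $c=a+b$ logarithmic asymptotic for the Gauss hypergeometric function in the correct normalization. No delicate analysis is needed beyond this; the proof is essentially a chain of identities.
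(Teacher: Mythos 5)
Your proof is correct, and it takes a genuinely different route from the paper's. The paper starts from the decomposition $Q_n^{(\alpha,\beta)}(x)=P_n^{(\alpha,\beta)}(x)Q_0^{(\alpha,\beta)}(x)-\tfrac{1}{2}(x-1)^{-\alpha}(x+1)^{-\beta}W_{n-1}^{(\alpha,\beta)}(x)$ of \eqref{eqQ0}, evaluates the elementary integral $\int_{-1}^1 (1+u)^{2t}(x-u)^{-1}du$ to extract the logarithm, computes $W_{k-t-2}^{(0,2t)}(1)$ from Grosjean's identity \eqref{Weq}, and finishes with a harmonic-number cancellation. You instead stay inside the hypergeometric representation \eqref{hyperg}, use a Pfaff transformation to turn the argument $2/(1-x)\to-\infty$ into $2/(1+x)\to 1^-$ with balanced parameters $a+b=c$, and then quote the classical logarithmic expansion of $F(a,b,a+b;w)$ at $w=1$. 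Your version is shorter and less computational, trading the Grosjean/harmonic-number bookkeeping for a standard fact about ${}_2F_1$ at a logarithmic singularity; the paper's version is more self-contained but requires the auxiliary polynomial $W$. Both are sound.

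One small point worth making explicit in a final write-up: after the cancellations, the remaining prefactor $2^{k+t}(x+1)^{-(k+t)}$ is not identically $1$, and it multiplies the divergent term $\log\frac{x+1}{x-1}$. You should note that $2^{k+t}(x+1)^{-(k+t)}-1=O(x-1)$, so the correction is $O\bigl((x-1)\log(x-1)\bigr)=o(1)$, and likewise the next term of the hypergeometric expansion contributes $O\bigl((1-w)\log(1-w)\bigr)=o(1)$. This is exactly the kind of bookkeeping you flagged at the end; it does hold, but it should be stated rather than implied by ``evaluated at $x=1$.''
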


\begin{proof}
    Let $n:=k-t-1$ in this proof.
    Using Proposition \ref{prop:Jacobi}, \eqref{eqQ0}, and \eqref{intrep}, for all $x\in \C\setminus [-1,1]$ we have 
    \begin{equation}\label{yas}
    Q_{k,t}(x)=\frac{P^{(0, 2t)}_{n}(x)}{(x+1)^{2t}} \int_{-1}^1 \frac{(1+u)^{2t}}{x-u}du- \frac{W_{n-1}^{(0, 2t)}(x)}{(x+1)^{2t}}.
    \end{equation}
    By \cite[(87)]{grosjean}, for all $x\in \C$, we have\footnote{Beware of our normalizations that differ from the ones in \cite{grosjean}.}
\begin{equation}\label{Weq}
\frac{2t+1}{2^{2t+1}}W^{(0,2t)}_{n-1}(x)=\frac{(2t+1)(k+t)}{2} \sum_{j=0}^{k-t-2} \frac{P^{(0,-2t)}_{j}(x)P^{(0, 2t)}_{k-t-2-j}(x)}{(j+1)(k+t-1-j)}.
\end{equation}
Thus, 
\begin{align*}
\frac{W^{(0,2t)}_{n-1}(1)}{2^{2t}} & = \sum_{j=0}^{k-t-2} \frac{k+t}{(j+1)(k+t-1-j)}
=\sum_{j=0}^{k-t-2} \left(\frac{1}{j+1} + \frac{1}{k+t-1-j}\right) 
 = \sum_{j=1}^{k-t-1} \frac{1}{j} + \sum_{j=2t+1}^{k+t-1} \frac{1}{j} \\
&=\left( \frac{\Gamma'}{\Gamma}(k-t)-\frac{\Gamma'}{\Gamma}(1) \right)+\left( \frac{\Gamma'}{\Gamma}(k+t)-\frac{\Gamma'}{\Gamma}(1) \right)-\left( \frac{\Gamma'}{\Gamma}(2t+1)-\frac{\Gamma'}{\Gamma}(1) \right).
\end{align*}
We now calculate the elementary integral in \eqref{yas}:
\[
    \int_{-1}^1 \frac{(1+u)^{2t}}{x-u}du 
    = (1+x)^{2t}\log\left( \frac{x+1}{x-1}\right)+\sum_{j=1}^{2t} \binom{2t}{j} \frac{(-1)^j}{j} (x+1)^{2t-j}[(x+1)^{j}-(x-1)^j].
\]
As $x\to 1^+$, we thus obtain
\begin{multline*}
Q_{k,t}(x)=\log\left( \frac{x+1}{x-1}\right)+\sum_{j=1}^{2t} \binom{2t}{j} \frac{(-1)^j}{j} \\ -\left( \frac{\Gamma'}{\Gamma}(k-t)-\frac{\Gamma'}{\Gamma}(1) \right)-\left( \frac{\Gamma'}{\Gamma}(k+t)-\frac{\Gamma'}{\Gamma}(1) \right)+\left( \frac{\Gamma'}{\Gamma}(2t+1)-\frac{\Gamma'}{\Gamma}(1) \right)+o(1).
\end{multline*}
It is a standard fact about harmonic numbers that the formula
\begin{equation}\label{harm}
\frac{\Gamma'}{\Gamma}(2t+1)-\frac{\Gamma'}{\Gamma}(1)=\sum_{j=1}^{2t} \frac{1}{j}=-\sum_{j=1}^{2t} \binom{2t}{j} \frac{(-1)^j}{j}
\end{equation}
holds (an easy check using integral representations). This last observation concludes the proof.
\end{proof}

\begin{remark}
    When $t=0$, Lemma \ref{lem:asy} recovers the asymptotic formula \cite[p. 251]{GZ} for Legendre functions of the second kind (note that the $O(1)$ should be $o(1)$ therein).
\end{remark}

\section{$L$-functions}\label{s:Lfct}

Let $r=2k\geq 2$ and $N\geq 3$ be fixed integers. Let $K$ denote an imaginary quadratic field of odd discriminant $D$ prime to $N$. Consider 
\[
f = \sum_{n\geq 1} a_f(n) q^n \in S_r^{\new}(\Gamma_0(N)),
\]
and let $\chi \colon \A_K^\times \to \C^\times$ be a Hecke character of $K$ of infinity type $(\ell, 0)$, with $0<\ell\leq r-2$. We assume that the Dirichlet character $\chi_{|\A_\Q^\times} \cdot \mathrm{Nm}^{-\ell}$ is trivial, and hence $\chi^\tau\chi$ is an even power of the norm, where $\Gal(K/\Q) = \langle \tau \rangle$. This forces $\ell$ to be even, so $\ell = 2t$.
We will also assume that $\chi$ is unramified. There are then at most $\#\Cl_K$ characters $\chi$ of type $(\ell, 0)$, all differing by characters which factor through $\Cl_K \simeq \Gal(H/K)$, where $H$ is the Hilbert class field. 

Associated to $\chi$ is the theta series \cite[Prop. 3.13]{bdpCM}
\[
\Theta_{\chi}=\sum_{\fa\trianglelefteq \oh_K} \chi(\fa)q^{N(\fa)}\in S_{2t+1}
(\Gamma_0(\vert D\vert), \epsilon_K).
\]
Given an ideal class $\cA \in \Cl_K$, there are also partial theta series
\[
\Theta_{\cA,\chi}=\sum_{\substack{\fa\trianglelefteq \oh_K \\ [\fa] = \cA}} \chi(\fa)q^{N(\fa)}=\sum_{n\geq 1} r_{\cA, \chi}(n) q^n\in S_{2t+1}
(\Gamma_0(\vert D\vert), \epsilon_K),
\]
where 
\[  
r_{\cA, \chi}(n)=\sum_{\substack{[\fa] = \cA \\ N(\fa)=n}} \chi(\fa).
\]
We have the relation
\[
\Theta_{\chi}=\sum_{\cA \in \Cl_K} \Theta_{\cA,\chi}.
\]

\subsection{Rankin--Selberg convolution $L$-functions}\label{ss:RS}

Let $\cA \in \Cl_K$ be an ideal class, and consider the Rankin--Selberg convolution $L$-function of $f$ and the partial theta series $\Theta_{\cA, \chi}$ defined by 
\[
L_\cA(f,\chi,s):= L^{(N)}(\epsilon_K, 2s-2k-2t+1)\sum_{n\geq 1} a_f(n)r_{\cA, \chi}(n)n^{-s},
\]
where $\epsilon_K=\epsilon_D$ is the quadratic character associated to $K$.
It converges locally uniformly on the right half-plane $\Re(s)>k+t+3/2$ of the complex plane. The Rankin--Selberg convolution $L$-function of $f$ and $\Theta_{\chi}$ is then given by 
\[
L(f, \chi, s)=\sum_{\cA \in \Cl_K} L_{\cA}(f, \chi, s).
\]
Define the completed $L$-function
\[
L^*_\cA(f,\chi,s):=(2\pi)^{-s}N^s D^s \Gamma(s)\Gamma(s-2t)L_\cA(f,\chi,s).
\]

\begin{theorem}\label{thm:FE}
The $L$-function $L^*_\cA(f,\chi,s)$ admits an analytic continuation to the whole complex plane and satisfies the functional equation 
\[
L^*_\cA(f,\chi,s)=-\epsilon_K(N)L^*_\cA(f,\chi,2k+2t-s).
\]
\end{theorem}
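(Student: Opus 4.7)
My plan is to apply Rankin's method: realize $L^*_\cA(f,\chi,s)$ as a Petersson-style integral of $f$ against the product of $\Theta_{\cA,\chi}$ with a real-analytic Eisenstein series on $\Gamma_0(N\vert D\vert)$, and then transfer the meromorphic continuation and functional equation of the Eisenstein series to the $L$-function. This is the classical route that also sets up the holomorphic projection computation needed in Section~\ref{s:Lfct} to obtain the Fourier coefficients of $g$ in Theorem~\ref{fourcoeff}, and is closely modeled on the $t=0$ calculation in \cite{GZ,zhang}.

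Concretely, I would introduce a real-analytic Eisenstein series $E_s(z)$ of weight $2k-2t-1$ and nebentypus $\epsilon_K$ on $\Gamma_0(N\vert D\vert)$, roughly of the form
\[
E_s(z) = \sum_{\gamma \in \Gamma_\infty \backslash \Gamma_0(N\vert D\vert)} \overline{\epsilon_K(d_\gamma)} \, j(\gamma,z)^{-(2k-2t-1)} \, \im(\gamma z)^{\,s - (k+t) + 1/2},
\]
with a normalization chosen so that $\Theta_{\cA,\chi}\cdot E_s$ transforms with weight $2k$ and trivial character. For $\real(s)\gg 0$ the Petersson pairing $\langle f,\, \Theta_{\cA,\chi} E_s\rangle_{\Pet}$ then makes sense; unfolding against $\Gamma_\infty\backslash \cH$ and inserting the Fourier expansions of $f$ and $\Theta_{\cA,\chi}$ yields, after a standard Mellin-integral evaluation,
\[
\langle f,\, \Theta_{\cA,\chi}\, E_s\rangle_{\Pet} \;=\; C(s)\cdot L_\cA(f,\chi,s),
\]
where $C(s)$ is a product of elementary factors $(2\pi)^{-s}\Gamma(s)\Gamma(s-2t) N^s \vert D\vert^s$ absorbing exactly into the completion $L^*_\cA(f,\chi,s)$. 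The partial zeta factor $L^{(N)}(\epsilon_K, 2s-2k-2t+1)$ appears naturally from the constant-term computation of $E_s$ at the cusp $\infty$. Meromorphic continuation of $L^*_\cA(f,\chi,s)$ to $s\in\C$ follows from the standard analytic continuation of $E_s$.

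The functional equation is then induced by the functional equation for the completed Eisenstein series $E_s^\ast$, which relates $s\mapsto 2k+2t-s$ via the Fricke/Atkin--Lehner involution $W_{N\vert D\vert}$ and whose scattering sign can be computed from the local data at primes dividing $N\vert D\vert$. The main technical obstacle will be bookkeeping: checking that the Gamma factors, powers of $\pi$, $N$, and $\vert D\vert$ align to reproduce exactly the stated $L^*_\cA(f,\chi,s)$, and that the global sign of the functional equation comes out to $-\epsilon_K(N)$. The latter requires a careful analysis of the action of $W_{N\vert D\vert}$ on $\Theta_{\cA,\chi}$ (whose nebentypus $\epsilon_K$ contributes a Gauss-sum factor giving the $\vert D\vert^s$ normalization and part of the sign) and on $E_s$ (which contributes the $\epsilon_K(N)$ via its nebentypus), consistent with the Heegner hypothesis \eqref{HH} that forces the root number at the center to be $-1$.
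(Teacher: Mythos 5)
Your plan is essentially the route the paper takes: Rankin's method with a weight $2k-2t-1$ Eisenstein series of nebentypus $\epsilon_K$ at level divisible by $N\vert D\vert$, with meromorphic continuation inherited from the Eisenstein series. Where you diverge is in how the functional equation is extracted. The paper (following Gross--Zagier verbatim with $k$ replaced by $k-t$) first traces the product $E_s^{(1)}(Nz)\Theta_{\bar\cA,\chi}(z)$ from $\Gamma_0(N\vert D\vert)$ down to $\Gamma_0(N)$, re-expresses the result via $U_{\vert D\vert}$ and a decomposition of the Eisenstein series over factorizations $D = D_1 D_2$ of the discriminant (introducing the genus characters $\chi_{D_1\cdot D_2}$), and then derives the functional equation from the explicit term-by-term identity $e_s^*(n,y) = -\epsilon_K(N)\,e_{2-2k+2t-s}^*(n,y)$ for the Fourier coefficients, citing \cite[IV \S 4]{GZ}. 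This has two side benefits: the traced-down form $\tilde\Phi_s$ on $\Gamma_0(N)$ is exactly the input for the holomorphic projection step that follows, and the genus characters drop out in the form needed for Proposition \ref{fourier}.

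Your proposed Fricke-involution route via $W_{N\vert D\vert}$ can work, but note a subtlety you have not addressed: the \emph{partial} theta series $\Theta_{\cA,\chi}$ is a modular form of level $\vert D\vert$ and nebentypus $\epsilon_K$, but it is \emph{not} an eigenform for the Atkin--Lehner operators at primes dividing $D$; those involutions permute the $\Theta_{\cA,\chi}$ among themselves through genus characters. Hence the raw functional equation you obtain would relate $L^*_\cA$ at $s$ to a combination of $L^*_{\cA'}$ at $2k+2t-s$, and you would still need the genus-character bookkeeping to land on $L^*_\cA$ with the \emph{same} $\cA$. In effect, the Gauss-sum and sign analysis you flag as the ``main technical obstacle'' is precisely where the genus characters must enter, so you would be redoing the GZ trace computation in different clothing. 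The plan is sound, but the work you deferred is the heart of the proof.
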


\begin{proof}
The key is to use Rankin's method to prove the equality
\begin{equation}\label{rankin}
\frac{N^s\Gamma(s+2k-1)}{(4\pi)^{s+2k-1}}L_\cA(f,\chi,s+2k-1)=(f, \tilde \Phi_{\bar s})_{\Gamma_0(N)},
\end{equation}
where $\tilde \Phi_{s}$ is a non-holomorphic modular form of weight $2k$ and level $\Gamma_0(N)$ given by the trace of the product of the theta function $\Theta_{\bar{\cA}, \chi}$ with a non-holomorphic Eisenstein family indexed by $s$:
\[
\tilde \Phi_{s}=\mathrm{Tr}^{ND}_N(E_s^{(1)}(Nz)\Theta_{\bar{\cA}, \chi}(z)),
\]
where 
\[
E_s^{(1)}(z)=\frac{1}{2} \sum_{\substack{c,d\in \Z \\ D\vert c }} \frac{\epsilon_K(d)}{(cz+d)^{2k-2t-1}} \frac{y^s}{\vert cz+d\vert^{2s}}.
\]
By explicitly working out an expression for the trace, we obtain
\[
\tilde \Phi_{s}=D^{-t} \mathcal{E}_s(Nz)\Theta_{\bar{\cA}, \chi}(z) \vert_{U_{\vert D\vert}},
\]
where 
\[
\mathcal{E}_s(z)=\sum_{D_1 \cdot D_2} \frac{\epsilon_{D_1}(N)\chi_{D_1\cdot D_2}(\bar{\cA})}{\kappa(D_1)\vert D_1\vert^{s+\ell_1-3/2}} E_s^{(D_1)}(\vert D_2\vert z)=\sum_{n\in \Z} e_s(n, y) e(nx),
\]
with $e(x)=e^{2\pi i x}$ (exactly the same Eisenstein series as in \cite{GZ}, except that $k$ is replaced by $k-t$, or equivalently $2k$ is replaced by $\ell_1=2k-2t$). Formulas for the coefficients $e_s(n, y)$ can be found in \cite[IV (3.2)]{GZ} (after replacing $2k$ therein by $\ell_1$). In the above formula, $\kappa(D_1)=1$ or $i$ according as $D_1>0$ or $D_1<0$.

\begin{remark}
The character $\chi_{D_1 \cdot D_2}$ appearing in the above formula is the genus character associated with the partition $D=D_1 D_2$. Its value at a prime ideal $\fp$ is
\[
\chi_{D_1 \cdot D_2}(\fp)=
\begin{cases}
\epsilon_{D_1}(N(\fp)), & (N(\fp), D_1)=1 \\
\epsilon_{D_2}(N(\fp)), & (N(\fp), D_2)=1. \\
\end{cases}
\]
\end{remark}

If we let 
\[
e^*_s(n,y):=\pi^{-s} \vert D\vert^s \Gamma(s+2k-2t-1)e_s(n,y), 
\]
then the functional equation follows from the transformation property
\[
e^*_s(n,y)=-\epsilon_K(N)e^*_{2-2k+2t-s}(n,y)
\]
proved in \cite[IV \S 4]{GZ}. 
\end{proof}

\subsection{Central derivative calculations}

We assume that $K$ satisfies the Heegner hypothesis \eqref{HH}, namely that every prime dividing $N$ splits in $K$. This implies in particular that the sign of the functional equation of Theorem \ref{thm:FE} is $-1$. Moreover, equation \eqref{rankin} implies that there exists a non-holomorphic modular form $\tilde \Phi$ of weight $2k$ and level $\Gamma_0(N)$ such that the central value of the derivative is given by 
\[
L'_\cA(f,\chi,k+t)=\frac{2^{2k+2t+1}\pi^{k+t+1}}{(k+t-1)! \sqrt{\vert D\vert}D^t}(f,\tilde \Phi)_{\Gamma_0(N)}.
\]
In fact, the function is given by 
\begin{equation}\label{phifct}
\tilde \Phi =\frac{\sqrt{\vert D\vert}}{2\pi} N^{k-t-1} \frac{\partial}{\partial s} (D^t \tilde \Phi_s)\vert_{s=1-(k-t)}.
\end{equation}

\begin{proposition}\label{fourier}
The Fourier expansion of $\tilde \Phi$ is given by
\begin{align*}
\tilde \Phi (z) & = \sum_{m=-\infty}^\infty \left[ -\sum_{0<n\leq \frac{m\vert D\vert}{N}} \sigma'_{\cA}(n) r_{\bar{\cA}, \chi}(m\vert D\vert-nN) p_{k-t-1}\left( \frac{4\pi n N y}{\vert D\vert}\right) \right. \\
& \qquad + \frac{h}{u}D^t r_{\bar{\cA}, \chi}(m)\left( \log(y) + \frac{\Gamma'}{\Gamma}(k-t) + \log (N\vert D\vert) - \log(\pi) +2 \frac{L'}{L}(1, \epsilon_K) \right) \\
& \left.\qquad - \sum_{n=1}^\infty \sigma_{\cA}(n) r_{\bar{\cA}, \chi}(m\vert D\vert + nN) q_{k-t-1}\left( \frac{4\pi n N y}{\vert D\vert} \right)\right] y^{1-k+t} q^m, \\
\end{align*}
where 
\[
p_{\nu-1}(x):=\sum_{j=0}^{\nu-1} \binom{\nu-1}{j}\frac{(-x)^j}{j!} \qquad \text{ and } \qquad q_{\nu-1}(x):=\int_1^{\infty} \frac{(y-1)^{\nu-1}}{y^\nu} e^{-xy} dy,
\]
and
where the two first terms are zero whenever $m\leq 0$. Moreover, 
\[
\sigma_{\cA}'(n)=\sum_{\substack{d \mid n \\ d>0}} \epsilon_{\cA}(n,d) \log\left( \frac{n}{d^2} \right)
\]
and 
\[
\sigma_{\cA}(n)=\sum_{\substack{d \mid n \\ d>0}} \epsilon_{\cA}(n,d),
\]
with
\[
\epsilon_{\cA}(n,d)=
\begin{cases}
0 & (d,n/d, D) \neq 1 \\
\epsilon_{D_1}(d)\epsilon_{D_2}(-N n/d) \chi_{D_1 \cdot D_2}(\cA) & (d,n/d, D) = 1,
\end{cases}
\]
for $(d, D)= \vert D_2\vert$ and $D_1 D_2=D$.
\end{proposition}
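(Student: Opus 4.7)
The plan is to follow the template of \cite[IV \S3]{GZ} closely, the main differences being the replacement $2k \rightsquigarrow \ell_1 := 2k-2t$ in the Eisenstein series $\mathcal{E}_s$ and the holomorphic convolution against the theta series $\Theta_{\bar\cA,\chi}$ before applying $U_{|D|}$ and differentiating. The first step is to insert the explicit description $\tilde\Phi_s = D^{-t}(\mathcal{E}_s(Nz)\Theta_{\bar\cA,\chi}(z))|U_{|D|}$ into the definition \eqref{phifct}. The factor $D^{-t}$ cancels the $D^t$ appearing in \eqref{phifct}, reducing the problem to computing
\[
\tilde\Phi(z) = \frac{\sqrt{|D|}}{2\pi}\,N^{k-t-1}\,\frac{\partial}{\partial s}\Bigl[\bigl(\mathcal{E}_s(Nz)\,\Theta_{\bar\cA,\chi}(z)\bigr)\big|U_{|D|}\Bigr]\bigg|_{s=1-(k-t)}.
\]

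Next I would form the Cauchy product. Writing $\mathcal{E}_s(Nz) = \sum_n e_s(n,Ny)\,e(Nnx)$ and $\Theta_{\bar\cA,\chi}(z) = \sum_m r_{\bar\cA,\chi}(m)\,q^m$, the $M$-th Fourier coefficient of $\mathcal{E}_s(Nz)\Theta_{\bar\cA,\chi}(z)$ equals $\sum_n e_s(n,Ny)\,r_{\bar\cA,\chi}(M-Nn)$. Applying $U_{|D|}$ substitutes $y \mapsto y/|D|$ and retains only the coefficients at indices $M = m|D|$, yielding the coefficient of $q^m$ as $\sum_n e_s(n,Ny/|D|)\,r_{\bar\cA,\chi}(m|D|-Nn)$, with $n$ ranging over all integers.

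The third step -- the analytic heart of the proof -- is to compute the $s$-derivative at $s = 1-(k-t)$ of each Fourier coefficient $e_s(n, Ny/|D|)$, using the explicit formulas of \cite[IV (3.2)]{GZ}. The analysis splits into three regimes. The $n=0$ constant term is a linear combination of $y^s$- and $y^{1-s-\ell_1/2}$-type pieces whose $s$-derivative produces the logarithmic contribution $\log y + \Gamma'/\Gamma(k-t) + \log(N|D|) - \log\pi + 2L'/L(1,\epsilon_K)$, the $L'/L$ coming from the Dirichlet $L$-factor normalizing $E_s^{(1)}$; paired with the Fourier coefficient $r_{\bar\cA,\chi}(m|D|)$ and combined with the genus-character sum defining $\mathcal{E}_s$, this produces the middle line of the expansion with the factor $D^t h/u$. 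For $n>0$, the relevant Whittaker function derivative evaluates to $p_{k-t-1}(4\pi n Ny/|D|)\cdot y^{1-k+t}$ (up to exponential factors combining with $q^m$), and the double sum over the genus decompositions $D = D_1D_2$ organizes into the divisor function $\sigma'_\cA(n)$, yielding the first line. For $n<0$, the Whittaker tail integral identifies with $q_{k-t-1}(4\pi|n|Ny/|D|) y^{1-k+t}$, and the same genus-character sum yields the divisor function $\sigma_\cA(|n|)$, giving the third line after the change of sign convention.

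The main obstacle is the delicate asymptotic analysis at the critical value $s = 1-(k-t)$, where the Gamma and $L$-function factors conspire to cancel potential singularities and produce the finite answer with the precise logarithmic constant stated. All other ingredients -- the identification of Whittaker derivatives with the specific polynomials $p_{k-t-1}$ and integrals $q_{k-t-1}$, the genus-character organization of the Dirichlet sum, the $y$-scaling from $U_{|D|}$, and the final matching of powers of $N$, $|D|$, and $\pi$ -- are direct recapitulations of \cite[IV \S3--4]{GZ} under the substitution $2k \rightsquigarrow \ell_1$, the only genuinely new feature being the holomorphic convolution against $\Theta_{\bar\cA,\chi}$, which replaces the index $m$ in the Gross--Zagier Fourier expansion by $m|D|-nN$ in the convolved expansion.
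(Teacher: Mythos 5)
Your proposal follows the same route as the paper: substitute the description $D^t\tilde\Phi_s = \mathcal{E}_s(Nz)\Theta_{\bar\cA,\chi}(z)\vert U_{|D|}$ into \eqref{phifct}, form the Cauchy product and apply $U_{|D|}$ to obtain the coefficient $\sum_n e_s(n, Ny/|D|)\,r_{\bar\cA,\chi}(m|D|-nN)$, and differentiate at $s=1-(k-t)$ using the GZ Fourier coefficient formulas with $2k\rightsquigarrow 2k-2t$, splitting into the $n=0$, $n>0$, $n<0$ regimes. The one step you should make explicit is the identity $r_{\bar\cA,\chi}(m|D|) = D^t r_{\bar\cA,\chi}(m)$ (coming from $\chi((\sqrt{D})) = D^t$), since this -- not the genus-character sum -- is the true source of the $D^t$ in the constant-term line; the paper flags it precisely because it is trivial when $t=0$ and therefore easy to miss.
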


\begin{remark}
    The fact that $r_{\bar{\cA}, \chi}(0)=0$ is proper to the $t> 0$ case, as $\Theta_{\cA, \chi}$ is a cusp form. This is not the case when dealing with a class group character as in \cite{GZ}. This simple observation leads to several simplifications when applying holomorphic projection in the next section.
\end{remark}

\begin{proof}
We have $D^t \tilde \Phi_s(z)=\mathcal{E}_s(Nz)\Theta_{\bar{\cA}, \chi}(z) \vert_{U_{\vert D\vert}}$ and the Fourier expansion of $\mathcal{E}_s(z)$ is given by $\sum_{n\in \Z} e_s(n, y) e(nx)$. Hence, the Fourier expansion of $\mathcal{E}_s(Nz)\Theta_{\bar{\cA}, \chi}(z)$ is 
\begin{align*}
\mathcal{E}_s(Nz)\Theta_{\bar{\cA}, \chi}(z) & =
\left(\sum_{n\in \Z} e_s(n, Ny) e(nNx) \right) \left( \sum_{m\geq 0} r_{\bar{\cA}, \chi}(m) e^{-2\pi m y} e(mx) \right) \\
& = \sum_{v\in \Z} \left( \sum_{\substack{m\geq 0 \\ v-m\equiv 0\pmod N}} e_s\left( \frac{v-m}{N}, Ny \right) r_{\bar{\cA}, \chi}(m) e^{-2\pi m y} \right) e(vx) \\
& =\sum_{v\in \Z} A_v(y) e(vx), \\
\end{align*}
where we made the substitution $v=nN+m$. Applying the operator $U_{\vert D\vert}$, we deduce that 
\begin{align*}
D^t \tilde \Phi_s(z) & = \sum_{v \in \Z} A_{v\vert D\vert}\left( \frac{y}{\vert D\vert} \right) e(vx) \\ 
& =\sum_{v\in \Z} \left( \sum_{\substack{m\geq 0 \\ v-m\equiv 0\pmod N}} e_s\left( \frac{v\vert D\vert-m}{N}, \frac{Ny}{\vert D\vert} \right) r_{\bar{\cA}, \chi}(m) e^{-2\pi m \frac{y}{\vert D\vert}} \right) e(vx) \\
& = \sum_{n\in \Z} \left( \sum_{\substack{m\geq 0 \\ nN+m\equiv 0 \pmod{\vert D\vert}}} e_s\left( n, \frac{Ny}{\vert D\vert}\right) r_{\bar{\cA}, \chi}(m) e^{-2\pi m \frac{y}{\vert D \vert}} \right) e\left( \frac{nN+m}{\vert D\vert} x \right), \\
\end{align*}
where we used the substitution $n=(v\vert D\vert -m)/N$.

We are going to focus on the terms of the Fourier expansion where $n=0$ (the cases where $n>0$ or $n<0$ are similar to what is done in \cite{GZ}), namely
\begin{align*}
\sum_{\substack{m\geq 0 \\ m\equiv 0 \pmod{\vert D\vert}}} e_s\left( 0, \frac{Ny}{\vert D\vert}\right) r_{\bar{\cA}, \chi}(m) e^{-2\pi m \frac{y}{\vert D \vert}} e\left( \frac{m}{\vert D\vert} x \right) & =\sum_{\substack{m\geq 0 \\ m\equiv 0 \pmod{\vert D\vert}}} e_s\left( 0, \frac{Ny}{\vert D\vert}\right) r_{\bar{\cA}, \chi}(m) q^{\frac{m}{\vert D\vert}} \\
& = \sum_{m'\geq 0} e_s\left( 0, \frac{Ny}{\vert D\vert}\right) r_{\bar{\cA}, \chi}(m' \vert D\vert) q^{m'}, \\
\end{align*}
where we used the substitution $m'=m/\vert D\vert$.
We need to calculate 
\[
\sum_{m\geq 0} \left.\frac{\partial}{\partial s} e_s\left( 0, \frac{Ny}{\vert D\vert}\right)\right\vert_{s=1-(k-t)} r_{\bar{\cA}, \chi}(m \vert D\vert) q^{m}.
\]
But this was done already in \cite[p. 283]{GZ} (they work with the same Eisenstein series but with $k$ instead of $k-t$):
\[
\left.\frac{\partial}{\partial s}e_s\left( 0, y\right)\right\vert_{s=1-(k-t)}= 2L(1, \epsilon_K) (\vert D\vert y)^{1-k+t}\left( \frac{\Gamma'}{\Gamma}(k-t) + \log\left(\frac{\vert D\vert^2 y}{\pi}\right) + 2 \frac{L'}{L}(1, \epsilon_K)\right).
\]
We have
\begin{multline*}
\left.\frac{\partial}{\partial s}e_s\left( 0, \frac{Ny}{\vert D\vert}\right)\right\vert_{s=1-(k-t)} = 2L(1, \epsilon_K) (Ny)^{1-k+t}\left( \frac{\Gamma'}{\Gamma}(k-t) + \log\left(\frac{\vert D\vert Ny}{\pi}\right) + 2 \frac{L'}{L}(1, \epsilon_K)\right) \\
= 2L(1, \epsilon_K) (Ny)^{1-k+t}\left( \log(y) + \frac{\Gamma'}{\Gamma}(k-t) + \log\left(N\vert D\vert\right) -\log(\pi) + 2 \frac{L'}{L}(1, \epsilon_K)\right).
\end{multline*}
We conclude that the $n=0$ term of the Fourier expansion of $\tilde \Phi(z)$ is 
\[
\sum_{m\geq 0} \frac{\sqrt{\vert D\vert}}{\pi} L(1, \epsilon_K)  r_{\bar{\cA}, \chi}(m \vert D\vert) \left( \log(y) + \frac{\Gamma'}{\Gamma}(k-t) + \log\left(N\vert D\vert\right) -\log(\pi) + 2 \frac{L'}{L}(1, \epsilon_K)\right) y^{1-k+t} q^{m}.
\]
The result follows from using the formula $\frac{\sqrt{\vert D\vert}}{\pi} L(1, \epsilon_K)=\frac{h}{u}$ \cite[p. 249]{GZ} and observing that
\[
r_{\bar{\cA}, \chi}(m \vert D\vert)= D^t r_{\bar{\cA}, \chi}(m).
\]
Note that 
\[
r_{\bar{\cA}}(m \vert D\vert)= r_{\bar{\cA}}(m)=r_{\cA}(m \vert D\vert)= r_{\cA}(m), 
\]
and \cite{GZ} opted for the simpler expression $r_{\cA}(m)$ in their formula, making it easy to miss the factor of $D^t$ in our formula (the factor of $D^t$ is in fact missing in \cite[Prop. X.7]{AriThesis}).
\end{proof}

\subsection{Holomorphic projection}

Let $g_\cA=\sum_{m\geq 1} a_m(\cA)q^m\in S_{2k}^{\new}(\Gamma_0(N))$ represent the linear functional on $S_{2k}^{\new}(\Gamma_0(N))$ given by 
\[
f \longmapsto \frac{(2k-2)! \sqrt{\vert D\vert}D^t}{2^{4k-1}\pi^{2k}} L'_\cA(f,\chi,k+t).
\]
Then $g_\cA$ is obtained by applying holomorphic projection lemma \cite[IV (5.1)]{GZ} to the function $\tilde \Phi$ defined by \eqref{phifct}. 
More precisely, $g_{\cA}$ is the holomorphic projection of 
\[
\frac{(2k-2)!}{(k+t-1)!} (4\pi)^{t-k+1} \tilde{\Phi}.
\]

\begin{lemma}\label{lem:est}
Assume that $0< t \leq k-1$.
The function $\tilde \Phi(z)=\sum_{m=-\infty}^{\infty} a_m(y) q^m$ satisfies $(\tilde \Phi \vert_{2k} \alpha)(z) = O(y^{-\epsilon})$ as $y\to \infty$ for some $\epsilon >0$ and every $\alpha \in \SL_2(\Z)$.
\end{lemma}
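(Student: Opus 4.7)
The plan is to exploit the trace representation $\tilde\Phi_s = \mathrm{Tr}^{ND}_N(E_s^{(1)}(Nz)\Theta_{\bar{\cA},\chi}(z))$ established in the proof of Theorem \ref{thm:FE}, together with the definition \eqref{phifct}. The crucial point is that $\Theta_{\bar{\cA},\chi}$ is a genuine \emph{cusp form} precisely because $t > 0$ (as flagged in the remark following Proposition \ref{fourier}), so its contribution to the product yields exponential decay at every cusp of $\Gamma_0(N)$, which will dominate the merely polynomial growth of the Eisenstein factor and any logarithmic factors coming from the derivative in $s$. This will in fact give the much stronger estimate of exponential decay, from which the claim follows immediately.

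First, I would unfold the trace as a finite sum over cosets, $\tilde\Phi_s = \sum_i G_s|_{2k}\gamma_i$ with $G_s(z) := E_s^{(1)}(Nz)\Theta_{\bar{\cA},\chi}(z)$ and $\gamma_i$ representing $\Gamma_0(ND)\backslash\Gamma_0(N)$, so that for any $\alpha \in \SL_2(\Z)$,
\[
\tilde\Phi\big|_{2k}\alpha \;=\; \frac{\sqrt{|D|}\,D^t}{2\pi}\,N^{k-t-1}\sum_i \left.\frac{\partial}{\partial s}\bigl(G_s\big|_{2k}(\gamma_i\alpha)\bigr)\right|_{s=1-(k-t)}.
\]
This reduces the problem to showing, for each $\beta \in \SL_2(\Z)$, that $\partial_s(G_s|_{2k}\beta)|_{s=1-k+t}$ decays faster than any negative power of $y$ as $y\to\infty$. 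Using the weight factorization
\[
G_s|_{2k}\beta \;=\; \bigl(E_s^{(1)}(N\,\cdot\,)\big|_{2k-2t-1}\beta\bigr)\cdot\bigl(\Theta_{\bar{\cA},\chi}\big|_{2t+1}\beta\bigr),
\]
I would bound the two factors separately: since $\Theta_{\bar{\cA},\chi}$ is a holomorphic cusp form of weight $2t+1$ and level $|D|$, its slash by any $\beta \in \SL_2(\Z)$ has a Fourier expansion at $\infty$ supported in $q^{n/h}$ for $n \geq 1$, yielding $(\Theta_{\bar{\cA},\chi}|_{2t+1}\beta)(z) = O(e^{-cy})$ for some $c > 0$. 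For the Eisenstein factor, I would invoke the standard theory of Maass--Eisenstein series: $E_s^{(1)}(N\cdot)|_{2k-2t-1}\beta$ extends analytically in $s$ past the line of absolute convergence, and near $s_0 = 1-k+t$ its constant Fourier coefficient at each cusp is a finite linear combination of terms of the form $y^{s_0}$ and $y^{1-s_0-(2k-2t-2)}$, with possible additional $\log y$ factors; in particular it is $O(y^A)$ uniformly in a small neighborhood of $s_0$. Differentiating in $s$ introduces at worst extra factors of $\log y$, so $\partial_s(E_s^{(1)}(N\cdot)|_{2k-2t-1}\beta)|_{s=s_0} = O(y^A \log y)$ for some fixed $A$. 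Multiplying gives exponential decay of the product, and summing over the finitely many $\beta = \gamma_i\alpha$ in the trace completes the argument.

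The main obstacle is verifying the uniform polynomial growth of the Eisenstein factor (and its $s$-derivative) at every cusp of $\Gamma_0(ND)$ relevant after slashing by $\beta$. This requires either a direct analysis of the Fourier expansion of $E_s^{(1)}$ at each such cusp, or an appeal to the general fact that Maass--Eisenstein series have moderate growth at every cusp on the relevant vertical strip; the needed computation is already implicit in \cite[IV \S 3--4]{GZ}. The additional subtlety here compared with \cite{GZ} is keeping track of the dependence on the cusp, but none of the polynomial exponents that arise can defeat the exponential decay of $\Theta_{\bar{\cA},\chi}$, which is why the hypothesis $t > 0$ is indispensable: for $t = 0$ the theta series $\Theta_{\bar{\cA},\chi}$ is no longer cuspidal, and one has to handle the bad cusps by a different argument, as in \cite{GZ}.
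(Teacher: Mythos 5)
Your proof is correct, but it takes a genuinely different route from the paper's. The paper proves the bound directly from the explicit Fourier expansion of $\tilde\Phi$ already computed in Proposition~\ref{fourier}: one plugs in the elementary estimates $p_{\nu-1}(x) = O(x^{\nu-1})$, $q_{\nu-1}(x) = O(x^{\nu-1}e^{-x})$, $r_{\bar\cA,\chi}(n) = O(n^{t+1/2})$, $\sigma_\cA(n) = o(n^\delta)$, $\sigma'_\cA(n) = O(n^\delta)$, observes that for $0 < t \leq k-1$ each Fourier coefficient $a_m(y)q^m$ is exponentially small (with the constant term $a_0(y)$ killed because $r_{\bar\cA,\chi}(0)=0$), and then asserts that ``similar estimates hold at the other cusps.'' Your argument is structural rather than term-by-term: you unfold the trace, factor $G_s\vert_{2k}\beta$ into a cuspidal factor and an Eisenstein factor, and let the exponential decay of $\Theta_{\bar\cA,\chi}\vert_{2t+1}\beta$ absorb the polynomial growth of the Eisenstein factor and its $s$-derivative. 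What yours buys is a cleaner explanation of \emph{why} the other cusps cause no trouble (the paper leaves this as an assertion, whereas your argument handles all cusps uniformly, since the slash-unfolding reduces everything to the factored product at $\infty$); what the paper's buys is that it requires no appeal to general growth properties of Maass--Eisenstein series beyond what is already written in Proposition~\ref{fourier}. Both proofs hinge on the same essential fact --- cuspidality of $\Theta_{\bar\cA,\chi}$ for $t > 0$ --- but the paper uses it implicitly (via $r_{\bar\cA,\chi}(0)=0$) while you use it directly.
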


\begin{proof}
    We have the estimates $p_{\nu-1}(x)=O(x^{\nu-1})$, $q_{\nu-1}(x)=O(x^{\nu-1}e^{-x})$, $r_{\bar\cA, \chi}(n)=O(n^{t+1/2})$, $\sigma_{\cA}(n)=o(n^{\delta})$ and $\sigma_{\cA}'(n)=O(n^{\delta})$ for all $\delta>0$. Using these estimates, and the formulas for $a_m(y)$ derived in Proposition \ref{fourier}, we obtain the following estimates. When $t<k-1$, we have
    \[
    a_m(y)=
    \begin{cases}
        O(m^{\delta+k+1/2}) & (m>0) \\
        O(e^{-4\pi \frac{N}{\vert D\vert}y}) & (m=0) \\
        O(\vert m\vert^{\delta+k-t-1} e^{-4\pi \vert m\vert y}) & (m<0).
    \end{cases}
    \]
    When $t=k-1$, we have
    \[
    a_m(y)=
    \begin{cases}
        O(m^{k-1/2}\log(y)) & (m>0) \\
        O(e^{-4\pi \frac{N}{\vert D\vert}y}) & (m=0) \\
        O(\vert m\vert^{\delta} e^{-4\pi \vert m\vert y}) & (m<0).
    \end{cases}
    \]
    In particular, for $0<t\leq k-1$, all terms in the Fourier expansion of $\tilde\Phi$ are exponentially small as $y\to \infty$. Similar estimates hold at the other cusps. 
\end{proof}

\begin{theorem}\label{fourcoeff}
If $0<t\leq k-1$, then for all $m\geq 1$ such that $(m,N)=1$, we have 
\begin{align*}
a_m(\cA) & = m^{k-t-1} \left[ -\sum_{0<n\leq \frac{m\vert D\vert}{N}} \sigma'_{\cA}(n) r_{\bar{\cA}, \chi}(m\vert D\vert-nN) P_{k,t}\left( 1-\frac{2nN}{m\vert D\vert}\right) \right. \\
& \qquad + \frac{h}{u}D^t r_{\bar{\cA}, \chi}(m)\left( \frac{\Gamma'}{\Gamma}(k+t) + \frac{\Gamma'}{\Gamma}(k-t) + \log \left(\frac{N\vert D\vert}{4\pi^2 m}\right) +2 \frac{L'}{L}(1, \epsilon_K) \right) \\
& \left.\qquad - \sum_{n=1}^\infty \sigma_{\cA}(n) r_{\bar{\cA}, \chi}(m\vert D\vert + nN) Q_{k,t}\left( 1+\frac{2 n N}{m\vert D\vert} \right)\right], \\
\end{align*}
where $P_{k,t}$ and $Q_{k,t}$ are the Jacobi functions defined by the formulas \eqref{Pkt} and \eqref{Qkt1} respectively.
\end{theorem}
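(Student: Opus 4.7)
The strategy is to apply the holomorphic projection lemma of Gross--Zagier \cite[IV (5.1)]{GZ} to the non-holomorphic form $\frac{(2k-2)!}{(k+t-1)!}(4\pi)^{t-k+1}\tilde\Phi$ and then identify the resulting integrals with values of the Jacobi functions $P_{k,t}$ and $Q_{k,t}$. The applicability of the holomorphic projection lemma is guaranteed by the estimates in Lemma \ref{lem:est} (which is why we restrict to $0 < t \leq k-1$; the exponential decay at all cusps is crucial). Writing the $m$-th Fourier coefficient of $\tilde\Phi$ as $A_m(y)\, y^{1-k+t}$ using Proposition \ref{fourier}, holomorphic projection gives
\[
a_m(\cA) = \frac{(4\pi m)^{2k-1}}{(k+t-1)!\,(4\pi)^{k-t-1}} \int_0^\infty A_m(y)\, e^{-4\pi m y}\, y^{k+t-1}\, dy,
\]
so the task reduces to evaluating three integrals, one for each of the three summands appearing in $A_m(y)$.

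For the term involving $p_{k-t-1}(4\pi n N y/|D|)$, one expands $p_{k-t-1}$ as a polynomial and computes each monomial integral as a gamma integral $\Gamma(k+t+j)/(4\pi m)^{k+t+j}$. Rewriting the binomial coefficient via $\binom{k-t-1}{j} = (-1)^j(-k+t+1)_j/j!$ converts the resulting finite sum into the hypergeometric value $F(-k+t+1,\,k+t,\,1;\,nN/(m|D|))$, which equals $P_{k,t}(1-2nN/(m|D|))$ by the definition \eqref{defjac} of $P_{k,t}$. For the middle term containing $\log(y)$ and constants, one uses $\int_0^\infty \log(y)\, y^{k+t-1} e^{-4\pi m y} dy = \Gamma(k+t)\big(\frac{\Gamma'}{\Gamma}(k+t) - \log(4\pi m)\big)/(4\pi m)^{k+t}$; combining this with the constants already present in the bracket produces the logarithmic expression $\log(N|D|/(4\pi^2 m))$ and the digamma contributions in the theorem.

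The term involving $q_{k-t-1}$ is the most delicate. After substituting the defining integral of $q_{k-t-1}$ and Fubini, one obtains
\[
\int_0^\infty q_{k-t-1}\!\left(\tfrac{4\pi n N y}{|D|}\right) e^{-4\pi m y}\, y^{k+t-1}\, dy = \frac{\Gamma(k+t)}{(4\pi m)^{k+t}} \int_1^\infty \frac{(u-1)^{k-t-1}}{u^{k-t}\,(1 + (x-1)u/2)^{k+t}}\, du,
\]
with $x = 1 + 2nN/(m|D|)$. The main obstacle is identifying the right-hand integral as $Q_{k,t}(x)$. The plan is to make the substitution $u = 1/(1-w)$, which transforms the integrand into $2^{k+t}\, w^{k-t-1}(1-w)^{k+t-1}/(x+1-2w)^{k+t}$ on $[0,1]$; a second substitution $w = (1+u')/2$ brings this into the form $2^{-k+t+1}\int_{-1}^1 (1+u')^{k-t-1}(1-u')^{k+t-1}/(x-u')^{k+t}\, du'$, which is exactly the intermediate formula for $Q_{k,t}$ derived in the proof of Proposition \ref{prop:Jacobi}.

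Finally, one tracks the overall normalization: the prefactor $(4\pi m)^{2k-1}/[(k+t-1)!(4\pi)^{k-t-1}]$ combines with each $(k+t-1)!/(4\pi m)^{k+t}$ to yield exactly $m^{k-t-1}$, matching the stated formula. The summation ranges for the $p_{k-t-1}$ terms are controlled by the condition $r_{\bar\cA,\chi}(m|D|-nN) = 0$ for $nN > m|D|$ (since $\Theta_{\bar\cA,\chi}$ is holomorphic), giving the constraint $0 < n \leq m|D|/N$; the cuspidality of $\Theta_{\bar\cA,\chi}$ (using $t > 0$) also accounts for the absence of a constant-in-$m$ contribution that would appear in the Gross--Zagier setting.
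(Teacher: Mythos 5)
Your proof is correct and follows exactly the same route as the paper, whose own proof of Theorem \ref{fourcoeff} is a one-line appeal to Lemma \ref{lem:est} and the holomorphic projection lemma of Gross--Zagier; you have simply carried out that application in full detail. In particular the gamma-integral evaluation, the hypergeometric identification $F(-k+t+1,k+t,1;nN/(m|D|))=P_{k,t}(1-2nN/(m|D|))$, and the substitution chain $u=1/(1-w)$, $w=(1+u')/2$ reducing the $q_{k-t-1}$ term to the intermediate integral for $Q_{k,t}$ appearing in the proof of Proposition \ref{prop:Jacobi} all check out, as does the normalization $m^{k-t-1}$.
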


\begin{proof}
By Lemma \ref{lem:est}, we may apply the holomorphic projection lemma \cite[IV (5.1)]{GZ} to get the result.
\end{proof}

\begin{remark}
   We have $Q_{k,t}(x)=O(x^{-k-t})$ as $x\to +\infty$ by Lemma \ref{lem:asyinfty2}. In particular, the $n^\text{th}$ term of the infinite sum in Theorem \ref{fourcoeff} is $O(n^{\delta+t+1/2-k-t})=O(n^{\delta-k+1/2})$. Since $k\geq 2$, we see that the $n^\text{th}$ term is $O(n^{\delta-3/2})$ and the sum converges. This is true even in the edge case $t=k-1$, in notable contrast with the situation in \cite{GZ} where the parameters $(k,t)=(1,0)$ lead to the infinite sum being divergent. 
\end{remark}

\section{Heights}\label{sec:height}

Let $Y$ be a smooth projective variety over a number field $F$ of dimension $d=2n-1$. Assume that $Y$ admits a regular model $\Y$ that is projective and flat over $\spec \oh_F$. Let $Z_1$ and $Z_2$ be algebraic cycles on $Y$ of codimension $n$. Assume the following conditions: 
\begin{itemize}
    \item[(a)] $Z_1$ has an integral model $\bZ_1$ in $\Y$ with the property that $\bZ_1$ has zero intersection with any algebraic cycle of codimension $n$ on $\Y$ supported on the special fibers.
    \item[(b)] $Z_2$ admits an integral model $\bZ_2$ in $\Y$.
    \item[(c)] $Z_1$ is null-homologous, i.e., its class in $H^{2n}(Y(\C), \C)$ is zero. 
\end{itemize}
Condition $(c)$ implies that there is a Green's current $g_1$ on $Y(\C)$ such that $\frac{\partial \bar{\partial}}{\pi i} g_1 = \delta_{Z_1}$, where $\delta_{Z_1}$ is the current associated to $Z_1$ which maps any real valued differential $\eta\in A^{n,n}(Y_\R)$ to $\int_{Z_{1,\C}(\C)} \eta$. 

Under the above assumptions, the height pairing of $Z_1$ and $Z_2$ is defined as 
\[
\langle Z_1, Z_2 \rangle^{\GS} := (-1)^n (\bZ_1, g_1) \cdot (\bZ_2, g_2),
\]
where $g_2$ is any Green's current for $Z_{2, \C}$. 
The intersection on the right hand side is the arithmetic intersection pairing of Gillet--Soul\'e 
\[
\widehat{\CH}^n(\Y)_{\R} \times \widehat{\CH}^n(\Y)_{\R} \lra \R,
\]
defined for arithmetic cycles $\widehat{T}_i=(\mathbf{T}_i, \alpha_i)$, $i=1,2$, that are irreducible and intersect properly, by
\[
\widehat{T}_1\cdot \widehat{T}_2 := \log(\vert \Gamma(\mathbf{T}_1\cdot \mathbf{T}_2, \oh) \vert) + \int_{\mathbf{T}_{2,\C}(\C)} \alpha_1 + \int_{\Y_{\C}(\C)} \alpha_2 \beta_1, 
\]
where $\beta_1=\delta_{T_1}-\frac{\partial \bar{\partial}}{\pi i} \alpha_1$ \cite[(4.2.2.2)]{GS}. Because $\Y$ is regular, there is a moving lemma for arithmetic Chow groups, and the arithmetic intersection pairing is defined for all $\widehat{T}_1$ and $\widehat{T}_2$ \cite[p. 104]{zhang}.

\subsection{Local height decomposition}

The arithmetic intersection pairing admits a decomposition into a sum of local intersections for all places $v$ of $F$: 
\[
\widehat{T}_1\cdot \widehat{T}_2= \sum_v (\mathbf{T}_{1,v}\cdot \mathbf{T}_{2,v}) \epsilon_v,
\]
where $\epsilon_v=\log q_v$ with $q_v=\vert \oh_F/\fp \vert$ if $v$ is a non-archimedean place corresponding to a prime ideal $\fp$, and $\epsilon_v=2$ if $v$ is a complex archimedean place. 
It follows that the height pairing also admits a decomposition into local components
\[
\langle Z_1, Z_2 \rangle^{\GS}=\sum_v \langle Z_1, Z_2 \rangle_{v} \epsilon_v. 
\]
These local intersections are only defined when $\bZ_1$ and $\bZ_2$ do not intersect in the generic fiber $Y=\Y_F$, and they depend on the representatives of the cycles $Z_1$ and $Z_2$. 
\subsection{Local heights in cases of improper intersection}

For the explicit cycles that we work with, we do not wish to use a moving lemma and it becomes crucial to define local heights even when the cycles do intersect in the generic fiber. Below we give such a local decomposition when $Y$ is a compactification of an abelian scheme $f \colon \mathcal{A} \to C$ of relative dimension $2n-2$ over a smooth curve $C$, and when the cycles $Z_1$ and $Z_2$ are supported on sub-abelian varieties in fibers of $f$.\footnote{We prove this below under an additional technical assumption but we suspect the method works in general.}  This is done by combining a result of Zhang \cite{zhang} with Brylinski's formalism for heights on local systems over curves \cite{brylinski}. 

 Now suppose $Z_1$ and $Z_2$ are such cycles (of codimension $n$ on $\mathcal{A}$) supported on the fiber of $f$ above a single point $x \in C$. 

\subsubsection{Archimedean local height}
Let $v$ be an archimedean place of $F$. A choice of polarization on $\mathcal{A}$ makes $V = R^{2n-2}f_*\Z(n-1)$ a weight 0 polarized variation of Hodge structures (PVHS) on $C_v=C\times_v \C$ with associated vector bundle $\mathcal{V}$  (see Definition \ref{def:PVHS} for the precise definition). Brylinski \cite{brylinski} attaches to such a PVHS a Green's kernel and defines a local height pairing on Hodge classes. Let us write $\bar{C}$ for the compactification of $C$. There is a canonical way to extend $\mathcal{V}$ to a vector bundle $\bar{\mathcal{V}}$ on $\bar{C}_v$ due to Deligne (characterized by certain properties) \cite[\S 1 p. 4]{brylinski}. Let us write $\bar{\mathcal{V}}_{0, \mathbb{R}}$ for the $C^\infty$ vector bundle of sections of $\bar{\mathcal{V}}$ which are real and of type $(0,0)$. Also write $\pr_1$ and $\pr_2$ for the projections $\bar{C}\times \bar{C} \lra \bar{C}$. 

\begin{proposition}[Proposition 2.9 \cite{brylinski}]\label{prop:bry}
    If $V$ has no non-zero global section, there exists a unique $C^\infty$-section $G$ of $\mathrm{Hom}(\pr_1^{-1} \bar{\mathcal{V}}_{0,\mathbb{R}}, \pr_2^{-1} \bar{\mathcal{V}}_{0,\mathbb{R}})$ over the complement of the diagonal in $\bar{C}_v\times \bar{C}_v$, such that 
    \begin{enumerate}
        \item $\square_2 G=0$, where $\square_2$ is the Laplacian associated with $\mathcal{V}$ in the second variable;
        \item $G(x,y)-\log\vert z(x)-z(y)\vert$ is bounded near any point $(a,a)\in \bar{C}_v\times \bar{C}_v$, if $z$ is a local coordinate on $\bar{C}_v$ near $a$.
    \end{enumerate}
\end{proposition}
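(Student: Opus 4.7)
The plan is to prove uniqueness first (using Hodge theory for the PVHS), then construct $G$ as the Schwartz kernel of the inverse of the Laplacian on $\bar{C}_v$, and finally verify the logarithmic singularity from standard elliptic parametrix theory.

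For uniqueness, suppose $G_1$ and $G_2$ both satisfy the listed conditions, and set $H := G_1 - G_2$. The boundedness in (2) implies that the logarithmic singularities of $G_1$ and $G_2$ cancel, so $H$ is bounded in a neighborhood of the diagonal; since $\square_2 H = 0$ off the diagonal, a removable singularity argument (a bounded harmonic section on a disc extends smoothly) promotes $H$ to a smooth section on all of $\bar{C}_v \times \bar{C}_v$ which is harmonic in the second variable. Fixing $x$, one gets a global harmonic section of $\bar{\mathcal{V}}_{0,\mathbb{R}}$ on $\bar{C}_v$. Hodge theory for PVHS identifies the space of harmonic sections with $H^0(\bar{C}_v, V) = 0$, forcing $H(x,\cdot) = 0$, and varying $x$ yields $G_1 = G_2$.

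For existence I would use that the Laplacian $\square$ attached to the polarized VHS is a second-order, self-adjoint elliptic operator acting on $C^\infty$ sections of $\bar{\mathcal{V}}_{0,\mathbb{R}}$. By the Hodge theorem for polarized variations on a compact curve, its kernel is a finite-dimensional space canonically isomorphic to the global sections of $V$; under our hypothesis this kernel vanishes, so $\square$ is invertible with a bounded Green's operator $\square^{-1}$. I would then take $G(x,y)$ to be the Schwartz kernel of $\square^{-1}$ (viewed as a section of $\mathrm{Hom}(\pr_1^{-1}\bar{\mathcal{V}}_{0,\mathbb{R}}, \pr_2^{-1}\bar{\mathcal{V}}_{0,\mathbb{R}})$ via the polarization). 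By construction $\square_2 G(x,\cdot) = 0$ away from $x$, establishing (1).

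For the singularity in (2), I would invoke the standard parametrix construction for second-order elliptic operators on a real surface: after trivializing $\bar{\mathcal{V}}_{0,\mathbb{R}}$ in a neighborhood of a point $a$ and choosing a holomorphic coordinate $z$, the principal symbol of $\square$ is, up to a smooth conformal factor, the scalar Laplacian, whose fundamental solution on a disc is $\log|z(x)-z(y)|$ times the identity. The lower-order terms contribute smooth corrections, so $G(x,y) - \log|z(x) - z(y)|\cdot \mathrm{id}$ extends continuously (in fact smoothly) across the diagonal, giving (2). Symmetry of $G$ in $x$ and $y$ falls out automatically from self-adjointness of $\square$.

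The main obstacle will be making sense of the Hodge theory on the Deligne extension $\bar{\mathcal{V}}$ near the boundary points of $\bar{C}_v \setminus C_v$, where $V$ degenerates. One needs to know that $\bar{\mathcal{V}}$ carries a natural metric (derived from the polarization with controlled logarithmic growth via the nilpotent orbit theorem) for which $\square$ is essentially self-adjoint, and for which harmonic sections of the smooth $L^2$-theory correspond precisely to global sections of the Deligne extension (the one-dimensional case of Zucker's theorem). Once this analytic framework is in place, the rest of the argument is formal.
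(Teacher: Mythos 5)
The paper does not prove this proposition; it cites Brylinski's Proposition~2.9 \cite{brylinski} directly and uses the resulting Green's kernel as a black box throughout Sections \ref{s:green}--\ref{s:archii}. So there is no in-paper proof to compare against, and your task was essentially to reconstruct Brylinski's argument.

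Your outline follows the right template, and the uniqueness argument (removable singularity plus ``harmonic sections $=$ global flat sections'') is sound modulo the analytic inputs you flag. But the issue you defer at the very end --- the $L^2$-Hodge theory near the cusps of $\bar{C}_v\setminus C_v$ --- is not an afterthought to ``put in place'' before a formal argument; it is the substance of the proposition, and your sketch does not close it. Three places where this bites concretely:
(i) \emph{Fredholm theory.} The Hodge metric on $\mathcal{V}$ degenerates at a cusp (logarithmically, by Schmid's nilpotent orbit theorem), so $\square$ is not a uniformly elliptic operator with smooth coefficients on a compact surface, and you do not get invertibility with a nice Schwartz kernel from the standard compact package; one needs Zucker-type $L^2$ estimates to see that $\square$ is Fredholm and that its kernel is $H^0(\bar{C}_v,V)$.
(ii) \emph{Removable singularity near a cusp.} Your uniqueness argument needs a bounded $\square_2$-harmonic section on a punctured disc to extend smoothly; when the puncture is a cusp and the coefficients of $\square$ blow up there, this is again a weighted $L^2$ statement, not a consequence of classical removable-singularity theorems for uniformly elliptic operators.
(iii) \emph{Condition (2) at cusp points of the diagonal.} The proposition asserts the log-singularity of $G$ near \emph{every} diagonal point $(a,a)$, including $a$ a cusp. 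Since the principal symbol of $\square$ degenerates at such $a$, the ``trivialize, reduce to the scalar Laplacian, apply a parametrix'' step needs justification precisely there; as written it only covers interior points. This is exactly where Brylinski has to work (and where the Deligne canonical extension $\bar{\mathcal{V}}$, rather than an arbitrary extension of $\mathcal{V}$, is essential).

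In short: the strategy is the expected one, but as a proof it has a genuine gap, because everything non-formal in this proposition lives at the cusps, and your writeup acknowledges but does not address that regime.
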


The $C^\infty$-section $G$, called the Green's kernel attached to $V$, can be used to define the Brylinski pairing. If $x \neq y \in C_v(\C)$ and $v_x^1$ and $v_y^2$ are type $(0,0)$ classes supported in the fibers of $V$ above $x$ and $y$ respectively, Brylinski defines \cite[Prop. 2.11]{brylinski}
\begin{equation}\label{eq: brylinski formula}\left\langle  v^1_x, v^2_y \right\rangle^{\mathrm{Br}} :=\langle G(x,y)v^1_x,v^2_y\rangle_y,
\end{equation}
where $\langle\, ,\, \rangle_y$ denotes the Hermitian pairing induced by the polarization on the fiber of $V$ above $y$. This extends by linearity to a pairing on fibral classes supported on disjoints sets of fibers. 
If $v_x^1$ and $v_y^2$ are cycle classes of homologically trivial cycles $Z$ and $Z'$ supported on the fibers above $x$ and $y$ (distinct points) respectively, then \cite[Appendix]{brylinski}
\begin{equation}\label{eq: brylinski local height}
    \left\langle v^1_x,v^2_y \right\rangle^{\mathrm{Br}} = \langle Z, Z'\rangle_v.
\end{equation}
(See also \cite{mullerstach} for various definitions of archimedean local height pairing and comparisons between them.)

Recall that $Z_1$ and $Z_2$ are supported on the fiber of $f$ above a single point $x \in C$. Their images, denoted $v^1_x$ and $v^2_x$ respectively, under the cycle class map relative to the fiber $f^{-1}(x)$ (after base-change to $\C$ via $v$) lie in the fiber of $V$ above $x$. Let $\eta$ be a $(n-1,n-1)$-form on $Y_v(\C)$ such that for all points $y$ close enough to $x$, the restriction $\eta_y$ to the fiber above $y$ satisfies $\partial \eta_y = 0 = \bar\partial \eta_y$, and moreover the restriction $\eta_x$ recovers the cycle class of $v^2_x$, thought of as a differential form.  For $y \in C_v(\C)$ close to $X$, let $v^2_y \in V_y$ correspond to $\eta_y$. 

 Let $t$ be a uniformizer on $C_v(\C)$ at $x$.  We let   
 \begin{equation}\label{locint}
 \langle Z_1,Z_2\rangle_v = \lim_{y \to x} \left(\langle G(x,y)(v^1_x, v^2_y)\rangle_y + (-1)^n(Z_1\cdot Z_2)_x \log|t(y)|_v\right),
 \end{equation}
 where $(Z_1 \cdot Z_2)_x$ is the usual (geometric) intersection pairing in the fiber $f^{-1}(x)$. 

\subsubsection{Non-archimedean local height}
If $v$ is a non-archimedean place of $F$, we define
\[\langle Z_1,Z_2\rangle_v = (Z_1\cdot Z_2)_x\ord_v\, d_xt,\]
where $\ord_v\, d_xt$ is defined as in \cite[p. 106]{zhang}.

\subsubsection{Global conjecture}

\begin{conjecture}\label{conj: height decomposition}
  Suppose $Z_1$ and $Z_2$ are supported on a single common fiber of $f$. Then 
  \[\langle Z_1, Z_2 \rangle^{\GS} = \sum_v \langle Z_1,Z_2\rangle_v,\]
  where the local pairings are defined as above.
\end{conjecture}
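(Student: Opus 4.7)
The plan is to reduce to the case of proper intersection via a rational equivalence on $\mathcal{A}$, apply the standard Gillet--Soulé decomposition, and then show that the discrepancy between the standard local pairings and the ones defined in the conjecture is exactly accounted for by the product formula applied to an auxiliary rational function on $C$.

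First, I would extend each irreducible component of $Z_2$ to a cycle on $\mathcal{A}$ meeting $f^{-1}(x)$ properly, obtaining a cycle $\tilde Z_2$ on $\mathcal{A}$ such that $\tilde Z_2 \cdot f^*[x] = Z_2$ as cycles in $f^{-1}(x)$. Let $t$ be the chosen uniformizer at $x$ on $C$, and pick an auxiliary point $y_0 \in C(F)$ close to $x$ together with a rational function $u$ on $C$ whose divisor equals $[x]-[y_0]+E$, with $E$ supported away from $x$ and $y_0$ and chosen so that $\tilde Z_2\cdot f^*E$ is disjoint from $Z_1$ on $Y$. Locally at $x$, $u$ agrees with $t$ up to a unit. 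The cycle
\[
Z_2' := \tilde Z_2 \cdot f^*[y_0] - \tilde Z_2 \cdot f^*E = Z_2 - \tilde Z_2 \cdot \diiv(f^*u)
\]
is rationally equivalent to $Z_2$ on $\mathcal{A}$, is disjoint from $Z_1 \cap f^{-1}(x)$, and intersects $Z_1$ properly on $Y$.

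Since the Gillet--Soulé height depends only on the rational equivalence class, $\langle Z_1, Z_2\rangle^{\GS} = \langle Z_1, Z_2'\rangle^{\GS} = \sum_v \langle Z_1, Z_2'\rangle_v^{\mathrm{std}}$, where the local pairings on the right are the usual ones, available because $Z_1$ and $Z_2'$ intersect properly. At an archimedean $v$, the component of $Z_2'$ supported over $y_0$ contributes $\langle G(x,y_0) v^1_x, v^2_{y_0}\rangle_{y_0}$ via \eqref{eq: brylinski formula}, where $v^2_{y_0}$ is the cycle class in $V_{y_0}$ of $\tilde Z_2 \cdot f^*[y_0]$; at a non-archimedean $v$, the standard local pairing is computed by intersection multiplicities on an integral model, together with the contribution from $\tilde Z_2 \cdot f^*E$ which is arranged to vanish by the choice of $E$. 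The next task is to compare these with the pairings of the conjecture. At archimedean $v$, the difference between Brylinski's pairing at $y_0$ and the regularized limit \eqref{locint} at $x$ is exactly $(-1)^n(Z_1\cdot Z_2)_x \log|t(y_0)|_v$ plus an error tending to zero as $y_0\to x$, by property (2) of Proposition \ref{prop:bry}. At non-archimedean $v$, the analogous discrepancy is $(Z_1\cdot Z_2)_x \ord_v u(y_0)$, arising from $\tilde Z_2 \cdot f^*[y_0] - \tilde Z_2 \cdot f^*[x] = \diiv(\tilde Z_2 \cdot f^*u)$ via the projection formula, together with the identification of $\ord_v\, d_x t$ as the appropriate discrete analogue of the Brylinski regularization. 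Summing over all places, the product formula $\sum_v \log|u(y_0)|_v \epsilon_v = 0$ delivers the required cancellation, and taking $y_0 \to x$ (along a sequence for which the limits on both sides can be controlled) yields the claimed identity.

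The main obstacle is the rigorous justification of the simultaneous archimedean limit $y_0\to x$ and non-archimedean comparison, together with the verification that the auxiliary data $(\tilde Z_2, y_0, u, E)$ can be chosen so that no extraneous horizontal components of $\tilde Z_2$ contribute spurious intersections at finite places. The archimedean side relies crucially on Brylinski's asymptotic $G(x,y) = \log|z(x)-z(y)| + O(1)$, while the non-archimedean side requires extending Zhang's local analysis in \cite{zhang} to the generalized Kuga--Sato setting, identifying $(Z_1\cdot Z_2)_x\ord_v\, d_x t$ as the correct discrete counterpart of the archimedean regularization. Arranging a choice of $\tilde Z_2$ with enough transversality at every place of bad reduction is precisely the additional technical assumption flagged in the paper; removing it in full generality appears to require either a flatter extension obtained via a deformation-to-the-normal-cone construction or a more intrinsic formulation of the local heights that bypasses the auxiliary cycle $\tilde Z_2$ altogether.
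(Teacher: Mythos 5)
The statement you are trying to prove is labelled a \emph{conjecture} in the paper precisely because the authors do not prove it in full generality; they establish only Theorem~\ref{thm:ari}, i.e.\ the case where the class $v_y^2$ is algebraic on a dense set of fibers. Their route is quite different from yours: they invoke Zhang's global result \cite[Conj.~1.2.1 \& Thm.~1.2.2]{zhang}, which already gives the global decomposition with archimedean contributions expressed via the Green's current $g_1$, and then use \eqref{eq: brylinski local height} together with a density-and-continuity argument to identify Zhang's archimedean term with the Brylinski limit \eqref{locint}. Your approach bypasses Zhang entirely and tries to prove the decomposition from scratch via rational equivalence and the product formula; this is a genuinely different and potentially more self-contained strategy, but as written it has a gap that cannot be patched by the extra hypothesis you flag at the end.

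The gap is the incompatibility between the limit $y_0\to x$ and the product formula. Your comparison at an archimedean place $v$ shows only that
$\langle G(x,y_0)v^1_x, v^2_{y_0}\rangle_{y_0} = \langle Z_1,Z_2\rangle_v^{\mathrm{conj}} - (-1)^n(Z_1\cdot Z_2)_x\log|t(y_0)|_v + \epsilon_v(y_0)$,
where $\epsilon_v(y_0)\to 0$ as $y_0\to x$ in the \emph{archimedean} topology. But $y_0$ is a single $F$-rational point, and the non-archimedean contributions $\langle Z_1, \tilde Z_2\cdot f^*[y_0]\rangle_v^{\mathrm{std}}$ depend on the reduction of $y_0$ at each finite place, which fluctuates uncontrollably as $y_0$ moves archimedeanly towards $x$. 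The product formula $\sum_v \log|a|_v\,\epsilon_v=0$ cancels the $\log|t(y_0)|_v$ terms only for a fixed $a\in F^\times$, whereas your argument needs both a fixed $y_0$ (to apply the product formula) and $y_0\to x$ (to kill the $\epsilon_v$'s); there is no sequence of $F$-rational points along which both limits behave. Moreover, the element you feed to the product formula, $u(y_0)$, is not in $F^\times$ since $y_0$ is a pole of $u$ by construction; and the identity $\tilde Z_2\cdot f^*[y_0]-\tilde Z_2\cdot f^*[x]=\diiv(\tilde Z_2\cdot f^*u)$ has a sign error and silently drops the $E$-contribution, which is non-trivial at finite places and does not obviously vanish. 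The extra transversality assumption you mention at the end is roughly the hypothesis of Theorem~\ref{thm:ari}, but it does not address any of these issues; what actually closes the gap in the paper is that Zhang's theorem supplies the global identity without ever needing a place-uniform approximating point, and the Brylinski comparison is then done \emph{after} the global decomposition is already in hand.
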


For our purposes it will be enough to prove the following special case.

\begin{theorem}\label{thm:ari}
With hypotheses and notation as above, suppose further that there exists a dense set of points $W \subset U$ such that the class $v_y^2$ of \eqref{locint} is the class of an algebraic cycle $Z_y$ in $f^{-1}(y)$ for all $y \in W$. Then Conjecture \ref{conj: height decomposition} holds. 
\end{theorem}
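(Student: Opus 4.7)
The plan is to reduce to the classical case of properly intersecting cycles by replacing $Z_2$ with its algebraic avatar $Z_y$ in a nearby fiber, and then passing to the limit $y \to x$. For each $y \in W$ with $y \neq x$, the cycles $Z_1$ (supported in $f^{-1}(x)$) and $Z_y$ (supported in $f^{-1}(y)$) are disjoint on the generic fiber, so the ordinary Gillet--Soul\'e formalism applies. Since $Z_1$ is null-homologous and $Z_y$ inherits an integral model from the abelian scheme structure, one obtains
\[
\langle Z_1, Z_y\rangle^{\GS} = \sum_v \langle Z_1, Z_y\rangle_v\,\epsilon_v,
\]
where the archimedean contributions are Brylinski pairings of fibral classes, $\langle G(x,y)\,v_x^1,\,v_y^2\rangle_y$, by \eqref{eq: brylinski local height}, and the non-archimedean contributions are ordinary arithmetic intersection multiplicities on a regular model.

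The crux is then to compare $\langle Z_1, Z_y\rangle^{\GS}$ with $\langle Z_1, Z_2\rangle^{\GS}$ as $y \to x$ through $W$. I would invoke Zhang's limit formula \cite[Ch.\ 8]{zhang}, suitably adapted to our generalized Kuga--Sato setting, which expresses the difference as a sum over all places of explicit local counterterms reflecting the log-singularity of the Green's kernel (archimedean) and the intersection singularity on the arithmetic model (non-archimedean). Concretely, with $t$ a uniformizer at $x$, these counterterms should be $(-1)^n(Z_1\cdot Z_2)_x\log|t(y)|_v$ at archimedean places and $-(Z_1\cdot Z_2)_x\,v(t(y))\log q_v$ at non-archimedean places; the global sum vanishes by the product formula applied to $t(y)\in F^\times$, but the individual pieces are precisely the divergent counterterms appearing in the definitions \eqref{locint} and the non-archimedean analogue. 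Combining Steps 1 and 2 and distributing the limit across the finite sum over places yields
\[
\langle Z_1, Z_2\rangle^{\GS} = \sum_v \lim_{y\to x}\bigl(\langle Z_1, Z_y\rangle_v + C_v(y)\bigr)\epsilon_v = \sum_v \langle Z_1, Z_2\rangle_v\,\epsilon_v,
\]
where the density of $W$ ensures that the limit may be taken through the algebraic approximants.

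The main obstacle will be to establish Zhang's limit formula in the generalized Kuga--Sato setting and to identify the archimedean counterterm with the log-singularity of $G(x,y)$ near the diagonal (controlled by Proposition \ref{prop:bry}(2)). This reduces to an explicit arithmetic intersection analysis of $Z_1$ with the difference cycle $Z_y - Z_2$ on a regular model; the hypothesis that $v_y^2$ is algebraic for $y$ in a dense set is used precisely to guarantee that this difference is itself an algebraic cycle, so that ordinary intersection theory applies and the log-divergences on the two sides can be matched term by term via the product formula. A subsidiary technical point is verifying termwise convergence at each place, which follows from the explicit asymptotic description of $G(x,y)$ along the diagonal together with the local structure of the arithmetic model near fibers of bad reduction.
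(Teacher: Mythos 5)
Your proposal takes a genuinely different route from the paper and has a significant gap. The paper's proof is structured as follows: Zhang's Theorem 1.2.2 in \cite{zhang} already establishes Conjecture \ref{conj: height decomposition} for cycles supported on a common fiber, with the archimedean local heights given by the explicit formula $(-1)^n\lim_{y\to x}\left(\int_{\mathcal{A}_y}g_1\eta + (Z_1\cdot Z_2)_x\log|t(y)|_v\right)$ involving the Green's current $g_1$. The only remaining task is to identify this archimedean expression with the Brylinski formula \eqref{locint}. That identification is a pure statement about two continuous functions of the complex variable $y$ on $U\setminus\{x\}$: they agree on the dense set $W$ (because there $(-1)^n\int_{\mathcal{A}_y}g_1\eta$ is a genuine local Gillet--Soul\'e height, which \eqref{eq: brylinski local height} and \eqref{eq: brylinski formula} equate with $\langle G(x,y)v_x^1, v_y^2\rangle_y$), hence they agree everywhere and the limits coincide. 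The density hypothesis therefore enters only through a soft continuity argument at one archimedean place.

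Your proposal instead attempts to prove $\lim_{y\to x}\langle Z_1, Z_y\rangle^{\GS} = \langle Z_1, Z_2\rangle^{\GS}$ and then distribute the limit over places. This is a much stronger claim than anything Zhang proves, and it is the crux of the gap. First, for higher-codimension cycles there is no established continuity of the global Beilinson--Bloch/Gillet--Soul\'e height as a function of the cycle: $Z_y$ and $Z_2$ are genuinely different cycles on different fibers, and having nearby cycle classes does not control their difference modulo rational equivalence, so there is no a priori reason for the global heights to converge (the product-formula cancellation of counterterms is necessary but far from sufficient). Second, the ``limit $y\to x$'' in the non-archimedean term $\langle Z_1, Z_y\rangle_v + C_v(y)$ is problematic: the relevant local intersection multiplicities on the arithmetic model are combinatorial, the $v$-adic topology is totally disconnected, and the conjecture's definition of $\langle Z_1, Z_2\rangle_v$ at finite places is the fixed quantity $(Z_1\cdot Z_2)_x\ord_v d_xt$, not a limit, so your rewriting as a limit at each finite place would itself require justification. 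You would also need to re-establish the analogue of Zhang's decomposition for each fixed $y\in W$ rather than simply cite it. In short, your approach inverts the logical order — deriving the decomposition from a limit of global heights rather than using Zhang's decomposition and verifying the archimedean formula via density — and the continuity input it requires is not available.
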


\begin{proof}
    This follows from the results of Zhang and Brylinski mentioned earlier. Zhang's \cite[Conj. 1.2.1 \& Thm. 1.2.2]{zhang} says that Conjecture \ref{conj: height decomposition} is true with archimedean local heights given by  
    \[
    (-1)^n\lim_{y \to x} \left(\int_{\mathcal{A}_y} g_1\eta+(Z_1\cdot Z_2)_x \log|t(y)|_v\right),
    \]
    where $g_1$ is the Green's current for $Z_1$. In the fiber above some $y \in W$, the integral $(-1)^n\int_{\mathcal{A}_y} g_1\eta$ is by definition the local Gillet--Soul\'e height $\langle Z_1, Z_y\rangle_v$, which is in turn equal to $\langle v_x^1, v_y^2\rangle^{\mathrm{Br}}$ by (\ref{eq: brylinski local height}). Applying (\ref{eq: brylinski formula}), we obtain
    \[
    (-1)^n\int_{\mathcal{A}_y} g_1\eta=\langle G(x,y)v_x^1, v_y^2\rangle_y, \qquad \forall y\in W.
    \]
    The left hand side and the right hand side can both be viewed as function on $U\setminus \{ x \}$. As such, they are continuous, and since they agree on a dense set they must be equal. We conclude that Conjecture \ref{conj: height decomposition} is true with archimedean local heights given by \eqref{locint}.
\end{proof}

\section{Generalized Heegner cycles}\label{s:GHC}

Let $K/\Q$ be an imaginary quadratic field, and let $H$ be its Hilbert class field. Let $(A,  \iota)$ be an elliptic curve $A/H$ with complex multiplication $\iota \colon \oh_K \hookrightarrow \End(A)$ by $\oh_K$.

\subsection{Generalized Kuga--Sato varieties}

For any $M \geq 3$, let $X(M)$ be the modular curve (over $H$) parameterizing pairs $(E,(\alpha,\beta))$, where $E$ is an elliptic curve and $(\alpha,\beta)$ is a basis for $E[M]$.  Note that $X(M)$ is geometrically disconnected since we impose no condition on the Weil pairing $\langle \alpha, \beta \rangle$. 
For $k \geq 1$, let $W_{2k-2}$ be the smooth and projective Kuga--Sato $(2k-1)$-fold over $X(M)$, which is birational to the $(2k-2)$-th fiber power  $\bar{\cE}^{(2k-2)}$ of the universal generalized elliptic curve over $X(M)$ \cite[\S2]{zhang}. 

For $\ell\geq 0$, we consider the variety 
\[X = X_{M, k,t} := W_{2k-2}\times_H A^{\ell},\] 
which we view as fibered over $X(M)$.  It is smooth and projective over $H$, of dimension $2k+\ell-1$, birational to $\bar{\cE}^{(2k-2)} \times_H A^{\ell}$.

\subsection{Cycles}\label{subsec:gen Heeg cycles}

Let $F$ be a finite extension of $H$ and $Q \in X(M)(F)$ a point corresponding to a full level $M$-structure $B=(\alpha,\beta)$ of $A$.  
Any isogeny $\varphi \colon A_F \to A'$ over $F$ of degree prime to $M$  gives another point $Q_\varphi:=(A', \varphi(B))$  of $X(M)(F)$. 
The fiber of $\bar{\cE} \lra X(M)$ above $Q_\varphi$ is isomorphic to $A'$. The fiber $X_{Q_\varphi}$ of $X\lra X(M)$ over $Q_{\varphi}$ is isomorphic to $(A')^{2k-2}\times A_{F}^{\ell}$. 

For a homomorphism $\varphi \in \Hom(A_1, A_2)$ between elliptic curves, we denote by $\Gamma_{\varphi}\subset A_1 \times A_2$ its graph, and by $\Gamma^T_{\varphi}\subset A_2 \times A_1$ the transpose of $\Gamma_\varphi$. We choose a square root $\sqrt{D} \in \oh_K$ and view $\sqrt{D}$ as an element of $\End(A)$ if no confusion can arise, e.g., in the notation $\Gamma_{\sqrt{D}} \subset A \times A$. 

Assume from now on that $\ell = 2t$ is even and $0<t \leq k-1$. For $\varphi \colon A_F \to A'$ as above, define 
\[
Y_{\varphi}:= \Gamma_{\sqrt{D}}^{k-1-t}\times (\Gamma_{\varphi}^T)^{2t} \subset (A' \times A')^{k-1-t} \times (A' \times  A_F)^{2t} \simeq X_{Q_\varphi} \subset X_{F}.
\]
For each ideal $\mathfrak{a} \trianglelefteq \oh_K$ coprime to $M$, we have an isogeny $\phi_{\fraka}: A\lra A/A[\fraka] =: A^\fa$. We write $Q^\fa:=Q_{\phi_\fa}$ and $Y^\fa:=Y_{\phi_\fa}$. 
When $\fa=\oh_K$, we recover the point $Q=Q^{\oh_K}=(\alpha,\beta)\in X(M)(F)$. We write $Y^\fa = Y^\fa_Q$ if we want to emphasize the dependence on the initial choice of point $Q$. 

Let $\corr^0(X,X)_K$ be the ring of algebraic correspondences modulo rational equivalence on $X$ with coefficients in $K$, as in \cite[p. 755]{bdp2}.   
We define the idempotent \[\epsilon:=\epsilon_W \otimes \epsilon_{2t}\in \corr^0(X,X)_K,\] 
where $\epsilon_W$ is the Deligne--Scholl \cite{scholl} projector on $W_{2k-2}$ (i.e.,\ the Chow motive of weight $2k$ cusp forms) and, for any $m\geq 1$, $\epsilon_m$ is the idempotent
\[
\epsilon_{m}:=\left( \frac{\sqrt{D}+[\sqrt{D}]}{2\sqrt{D}} \right)^{\otimes m} \circ \left( \frac{1-[-1]}{2} \right)^{\otimes m} \in \corr^0(A^{m},A^{m})_K.
\]
Define also
\[
\bar{\epsilon}_{m}:=\left( \frac{\sqrt{D}-[\sqrt{D}]}{2\sqrt{D}} \right)^{\otimes m} \circ \left( \frac{1-[-1]}{2} \right)^{\otimes m} \in \corr^0(A^{m},A^{m})_K,
\]
and
\begin{equation}\label{eq:kappam}
    \kappa_{m}=\epsilon_{m}
+ \bar \epsilon_{m}.
\end{equation}
Define
\[
\bar \epsilon:=\epsilon_W\otimes\bar \epsilon_{2t}\in \corr^0(X,X)_K,
\]
\[
\epsilon'=\epsilon_W\otimes\kappa_{2t}\in \corr^0(X,X)_\Q.
\]
For any Weil cohomology theory $H^*(-)$, we have \cite[Prop. 2.4]{bdp1} 
\begin{align*}
\epsilon_W H^*(W_{2k-2}) & = \epsilon_W H^{2k-1}(W_{2k-2})\\ 
\kappa_{2t}H^*(A^{2t}) & = \kappa_{2t}H^{2t}(A^{2t}),
\end{align*}
and so $\epsilon'H^*(X) = \epsilon'H^{2k + 2t-1}(X)$.  

\begin{lemma}\label{lem: null homologous}
    For all $B=(\alpha, \beta)$ and  $\varphi \colon A_F \to A'$ as above, the cycles $\epsilon Y_\varphi$ and $\bar{\epsilon}Y_\varphi$ are null-homologous. In other words,
\[
\epsilon Y_{\varphi} \in \CH^{k+t}(X_F)_{0, K} \qquad \text{ and } \qquad \bar \epsilon Y_\varphi \in \CH^{k+t}(X_F)_{0, K},
\]
where $\CH^j(X_F)_{0,K}$ denotes the kernel of the cycle class map $\CH^j(X_F)_K \to H^{2j}(X)(j) \otimes K$.
\end{lemma}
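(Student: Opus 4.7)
The plan is to reduce the claim to a pure Künneth-degree computation on the cohomology of $X$. The cycle $Y_\varphi = \Gamma_{\sqrt{D}}^{k-1-t}\times(\Gamma_\varphi^T)^{2t}$ sits inside the fiber $X_{Q_\varphi}\simeq (A')^{2k-2}\times A_F^{2t}$ of $X\to X(M)$ and has dimension $(k-1-t)+2t = k+t-1$, so codimension $k+t$ in $X$. Hence the cycle class $[Y_\varphi]$ lives in $H^{2(k+t)}(X)$. The strategy is simply to show that $\epsilon$ (and $\bar\epsilon$) annihilate this entire cohomology group.

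First, I would write the Künneth decomposition
\[
H^{2k+2t}(X) \;=\; \bigoplus_{i+j=2k+2t} H^i(W_{2k-2})\otimes H^j(A^{2t}),
\]
and recall the two properties cited immediately above the lemma: $\epsilon_W H^*(W_{2k-2}) = \epsilon_W H^{2k-1}(W_{2k-2})$, and $\kappa_{2t} H^*(A^{2t}) = \kappa_{2t}H^{2t}(A^{2t})$. Since $\epsilon_{2t}$ and $\bar\epsilon_{2t}$ are sub-projectors of $\kappa_{2t}$ (they are orthogonal and sum to $\kappa_{2t}$), the same concentration holds for each of them, namely $\epsilon_{2t}H^*(A^{2t}) = \epsilon_{2t}H^{2t}(A^{2t})$ and likewise for $\bar\epsilon_{2t}$.

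Now, for $\epsilon = \epsilon_W\otimes\epsilon_{2t}$ to act non-trivially on a Künneth summand $H^i(W_{2k-2})\otimes H^j(A^{2t})$, we would need $i=2k-1$ and $j=2t$, forcing $i+j = 2k+2t-1$. Since $2k+2t-1 \neq 2k+2t$, no such summand appears in $H^{2k+2t}(X)$, and therefore $\epsilon H^{2k+2t}(X) = 0$. Applying the correspondence $\epsilon$ to $[Y_\varphi]$ and using compatibility of the cycle class map with the action of correspondences yields $[\epsilon Y_\varphi] = \epsilon [Y_\varphi] = 0$. The identical argument with $\bar\epsilon_{2t}$ in place of $\epsilon_{2t}$ shows $[\bar\epsilon Y_\varphi]=0$.

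The argument is essentially forced by the parities, so I do not expect any real obstacle; the only thing to double check is that the Deligne--Scholl projector $\epsilon_W$ actually has the stated concentration property on the (possibly singular) compactification $W_{2k-2}$, which is the content of \cite{scholl} and is invoked in \cite[Prop.~2.4]{bdp1}. All coefficients can be taken in $K$ (or any field containing $\sqrt{D}$) so that the idempotents $\epsilon_{2t}$ and $\bar\epsilon_{2t}$ make sense, which matches the stated target $\CH^{k+t}(X_F)_{0,K}$.
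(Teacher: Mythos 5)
Your proof is correct and takes essentially the same route as the paper: the paper observes that $\epsilon' = \epsilon_W\otimes\kappa_{2t}$ concentrates cohomology in the odd degree $2k+2t-1$, hence $\epsilon'$ (and therefore the subprojectors $\epsilon$ and $\bar\epsilon$) kills $H^{2(k+t)}(X)$. You have merely unpacked the same Künneth/concentration argument in a bit more detail.
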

\begin{proof}
    This follows from the fact that $\epsilon'$ (and hence $\epsilon$ and $\bar{\epsilon}$) annihilates even degree cohomology.
\end{proof}

\subsection{Generalized Heegner cycles}

Let $N$ be a divisor of $M$, and let $\pi_{M,N} \colon X(M)\to X_0(N)$ be the natural morphism of curves over $H$ given by $(E, \alpha,\beta) \mapsto (E,\langle \frac{M}{N} \alpha \rangle)$.  

We assume from now on the Heegner hypothesis \eqref{HH}:  every prime dividing $N$ splits in $K$.  This guarantees that there exists an ideal $\fn \trianglelefteq \oh_K$ such that $\oh_K/\fn \simeq \Z/N\Z$, and hence $A[N] \simeq A[\fn] \oplus A[\overline{\fn}]$.  The pair $(A,A[\fn])$ corresponds to a Heegner point $P_\fn \in X_0(N)(H)$ \cite{grossHP}. Let $F/H$ be a finite extension splitting the finite $H$-scheme $\pi^{-1}_{M,N}(P_\fn)$. For each $ \fa \trianglelefteq \oh_K$ coprime to $M$, define
\begin{equation}\label{def:Za}
    Z^\fa_\fn = \sum_{Q \in \pi_{M,N}^{-1}(P_\fn)(F)} \epsilon Y_Q^\fa \in \CH^{k+t}(X)_{0,K}
\end{equation}
\begin{equation}\label{def:Zbara}
    \bar{Z}^\fa_\fn = \sum_{Q \in \pi_{M,N}^{-1}(P_\fn)(F)} \bar{\epsilon} Y_Q^\fa \in \CH^{k+t}(X)_{0,K},
\end{equation}
which a priori only live in $\CH^{k+t}(X_F)_K$, but clearly descend to $\CH^{k+t}(X)_{0,K}$. 

Let $\chi$ be an unramified Hecke character of $K$ of type $(2t,0)$. For each ideal class $\cA=[\fraka]$, we choose a representative $\fa$ of norm prime to $M$ and define 
\[
Z_\cA := \chi(\fraka)^{-1} Z_\fn^\fa \in \CH^{k+t}(X)_{0, K(\chi)},
\]
\[
\bar Z_\cA := \chi(\bar\fraka)^{-1} \bar{Z}_\fn^\fa \in  \CH^{k+t}(X)_{0, K(\chi)}.
\]
Assuming the conjectural injectivity of Abel--Jacobi maps, the cycles $Z_\cA$ and $\bar Z_\cA$ do not depend on the choice of representative ideal $\fa\in \cA$ (see \cite[Prop. 4.6]{pGZshnidman}). 
Finally, we define 
\begin{equation}\label{def:delchi}
\Delta_\chi := \frac{1}{\sqrt{\deg(\pi_{M,N})}} \sum_{\cA} (Z_{\cA}+\bar{Z}_{\bar{\cA}})\in \CH^{k+t}(X)_{0, \C}.
\end{equation}

\begin{remark}
    The cycle $\Delta_\chi$ depends on the choice of $\fn$, but its Beilinson--Bloch height 
    is independent of this choice, since the Atkin--Lehner involutions on $X_0(N)$ act transitively on the points $P_\fn$. Likewise the height of $\Delta_\chi$
    is independent of $M$ due to the square root normalization factor.
\end{remark}

\subsection{Integral models}\label{ss:integral}
In order to compute heights we must use integral models of generalized Kuga--Sato variety and generalized Heegner cycles, as discussed in Section \ref{sec:height}. 

Suppose $M$ is a multiple of $N$ such that $M = N_1N_2$ with $(N_1,N_2) = 1$ and $N_i \geq 3$. We will compute heights over a finite extension $F/H$ which is unramified at $M$ and with the property that $A$ has everywhere good reduction over $F$. These two conditions are compatible: since $N$ is prime to $D = \Disc(K)$, we can always choose $A$ and $M$ so that $A$ has good reduction at all primes dividing $M$, and we may then take $F$ to be the minimal number field over which $A$ attains good reduction. We may and do moreover assume that all primes dividing $M$ split in $K$. We write $X$, $X(M)$, et cetera,  instead of $X_F, X(M)_F$, etc. 

Let $\W \to \oh_F$ be the proper regular model for $W_{2k-2}$ constructed in \cite[\S2.2]{zhang}, and let $\mathbf{A}$ be the N\'eron model for $A$ over $\oh_F$. Since $A/F$ has everywhere good reduction, $\mathbf{A} \to \oh_F$ is smooth and we conclude:

\begin{proposition}
    $\X := \W \times_{\oh_F} \mathbf{A}^\ell$ is a proper regular model for $X$ over $\oh_F$. 
\end{proposition}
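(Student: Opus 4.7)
The plan is to verify the three defining properties of ``proper regular model'' in turn: correct generic fiber, properness over $\oh_F$, and regularity of the total space. All three are formal once the good-reduction hypothesis on $A/F$ has been absorbed.

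First, the generic fiber. By construction $\W_F = W_{2k-2}$ and $\mathbf{A}_F = A$, so formation of the fiber product commutes with base change along $\spec F \to \spec \oh_F$, giving $\X_F = W_{2k-2}\times_F A^{\ell} = X$. This is the identification we need.

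Next, properness. The morphism $\W \to \spec \oh_F$ is proper by construction in \cite[\S 2.2]{zhang}. Since $A/F$ has everywhere good reduction over $F$, its N\'eron model $\mathbf{A}\to \spec\oh_F$ is an abelian scheme; in particular it is smooth and proper, and hence so is any finite fiber power $\mathbf{A}^{\ell}$. Properness is stable under base change and composition, so the composite $\X = \W \times_{\oh_F} \mathbf{A}^{\ell} \to \spec\oh_F$ is proper.

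Finally, regularity. Because $\mathbf{A}\to \spec\oh_F$ is smooth, so is $\mathbf{A}^{\ell}\to \spec\oh_F$, and smoothness is preserved under base change, so the projection $\X = \W\times_{\oh_F} \mathbf{A}^{\ell} \to \W$ is smooth. Any scheme smooth over a regular scheme is regular; since $\W$ is regular by \cite[\S 2.2]{zhang}, we conclude that $\X$ is regular.

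There is essentially no obstacle here: the only substantive input is that good reduction of $A$ at every place of $F$ makes $\mathbf{A}^{\ell}\to\spec\oh_F$ smooth (rather than merely a N\'eron model with potentially non-smooth fibers), which is exactly what is required for the smooth base change argument that transports regularity from $\W$ to $\X$.
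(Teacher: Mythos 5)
Your proof is correct and fills in exactly the details the paper leaves implicit: the paper gives no written proof, merely observing that good reduction of $A$ over $F$ makes $\mathbf{A}\to\oh_F$ smooth and then stating the proposition, so your verification that the generic fiber is $X$, that properness follows since $\mathbf{A}^\ell$ is proper (an abelian scheme) and $\W$ is proper, and that regularity descends from $\W$ along the smooth projection $\X\to\W$, is precisely the intended argument. One small remark: it is worth saying explicitly, as you do, that properness of $\mathbf{A}$ (not just smoothness) depends on the good-reduction hypothesis, since a general N\'eron model is smooth and separated but not proper.
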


As in \cite[3.1]{zhang}, we may extend the cycles $Z_\fn^\fa$ and $\bar{Z}_\fn^\fa$ to cycles on $\X$ so that the  height pairing $\langle \Delta_\chi, \Delta_\chi\rangle^{\GS}_F$ is well-defined (i.e., conditions (a)-(c) of Section \ref{sec:height} are satisfied). Just as in \cite{zhang}, we are implicitly making use of the fact that in characteristic $p \mid M$, generalized Heegner cycles live in fibers above ordinary (and hence smooth!) points of $X(M)_{\F_p}$, so that the Zariski closure of the cycles in the generic fiber automatically satisfy condition $(a)$. 

\subsection{Hecke operators}

The usual Hecke operators on higher weight modular forms come from Hecke correspondence on $W_{2k-2}$, and these extend to $\W$ as in \cite[2.2]{zhang}. We define correspondences $T_m$ on $X$ (and similarly $\X$) by pullback from $W_{2k-2}$ (resp.\ $\W$). 

\section{Variations of Hodge structures}\label{s:PVHS}

In order to use Brylinski's formalism to compute heights of generalized Heegner cycles, we need to construct the Green's kernel of Proposition \ref{prop:bry} associated to a certain polarized variation of Hodge structures (PVHS) over $X(M)$. In this section we define the relevant PVHS, and  in the next section we compute the associated Laplacian.

\subsection{Polarized variations of Hodge structures}

Let $T$ be a subring of $\R$, and let $H$ be a $T$-Hodge structure of weight $m$.
\begin{definition}\label{def:pola}
A polarization on $H$ is a $(-1)^m$-symmetric pairing $( \;, \; ) \colon H \times H \lra T$, whose base-change $H_\C \times H_\C \lra \C$ satisfies $(H_\C^{p,q}, H_\C^{p',q'})=0$ unless $(p',q')=(q,p)$, as well as  $i^{p-q}(v, \bar{v})>0$ for all $0\neq v\in H_\C^{p,q}$. 
\end{definition}

\begin{remark}\label{rem:herm}
The Hermitian form $\langle v,w \rangle:=i^{p-q}(v, \bar{w})$ on $H_\C^{p,q}$ is positive-definite. Moreover, the orthogonal complement of $\Fil^{p} H_{\C}$ with respect to $(\;, \; )$ is $\Fil^{m-p+1} H_\C$.
\end{remark}

Let $X$ be a complex manifold and $V$ a local system of finitely generated $T$-modules over $X$. Let $V_\C=V\otimes_T \C$ be the local system of finite dimensional complex vector spaces over $X$, and let $\cV:=\oh_X \otimes_\C V_\C$ be the corresponding holomorphic vector bundle over $X$ with integrable connection $\nabla:=\partial \otimes 1$. 

\begin{definition}\label{def:PVHS}
A $T$-PVHS with underlying local system $V$ of weight $m$ is the data of:\footnote{Technically speaking, the PVHS in Definition \ref{def:PVHS} is $(V, \Fil^{\bullet} \cV)$, but in practice we will simply call $\cV$ the PVHS, when no confusion can arise.} 
\begin{itemize}
    \item a finite decreasing filtration by holomorphic subbundles $\Fil^{p+1} \cV \subset \Fil^p \cV$, each locally a direct factor in $\cV$;
    \item a flat bilinear pairing $( \: , \: ) \colon V \times V \lra T(-m)$;
\end{itemize}
with the following properties:
\begin{itemize}
    \item[(i)] For each $x\in X$, the induced filtration $\Fil^p_x:=\Fil^p \cV_x$ on the fiber $\cV_x$ is the Hodge filtration of a pure Hodge structure of weight $m$;
    \item[(ii)] $\nabla(\Fil^p \cV) \subset \Omega^1_X \otimes_{\oh_X} \Fil^{p-1} \cV$ (Griffiths transversality);
    \item[(iii)] For each $x\in X$, $(2\pi i)^m( \: , \: )_x$ gives a polarization on $\cV_x$ in the sense of Definition \ref{def:pola}. 
\end{itemize}
\end{definition}

\begin{remark}\label{rem:flat}
Let $\cV^\infty:=\mathcal{C}^\infty_X\otimes\cV$ be the associated $C^\infty$-bundle with integrable connection $D:=d\otimes \id + \id \otimes \nabla$. The bilinear pairing $(\:, \:)$ of Definition \ref{def:PVHS} (2) induces an $\oh_X\otimes \overline{\oh_X}$-linear pairing $(\:, \:) \colon \cV^\infty \otimes_{\mathcal{C}_X^\infty} \overline{\cV^\infty} \lra \mathcal{C}_X^\infty$. The flatness in Definition \ref{def:PVHS} (2) means that  
    $d(u, \bar{v})=(Du, \overline{v})+(u, \overline{Dv}).$
\end{remark}

\subsection{PVHS over modular curves}

Let $M\geq 3$ be an integer. Let $\bar{C}=X(M)_{\C}$ denote the compact modular curve of full level $M$ structure, base-changed to $\C$. The open modular curve $Y(M)_{\C}$ will be denoted by $C$, and we let $\pi=\pi_{\cE} \colon \cE \lra C$ denote the corresponding universal elliptic curve over $C$. Recall that $C(\C)$ is the disjoint union of $\varphi(M)$ copies of $\Gamma(M)\backslash\cH$.

Consider the relative de Rham cohomology sheaf on $C$
\[
\mathcal{L} := \mathbb{R}^1 \pi_* (0 \lra \mathcal{O}_{\cE}\lra \Omega^1_{\cE/C} \lra 0)=\mathbb{H}^1_{\dR}(\cE/C).
\]
It is a vector bundle of rank $2$ over $C$, whose fibers are given by $\mathcal{L}_{Q} = H^1_{\dR}(E_z)$, where $Q=\Gamma(M)z\in C(\C)$ and $E_z:=\pi^{-1}(z)=\C/\langle 1, z\rangle$. This vector bundle comes equipped with an integrable connection 
\[
\nabla \colon \mathcal{L} \lra \Omega^1_{C} \otimes \mathcal{L}
\]
called the Gauss--Manin connection, as well as a canonical pairing 
\[
( \: , \: ) \colon \mathcal{L} \times \mathcal{L} \lra \oh_{C},
\]
given on the fiber at $Q=\Gamma(M)z\in C(\C)$ by 
\begin{equation}\label{Poin}
( \: , \: )_z \colon H^1_{\dR}(E_z)\times H^1_{\dR}(E_z) \lra \C, \qquad ( \omega_1, \omega_2 )_z= \int_{E_z} \omega_1\wedge \omega_2.
\end{equation}

\begin{remark}
Together with the Hodge filtration given by the line bundle $\Fil^1 \cL=\underline\omega:=\pi_*\Omega^1_{\cE/C}$,
the pairing 
$
( \: , \: )_{-1} \colon \mathcal{L} \times \mathcal{L} \lra \oh_{C}(-1)
$
given on the fiber at $Q=\Gamma(M)z\in C(\C)$ by 
\begin{equation}\label{Poin-1}
( \: , \: )_{-1, z} \colon H^1_{\dR}(E_z)\times H^1_{\dR}(E_z) \lra \C, \qquad ( \omega_1, \omega_2 )_{-1, z}= \frac{1}{2\pi i}\int_{E_z} \omega_1\wedge \omega_2
\end{equation}
makes $\cL$ into a PVHS of weight $1$ with underlying local system $R^1\pi_* \Z$, in the sense of Definition \ref{def:PVHS}. 
\end{remark}

\begin{remark}\label{rem:canext}
The vector bundle $\mathcal{L}$ admits a canonical extension to a vector bundle $\bar{\mathcal{L}}$ over $\bar{C}$ (see \cite[p. 1043-1044]{bdp1} or \cite[p. 4]{brylinski}). The Gauss--Manin connection extends to a connection with log poles 
\[
\nabla \colon \bar{\mathcal{L}} \lra \Omega^1_{\bar{C}}(\log Z) \otimes \bar{\mathcal{L}},
\]
where $Z$ denotes the cuspidal divisor $\bar{C}\setminus C$. The above pairing also admits an extension. Because the cycles defined in Section \ref{s:GHC} live in non-cuspidal fibers of Kuga--Sato varieties, we will not have any need for the defining properties of these extensions. 
\end{remark}

Given $n\geq 1$, we define vector bundles $\mathcal{L}_n:=\sym^n \mathcal{L}$ and write $\nabla_n \colon \mathcal{L}_n \lra \Omega^1_{C} \otimes \mathcal{L}_n$ for the induced connections and 
\[
( \: , \: )_n \colon \mathcal{L}_n \times \mathcal{L}_n \lra \oh_{C},
\]
for the induced pairing given by 
\[
( x_1\ldots x_n , y_1 \ldots y_n )_n:= \frac{1}{n!} \sum_{\sigma\in S_n} ( x_1, y_{\sigma(1)})\ldots ( x_n, y_{\sigma(n)}).
\]

\begin{remark}
The normalized pairing $$( \: , \: )_{-n}:=(2\pi i )^{-n}( \: , \: )_{n} \colon \mathcal{L}_n \times \mathcal{L}_n \lra \oh_{C}(-n)$$ makes $\cL_n$ into a PVHS of weight $n$ with underlying local system $\sym^n R^1 \pi_* \Z$, in the sense of Definition \ref{def:PVHS}.
\end{remark}

Furthermore, for $m\geq 0$, we define vector bundles 
\[
\cL_{n,m}:=\cL_n \otimes \kappa_m H^{m}(A^m_\C),
\]
with $\kappa_m$ defined by \eqref{eq:kappam}.
We naturally get a connection 
\[
\nabla_{n,m} \colon \cL_{n,m}\lra \Omega^1_{C}\otimes \cL_{n,m}
\]
and a pairing 
\[
( \: , \: )_{n,m}\colon \mathcal{L}_{n,m} \times \mathcal{L}_{n,m} \lra \oh_{C},
\]
using the canonical pairing \eqref{Poin} on $H^1_{\dR}(A_{\C})$.

\begin{remark}
The normalized pairing $$( \: , \: )_{-n, -m}:=(2\pi i )^{-(n+m)}( \: , \: )_{n, m} \colon \mathcal{L}_{n,m} \times \mathcal{L}_{n,m} \lra \oh_{C}(-(n+m))$$ makes $\cL_{n,m}$ into a PVHS of weight $n+m$ with underlying local system $\sym^n R^1 \pi_* \Z\otimes \kappa_m H^m_B(A^m, \Z)$.
\end{remark}

Consider the local systems
\[
\mathbb{L}^B := R^1 \pi_* \Z, \qquad \mathbb{L}^B_n := \sym^n \bL^B, \qquad 
\mathbb{L}^B_{n,m} := \bL^B_n \otimes \kappa_m H^m_B(A^m, \Z),
\]
with fibers at $Q=\Gamma(M)z\in C(\C)$ given respectively by the Betti cohomology groups 
\[
\mathbb{L}^B(Q) := H^1_B(E_z, \Z), \; \mathbb{L}^B_{n}(Q) := \sym^n H^1_B(E_z, \Z), \; 
\mathbb{L}^B_{n,m}(Q) := \sym^n H^1_B(E_z, \Z) \otimes \kappa_m H^m_B(A^m, \Z).
\]
The associated complex local systems 
\[
\mathbb{L} := \C \otimes_\Z \mathbb{L}^B, \qquad \mathbb{L}_n= \C \otimes_\Z\mathbb{L}^B_n , \qquad 
\mathbb{L}_{n,m} := \C \otimes_\Z\mathbb{L}^B_{n,m}
\]
are the sheaves of horizontal sections of $(\cL, \nabla), (\cL_n, \nabla_n)$, and $(\cL_{n,m}, \nabla_{n,m})$ over $C(\C)$ (in the complex topology). In other words, we have 
\[
\cL=\oh_{C}\otimes_\C \bL , \qquad 
\cL_n=\oh_{C}\otimes_\C\bL_n, \qquad
\cL_{n,m}=\oh_{C}\otimes_\C\bL_{n,m}.
\]

Later, we will be interested in the case were $n=2k-2$ and $m=2t$, and we will consider the vector bundle
\begin{equation}\label{bundleW}
    \mathcal{W}=\mathcal{W}_{k,t}:= \cL_{2k-2, 2t}(k+t-1)= \cL_{2k-2}(k-1) \otimes \kappa_{2t} H^{2t}_{\dR}(A_{\C}^{2t})(t).
\end{equation}
The pairing 
\[
(\:, \:)_{k,t}^0:=(\:, \:)_{-2(k-1),-2t} \colon  \mathcal{W}\times  \mathcal{W}  \lra \cO_{C}
\]
makes $\mathcal{W}$ into a PVHS of weight $0$, in the sense of Definition \ref{def:PVHS}, with underlying local system 
\[
    W=W_{k,t}:=(\sym^{2k-2} R^1 \pi_* \Z)(k-1) \otimes \kappa_{2t} H_B^{2t}(A^{2t}, \Z)(t).
\] 
In view of Remark \ref{rem:canext}, the PVHS $\mathcal{W}$ admits a canonical extension $\bar{\mathcal{W}}$ to a vector bundle on $\bar{C}$ equipped with a connection with log poles.

\subsection{PVHS over the upper-half plane}\label{s:univellH}

We use the following conventions (see also Section \ref{s:conv}): a point of the upper-half plane $\cH$ will be denoted by $z=x+iy$, while the notation $\tau\in \cH$ will be reserved for imaginary quadratic points. The complex coordinate on the elliptic curve $\C/\langle 1, z\rangle$ will be denoted by $w$, and the same goes for a CM elliptic curve $\C/\langle 1, \tau\rangle$.

Consider the projection map 
\[
\pr \colon \bigsqcup\mathcal{H}\lra \bigsqcup \Gamma(M)\backslash \mathcal{H}=C(\C),
\]
with the disjoint union indexed by the primitive $M^{\text{th}}$ roots of unity.
Using this projection, we pull all the previously defined structures back to obtain vector bundles and local systems over $\bigsqcup\mathcal{H}$: 
\[
\tilde{\mathcal{L}}_{n}:=\pr^*(\mathcal{L}_{n}), \qquad \tilde{\mathbb{L}}_{n}:=\pr^*(\mathbb{L}_{n}), \qquad \text{etc...}
\]

Let $W_{2k-2}^0:=W_{2k-2}\times_{\bar{C}} C=(\cE)^{2k-2}$ and $X^0:=X\times_{\bar{C}} C=(\cE)^{2k-2}\times_H A^{2t}$. Denote by $\tcE:= \pr^*(\cE)$ the pull-back of the universal elliptic curve with $\Gamma(M)$-level structure with its structural map $\tilde\pi \colon \tcE \lra \bigsqcup\cH$. This is the universal elliptic curve over $\bigsqcup\cH$, or rather the disjoint union of $\varphi(M)$ copies of the universal elliptic curve over $\cH$. The latter can be described as the quotient $\Z^2\backslash (\C\times \cH)$, with $(m,n)\in \Z^2$ acting on $(w, z)\in \C\times \cH$ via the rule
\[
(m,n)\cdot (w, z):=(w+m+nz, z).
\]
The fiber over $z\in \cH$ is clearly $\tilde\pi^{-1}(z)=\C/\langle 1, z \rangle$. Let $\tW^0_{2k-2}:=(\tcE)^{2k-2}$ be the $(2k-2)$-fold fiber product of $\tcE$ over $\bigsqcup\cH$. It can be described as the disjoint union of copies of the quotient $(\Z^2)^{(2k-2)}\backslash (\C^{2k-2}\times \cH)$. We also define $\tX^0:=(\tW^0_{2k-2}\times_H A^{2t})(\C)$. We then have $W^0_{2k-2}(\C)=\Gamma(M)\backslash \tW^0_{2k-2}$ and $X^0(\C)=\Gamma(M)\backslash \tX^0$, where $\Gamma(M)$ acts on $\tcE^0$ via 
\[
\left(\begin{matrix} a &b \\ c & d \end{matrix}\right)\cdot (w, z)=\left( \frac{w}{cz+d}, \frac{az+b}{cz+d} \right).
\]

Taking $n=2k-2$ and $m=2t$, we define the PVHS
\begin{equation}\label{PVHS}
\tilde{\mathcal{W}}=\tilde{\mathcal{W}}_{k,t}:= \tilde{\mathcal{L}}_{2k-2}(k-1) \otimes \kappa_{2t} H^{2t}_{\dR}(A^{2t}_{\C})(t)
\end{equation}
with polarization inducing pairing  
\begin{equation}\label{polaa}
(\:, \:)_{k,t}^0:=(\:, \:)_{-2(k-1),-2t} \colon  \tilde{\mathcal{W}}\times  \tilde{\mathcal{W}}  \lra \cO_{\cH}.
\end{equation}
The underlying local system is
\[
\tilde{W}=\tilde{\bL}^B_{2k-2, 2t}(k+t-1).
\]

\subsection{Connection with the formalism of Brylinski}\label{s:brylinski}

Recall that $w:=a+ib$ denotes the complex coordinate on $E_z(\C):=\tilde{\pi}^{-1}(z)=\C/\langle 1, z \rangle$. 
Then $\underline{\omega}_z$ is generated by $dw$ and $\tilde{\cL}(z)=H^1_{\dR}(E_z)$ admits the (canonical but non-holomorphic) Hodge decomposition 
\[
H^1_{\dR}(E_z)=\C dw \oplus \C d\bar{w}, \qquad H^{1,0}(E_z)=\C dw, \qquad H^{0,1}(E_z)=\C d\bar{w}.
\]
Using the Poincar\'e pairing \eqref{Poin}, we see that 
\begin{equation}\label{Poincalc}
( dw, d\bar{w})_z=\int_{E_z} dw\wedge d\bar{w}=-2i\int_{E_z} dadb = -(z-\bar{z}).
\end{equation}

Let $p_1$ and $p_z$ denote the elements of $H_1(E_z(\C), \Z)$ corresponding to a closed path from $0$ to $1$ and $0$ to $z$ in $E_z(\C)$ respectively. Write $\eta_1$ and $\eta_z$ for the elements in $H^1(E_z(\C), \Q)$, which when viewed in $H_{\dR}^1(E_z)$ satisfy 
\[
( \omega, \eta_1 )_z = \int_{p_1} \omega, \qquad ( \omega, \eta_z )_z = \int_{p_z} \omega, \qquad \forall \omega\in H^1_{\dR}(E_z),
\]
with respect to the Poincar\'e pairing \eqref{Poin}.
Then $( dw, \eta_1 )_{z}=1, ( dw, \eta_z )_{z}=z$, $( d\bar{w}, \eta_1 )_{z}=1$, and $( d\bar{w}, \eta_z )_{z}=\bar{z}$.
It follows from \eqref{Poincalc} that 
\begin{equation}\label{eq:dw}
dw = z \eta_1 - \eta_z \qquad \text{ and } \qquad d\bar{w}=\bar{z} \eta_1 - \eta_z.
\end{equation}
Define the skew-symmetric pairing 
\[
( \:, \: )_{-1} \colon H^1(E_z(\C), \R)\times H^1(E_z(\C), \R) \lra \R(-1), \qquad
(\omega_1,\omega_2)_{-1} := \frac{1}{2\pi i}( \omega_1, \omega_2 )_z=\frac{1}{2\pi i}\int_{E_z} \omega_1\wedge \omega_2.
\]
Then with respect to the basis $\{ \eta_1, -\eta_z \}$, $H^1(E_z(\C), \R)$ is a $2$-dimensional real vector space with a skew-symmetric bilinear pairing given by $(\eta_1, -\eta_z)_{-1}=-1/2\pi i$.

Following Brylinski \cite{brylinski},
we now consider the standard representation $V=\R^2$ of $G:=\GL_2(\R)^+$ with basis $\{ u_1, u_2 \}$. We denote the representation by $\rho \colon G \lra \GL(V)$. In detail, we let $g=\left( \begin{smallmatrix} a&b \\ c&d \end{smallmatrix} \right)\in G$ act on $v:=(\lambda_1, \lambda_2)=\lambda_1 u_1 + \lambda_2 u_2\in V$ via 
\[
g\cdot e= \rho(g)(e)=
\left( \begin{matrix} a&b \\ c&d \end{matrix} \right)\left(\begin{matrix} \lambda_1 \\ \lambda_2 \end{matrix} \right)
=\left(\begin{matrix} a\lambda_1+b\lambda_2 \\ c\lambda_1+d\lambda_2 \end{matrix} \right)=(a\lambda_1+b\lambda_2)u_1+(c\lambda_1+d\lambda_2)u_2.
\]

The group $G$ acts on the upper half-plane via
\[
g\cdot z=\left( \begin{matrix} a&b \\ c&d \end{matrix} \right)\cdot z=\frac{az+b}{cz+d}.
\]
Let $K\subset G$ denote the stabilizer of $i\in \cH$. Then $K$ consists of matrices $\left( \begin{smallmatrix} a&b \\ -b&a \end{smallmatrix} \right)$ such that $a^2+b^2\neq 0$, hence is identified with $\C^\times$ by sending such a matrix to $a+bi$. The upper-half plane $\cH$ is thus identified with the quotient $G\slash K$. From this perspective, it is clear that $$\cV:=G\times^K V=(G\times V)\slash [(gk, v)\sim (g, \rho(k)(v))]$$
is a $G$-equivariant vector bundle on $\cH$. It is trivialized via the (not $G$-equivariant) holomorphic identification 
\[
\varphi \colon \cH \times V \overset{\sim}{\lra} \cV, \qquad (gK, v)\mapsto [(g,\rho(g)^{-1}(v))]. 
\]
The $C^\infty$-sections of $\cV$ are the $C^\infty$-functions $F \colon G \lra V$ satisfying $F(gk)=\rho(k)^{-1}(F(g))$ for all $g\in G$, $k\in K$. In particular, the functions 
\[ 
u_i \colon G\lra V, \qquad g \mapsto \rho(g)^{-1}(u_i),  \qquad i=1,2,
\]
are global sections of $\cV$ (note the slight abuse of notation).
The holomorphic subbundle $\Fil^1 \cV$ is generated by the nowhere vanishing holomorphic section $z u_1 + u_2$.
Define a skew-symmetric bilinear pairing $(\:, \: ) \colon V\times V\lra \R(-1)$ by $(u_1, u_2)=-1/2\pi i$. Then the map
\begin{equation}\label{eq:isomV}
(V, (\:, \:)) \overset{\sim}{\lra} (H^1(\tilde{\pi}^{-1}(z), \R), (\:, \:)_{-1}), \qquad
\{ u_1, u_2 \} \mapsto \{ \eta_1, -\eta_z \}
\end{equation}
is an isomorphism of real vector spaces endowed with bilinear forms.

The pairing \eqref{Poin-1} is a polarization in the sense of Definition \ref{def:pola}. If we define $\langle \:, \: \rangle$ on $\cV_z\times \cV_z$ by $\langle v, w \rangle:= i^{p-q}2\pi i(v, \bar{w})$ for $v\in \cV_z^{p,q}$, then we recover Brylinski's \cite{brylinski} calculation that 
\begin{align*}
\langle z u_1 + u_2, z u_1 + u_2\rangle_z & =2y \\
\langle z u_1 + u_2, \bar{z} u_1 + u_2\rangle_z & =0 \\
\langle \bar{z} u_1 + u_2, \bar{z} u_1 + u_2\rangle_z & =2y.
\end{align*}

We conclude that 
\[
\left\{ \frac{1}{\sqrt{2y}}(z u_1 + u_2), \frac{1}{\sqrt{2y}}(\bar{z} u_1 + u_2) \right\}
\]
is an orthonormal $C^\infty$-basis of the vector bundle $\cV^{\infty}=\cC^{\infty}_{\cH} \otimes \cV$ with respect to the Hermitian form $\langle \:, \: \rangle$ \cite[Lem. 3.1]{brylinski}. 

Let $p\geq 1$ and consider the local system $\sym^{2p}(V)$ together with the pairing
\[
(\:, \:)_{2p} \colon \sym^{2p}(V) \times \sym^{2p}(V) \lra \R(-2p) 
\]
given by 
\begin{equation}\label{pairing:2p}
( v_1\ldots v_{2p}, w_1\ldots w_{2p} )_{2p} := \frac{1}{(2p)!} \sum_{\sigma \in S_{2p}} ( v_1, w_{\sigma(1)}) \ldots ( v_{2p}, w_{\sigma(2p)}).
\end{equation}
Together with this pairing, the associated vector bundle $\sym^{2p}(\cV)$ is a PVHS of weight $2p$ in the sense of Definition \ref{def:PVHS}. The associated positive-definite Hermitian pairing
\[
\langle \:, \: \rangle \colon \sym^{2p}(\cV)^{r,s}\times \sym^{2p}(\cV)^{r,s} \lra \cO_\cH,
\]
as in Remark \ref{rem:herm}, is given by 
\begin{align*}
\langle v_1\ldots v_{2p}, w_1\ldots w_{2p} \rangle_{2p} & := i^{r-s} (2\pi i)^{2p} ( v_1\ldots v_{2p}, \bar{w}_1\ldots \bar{w}_{2p} )_{2p} \\
& = \frac{i^{r-s} (2\pi i)^{2p}}{(2p)!} \sum_{\sigma \in S_{2p}} ( v_1, \bar{w}_{\sigma(1)}) \ldots ( v_{2p}, \bar{w}_{\sigma(2p)}) \\
& =  \frac{1}{(2p)!} \sum_{\sigma \in S_{2p}} \langle v_1, w_{\sigma(1)}\rangle \ldots \langle v_{2p}, w_{\sigma(2p)}\rangle. 
\end{align*}

We now consider the PVHS $\cV_p=\sym^{2p}(\cV)(p)$ of weight $0$ with polarization inducing pairing $( \:, \: )_{2p}^0 \colon \cV_p \times \cV_p \lra \oh_\cH$ defined by \eqref{pairing:2p}. Explicitly, we have
\[
( (2\pi i)^p v_1\ldots v_{2p}, (2\pi i)^p w_1\ldots w_{2p} )_{2p}^0 := \frac{(2\pi i)^{2p}}{(2p)!} \sum_{\sigma \in S_{2p}} ( v_1, w_{\sigma(1)}) \ldots ( v_{2p}, w_{\sigma(2p)}).
\]
Note that the twists make sense (the pairing \eqref{pairing:2p} takes values in $\R(-2p)$).
The associated Hermitian form $\langle \:, \: \rangle_{2p}^0 \colon \cV_p^{s,-s} \times \cV_p^{s,-s} \lra \oh_{\cH}$ of Definition \ref{def:pola} is 
\begin{align*}
 \langle (2\pi i)^p v_1\ldots v_{2p}, (2\pi i)^p w_1\ldots w_{2p} \rangle_{2p}^0 & 
 :=  (i)^{2s} ((2\pi i)^p v_1\ldots v_{2p}, (2\pi i)^p \bar{w}_1\ldots \bar{w}_{2p})_{2p}^0  \\
 & = (-1)^s\frac{(2\pi i)^{2p}}{(2p)!} \sum_{\sigma \in S_{2p}} ( v_1, \bar{w}_{\sigma(1)}) \ldots ( v_{2p}, \bar{w}_{\sigma(2p)}) \\
 & = \frac{1}{(2p)!} \sum_{\sigma \in S_{2p}} \langle v_1, w_{\sigma(1)}\rangle \ldots \langle v_{2p}, w_{\sigma(2p)}\rangle.
\end{align*}

We then calculate
\begin{multline*}
\left\langle (2\pi i)^{p}\left(\frac{1}{\sqrt{2y}}(z u_1 + u_2)\right)^n \left(\frac{1}{\sqrt{2y}}(\bar{z} u_1 + u_2)\right)^{2p-n}, \right. \\
\left. (2\pi i)^{p}\left(\frac{1}{\sqrt{2y}}(z u_1 + u_2)\right)^n \left(\frac{1}{\sqrt{2y}}(\bar{z} u_1 + u_2)\right)^{2p-n}\right\rangle_{2p}^0 
=\binom{2p}{n}^{-1}.
\end{multline*}
It follows that the global sections
\[
v_n:=\frac{\binom{2p}{n}^{1/2} (2\pi i)^p}{(2y)^p}(z u_1 + u_2)^n(\bar{z} u_1 + u_2)^{2p-n},
\]
for $0\leq n\leq 2p$, form an orthonormal $C^\infty$-basis of $\cV_p^{\infty}:=\cC^{\infty}_{\cH}\otimes \cV_p$ with respect to the positive-definite Hermitian form $\langle \:, \: \rangle_p$, and each $v_n$ is pure of type $(n-p, p-n)$. 

Let $\tau_0:=x_0+iy_0\in \cH$ represent the fixed CM elliptic curve $A$. Observe that 
\[
\kappa_{2t} H^{2t}_{\dR}(A_{\C}^{2t})=H^{2t,0}(A^{2t}) \oplus H^{0, 2t}(A^{2t}).
\]
We realize the constant VHS $\kappa_{2t} H^{2t}(A^{2t}, \Q)(t)$ of weight $0$ as arising from the trivial representation $\R^2$ of $G$ with basis $\{ e_1, e_2 \}$. Mimicking what was done above, an orthonormal basis of $C^\infty$-sections is suggestively written as 
\begin{align*}
\mu_0 & :=\frac{(2\pi i)^t}{(2y_0)^t}(\bar{\tau}_0e_1+e_2)^{2t} \\
\mu_{2t} & :=\frac{(2\pi i)^t}{(2y_0)^t}(\tau_0e_1+e_2)^{2t}.
\end{align*}
These sections are of type $(-t, t)$ and $(t, -t)$ respectively. For future use, we let $c:=(2\pi i)^t/(2y_0)^t$.

From now on we take $p=k-1$. Then $\tilde{\cW}=\tilde{\cW}_{k, t}=\cV_{k-1}\otimes \R^2$ is the PVHS defined in \eqref{PVHS}. The $C^\infty$-sections 
\[
w_{n,j}:=v_n \otimes \mu_j, \qquad 0\leq n\leq 2k-2, \qquad j=0, 2t,
\]
form an orthonormal basis of $\tilde{\cW}^{\infty}:=\cC^{\infty}_{\cH} \otimes \tilde{\cW}$ with respect to the Hermitian form associated to the polarization. Each $w_{n,j}$ is of pure type $(n-k+1+j-t, k-1-n+t-j)$. In particular, the $C^\infty$-subbundle $\tilde{\cW}^{\infty}_0$ of sections of pure type $(0,0)$ has rank $2$ with orthonormal basis given by
\[
\{ w_{k+t-1, 0}, w_{k-t-1, 2t}\}.
\]
We have 
\[
w_{k+t-1, 0}=\frac{\binom{2k-2}{k+t-1}^{1/2}(2\pi i)^{k+t-1}}{(2y)^{k-1}(2y_0)^{t}}(z u_1+u_2)^{k+t-1}(\bar{z} u_1+u_2)^{k-t-1}\otimes (\bar{\tau}_0 e_1+e_2)^{2t}
\]
and 
\[
w_{k-t-1, 2t}=\frac{\binom{2k-2}{k-t-1}^{1/2}(2\pi i)^{k+t-1}}{(2y)^{k-1}(2y_0)^{t}}(z u_1+u_2)^{k-t-1}(\bar{z} u_1+u_2)^{k+t-1}\otimes (\tau_0 e_1+e_2)^{2t}.
\]
Observe in particular that $\overline{w_{k+t-1,0}}=w_{k-t-1,2t}$. 

\begin{remark}\label{rem:basis}
The $C^\infty$-subbundle $\tilde{\cW}_{0, \R}$ of real $(0,0)$-vectors (considered in Proposition \ref{prop:bry}) has rank $2$ with basis given by $w(+):=w_{p+t,0}+w_{p-t, 2t}$ and $w(-):=i(w_{p+t,0}-w_{p-t, 2t})$. We prefer to work in the $\C$-vector space $\tilde{\cW}_{0,\R} \otimes K$, which has basis
\begin{equation}\label{eqwww}
w^+(z):=y^t w_{k+t-1, 0}(z) \qquad \text{ and } \qquad w^-(z):=y^{-t} w_{k-t-1, 2t}(z),
\end{equation}
for $z=x+iy\in \cH$.
In this basis, the Laplacian is represented by a  diagonal matrix. 
\end{remark}

\section{Laplacians}\label{s:laplacian}

We compute the Laplacian associated to the PVHS $\tilde{\cW}$ defined in \eqref{PVHS} on sections of type $(0,0)$. This will be used to identify the Green's kernel associated to $\cW$ in the next section. 

\subsection{Hodge star operators}

For general background on Hodge operators, see \cite[Ch. 5]{voisin}. 
Let $\mathcal{C}_\R^\infty$ denote the sheaf of real-valued smooth functions on $\mathcal{H}$ and let $\mathcal{O}_\mathcal{H}$ denote the sheaf of holomorphic functions on $\mathcal{H}$. Let $T_{\R}$ denote the real tangent bundle of $\mathcal{H}$ and let $\mathcal{A}_\R^1$ denote the sheaf of real-valued differential $1$-forms on $\mathcal{H}$. If $z \in \mathcal{H}$ then by definition the dual vector space $(T_{\R, z})^*$ is equal to the fiber $\mathcal{A}^1_{\R, z}$ of $\mathcal{A}_\R^1$ at $z$. 
Consider the Poincar\'e metric given for $z=x+iy\in \mathcal{H}$ on the tangent space $T_{\R, z}$ by 
\[ g_z=\left(\begin{smallmatrix} 1/y^2 & 0 \\ 0 & 1/y^2 \end{smallmatrix}\right). \] 
This defines a Riemannian metric on the upper half-plane $\mathcal{H}$.
The linear map $r : T_{\R, z} \longrightarrow \mathcal{A}^1_{\R, z}$ defined by $Y_z \mapsto g_{z}(\: \cdot \:, Y_z)$ is an isomorphism. If $\alpha_z \in \mathcal{A}^1_{\R, z}$, then let $\alpha_z^\#:= r^{-1}(\alpha_z)$. We define an inner product on $\mathcal{A}^1_{\R, z}$ by 
\[ \langle \alpha_z, \beta_z \rangle_z := g_z(\alpha_z^\#, \beta_z^\#).  \]
One checks that $dx^\#=y^2\frac{\partial}{\partial x}$ and $dy^\#=y^2\frac{\partial}{\partial y}$ so that the inner product is given by the matrix $\left(\begin{smallmatrix} y^2 & 0 \\ 0 & y^2 \end{smallmatrix}\right)$. Finally, we endow $\bigwedge^2 \mathcal{A}^1_{\R, z}$ with the inner product 
\[ \langle dx\wedge dy, dx\wedge dy\rangle_z=\det\left(\begin{smallmatrix} y^2 & 0 \\ 0 & y^2 \end{smallmatrix}\right)=y^4. \]
It follows that the volume form on $\mathcal{H}$ associated to the Poincar\'e metric is given by $\Vol_z=\frac{dx\wedge dy}{y^2}$.

Suppose that a complex ($C^\infty$-bundle) $B$ over $\cH$ is endowed with a positive-definite Hermitian form $\langle \;, \;\rangle$.
Let $\mathcal{A}^{p, q}$ denote the sheaf of complex valued differential forms on $\cH$ of type $(p, q)$. It is equipped with the Hermitian product induced by the Poincar\'e metric. The sheaf of $B$-valued differential forms $\mathcal{A}^{p, q}(B):=\mathcal{A}^{p, q}\otimes_{\mathcal{C}^\infty} B$ of type $(p,q)$ is thus equipped with a positive-definite Hermitian product.
The Hodge star operator for $B$ is an anti-linear isomorphism of sheaves
\[ \bar\ast_{B} : \mathcal{A}^{p, q}(B) \overset{\sim}{\lra} \mathcal{A}^{1-p, 1-q}(B^*) \]
characterised by the following property: if $\beta\otimes \eta\in \mathcal{A}^{p, q}(B)(\cH)$, then for all $z\in \cH$ and all $ \alpha_z \otimes \omega\in \mathcal{A}^{p, q}(B)_z$ we have
\[ \alpha_z \otimes \omega\wedge (\bar\ast_{\cW}(\beta\otimes \eta))_z=\langle \alpha_z\otimes \omega, \beta_z\otimes \eta\rangle_{z} \Vol_z. \]
From this, we deduce that 
\begin{equation}\label{op:star}
\bar\ast_{B}(\beta\otimes \eta)=\overline{\ast \beta} \otimes \eta^*,
\end{equation}
where $\eta^*\in \cW^*$ is the form $\eta^*=\langle -, \eta\rangle$ and 
\begin{equation}\label{hodgestar}
    \ast : \mathcal{A}^k_{\R} \overset{\sim}{\lra} \mathcal{A}^{2-k}_{\R}, \qquad \qquad k=0,1,2,
\end{equation} 
is the Hodge star operator on $\cH$ with respect to the Poincar\'e metric. The latter is a linear bundle isomorphism satisfying 
\begin{equation}\label{eq1}
\begin{array}{lcl}
\ast dx=dy & \qquad \qquad & \ast dy=-dx \\
 \ast 1 = \frac{dx\wedge dy}{y^2} & \qquad \qquad & \ast dx\wedge dy=y^2.
\end{array}
\end{equation} 
One checks that \cite[bottom of p. 168]{huybrechts}
\begin{equation}\label{astbdual}
\bar{\ast}_{B^*}\circ \bar{\ast}_{B}=(-1)^{p+q}
\end{equation}
on $\mathcal{A}^{p, q}(B)$.

We apply this theory to the complex vector bundle $\tilde{\cW}^\infty$ \eqref{PVHS} endowed with the Hermitian pairing given on fibers over $z\in \cH$ by
\begin{multline*}
\langle (2\pi i)^p v_1\ldots v_{2p}\otimes (2\pi i)^t \omega_1,  (2\pi i)^p w_1\ldots w_{2p}\otimes (2\pi i)^t  \omega_2 \rangle_z  \\  = \langle (2\pi i)^p v_1\ldots v_{2p},  (2\pi i)^p w_1\ldots w_{2p}\rangle_{2p, z}^0\langle (2\pi i)^t \omega_1, (2\pi i)^t \omega_2\rangle_{2t, \tau_0}^0.
\end{multline*}
The associated Hodge star operator will be denoted by $\bar\ast_{\cW}:=\bar\ast_{\tilde{\cW}^\infty}$. The one for the dual bundle will similarly be denoted by $\bar\ast_{\cW^*}$. 

\subsection{Gauss--Manin connections}

The holomorphic complex vector bundle $\tilde{\cW}=\oh_{\cH}\otimes_\C \tilde{W}_\C$ \eqref{PVHS} comes equipped with the Gauss--Manin connection 
\[
\tilde{\nabla}:=\partial \otimes 1 \colon \tilde{\cW} \lra \Omega^1_\cH \otimes_{\oh_{\cH}} \tilde{\cW}
\]
satisfying Griffiths transversality
\[
\tilde{\nabla}(\Fil^p)\subset \Omega^1_\cH \otimes \Fil^{p-1}.
\]

\begin{notation}
    In order to make the notation less cumbersome, we drop the tilde in the notations for the bundles and connections in this subsection and the next, and remember that we are working over $\bigsqcup\cH$. 
\end{notation}

We consider the complex $C^\infty$-bundle $\cW^\infty:= \cC^\infty_{\cH}\otimes_{\C} W_{\C}$ and extend the Gauss--Manin connection to $D:= d\otimes 1 \colon \cW^\infty \lra \cA^1_{\C} \otimes_{\cC^\infty_{\cH}} \cW^\infty$. With the description $\cW^{\infty}=\cC^\infty_{\cH}\otimes_{\oh_{\cH}} \cW$, we have 
\[
D=\bar{\partial}\otimes 1 + 1\otimes \nabla=\bar{\partial}_{\cW}+1\otimes \nabla.
\]
(While the operator $d\otimes 1$ is not defined on the holomorphic vector bundle $\cW$, the operator $\bar{\partial}_{\cW}:=\bar{\partial}\otimes 1$ is on the other hand well-defined.)
The pairing $(\: , \:)=(\:, \:)_{k,t}^0$ \eqref{polaa} inducing the structure of PVHS is flat, or horizontal, in the sense of Remark \ref{rem:flat}: 
\begin{equation}\label{horizontal}
d(v,\bar{w})=(D(v), \bar w)+(v, \overline{D(w)}).
\end{equation}
In other words, $D$ is a Hermitian connection in the sense of \cite[Def. 4.2.9]{huybrechts}. It is important to note here that if $\alpha\otimes s\in \cA^1(\cW^\infty)$ and $s'\in \cW^\infty$, then $(\alpha\otimes s, s')=\alpha(s,s')$.
In Zucker's notations \cite[p. 423]{zucker}, define the pairing $((v, w))=(v, \bar{w})$. Then \eqref{horizontal} becomes
\begin{equation}\label{horizontal2}
d((v,w))=((D(v), w))+((v, D(w))).
\end{equation}
Let $C_{\cW}$ denote the Weil operator of $\cW$ given by the direct sum of the scalar operators $i^{p-q}$ on $\cW^{p,q}$. Then the associated Hermitian pairing can be expressed as 
\[
\langle v, w \rangle=((C_{\cW} v, w))=((v, C_{\cW} w)).
\]

On $\cW^\infty= \cC^\infty_{\cH}\otimes_{\C} W_{\C}$, we have
$D=\partial \otimes 1 + \bar\partial\otimes 1$, and we observe by Griffiths transversality that 
\[
D(\cA^{r,s}(\Fil^p))\subset \cA^{r+1,s}(\Fil^{p-1})\oplus \cA^{r,s+1}(\Fil^p).
\]

The complex $C^\infty$-bundle $\cA^1_\C$ decomposes as $\cA^{1,0}\oplus \cA^{0,1}$. Similarly, the PVHS $\cW^\infty$ of weight $0$ admits a Hodge decomposition $\cW^\infty=\oplus_k \cW^{k, -k}$. Following \cite[(1.8)]{zucker}, we have
\[
D(\cA^{r,s}(\cW^{k,-k}))\subset \cA^{r+1,s}(\cW^{k,-k})\oplus \cA^{r+1,s}(\cW^{k-1,-k+1})\oplus \cA^{r,s+1}(\cW^{k,-k}) \oplus \cA^{r,s+1}(\cW^{k+1,-k-1}),
\]
and the Gauss--Manin connection $D$ splits into $4$ components: 
\begin{align*}
\partial'_\cW & \colon \cA^{r,s}(\cW^{k,-k})\lra \cA^{r+1,s}(\cW^{k,-k}) \\
\bar{\partial}'_\cW & \colon \cA^{r,s}(\cW^{k,-k})\lra \cA^{r,s+1}(\cW^{k,-k}) \\
\nabla'_\cW & \colon \cA^{r,s}(\cW^{k,-k})\lra \cA^{r+1,s}(\cW^{k-1,-k+1}) \\
\bar{\nabla}'_\cW & \colon \cA^{r,s}(\cW^{k,-k})\lra \cA^{r,s+1}(\cW^{k+1,-k-1}).
\end{align*}

\begin{lemma}\label{lem:leipniz}
If $\alpha \in \cA^{r,s}$ and $s\in \cW^{k,-k}$, then 
\begin{align*}
\partial'_\cW(\alpha \otimes s) & = \partial(\alpha) \otimes s + (-1)^{r+s}\alpha \wedge \partial'_\cW(s) \\
\bar{\partial}'_\cW(\alpha \otimes s) & = \bar\partial(\alpha) \otimes s + (-1)^{r+s}\alpha \wedge \bar{\partial}'_\cW(s) \\
\nabla'_\cW(\alpha \otimes s) & = (-1)^{r+s} \alpha \wedge \nabla'_\cW(s) \\
\bar{\nabla}'_\cW(\alpha \otimes s) & = (-1)^{r+s} \alpha \wedge \bar{\nabla}'_\cW(s).
\end{align*}
\end{lemma}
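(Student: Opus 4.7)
The plan is to expand $D(\alpha \otimes s)$ using the graded Leibniz rule for the extended connection on $\cA^\bullet(\cW^\infty)$, and then read off each of the four components by projecting onto the bigraded pieces of the target.

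First I would recall that on $\cC^\infty_\cH \otimes_\C W_\C$ we have $D = \partial\otimes 1 + \bar\partial \otimes 1$, and that $D$ extends to $\cA^\bullet(\cW^\infty)$ by the usual rule
\[
D(\alpha \otimes s) = d\alpha \otimes s + (-1)^{r+s}\, \alpha \wedge D(s),
\]
for $\alpha \in \cA^{r,s}$ and $s \in \cW^{k,-k}$. Writing $d\alpha = \partial\alpha + \bar\partial\alpha$ and
\[
D(s) = \partial'_\cW(s) + \bar\partial'_\cW(s) + \nabla'_\cW(s) + \bar\nabla'_\cW(s),
\]
we obtain an expression for $D(\alpha \otimes s)$ as a sum of six terms.

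Next I would classify each of these six terms according to the bigrading on $\cA^{*,*}(\cW^{*,*})$ recalled immediately before the lemma. The term $\partial\alpha \otimes s$ lies in $\cA^{r+1,s}(\cW^{k,-k})$, and $\bar\partial\alpha\otimes s$ in $\cA^{r,s+1}(\cW^{k,-k})$; the four terms $\alpha \wedge \partial'_\cW(s)$, $\alpha \wedge \bar\partial'_\cW(s)$, $\alpha \wedge \nabla'_\cW(s)$, $\alpha \wedge \bar\nabla'_\cW(s)$ (each with sign $(-1)^{r+s}$) land respectively in $\cA^{r+1,s}(\cW^{k,-k})$, $\cA^{r,s+1}(\cW^{k,-k})$, $\cA^{r+1,s}(\cW^{k-1,-k+1})$, $\cA^{r,s+1}(\cW^{k+1,-k-1})$. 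Because the decomposition of $D(\alpha\otimes s)$ into these four pieces is unique, one simply reads off
\[
\partial'_\cW(\alpha\otimes s) = \partial\alpha \otimes s + (-1)^{r+s}\alpha \wedge \partial'_\cW(s),
\]
\[
\bar\partial'_\cW(\alpha\otimes s) = \bar\partial\alpha \otimes s + (-1)^{r+s}\alpha \wedge \bar\partial'_\cW(s),
\]
and similarly $\nabla'_\cW(\alpha\otimes s) = (-1)^{r+s}\alpha\wedge \nabla'_\cW(s)$, $\bar\nabla'_\cW(\alpha\otimes s) = (-1)^{r+s}\alpha\wedge \bar\nabla'_\cW(s)$, since $d\alpha \otimes s$ does not contribute to the two components that change the Hodge type on $\cW$.

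The proof is essentially bookkeeping; the only mild subtlety is making sure the sign in the Leibniz rule for $D$ matches the sign conventions in Zucker \cite{zucker} used to define the four components, and verifying that for a pure-type section $s\in \cW^{k,-k}$ the wedge with $\alpha\in \cA^{r,s}$ preserves the Hodge-type label on $\cW$ (so that the projection to each of the four bigraded subspaces commutes with $\alpha \wedge (\cdot)$). No obstacle is anticipated beyond this.
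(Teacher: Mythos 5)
Your proof is correct and takes the same approach as the paper's own (one-line) proof: the paper simply cites the Leibniz rule $D(\beta\otimes s)=d\beta\otimes s+(-1)^n\beta\wedge D(s)$ and leaves the bigraded bookkeeping implicit, whereas you spell out the classification of all six resulting terms and invoke uniqueness of the Hodge-type decomposition to read off the four components. No gap; your write-up is just a fuller version of the intended argument.
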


\begin{proof}
This follows from the fact that the connection is extended to $D \colon \mathcal{A}^n(\cW) \lra \mathcal{A}^{n+1}(\cW)$ via the Leibniz rule $D(\beta \otimes s)=d\beta \otimes s + (-1)^{n} \beta \wedge D(s)$.
\end{proof}

The Hermitian metric on $\cW$ induces an anti-linear isomorphism 
\begin{equation}\label{dualmap1}
(\;)^* \colon \cW \overset{\sim}{\lra} \cW^*, \qquad  w\lra w^*:=\langle -, w\rangle.
\end{equation}
Since the basis $\{ w_{n,j}$, $0\leq n\leq 2p$, $j=0,2t \}$ is orthonormal with respect to the Hermitian pairing, the dual basis $\{ w_{n,j}^\vee$, $0\leq n\leq 2p$, $j=0,2t \}$ satisfies $w_{n,j}^\vee=\langle -, w_{n,j}\rangle=w_{n,j}^*$. 
We extend \eqref{dualmap1} to an anti-linear isomorphism 
\begin{equation}\label{dualmap}
(\;)^* \colon \cA^{p,q}(\cW) \overset{\sim}{\lra} \cA^{q,p}(\cW^*)
\end{equation}
by declaring that if $\alpha\in \cA^{p,q}$ and $s\in \cA^0(\cW)$, then 
\[
(\alpha\otimes s)^*:=\bar\alpha \otimes s^*.
\]

The complex dual $C^\infty$-bundle $\mathcal{C}_{\cH}^\infty \otimes_{\cO_{\cH}} \cW^*$ has an integral connection $D_{\cW^*}$ defined by
\[
D_{\cW^*}(f)(s)=d(f(s))-f(D(s)).
\]
If $D(s)=\alpha\otimes t\in \cA^1(\cW)$ with $\alpha\in \cA^1_\C$ and $t\in \cA^0(\cW)$, then $f(D(s)):=\alpha f(t)$.
As above, this connection splits into four components $\partial'_{\cW^*}, \bar\partial'_{\cW^*}, \nabla'_{\cW^*},$ and $\bar\nabla'_{\cW^*}$. We extend the connection to $D_{\cW^*} \colon \cA^n(\cW^*) \lra \cA^{n+1}(\cW^*)$ via the usual Leibniz rule $D_{\cW^*}(\beta\otimes t)=d\beta \otimes t +(-1)^n \beta \wedge D_{\cW^*}(t)$.

\begin{lemma}\label{lem:lei}
    For all $A \in \cA^p(\cW)$ and $B\in \cA^q(\cW^*)$,
    the following Leibniz rule holds:
    \[
    d(A\wedge B)=D_{\cW}(A)\wedge B  + (-1)^p A\wedge D_{\cW^*}(B).
    \]
\end{lemma}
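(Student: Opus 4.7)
The plan is to reduce to pure tensors and then expand both sides. By bilinearity of the pairing $\cA^p(\cW)\times\cA^q(\cW^*)\to\cA^{p+q}_\C$ and $\cC^\infty$-linearity of all operators in their form-valued entries, it suffices to prove the identity locally when $A=\alpha\otimes s$ and $B=\beta\otimes t$ with $\alpha\in\cA^p$, $\beta\in\cA^q$, $s\in\cA^0(\cW)$, $t\in\cA^0(\cW^*)$. Under this reduction, $A\wedge B=t(s)\cdot(\alpha\wedge\beta)$, which is an honest form times a smooth function, so the ordinary Leibniz rule on $\cH$ gives
\[
d(A\wedge B)=d(t(s))\wedge(\alpha\wedge\beta)+t(s)\cdot d(\alpha\wedge\beta).
\]

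Next I would invoke the defining property of $D_{\cW^*}$ on sections, namely $d(t(s))=D_{\cW^*}(t)(s)+t(D(s))$, where $t(D(s))$ is the $\cA^1_\C$-valued function obtained by writing $D(s)=\sum_i\gamma_i\otimes s_i$ and setting $t(D(s))=\sum_i t(s_i)\gamma_i$. Combined with $d(\alpha\wedge\beta)=d\alpha\wedge\beta+(-1)^p\alpha\wedge d\beta$, the left-hand side becomes a sum of four terms involving $D_{\cW^*}(t)(s)$, $t(D(s))$, $d\alpha$, and $d\beta$. For the right-hand side, I would compute $D_\cW(A)$ and $D_{\cW^*}(B)$ using the Leibniz rules of Lemma \ref{lem:leipniz} (and its analogue for $\cW^*$), then carefully expand the wedge pairings: one has
\[
(\alpha\wedge D(s))\wedge(\beta\otimes t)=\alpha\wedge t(D(s))\wedge\beta,\qquad (\alpha\otimes s)\wedge(\beta\wedge D_{\cW^*}(t))=\alpha\wedge\beta\wedge D_{\cW^*}(t)(s),
\]
each of which is a straightforward unravelling of the definitions for pure tensors.

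Finally I would match the four terms on each side using the graded commutativity of the wedge product. The two terms involving $d\alpha$ and $d\beta$ match immediately. For the remaining terms, the identities $\eta\wedge(\alpha\wedge\beta)=(-1)^{p+q}\alpha\wedge\beta\wedge\eta$ for any $1$-form $\eta$, together with $\alpha\wedge\eta\wedge\beta=(-1)^q\alpha\wedge\beta\wedge\eta$, translate the LHS contributions $D_{\cW^*}(t)(s)\wedge(\alpha\wedge\beta)$ and $t(D(s))\wedge(\alpha\wedge\beta)$ into the RHS contributions $(-1)^{p+q}\alpha\wedge\beta\wedge D_{\cW^*}(t)(s)$ and $(-1)^p\alpha\wedge t(D(s))\wedge\beta$ respectively, with the factor $(-1)^p$ in front of $A\wedge D_{\cW^*}(B)$ absorbing the sign from the $(-1)^q$ term in $D_{\cW^*}(B)$. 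The whole argument is really just careful bookkeeping of signs; the only non-formal input is the defining property $d(t(s))=D_{\cW^*}(t)(s)+t(D(s))$ for $0$-forms, which is built into the construction of $D_{\cW^*}$. I expect the main source of potential error to be sign tracking in moving odd-degree forms past even/odd-degree forms, so I would organize the calculation so that every commutation is performed explicitly and only once.
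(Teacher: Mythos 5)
Your proposal is correct and takes essentially the same approach as the paper: reduce to pure tensors $A=\alpha\otimes s$, $B=\beta\otimes t$, expand both sides using the graded Leibniz rules for $D_{\cW}$ and $D_{\cW^*}$ together with the defining property $D_{\cW^*}(t)(s)=d(t(s))-t(D(s))$, and match the four resulting terms by sign bookkeeping. The only difference is that you carry out the term-by-term matching that the paper's proof abbreviates as ``an easy calculation,'' and your sign checks are correct.
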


\begin{proof}
    Writing $A=\alpha\otimes s \in \cA^p(\cW)$ and $B=\beta\otimes t \in \cA^q(\cW^*)$, we must prove the following equality:
    \begin{equation}\label{Lei}
    d((\alpha\otimes s) \wedge (\beta\otimes t))=D_{\cW}(\alpha \otimes s)\wedge (\beta\otimes t) +(-1)^p (\alpha \otimes s)\wedge D_{\cW^*}(\beta\otimes t).
    \end{equation}
    By definition, we have
    \[
    D_{\cW}(\alpha\otimes s)=d\alpha \otimes s + (-1)^p \alpha \wedge D_{\cW}(s)
    \]
    and
    \[
        D_{\cW^*}(\beta\otimes t)=d\beta \otimes t + (-1)^q \beta \wedge D_{\cW^*}(t).
    \]
    The proof then follows from an easy calculation using the Leibniz rule for the exterior derivative and the definition of the connection $D_{\cW^*}$.
\end{proof}

\begin{lemma}\label{lem:dual}
For all $w\in \cW$, we have 
\[
D_{\cW^*}(w^*)=(C_{\cW}^{-1}D_{\cW}(C_{\cW} w))^*.
\]
\end{lemma}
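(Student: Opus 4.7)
The plan is to verify the identity after evaluating both sides on an arbitrary section $s$ of $\cW$; since $D_{\cW^*}(w^*)$ and $(C_\cW^{-1} D_\cW(C_\cW w))^*$ both lie in $\cA^1(\cW^*)$, this suffices. By the very definition of the dual connection,
\[
D_{\cW^*}(w^*)(s) \;=\; d\bigl(w^*(s)\bigr) - w^*\bigl(D_\cW(s)\bigr) \;=\; d\langle s, w\rangle - \langle D_\cW(s), w\rangle,
\]
where $\langle -, w\rangle$ is extended to $\cA^1(\cW)$ in the form-part of the first argument by $\langle \alpha \otimes t, w\rangle := \alpha \langle t, w\rangle$.

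The first step is to rewrite $\langle s, w\rangle = (s, \overline{C_\cW w})$ using $\langle v, w\rangle = ((v, C_\cW w))$ (valid on the weight-zero PVHS $\cW$) and to apply horizontality of the polarization (Remark \ref{rem:flat}):
\[
d\bigl(s, \overline{C_\cW w}\bigr) \;=\; \bigl(D_\cW(s), \overline{C_\cW w}\bigr) + \bigl(s, \overline{D_\cW(C_\cW w)}\bigr) \;=\; \langle D_\cW(s), w\rangle + \bigl(s, \overline{D_\cW(C_\cW w)}\bigr).
\]
The first summand cancels the subtraction in the formula for $D_{\cW^*}(w^*)(s)$, leaving the compact identity
\[
D_{\cW^*}(w^*)(s) \;=\; \bigl(s, \overline{D_\cW(C_\cW w)}\bigr).
\]

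To finish, I would fix a local decomposition $D_\cW(C_\cW w) = \sum_i \alpha_i \otimes u_i$ with $\alpha_i \in \cA^1_\C$ and $u_i \in \cW$, and expand both sides explicitly. The identity just derived unwinds to
\[
\bigl(s, \overline{D_\cW(C_\cW w)}\bigr) \;=\; \sum_i \overline{\alpha_i}\,(s, \bar u_i) \;=\; \sum_i \overline{\alpha_i}\,\bigl\langle s, C_\cW^{-1} u_i\bigr\rangle,
\]
where the last equality is the trivial rearrangement of $\langle v, w\rangle = (v, \overline{C_\cW w})$ applied with $w = C_\cW^{-1} u_i$. On the other side, $C_\cW^{-1} D_\cW(C_\cW w) = \sum_i \alpha_i \otimes C_\cW^{-1} u_i$, and the anti-linear extension \eqref{dualmap} of $(-)^*$ yields
\[
\bigl(C_\cW^{-1} D_\cW(C_\cW w)\bigr)^* \;=\; \sum_i \overline{\alpha_i} \otimes \bigl\langle -,\, C_\cW^{-1} u_i\bigr\rangle,
\]
which evaluated at $s$ produces the same expression $\sum_i \overline{\alpha_i}\,\langle s, C_\cW^{-1} u_i\rangle$. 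The two sides agree, completing the proof.

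There is no serious obstacle; the proof is essentially a bookkeeping exercise involving the various extensions of the pairings. The only subtlety worth emphasizing is that the complex conjugations appearing on the two sides have different origins---on the left they arise from $((u,v)) = (u, \bar v)$ together with horizontality of the polarization, while on the right they come from the anti-linearity of $(-)^*$---and they align precisely because of the defining relation $\langle v, w\rangle = (v, \overline{C_\cW w})$, which is conjugate-linear in $w$.
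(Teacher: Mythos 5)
Your proof is correct and follows essentially the same route as the paper's: evaluate both sides on an arbitrary section, rewrite the Hermitian pairing via the Weil operator as $\langle v,w\rangle = ((v, C_\cW w)) = (v,\overline{C_\cW w})$, invoke horizontality of the flat pairing, and cancel. The only difference is cosmetic — you unwind Zucker's $((\,,\,))$ notation and add an explicit local decomposition $D_\cW(C_\cW w)=\sum_i\alpha_i\otimes u_i$ to verify the final identification, whereas the paper does this in one line by inserting $C_\cW C_\cW^{-1}$ and reading off $\langle w', C_\cW^{-1}D_\cW(C_\cW w)\rangle = (C_\cW^{-1}D_\cW(C_\cW w))^*(w')$.
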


\begin{proof}
For all $w'\in \cW$, we have
\begin{align*}
    D_{\cW^*}(w^*)(w')&=
    d(w^*(w'))-w^*(D_{\cW}(w'))
    =d\langle w', w\rangle-\langle D_{\cW}(w'), w  \rangle \\
    &= d(( w', C_{\cW}w ))-(( D_{\cW}(w'), C_{\cW} w  )) 
    = (( w', D_{\cW}(C_{\cW} w)  )) \\
    &= (( w', C_{\cW}C_{\cW}^{-1} D_{\cW}(C_{\cW} w)  ))
     = \langle w', C_{\cW}^{-1}D_{\cW}(C_{\cW} w)  \rangle
    =(C_{\cW}^{-1}D_{\cW}(C_{\cW} w))^*(w'),
\end{align*}
where we used \eqref{horizontal2} in the fourth equality.
\end{proof}

\begin{lemma}\label{lem:dual2}
For all $w\in \cW$, we have 
\[
\partial'_{\cW^*}(w^*)=(C_{\cW}^{-1}\bar\partial'_{\cW}(C_{\cW}w))^* \quad \text{ and } \quad \bar\partial'_{\cW^*}(w^*)=(C_{\cW}^{-1}\partial'_{\cW}(C_{\cW}w))^*,
\]
\[
\nabla'_{\cW^*}(w^*)=(C_{\cW}^{-1}\bar\nabla'_{\cW}(C_{\cW}w))^* \quad \text{ and } \quad \bar\nabla'_{\cW^*}(w^*)=(C_{\cW}^{-1}\nabla'_{\cW}(C_{\cW}w))^*.
\]
\end{lemma}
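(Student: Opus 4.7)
\textbf{Proof plan for Lemma \ref{lem:dual2}.} The plan is to start from the identity of Lemma \ref{lem:dual}, namely
\[
D_{\cW^*}(w^*) = (C_\cW^{-1} D_\cW(C_\cW w))^*,
\]
and to project both sides onto each of the four bi-type components $\cA^{r,s}(\cW^{k,-k})$, using the fact that the anti-linear isomorphism $(\;)^* \colon \cA^{p,q}(\cW) \overset{\sim}{\lra} \cA^{q,p}(\cW^*)$ swaps the $(1,0)$ and $(0,1)$ parts of the form while dualising the Hodge type.

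First I would fix $w \in \cW^{k,-k}$, so that $C_\cW w = i^{2k} w \in \cW^{k,-k}$ and $w^* \in (\cW^*)^{-k,k}$. I then apply the decomposition $D_\cW = \partial'_\cW + \bar\partial'_\cW + \nabla'_\cW + \bar\nabla'_\cW$ to $C_\cW w$, which by definition lands in
\[
\cA^{1,0}(\cW^{k,-k}) \oplus \cA^{0,1}(\cW^{k,-k}) \oplus \cA^{1,0}(\cW^{k-1,-k+1}) \oplus \cA^{0,1}(\cW^{k+1,-k-1}),
\]
respectively. Applying $C_\cW^{-1}$ preserves both form type and Hodge type (it is a scalar on each Hodge summand), so the four images remain in the same summands. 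Applying $(\;)^*$ then lands these four pieces in
\[
\cA^{0,1}((\cW^*)^{-k,k}),\ \cA^{1,0}((\cW^*)^{-k,k}),\ \cA^{0,1}((\cW^*)^{-k+1,k-1}),\ \cA^{1,0}((\cW^*)^{-k-1,k+1})
\]
respectively.

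On the other side, decomposing $D_{\cW^*}(w^*)$ for $w^* \in (\cW^*)^{-k,k}$ yields
\[
\partial'_{\cW^*}(w^*) \in \cA^{1,0}((\cW^*)^{-k,k}),\qquad \bar\partial'_{\cW^*}(w^*) \in \cA^{0,1}((\cW^*)^{-k,k}),
\]
\[
\nabla'_{\cW^*}(w^*) \in \cA^{1,0}((\cW^*)^{-k-1,k+1}),\qquad \bar\nabla'_{\cW^*}(w^*) \in \cA^{0,1}((\cW^*)^{-k+1,k-1}).
\]
Since each of the four summands in the target bi-graded decomposition is distinct, matching the components of $D_{\cW^*}(w^*)$ to those of $(C_\cW^{-1} D_\cW(C_\cW w))^*$ by form type and Hodge type forces
\[
\partial'_{\cW^*}(w^*) = (C_\cW^{-1}\bar\partial'_\cW(C_\cW w))^*,\qquad \bar\partial'_{\cW^*}(w^*) = (C_\cW^{-1}\partial'_\cW(C_\cW w))^*,
\]
\[
\nabla'_{\cW^*}(w^*) = (C_\cW^{-1}\bar\nabla'_\cW(C_\cW w))^*,\qquad \bar\nabla'_{\cW^*}(w^*) = (C_\cW^{-1}\nabla'_\cW(C_\cW w))^*,
\]
which are the four formulas of the lemma. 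By linearity these extend to all of $\cW$ via its Hodge decomposition.

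The only potentially delicate point is to verify that the anti-linear dualisation $(\;)^*$ really does interchange the roles of $\partial'$ and $\bar\partial'$ (and of $\nabla'$ and $\bar\nabla'$) on the level of bi-types, which follows immediately from its definition $(\alpha \otimes s)^* = \bar\alpha \otimes s^*$ together with the observation that complex conjugation swaps $\cA^{1,0}$ and $\cA^{0,1}$ and that the dual Hodge structure on $\cW^*$ has $((\cW^*))^{p,q} = (\cW^{-q,-p})^*$. With this bookkeeping in hand the proof reduces to inspection of the four summands, so I expect no serious obstacle beyond making sure the Hodge-type conventions on $\cW^*$ are consistent with those on $\cW$.
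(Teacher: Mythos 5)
Your proof is correct and follows essentially the same route as the paper: the paper's proof simply states that the lemma is an immediate consequence of Lemma \ref{lem:dual} after identifying the four components of $D_{\cW^*}$ and $(C_{\cW}^{-1} D_{\cW}(C_{\cW}\,\cdot))^*$, which is exactly the bookkeeping by form type and Hodge type that you carry out. Spelling out which summands each piece lands in, as you do, is precisely the intended argument.
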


\begin{proof}
    Immediate consequence of Lemma \ref{lem:dual} after identifying the four components.
\end{proof}

Let $p=k-1$ and recall the sections 
\begin{equation}\label{eqpm}
w^-(z)=y^{-t} w_{p-t, 2t}(z) \quad \text{ and } \quad w^+(z)=y^t w_{p+t,0}(z)
\end{equation}
(with $z=x+iy\in \cH$) introduced in Remark \ref{rem:basis}.
For each $0\leq n\leq 2p$, define 
\[
\xi_n=\left(\binom{2p}{n}^{1/2}\frac{(2\pi i)^{p}}{2^p}\right)^{-1}, 
\]
so that 
\[
\xi_{p+t} w^+=y^{t-p}(z u_1+u_2)^{p+t}(\bar{z} u_1+u_2)^{p-t}\otimes \mu_{0}
\]
and 
\[
\xi_{p-t} w^{-}=y^{-p-t}(z u_1+u_2)^{p-t}(\bar{z} u_1+u_2)^{p+t}\otimes \mu_{2t}.
\]
In what follows, the calculations will mostly be carried out for $w^-$. The ones for $w^+$ can be verified similarly. (We also implicitly work in the complex vector space $\tilde{\mathcal{W}}_{0,\R} \otimes K$, as in Remark \ref{rem:basis}.)
\begin{lemma}\label{lem:components}
Given $F\in \cA^{0,0}(\cH)$, we have 
\begin{align*}
\partial'_{\cW}(Fw^-)& =\left( \frac{\partial F}{\partial z}-\frac{2tF}{z-\bar{z}} \right) dz \otimes w^- \\
\bar{\partial}'_{\cW}(Fw^-) & = \frac{\partial F}{\partial \bar{z}} d\bar{z}\otimes w^- \\
\nabla'_{\cW}(Fw^-) & = - \frac{p-t}{z-\bar{z}} F y^{-t} \xi_{p-t-1} \xi_{p-t}^{-1} dz \otimes w_{p-t-1, 2t} \\
\bar{\nabla}'_{\cW}(Fw^-) & = \frac{p+t}{z-\bar{z}} F y^{-t} \xi_{p-t+1} \xi_{p-t}^{-1} d\bar{z} \otimes w_{p-t+1, 2t}.
\end{align*}
\end{lemma}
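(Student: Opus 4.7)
By Lemma \ref{lem:leipniz} applied with $\alpha=F\in\cA^{0,0}$ and $s=w^-\in\cW^{0,0}$, it suffices to compute the four components of $D(w^-)$; the stated formulas then follow by adding $\partial F\otimes w^-$ and $\bar\partial F\otimes w^-$ to the $\partial'_\cW$ and $\bar\partial'_\cW$ terms respectively, and observing that $dF$ contributes nothing to $\nabla'_\cW$ or $\bar\nabla'_\cW$ since $w^-$ has type $(0,0)$.

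To compute $D(w^-)$, the plan is to exploit that $u_1,u_2$ are flat sections of $\tilde\cL$ (they correspond to the constant sections of $\cH\times V$ under the trivialization of Section \ref{s:brylinski}), and similarly $e_1,e_2$ are flat for the constant PVHS on $A^{2t}$. Thus $\mu_{2t}$ is flat, and $D(zu_1+u_2)=dz\otimes u_1$ and $D(\bar zu_1+u_2)=d\bar z\otimes u_1$. Writing $w^-=\xi_{p-t}^{-1}y^{-p-t}s$ with $s=(zu_1+u_2)^{p-t}(\bar zu_1+u_2)^{p+t}\otimes\mu_{2t}$, the Leibniz rule yields
\begin{equation*}
D(w^-) = -(p+t)\xi_{p-t}^{-1}y^{-p-t-1}\, dy\otimes s + \xi_{p-t}^{-1}y^{-p-t}\, D(s),
\end{equation*}
and differentiating inside $s$ gives
\begin{equation*}
D(s) = (p-t)\, dz\otimes(zu_1+u_2)^{p-t-1}u_1(\bar zu_1+u_2)^{p+t}\otimes\mu_{2t} + (p+t)\, d\bar z\otimes(zu_1+u_2)^{p-t}(\bar zu_1+u_2)^{p+t-1}u_1\otimes\mu_{2t}.
\end{equation*}

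The next step is to identify Hodge types. Using the identity $u_1=((zu_1+u_2)-(\bar zu_1+u_2))/(z-\bar z)$, each factor $u_1$ above is rewritten as a combination of sections of the form $y^p\xi_n v_n$ for $n\in\{p-t-1,p-t,p-t+1\}$, so that $D(s)$ becomes a linear combination of $dz\otimes w_{p-t,2t}$, $dz\otimes w_{p-t-1,2t}$, $d\bar z\otimes w_{p-t,2t}$, and $d\bar z\otimes w_{p-t+1,2t}$ with explicit coefficients involving $1/(z-\bar z)$ and ratios of the $\xi_n$. Finally, one converts $dy=(dz-d\bar z)/(2i)$ and uses $z-\bar z=2iy$ to combine the $-(p+t)y^{-1}\,dy\otimes w^-$ contribution with the $d\bar z\otimes w^-$ terms coming from $D(s)$; one checks that the $d\bar z\otimes w^-$ coefficients cancel exactly (this is the only non-trivial bookkeeping), while the $dz\otimes w^-$ coefficients combine to $-2t/(z-\bar z)$.

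Reading off the terms by their Hodge type ($dz\otimes w^-$ is of type $(1,0)\otimes(0,0)$, hence contributes to $\partial'_\cW$; $d\bar z\otimes w^-$ to $\bar\partial'_\cW$; $dz\otimes w_{p-t-1,2t}$ to $\nabla'_\cW$ since $w_{p-t-1,2t}$ has type $(-1,1)$; and $d\bar z\otimes w_{p-t+1,2t}$ to $\bar\nabla'_\cW$) yields the four formulas in the statement. The main obstacle is the sign/normalization bookkeeping in step two and the verification that the $d\bar z\otimes w^-$ contributions cancel, which forces $\bar\partial'_\cW(w^-)=0$ and is essential for the clean form of the result.
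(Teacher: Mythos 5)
Your proposal is correct and follows essentially the same computation as the paper: differentiate $\xi_{p-t}w^{-}=y^{-p-t}(zu_1+u_2)^{p-t}(\bar z u_1+u_2)^{p+t}\otimes\mu_{2t}$ using flatness of $u_1,u_2,\mu_{2t}$, rewrite $u_1=\frac{(zu_1+u_2)-(\bar z u_1+u_2)}{z-\bar z}$, and sort by Hodge type. The paper differentiates $Fw^{-}$ directly (splitting $D=\partial\otimes 1+\bar\partial\otimes 1$ and using $\partial y/\partial z=1/2i$, $\partial y/\partial\bar z=-1/2i$), while you first reduce to $F=1$ via Lemma~\ref{lem:leipniz} and write $dy=(dz-d\bar z)/2i$; these are the same bookkeeping in a different order, and your observation that the $d\bar z\otimes w^{-}$ terms cancel is exactly the paper's computation that $\bar\partial'_{\cW}(w^{-})=0$.
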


\begin{proof}
We will often use the identity $2iy=z-\bar{z}$, from which it follows that 
\[
\frac{\partial y}{\partial z}=\frac{1}{2i} \quad \text{ and } \quad \frac{\partial y}{\partial \bar{z}}=-\frac{1}{2i}.
\]
Using $D=d\otimes 1=\partial\otimes 1 + \bar{\partial}\otimes 1$ on $\cW^{\infty}=\cC^{\infty}_{\cH} \otimes_\C W_\C$, we begin by computing 
\begin{align*}
(\partial\otimes 1)(\xi_{p-t} Fw^-)& =\frac{\partial F}{\partial z} y^{-p-t}(z u_1+u_2)^{p-t}(\bar{z} u_1+u_2)^{p+t}dz \otimes \mu_{2t} \\ 
& -\frac{p+t}{2i} F y^{-p-t-1}(z u_1+u_2)^{p-t}(\bar{z} u_1+u_2)^{p+t}dz \otimes \mu_{2t}  \\
& +(p-t) u_1 F y^{-p-t}(z u_1+u_2)^{p-t-1} (\bar{z} u_1+u_2)^{p+t}dz \otimes \mu_{2t}.
\end{align*}
From $(z-\bar{z}) u_1=(z u_1 + u_2)-(\bar{z} u_1+u_2)$, we obtain
\begin{align*}
(\partial\otimes 1)(\xi_{p-t} Fw^-)& =\left(\frac{\partial F}{\partial z} - \frac{2tF}{z-\bar{z}} \right) y^{-p-t}(z u_1+u_2)^{p-t}(\bar{z} u_1+u_2)^{p+t}dz \otimes \mu_{2t} \\ 
& - \frac{p-t}{(z-\bar{z})} F y^{-p-t}(z u_1+u_2)^{p-t-1}(\bar{z} u_1+u_2)^{p+t+1}dz \otimes \mu_{2t}.
\end{align*}
Hence, 
\begin{align*}
(\partial\otimes 1)(Fw^-)& =\left(\frac{\partial F}{\partial z} - \frac{2tF}{z-\bar{z}} \right) dz \otimes w^- - \frac{p-t}{z-\bar{z}} F y^{-t} \xi_{p-t-1} \xi_{p-t}^{-1} dz \otimes w_{p-t-1, 2t},
\end{align*}
and we conclude by separating terms in $\cA^{1,0}(\cW^{0,0})$ and $\cA^{1,0}(\cW^{-1,1})$.

On the other hand, we compute 
\begin{align*}
(\bar\partial\otimes 1)(\xi_{p-t} Fw^-)& =\frac{\partial F}{\partial \bar{z}} y^{-p-t}(z u_1+u_2)^{p-t}(\bar{z} u_1+u_2)^{p+t}d\bar{z} \otimes \mu_{2t} \\ 
& +\frac{p+t}{2i} F y^{-p-t-1}(z u_1+u_2)^{p-t}(\bar{z} u_1+u_2)^{p+t}d\bar{z} \otimes \mu_{2t}  \\
& +(p+t) u_1 F y^{-p-t}(z u_1+u_2)^{p-t} (\bar{z} u_1+u_2)^{p+t-1}d\bar{z} \otimes \mu_{2t}.
\end{align*}
Using the fact that $(z-\bar{z}) u_1=(z u_1 + u_2)-(\bar{z} u_1+u_2)$, we obtain
\begin{align*}
(\bar\partial\otimes 1)(\xi_{p-t} Fw^-)& =\frac{\partial F}{\partial \bar{z}} y^{-p-t}(z u_1+u_2)^{p-t}(\bar{z} u_1+u_2)^{p+t}d\bar{z} \otimes \mu_{2t} \\ 
& +\frac{p+t}{z-\bar{z}} F y^{-p-t}(z u_1+u_2)^{p-t+1} (\bar{z} u_1+u_2)^{p+t-1}d\bar{z} \otimes \mu_{2t}.
\end{align*}
Hence, 
\begin{align*}
(\bar\partial\otimes 1)(Fw^-)& =\frac{\partial F}{\partial \bar{z}} d\bar{z} \otimes w^- + \frac{p+t}{z-\bar{z}} F y^{-t} \xi_{p-t+1} \xi_{p-t}^{-1} d\bar{z} \otimes w_{p-t+1, 2t},
\end{align*}
and we conclude by separating terms in $\cA^{0,1}(\cW^{0,0})$ and $\cA^{0,1}(\cW^{1,-1})$. 
\end{proof}

\subsection{Computing Laplacians}

\begin{definition}\label{def:lapla}
The Laplacian associated to the complex $C^\infty$-bundle $\cW^\infty$ with connection $D$ is 
\[
\square_D:=DD^* + D^*D,
\] 
where 
\[
D^*:=-\bar{\ast}_{\cW^*}\circ D_{\cW^*}\circ \bar{\ast}_{\cW}
\]
denotes the formal adjoint operator with respect to the scalar product on $\cA_c^{p,q}(\cW)$
\[
(\alpha, \beta)_{L^2}:= \int_{\mathcal{H}} \alpha \wedge \bar{\ast}_{\cW} \beta= \int_{\mathcal{H}} \langle \alpha, \beta\rangle \Vol.
\]
\end{definition}

\begin{remark}
    For the claim that $D^*$ is indeed the formal adjoint of $D$, see \cite[Lem. 4.1.12]{huybrechts}.  
\end{remark}

Observe that on $\cA^{0,0}(\cW)$ we have 
\[
\square_{D}=D^*D,
\]
simply because the adjoint operator on $\cA^{0,0}(\cW)$ is trivial (as there are no forms of negative type). 

\begin{remark}
    Define $D':=\partial'_\cW+\bar{\nabla}'_\cW$ and $D'':=\bar{\partial}'_\cW+\nabla'_\cW$. 
    Letting 
    \[
    (D')^*=-\bar{\ast}_{\cW^*}\circ D'_{\cW^*}\circ \bar{\ast}_{\cW} \qquad \text{ and } \qquad (D'')^*=-\bar{\ast}_{\cW^*}\circ D''_{\cW^*}\circ \bar{\ast}_{\cW}
    \]
    denote the formal adjoints of $D'$ and $D''$ and defining Laplacians $\square_{D'}$ and $\square_{D''}$ as in Definition \ref{def:lapla}, we have \cite[Thm. 2.7]{zucker} 
    \begin{equation}\label{Kident}
    \square_D=2\square_{D'}=2\square_{D''}.
    \end{equation}
    In practice, we will compute $\square_{D'}=(D')^*D'$ on $\cA^{0,0}(\cW)$.   
\end{remark}

\begin{proposition}\label{form:lapl}
Let $w^+$ and $w^-$ be the sections defined by \eqref{eqpm}.
Given $F\in \cA^{0,0}(\cH)$, we have 
\[
\square_{D}(F\cdot w^-)=\left( -4y^2\frac{\partial^2}{\partial z\partial \bar{z}}-4ity \frac{\partial}{\partial \bar{z}} + (k-t-1)(k+t)\right) F \cdot w^-
\]
and
\[
\square_{D}(F\cdot w^+)=\left( -4y^2\frac{\partial^2}{\partial z\partial \bar{z}}+4ity \frac{\partial}{\partial \bar{z}} + (k+t-1)(k-t)\right) F \cdot w^+.
\]
\end{proposition}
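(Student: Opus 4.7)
The plan is to invoke the Kähler identity $\square_D = 2\square_{D'}$ from (\ref{Kident}), which on $\cA^{0,0}(\cW^\infty)$ reduces to $\square_D = 2(D')^*D'$, since $(D')^*$ lowers form degree by one and thus kills $(0,0)$-forms. Here $D' = \partial'_\cW + \bar\nabla'_\cW$ and $(D')^* = -\bar\ast_{\cW^*}\circ D'_{\cW^*}\circ \bar\ast_\cW$. I carry out the computation for $Fw^-$ in detail; the case $Fw^+$ is strictly parallel, with the roles of $\bar\nabla'_\cW$ and $\nabla'_\cW$ swapped and $t$ replaced by $-t$ in the first-order term, reflecting the complex conjugation $\overline{w^+} = w^-$.

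First, Lemma \ref{lem:components} expands $D'(Fw^-)$ as a sum of two explicit pieces: a term $\bigl(\partial F/\partial z - 2tF/(z-\bar z)\bigr)\,dz\otimes w^-$ lying in $\cA^{1,0}(\cW^{0,0})$, and a term of the form $(p+t)Fy^{-t}(z-\bar z)^{-1}\xi_{p-t+1}\xi_{p-t}^{-1}\,d\bar z\otimes w_{p-t+1,2t}$ lying in $\cA^{0,1}(\cW^{1,-1})$. Applying $(D')^*$ to each piece proceeds in three sub-steps. (i) Apply $\bar\ast_\cW$ using (\ref{op:star}) together with $\ast\,dz = -i\,dz$ and $\ast\,d\bar z = i\,d\bar z$ (checked against the defining identity $\alpha\wedge \overline{\ast\beta} = \langle\alpha,\beta\rangle\,\Vol$ with $\Vol = dx\wedge dy/y^2$). (ii) Apply $D'_{\cW^*}$; by Lemma \ref{lem:dual2} this is computed by pushing derivatives back to $w^-$ and $w_{p-t+1,2t}$ through the Weil operator $C_\cW$, which acts trivially on the type $(0,0)$ section $w^-$ and by $-1$ on the type $(1,-1)$ section $w_{p-t+1,2t}$; Lemma \ref{lem:components} applied again then expands these derivatives. (iii) Apply $\bar\ast_{\cW^*}$ and use $\bar\ast_{\cW^*}\circ \bar\ast_\cW = (-1)^{p+q}$ from (\ref{astbdual}) to return to $\cA^{0,0}(\cW)$.

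Collecting terms, the second-order contribution $-4y^2\,\partial^2 F/(\partial z\,\partial\bar z)$ emerges from $\bar\partial'_{\cW^*}\partial'_\cW F$ after the Hodge-star adjunction (this is exactly the scalar Laplacian on $\cH$ for the Poincaré metric); the first-order term $-4ity\,\partial F/\partial \bar z$ comes from the cross term between $\bar\partial$ and the connection correction $-2tF/(z-\bar z)$ in the first summand of Step 1; and the zeroth-order scalar $(k-t-1)(k+t) = (p-t)(p+t+1)$ arises from the second summand — it is the product of the connection coefficients $(p+t)/(z-\bar z)$ from $\bar\nabla'_\cW$ and $(p-t)/(z-\bar z)$ from the ensuing $\nabla'_{\cW^*}$-contribution, with the factor $(z-\bar z)^2 = -4y^2$ absorbed by the Hodge stars and the orthonormality constants $\xi_n$ matching cleanly across the two invocations of Lemma \ref{lem:components}.

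The main obstacle is the careful bookkeeping of signs and complex conjugations: the antilinearity of $\bar\ast_\cW$ conjugates intermediate scalar coefficients; the Weil operator produces $-1$ signs on the non-$(0,0)$ basis sections $w_{p\mp t\pm 1,2t}$ used in Step 3; and the normalizations $\xi_n$ must cancel correctly when the two applications of Lemma \ref{lem:components} are composed. The sign flip between $-4ity\,\partial F/\partial \bar z$ for $Fw^-$ and $+4ity\,\partial F/\partial\bar z$ for $Fw^+$ is the most delicate verification, and reflects the symmetry $2t\leftrightarrow -2t$ in the Hodge type of the constant factor $\kappa_{2t}H^{2t}(A^{2t})$ together with the complex conjugation $\overline{w^+}=w^-$.
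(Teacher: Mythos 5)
Your plan follows the paper's own proof almost line for line: use the K\"ahler identity $\square_D = 2\square_{D'}$, expand $D'(Fw^-)$ via Lemma~\ref{lem:components}, and compute $(D')^* = -\bar\ast_{\cW^*}\circ D'_{\cW^*}\circ\bar\ast_\cW$ on each of the two pieces $\partial'_\cW(Fw^-) \in \cA^{1,0}(\cW^{0,0})$ and $\bar\nabla'_\cW(Fw^-) \in \cA^{0,1}(\cW^{1,-1})$ using Lemmas~\ref{lem:leipniz} and~\ref{lem:dual2}, the Hodge star formulas $\ast\,dz = -i\,dz$, $\ast\,d\bar z = i\,d\bar z$, and the Weil operator signs on types $(0,0)$ and $(\pm 1,\mp 1)$. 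All of that is correct and is exactly what the paper does.

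However, your bookkeeping of the zeroth-order scalar has two errors that would prevent the plan from closing if executed as stated. First, the $\nabla'_{\cW^*}$-contribution coming back from $\bar\nabla'_\cW(Fw^-)$ is $\nabla'_\cW$ applied to $w_{p-t+1,2t}$ (not to $w^- = w_{p-t,2t}$), so its coefficient is $(p-t+1)/(z-\bar z)$, not $(p-t)/(z-\bar z)$. Thus the second summand actually contributes $\tfrac{1}{2}(p+t)(p-t+1)\,F w^-$ to $\square_{D'}$, not $(p+t)(p-t)$. Second, the zeroth-order term does \emph{not} arise from the second summand alone: the term $-2tF/(z-\bar z)$ in $\partial'_\cW(Fw^-)$, after $\bar\ast_\cW$, $\partial'_{\cW^*}$, and $\bar\ast_{\cW^*}$, produces an additional $-tF\,w^-$ in $\square_{D'}$ (from differentiating $1/(z-\bar z)$, alongside the cross term you correctly identify as the source of $-4ity\,\partial F/\partial\bar z$). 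It is only after combining $-2t + (p+t)(p-t+1) = (p-t)(p+t+1) = (k-t-1)(k+t)$ that the correct constant appears. With your allocation you would obtain $(p+t)(p-t) = (k-1)^2 - t^2$, which is wrong for $t>0$, so the computation would not match the claimed formula and you would need to retrace both of these steps.

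Everything else — the identification of the second-order term with the Poincar\'e Laplacian, the origin of the first-order term, the $\xi_n$ normalizations canceling, and the $t\leftrightarrow -t$ symmetry producing the $w^+$ formula — matches the paper's argument.
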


\begin{proof}
During the course of this proof, we set $p=k-1$.
    We begin by computing the $(0,0)$-component of $(D')^*\partial'_{\cW}(Fw^-)$, using Lemma \ref{lem:components}:
    \begin{align*}
     (D')^* & \left(\left( \frac{\partial F}{\partial z}-\frac{2tF}{z-\bar{z}} \right) dz \otimes w^-\right) \\
    &=-\bar{\ast}_{\cW^*}\circ D'_{\cW^*}\circ \bar{\ast}_{\cW}\left(\left( \frac{\partial F}{\partial z}-\frac{2tF}{z-\bar{z}} \right) dz \otimes w^-\right) \\
    &=-\bar{\ast}_{\cW^*}\circ D'_{\cW^*}\left(i\left( \frac{\partial \bar{F}}{\partial \bar{z}}+\frac{2t\bar{F}}{z-\bar{z}} \right) d\bar{z} \otimes (w^-)^*\right) \\
    &=-\bar{\ast}_{\cW^*}\circ (\partial'_{\cW^*}+\bar{\nabla}'_{\cW^*})\left(i\left( \frac{\partial \bar{F}}{\partial \bar{z}}+\frac{2t\bar{F}}{z-\bar{z}} \right) d\bar{z} \otimes (w^-)^*\right) \\
    &=-\bar{\ast}_{\cW^*}\circ \partial'_{\cW^*}\left(i\left( \frac{\partial \bar{F}}{\partial \bar{z}}+\frac{2t\bar{F}}{z-\bar{z}} \right) d\bar{z} \otimes (w^-)^*\right).
    \end{align*}
    Using Lemma \ref{lem:leipniz} followed by Lemma \ref{lem:dual2}, we have 
    \begin{align*}
        \partial'_{\cW^*} & \left(i\left( \frac{\partial \bar{F}}{\partial \bar{z}}+\frac{2t\bar{F}}{z-\bar{z}} \right) d\bar{z} \otimes (w^-)^*\right) \\
        &=\partial \left(i\left( \frac{\partial \bar{F}}{\partial \bar{z}}+\frac{2t\bar{F}}{z-\bar{z}} \right) d\bar{z}\right) \otimes (w^-)^* - i\left( \frac{\partial \bar{F}}{\partial \bar{z}}+\frac{2t\bar{F}}{z-\bar{z}} \right) d\bar{z} \wedge (C_{\cW}^{-1}\bar\partial'_{\cW}(C_{\cW}w^-))^* \\
        & = i\left( \frac{\partial^2 \bar{F}}{\partial z\partial \bar{z}}+\frac{2t}{z-\bar{z}} \frac{\partial \bar{F}}{\partial z} - \frac{2t \bar{F}}{(z-\bar{z})^2} \right) dz\wedge d\bar{z} \otimes (w^-)^*,
    \end{align*}
    where in the last equality we used the equality $\bar\partial'_{\cW}(w^-)=0$ which follows from Lemma \ref{lem:components}.

    We deduce that 
    \begin{align*}
    (D')^*\partial'_{\cW}(Fw^-)
    &=-\bar{\ast}_{\cW^*}\circ \left( i\left( \frac{\partial^2 \bar{F}}{\partial z\partial \bar{z}}+\frac{2t}{z-\bar{z}} \frac{\partial \bar{F}}{\partial z} - \frac{2t \bar{F}}{(z-\bar{z})^2} \right) dz\wedge d\bar{z} \otimes (w^-)^* \right) \\
    &=- \left( -i\left( \frac{\partial^2 F}{\partial z\partial \bar{z}}-\frac{2t}{z-\bar{z}} \frac{\partial F}{\partial \bar{z}} - \frac{2t F}{(z-\bar{z})^2} \right) 2iy^2 w^- \right) \\
    &=i\left( \frac{\partial^2 F}{\partial z\partial \bar{z}}-\frac{2t}{z-\bar{z}} \frac{\partial F}{\partial \bar{z}} - \frac{2t F}{(z-\bar{z})^2} \right) 2iy^2 w^- \\
    &=-\left( 2y^2\frac{\partial^2 F}{\partial z\partial \bar{z}}+2ity \frac{\partial F}{\partial \bar{z}} + tF \right) w^-. 
    \end{align*}

    Next, we compute the $(0,0)$-component of $(D')^*\bar\nabla'_{\cW}(Fw^-)$, using Lemma \ref{lem:components}:
    \begin{align*}
     (D')^* & \left(\frac{p+t}{z-\bar{z}} F y^{-t} \xi_{p-t+1} \xi_{p-t}^{-1} d\bar{z} \otimes w_{p-t+1, 2t}\right) \\
    &=-\bar{\ast}_{\cW^*}\circ D'_{\cW^*}\circ \bar{\ast}_{\cW}\left(\frac{p+t}{z-\bar{z}} F y^{-t} \xi_{p-t+1} \xi_{p-t}^{-1} d\bar{z} \otimes w_{p-t+1, 2t}\right) \\
    &=-\bar{\ast}_{\cW^*}\circ D'_{\cW^*}\left(i\frac{p+t}{z-\bar{z}} \bar{F} y^{-t} \xi_{p-t+1} \xi_{p-t}^{-1} dz \otimes (w_{p-t+1, 2t})^*\right) \\
    &=-\bar{\ast}_{\cW^*}\circ (\partial'_{\cW^*}+\bar\nabla'_{\cW^*})\left(i\frac{p+t}{z-\bar{z}} \bar{F} y^{-t} \xi_{p-t+1} \xi_{p-t}^{-1} dz \otimes (w_{p-t+1, 2t})^*\right) \\
    &=-\bar{\ast}_{\cW^*}\circ \bar\nabla'_{\cW^*}\left(i\frac{p+t}{z-\bar{z}} \bar{F} y^{-t} \xi_{p-t+1} \xi_{p-t}^{-1} dz \otimes (w_{p-t+1, 2t})^*\right).
    \end{align*}
    Using Lemma \ref{lem:leipniz} followed by Lemma \ref{lem:dual2}, we have 
    \begin{align*}
     \bar\nabla'_{\cW^*} & \left(i\frac{p+t}{z-\bar{z}} \bar{F} y^{-t} \xi_{p-t+1} \xi_{p-t}^{-1} dz \otimes (w_{p-t+1, 2t})^*\right) \\
     & = - \left(i\frac{p+t}{z-\bar{z}} \bar{F} y^{-t} \xi_{p-t+1} \xi_{p-t}^{-1} dz \wedge \bar\nabla'_{\cW^*}((w_{p-t+1, 2t})^*)\right) \\
     & = - \left(i\frac{p+t}{z-\bar{z}} \bar{F} y^{-t} \xi_{p-t+1} \xi_{p-t}^{-1} dz \wedge (C_{\cW}^{-1}\nabla'_{\cW}(C_{\cW}w_{p-t+1, 2t}))^*\right) \\
     & = - \left(i\frac{p+t}{z-\bar{z}} \bar{F} y^{-t}  \xi_{p-t}^{-1} dz \wedge (C_{\cW}^{-1}\nabla'_{\cW}(-\xi_{p-t+1}w_{p-t+1, 2t}))^*\right) \\
     & = i\frac{p+t}{z-\bar{z}} \bar{F} y^{-t} \xi_{p-t}^{-1} dz \wedge (C_{\cW}^{-1}\nabla'_{\cW}(\xi_{p-t+1}w_{p-t+1, 2t}))^*.
    \end{align*}
    We have 
    \begin{align*}
        \nabla'_{\cW} & (\xi_{p-t+1}w_{p-t+1, 2t})=\nabla'_{\cW}(y^{-p}(zu_1+u_2)^{p-t+1}(\bar z u_1 + u_2)^{p+t-1} \otimes \mu_{2t}) \\ 
        & = -\frac{(p-t+1)}{z-\bar z} y^{-p}(zu_1+u_2)^{p-t}(\bar z u_1 + u_2)^{p+t} d z \otimes \mu_{2t} \\
        & = -\frac{(p-t+1)}{z-\bar z} y^t \xi_{p-t} d z \otimes w^-.
    \end{align*}
    Thus, 
    \begin{align*}
     \bar\nabla'_{\cW^*} & \left(i\frac{p+t}{z-\bar{z}} \bar{F} y^{-t} \xi_{p-t+1} \xi_{p-t}^{-1} dz \otimes (w_{p-t+1, 2t})^*\right) \\
     & =i\frac{p+t}{z-\bar{z}} \bar{F} y^{-t} \xi_{p-t}^{-1} dz \wedge \left(C_{\cW}^{-1}\left(-\frac{(p-t+1)}{z-\bar z} y^t \xi_{p-t} d z \otimes w^-\right)\right)^*\\
     & =i\frac{(p+t)(p-t+1)}{(z-\bar{z})^2} \bar{F} dz \wedge d\bar z \otimes (w^-)^*.
    \end{align*}
    We conclude that 
    \begin{align*}
        (D')^* & \bar\nabla'_{\cW}(Fw^-)=- \bar{\ast}_{\cW^*}\left(i\frac{(p+t)(p-t+1)}{(z-\bar{z})^2} \bar{F} dz \wedge d\bar z \otimes (w^-)^*\right) \\
        & =-\left(-\frac{(p+t)(p-t+1)}{2} F w^-\right) \\
        & =\frac{(p+t)(p-t+1)}{2} F w^-.
    \end{align*}
    
    Putting everything together yields
    \begin{align*}
        \square_{D'}(Fw^-) & = -\left( 2y^2\frac{\partial^2 F}{\partial z\partial \bar{z}}+2ity \frac{\partial F}{\partial \bar{z}} + tF \right) w^-
        +\frac{(p+t)(p-t+1)}{2} F w^- \\
        & = -\left( 2y^2\frac{\partial^2}{\partial z\partial \bar{z}}+2ity \frac{\partial}{\partial \bar{z}} + t - \frac{(p+t)(p-t+1)}{2} \right)Fw^- \\
        & = -\frac{1}{2}\left( 4y^2\frac{\partial^2}{\partial z\partial \bar{z}} + 4ity \frac{\partial}{\partial \bar{z}} - (p-t)(p+t+1) \right) Fw^-.
    \end{align*}
    We conclude by using \eqref{Kident} and recalling that $p=k-1$.
    A similar calculation yields the result for $w^+$ (replace $-t$ by $+t$ in all calculations). 
\end{proof}

\section{Green's kernels}\label{s:green}

Recall that $C=Y(M)_{\C}$.
In this section,
we construct the Green's kernel associated to the PVHS $\cW$ (Proposition \ref{prop:bry}). More precisely, we construct a harmonic $C^\infty$-section of the rank $4$ vector bundle $\Hom(\pr_1^* \tilde{\cW}_{0}, \pr_2^* \tilde{\cW}_{0})$ over the complement of the diagonal in $\bigsqcup\cH\times \bigsqcup\cH$ that descends to a section of the corresponding bundle over $C\times C$ minus the diagonal and admits an extension to a $C^\infty$-section of $\Hom(\pr_1^* \bar{\cW}_{0}, \pr_2^* \bar{\cW}_{0})$ over the complement of the diagonal in $\bar{C}\times \bar{C}$. We begin by analyzing the action of $\SL_2(\Z)$ on sections of $\Hom(\pr_1^* \tilde{\cW}_{0}, \pr_2^* \tilde{\cW}_{0})$.

\subsection{$\SL_2(\Z)$-invariant sections}

Let $G$ be a section of $\Hom(\pr_1^* \tilde\cW_{0}, \pr_2^* \tilde\cW_{0})$ over $\bigsqcup(\cH \times \cH)\setminus \Delta_{\cH}$, where we use $\Delta_{\cH}$ to denote the diagonal of $\cH$. At a point $(z, z')\in (\cH \times \cH)\setminus \Delta_{\cH}$, $G(z,z')$ is an element in the fiber $\Hom(\pr_1^* \tilde\cW_{0}, \pr_2^* \tilde\cW_{0})_{(z,z')}=\Hom((\tilde{\cW}_{0})_{z}, (\tilde{\cW}_{0})_{z'})$.
The bundle $\tilde\cW_{0}$ has rank $2$ with basis $\{ w^+, w^-\}$ defined by \eqref{eqpm}, so all we need to do to specify a section $G$ is to define $G(z,z')(w^-(z))$ and $G(z,z')(w^+(z))$ in terms of $w^-(z')$ and $w^+(z')$.

Throughout this section, we set $p=k-1$. 
Let $\gamma=\left(\begin{smallmatrix} a&b\\c&d \end{smallmatrix} \right)\in \SL_2(\Z)$. We have $w^\pm(z) \in \tilde{\cW}_z= \tilde{W}=\sym^{2p}(V)(p)\otimes \R^2$ and thus $\gamma$ acts (on the left) on $w^\pm(z)$ via the symmetric standard representation of $G=\GL_2(\R)^+$ as follows:
\begin{equation}\label{action}
\gamma\cdot w^{\pm}(z):= \xi_{p\pm t}^{-1} y^{-p\pm t} ((az+b)u_1+(cz+d)u_2)^{p\pm t}((a\bar{z}+b)u_1+(c\bar{z}+d)u_2)^{p\mp t} \otimes \mu_{t\mp t}.
\end{equation}
In particular, we see that 
\begin{equation}\label{actionw}
\gamma\cdot w^{\pm}(z)=\det(\gamma)^{p \mp t}j(\gamma, z)^{\pm 2t} w^\pm(\gamma z).
\end{equation}
The left action of $\SL_2(\Z)\times \SL_2(\Z)$ on a section $G$ of $\Hom(\pr_1^* \tilde{\cW}_{0}, \pr_2^* \tilde{\cW}_{0})$ is defined by 
\begin{equation}\label{def:actionG2}
((\gamma, \gamma') \cdot G)(z, z')(w^{\pm}(z)):=\gamma' \cdot G(\gamma^{-1} z, (\gamma')^{-1} z')(\gamma^{-1} \cdot w^{\pm}(z)).
\end{equation}
In particular, the left diagonal $\SL_2(\Z)$-action is given by 
\begin{equation}\label{def:actionG}
(\gamma\cdot G)(z, z')(w^{\pm}(z)):=\gamma\cdot G(\gamma^{-1}z, \gamma^{-1}z')(\gamma^{-1}\cdot w^{\pm}(z)).
\end{equation}
We compute this diagonal action on the section $G_0$ defined by $G_0(z,z')(w^{\pm}(z))=w^{\pm}(z')$:
\[
(\gamma\cdot G_0)(z,z')(w^{\pm}(z))=j(\gamma^{-1}, z)^{\pm 2t}j(\gamma, \gamma^{-1} z')^{\pm 2t}w^{\pm}(z').
\]
With these formulas in hand, it is not difficult to create diagonal $\SL_2(\Z)$-invariant sections, as we will now explain.

Consider the function $g=g_{k,t}$ on $(\cH\times \cH)\setminus \Delta_{\cH}$ given by
\begin{equation}\label{def:g}
g(z,z'):=-Q_{k,t}\left(1+\frac{\vert z-z'\vert^2}{2yy'}\right),
\end{equation}
where $Q_{k,t}(x)$ is the Jacobi function of the second kind \eqref{Qkt1} which we recall is defined, for all $x$ in the complex plane cut along the segment $[-1,1]$, by
\begin{equation}\label{Qkt}
Q_{k,t}(x):=\int_{-\infty}^\infty \frac{2^{2t}dw}{(x+\sqrt{x^2-1}\cosh(w))^{k-t}(x+1+\sqrt{x^2-1}e^w)^{2t}}.
\end{equation}
Note that $g$ is a function on $(\cH\times \cH)\setminus \Delta_{\cH}$ that only depends on the hyperbolic distance between $z$ and $z'$. It follows easily that $g$ is invariant under the diagonal action of $\SL_2(\Z)$, i.e., $g(\gamma z, \gamma z')=g(z,z')$ for all $\gamma \in \SL_2(\Z)$. Define the following functions on $(\cH\times \cH)\setminus \Delta_{\cH}$:
\[
\mu_g^{+}(z,z'):=g(z,z')\left( \frac{z- \bar{z}'}{2iy'} \right)^{2t} \quad \text{ and } \quad \mu_g^{-}(z,z'):=g(z,z')\left( \frac{\bar{z}- z'}{2iy} \right)^{2t}. 
\]
A quick calculation reveals that 
\[
\mu_g^{\pm}(\gamma z,\gamma z')=j(\gamma,z)^{\mp 2t}j(\gamma,z')^{\pm 2t}\mu_g^{\pm}(z,z').
\]

\begin{lemma}
The section $G_g$ of $\Hom(\pr_1^* \tilde{\cW}_{0}, \pr_2^* \tilde{\cW}_{0})$ defined by 
\[
G_g(z,z')(w^{\pm}(z))=\mu_g^\pm(z,z') w^{\pm}(z'),
\]
is invariant under the left diagonal action \eqref{def:actionG} of $\SL_2(\Z)$. 
\end{lemma}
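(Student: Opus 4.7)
The plan is to prove invariance by a direct unwinding of all the definitions given immediately before the lemma, reducing the claim to an identity between products of automorphy factors.

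First I would fix $\gamma \in \SL_2(\Z)$, write $u = \gamma^{-1}z$, $u' = \gamma^{-1}z'$, and compute
\[
(\gamma \cdot G_g)(z,z')(w^{\pm}(z)) = \gamma \cdot G_g(u,u')(\gamma^{-1}\cdot w^{\pm}(z))
\]
by plugging in piece by piece. Since $\det(\gamma) = 1$, the formula \eqref{actionw} simplifies to $\gamma\cdot w^{\pm}(z) = j(\gamma,z)^{\pm 2t}\, w^{\pm}(\gamma z)$, so $\gamma^{-1}\cdot w^{\pm}(z) = j(\gamma^{-1},z)^{\pm 2t}\,w^{\pm}(u)$. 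Applying the definition of $G_g$ at $(u,u')$ and then the action of $\gamma$ on $w^{\pm}(u')$ gives
\[
(\gamma\cdot G_g)(z,z')(w^{\pm}(z)) \;=\; j(\gamma^{-1},z)^{\pm 2t}\, j(\gamma,u')^{\pm 2t}\, \mu_g^{\pm}(u,u')\, w^{\pm}(z').
\]

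Next I would use the transformation rule for $\mu_g^{\pm}$ stated just above the lemma. Replacing $(z,z')$ there by $(u,u')$ and using $\gamma u = z$, $\gamma u' = z'$ yields
\[
\mu_g^{\pm}(z,z') \;=\; j(\gamma,u)^{\mp 2t}\, j(\gamma,u')^{\pm 2t}\, \mu_g^{\pm}(u,u'),
\]
which I would invert to express $\mu_g^{\pm}(u,u')$ in terms of $\mu_g^{\pm}(z,z')$. Substituting into the previous display, the lemma reduces to the identity
\[
j(\gamma^{-1},z)^{\pm 2t}\, j(\gamma,u')^{\pm 2t}\, j(\gamma,u)^{\pm 2t}\, j(\gamma,u')^{\mp 2t} \;=\; 1.
\]

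The final step is to invoke the standard cocycle relation $j(\gamma,\gamma^{-1}w)\, j(\gamma^{-1},w) = j(\mathrm{id},w) = 1$ at $w = z$, which gives $j(\gamma,u)^{\pm 2t} = j(\gamma^{-1},z)^{\mp 2t}$, making all four factors cancel in pairs. I expect no real obstacle here: the entire argument is bookkeeping of automorphy factors, with the one subtle point being the correct use of the cocycle identity to convert $j(\gamma,\gamma^{-1}z)$ into $j(\gamma^{-1},z)^{-1}$. The function $g(z,z')$ itself plays no role in this computation because it is already diagonally $\SL_2(\Z)$-invariant, so the invariance of $G_g$ is entirely controlled by the $j$-factors carried by the multipliers $\left(\frac{z-\bar z'}{2iy'}\right)^{2t}$ and $\left(\frac{\bar z-z'}{2iy}\right)^{2t}$ balancing those coming from $w^{\pm}$.
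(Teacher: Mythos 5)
Your argument is correct and is exactly the "easy verification" the paper alludes to: after unwinding the diagonal action and applying the transformation rule for $\mu_g^\pm$, everything reduces to the cocycle relation $j(\gamma^{-1},z)\,j(\gamma,\gamma^{-1}z)=1$, which is precisely the hint the paper gives in its one-line proof.
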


\begin{proof}
An easy verification using the fact that for any $z'\in \cH$ we have $j(\gamma^{-1}, z')j(\gamma, \gamma^{-1}z')=1$.
\end{proof}

\subsection{The Green's kernel}\label{s:Gamma1}

Now that we have an $\SL_2(\Z)$-invariant section $G_g$, we will use it to define a section $G_{g,M}$ of the bundle $\Hom(\pr_1^* \cW_{0}, \pr_2^* \cW_{0})$ on $(C\times C)\setminus \Delta_C$, 
where $\cW$ denotes the bundle \eqref{bundleW} on $C$ and $\Delta_C$ is the diagonal in $C\times C$.

By definition, we have a Cartesian diagram 
\begin{equation*}
\begin{tikzcd}
\tilde{\cW} \arrow{r}{\pr} \arrow{d} 
 & \cW \arrow{d} \\
\bigsqcup\cH \arrow{r}[swap]{\pr} 
 & C(\C).
\end{tikzcd}
\end{equation*}
If $z\in \cH$ and $Q=\pr(z)=\Gamma(M)z \in C(\C)$, then the fibers $\tilde{\cW}_{z}$ and $\cW_{Q}$ are canonically identified since the diagram is Cartesian. Suppose that $z'\in \cH$ is another point such that $\pr(z')=Q$. Then there exists $\gamma\in \Gamma(M)$ such that $\gamma z'=z$. The action of $\gamma$ on fibers then identifies $\gamma\cdot \tilde{\cW}_{z'}$ with $\tilde{\cW}_{z}$. 
 
In order to define a section $G_{g,M}$ of $\Hom(\pr_1^* \cW_{0}, \pr_2^* \cW_{0})$, it suffices to specify $G_{g,M}(Q, Q')\in \Hom(\cW_{Q}, \cW_{Q'})$ for all $Q\neq Q'$ in $C(\C)$. For this, it is in turn enough to specify $G_{g,M}(z,z')\in \Hom((\tilde{\cW}_0)_{z}, (\tilde{\cW}_0)_{z'})$ that are independent of choices of representatives $z$ and $z'$ for $Q=\Gamma(M)z$ and $Q'=\Gamma(M)z'$. This last condition is equivalent to requiring that $(\gamma, \gamma')\cdot G_{g,M} = G_{g,M}$ for all $\gamma, \gamma'\in \Gamma(M)$, where the action is the one defined in \eqref{def:actionG2}. 

\begin{lemma}\label{def:Green}
For all $0<t\leq k-1$, the sum 
\begin{equation*}
G=G_{g,M}:=\sum_{\gamma\in \Gamma(M)} (1, \gamma^{-1})\cdot G_g
\end{equation*}
converges uniformly on compact subsets of $\mathcal{H}^2\setminus \{ (z,z') \mid z\in \Gamma(M) z' \}$ and gives a well-defined section of the bundle $\Hom(\pr_1^* \cW_{0}, \pr_2^* \cW_{0})$ on $(C\times C)\setminus \Delta_C$. 
\end{lemma}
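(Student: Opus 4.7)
The plan is to verify two properties: (a) uniform convergence of the series on compact subsets of $\cH^2 \setminus \{(z,z') : z \in \Gamma(M) z'\}$, and (b) invariance of the limit under the $\Gamma(M) \times \Gamma(M)$-action \eqref{def:actionG2}. Together these imply that $G$ descends to a well-defined section of $\Hom(\pr_1^* \cW_{0}, \pr_2^* \cW_{0})$ on $(C \times C) \setminus \Delta_C$.

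Property (b) is formal and I would dispose of it first. For $\gamma_1, \gamma_2 \in \Gamma(M)$, the composition law for the $\SL_2(\Z) \times \SL_2(\Z)$-action gives
\begin{equation*}
(\gamma_1, \gamma_2) \cdot G = \sum_{\gamma \in \Gamma(M)} (\gamma_1, \gamma_2\gamma^{-1}) \cdot G_g = \sum_{\gamma \in \Gamma(M)} (1, \gamma_2\gamma^{-1}\gamma_1^{-1}) \cdot G_g,
\end{equation*}
where the second equality applies the diagonal invariance of $G_g$ in the form $(\gamma_1^{-1}, \gamma_1^{-1}) \cdot G_g = G_g$ and the group law $(\alpha, \beta) \cdot (\alpha', \beta') = (\alpha\alpha', \beta\beta')$. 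Since $\Gamma(M)$ is a subgroup, the map $\gamma \mapsto \gamma_1 \gamma \gamma_2^{-1}$ is a bijection of $\Gamma(M)$ (inverse $\gamma' \mapsto \gamma_1^{-1}\gamma'\gamma_2$), so reindexing recovers $G$.

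For (a), I would first compute a general term explicitly. Using \eqref{def:actionG2}, \eqref{actionw}, and the identity $j(\gamma^{-1}, \gamma z')^{\pm 2t} = j(\gamma, z')^{\mp 2t}$, a short calculation gives
\begin{equation*}
((1, \gamma^{-1}) \cdot G_g)(z, z')(w^\pm(z)) = \mu_g^\pm(z, \gamma z')\, j(\gamma, z')^{\mp 2t}\, w^\pm(z').
\end{equation*}
Absolute convergence thus reduces to the summability of $|\mu_g^\pm(z, \gamma z') j(\gamma, z')^{\mp 2t}|$ over $\gamma \in \Gamma(M)$, locally uniformly in $(z, z')$. From Lemma \ref{lem:asyinfty2} I get $|g(z, \gamma z')| = O((|z-\gamma z'|^2/(y \im(\gamma z')))^{-k-t})$ whenever the argument is large. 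Combining this with $|j(\gamma, z')|^2 = y'/\im(\gamma z')$ and the crude estimate $|z - \overline{\gamma z'}| \leq |z - \gamma z'| + 2\im(\gamma z')$, each term is bounded for $|z - \gamma z'| \geq \im(\gamma z')$ by a multiple of $C(y, y') \cdot \im(\gamma z')^k / |z - \gamma z'|^{2k}$, where $C(y,y')$ is locally uniform. Since all but finitely many $\gamma \in \Gamma(M)$ satisfy $|z - \gamma z'| \geq \im(\gamma z')$ on a compact subset of the complement of $\{z \in \Gamma(M) z'\}$, the tail of the series is dominated by a weighted Eisenstein-type sum bounded by $\sum_\gamma \im(\gamma z')^k$, which converges for $k \geq 2$.

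The main technical step will be the balancing of the polynomial factors $(z - \overline{\gamma z'})^{2t}$ and $(\bar z - \gamma z')^{2t}$ appearing in $\mu_g^\pm$ against the automorphy factor $j(\gamma, z')^{\mp 2t}$ and the decay of $Q_{k, t}$, so that the exponent of $t$ is absorbed uniformly in the two cases $w^+$ and $w^-$. Once this bookkeeping is done, convergence reduces to classical Eisenstein-series estimates. The logarithmic singularity of $g$ along the diagonal recorded in Lemma \ref{lem:asy} affects only finitely many summands on any compact set disjoint from $\{z \in \Gamma(M) z'\}$, so it plays no role in the convergence argument and only governs the pole structure of $G$ along the diagonal.
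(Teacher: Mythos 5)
Your overall strategy matches the paper's: you establish the formal $\Gamma(M)\times\Gamma(M)$-equivariance, derive the explicit term $\mu_g^\pm(z,\gamma z')\,j(\gamma,z')^{\mp 2t}$, and combine the decay $Q_{k,t}(x) = O(x^{-k-t})$ with the automorphy relations to obtain a term-by-term bound of the form $C(y,y')\,\Im(\gamma z')^k/|z-\gamma z'|^{2k}$. Part (b) and the derivation of that bound are correct. However, the final step has a genuine gap.

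You claim the tail is ``dominated by a weighted Eisenstein-type sum bounded by $\sum_\gamma \Im(\gamma z')^k$, which converges for $k\ge 2$.'' That series diverges. The quantity $\Im(\gamma z') = y'/|c z' + d|^2$ depends on $\gamma = \left(\begin{smallmatrix} a & b \\ c & d \end{smallmatrix}\right)$ only through the bottom row $(c,d)$, so it is invariant under left multiplication of $\gamma$ by the infinite group $\Gamma_\infty \cap \Gamma(M)$. Consequently each value of $\Im(\gamma z')$ is repeated infinitely often as $\gamma$ ranges over $\Gamma(M)$, and $\sum_{\gamma\in\Gamma(M)}\Im(\gamma z')^k = \infty$ for every $k$. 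The error is in discarding the factor $|z-\gamma z'|^{-2k}$: on a $\Gamma_\infty(M)$-coset $\{\left(\begin{smallmatrix}1 & nM \\ 0 & 1\end{smallmatrix}\right)\gamma_0\}_{n\in\Z}$, the points $\gamma z' = \gamma_0 z' + nM$ march off to $\pm\infty$ horizontally, and it is precisely $|z-\gamma z'|^{-2k}$ that makes the sum over $n$ converge. A uniform lower bound $|z-\gamma z'|\ge\delta$ is far too weak to salvage this. The paper retains the factor, replaces it by the comparable quantity $(1+|\gamma z'|)^{-2k}$ (valid on a compact set disjoint from the orbit, where $|z - \gamma z'| \gtrsim 1 + |\gamma z'|$ for all but finitely many $\gamma$), and then invokes the uniform bound $\sum_{\gamma\in\Gamma(M)}\left(\Im(\gamma z')/(1+|\gamma z'|)^2\right)^k \le C(k,M)$ from Hejhal, valid for $k\ge 2$. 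You should keep that factor and finish as the paper does.
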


\begin{remark}\label{def:G2}
Assuming the convergence of the sum (to be proved below), we explicitly have, for $Q=\Gamma(M)z\neq \Gamma(M)z'=Q'$,
\begin{equation}\label{eqG-}
G_{g,M}(Q,Q')(w^{\pm}(z))
=\sum_{\gamma\in \Gamma(M)}  j(\gamma, z')^{\mp 2t} \mu^{\pm}_g(z,\gamma z')w^{\pm}(z'). 
\end{equation}
Using the diagonal $\SL_2(\Z)$-invariance of $G_g$, we obtain
\[
G_{g,M}=\sum_{\gamma\in \Gamma(M)} (1, \gamma^{-1})\cdot G_g=\sum_{\gamma\in \Gamma(M)} (1, \gamma^{-1})\cdot (\gamma \cdot G_g)=\sum_{\gamma\in \Gamma(M)} (\gamma, 1) \cdot G_g,
\]
which leads to the useful formula
\begin{equation}\label{exp}
G_{g,M}(Q, Q')(w^{\pm}(z))
=\sum_{\gamma\in \Gamma(M)} j(\gamma, z)^{\pm 2t} \mu^{\pm}_g(\gamma z, z')w^{\pm}(z'). 
\end{equation}
\end{remark}

\begin{proof}[Proof of Lemma \ref{def:Green}]
Assuming the convergence of the sum, the $\Gamma(M)$-invariance is an easy verification using the diagonal $\SL_2(\Z)$-invariance of $G_g$. We focus on the proof of convergence, inspired by the proof of \cite[VI Prop. 6.2]{hejhal}. Let $Q=\Gamma(M)z\neq \Gamma(M)z'=Q'$.
In view of Remark \ref{def:G2}, it suffices to check that the sums 
\begin{equation}\label{Gpm}
\mathcal{G}^+(z,z'):=\sum_{\gamma\in \Gamma(M)}  j(\gamma, z')^{- 2t} \mu^{+}_g(z,\gamma z') \quad \text{ and } \quad \mathcal{G}^-(z,z'):=\sum_{\gamma\in \Gamma(M)} j(\gamma, z)^{- 2t} \mu^{-}_g(\gamma z, z')
\end{equation}
converge  uniformly for $(z,z')\in E_1 \times E_2$, where $E_1$ and $E_2$ are compact subsets of $\mathcal{H}$. We have 
\begin{multline*}
\mathcal{G}^+(z,z')=-\sum_{\gamma\in \Gamma(M)} \left( \frac{(z- \gamma\bar{z}')}{j(\gamma, z') 2i\Im(\gamma z')} \right)^{2t}Q_{k,t}\left(1+\frac{\vert z-\gamma z'\vert^2}{2y \Im(\gamma z')}\right) \\
=-\sum_{\gamma\in \Gamma(M)} \left( \frac{j(\gamma, \bar{z}')(z- \gamma\bar{z}')}{ 2iy'} \right)^{2t}Q_{k,t}\left(1+\frac{\vert z-\gamma z'\vert^2}{2y \Im(\gamma z')}\right).
\end{multline*}
Using Lemma \ref{lem:asyinfty2}, this is 
\begin{align*}
O & \left( \sum_{\gamma\in \Gamma(M)} \left( \frac{\vert j(\gamma, z')\vert \vert z- \gamma\bar{z}'\vert}{y'} \right)^{2t} \left(\frac{2y\Im(\gamma z')}{\vert z-\gamma z'\vert^2}\right)^{k+t} \right) \\
&=O\left( \sum_{\gamma\in \Gamma(M)} \frac{\vert j(\gamma, z')\vert^{2t} \vert z- \gamma\bar{z}'\vert^{2t}}{(y')^t} \left(\frac{2y\Im(\gamma z')}{\vert z-\gamma z'\vert^2}\right)^{k+t} \right) \\
&=O\left( \sum_{\gamma\in \Gamma(M)} \left(\frac{\vert z- \gamma\bar{z}'\vert^{2}}{\Im(\gamma z')}\right)^t \left(\frac{2y\Im(\gamma z')}{\vert z-\gamma z'\vert^2}\right)^{k+t} \right) \\
&=O\left( \sum_{\gamma\in \Gamma(M)} \left(\frac{\Im(\gamma z')}{\vert z-\gamma z'\vert^2}\right)^{k} \right) \\
&=O\left( \sum_{\gamma\in \Gamma(M)} \left(\frac{\Im(\gamma z')}{( 1+\vert\gamma z'\vert)^2}\right)^{k} \right),
\end{align*}
where the implicit constants depend on $E_1$ and $E_2$. Similarly, it can be shown that
\begin{equation}\label{blup}
\mathcal{G}^-(z,z')=O\left( \sum_{\gamma\in \Gamma(M)} \left(\frac{\Im(\gamma z)}{( 1+\vert\gamma z\vert)^2}\right)^{k} \right).
\end{equation}
Since $k\geq 2$, we may apply \cite[VI Prop. 5.1]{hejhal}, which says that for each $\xi\in \mathcal{H}$, we have 
\[
\sum_{\gamma\in \Gamma(M)} \left(\frac{\Im(\gamma \xi)}{( 1+\vert\gamma \xi\vert)^2}\right)^{k} \leq C(k, M),
\]
where $C(k,M)$ is some positive constant that only depends on $k$ and $M$. The uniform convergence follows as in the proof of \cite[VI Prop. 6.2]{hejhal}.
\end{proof}

We may use the basis $\{ w^+, w^- \}$ to trivialize $\cW_0$, but this does not extend to a $C^\infty$-trivialization of $\bar{\cW}_0$, because $w^{\pm}$ have singularities of type $y^{k\mp t-1}$ near the cusp $\infty$ (hence singularities of type $\Im(\gamma z)^{{k\mp t-1}}$ near the cusp $\gamma\infty$).
Recall from \eqref{Gpm} the functions $\mathcal{G}^{\pm}$ on $\cH\times \cH\setminus \Delta_{\cH}$. Using the trivialization $\{ w^+, w^- \}$, the section $G$ is given by the matrix 
\[
\left(\begin{matrix}
    \mathcal{G}^+ &  0 \\
     0 &  \mathcal{G}^-
\end{matrix}
\right).
\]

The remaining part of this section is dedicated to proving that $G$ defined in Lemma \ref{def:Green} is the Green's kernel associated to the bundle $\cW$ on $C$, i.e., that it satisfies the following properties (see Proposition \ref{prop:bry}): 
\begin{itemize}
\item[(i)] $G(Q,Q')-\log \vert z(Q)-z(Q')\vert$ is bounded near any point $(Q,Q)\in \Delta_{\bar{C}}$, if $z$ is a local coordinate on $\bar{C}$ near $Q$;
\item[(ii)] $G$ is harmonic with respect to the Laplacian $\square_D$ attached to $\cW$ in the second variable;
\item[(iii)] As $z'$ approaches the cusp $\infty$, the functions $\langle \Im(\gamma z)^{{k\mp t-1}} G(z,\gamma z')(w^{\pm}(z)), w^{\pm}(\gamma z') \rangle$ are $C^{\infty}$ for $z'$ in a neighborhood of $\infty$, for all $\gamma\in \Gamma(M)$.
\end{itemize}

\begin{proposition}
    $G(Q,Q')-\log \vert z(Q)-z(Q')\vert$ is bounded near any point $(Q,Q)\in \Delta_{\bar{C}}$, if $z$ is a local coordinate on $\bar{C}$ near $Q$.
\end{proposition}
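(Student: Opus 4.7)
The plan is to isolate the $\gamma=1$ term in the series defining $G$, since every other term is smooth near the diagonal. Write, as in Remark \ref{def:G2},
\[
G(Q,Q')(w^\pm(z))=\sum_{\gamma\in\Gamma(M)}j(\gamma,z')^{\mp 2t}\mu_g^\pm(z,\gamma z')w^\pm(z').
\]
Fix $Q_0\in C$ (I treat the non-cuspidal case first) and a lift $z_0\in\cH$. Because $M\geq 3$ the group $\Gamma(M)$ acts freely on $\cH$, so $\gamma z_0\neq z_0$ for every $\gamma\neq 1$ and, on a small compact neighborhood of $z_0$, the orbit points $\{\gamma z : \gamma\neq 1\}$ stay uniformly bounded away from the diagonal. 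The uniform bound \eqref{blup} from the proof of Lemma \ref{def:Green} shows that the tail $\sum_{\gamma\neq 1}$ converges to a $C^\infty$ section of $\Hom(\pr_1^*\cW_0,\pr_2^*\cW_0)$ on a neighborhood of $(Q_0,Q_0)$, hence gives a bounded contribution to $G(Q,Q')$.

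Next I isolate the singularity from the term $\gamma=1$, which is $\mu_g^\pm(z,z')\,w^\pm(z')$. In the expressions
\[
\mu_g^{+}(z,z')=g_{k,t}(z,z')\!\left(\tfrac{z-\bar z'}{2iy'}\right)^{2t},\qquad\mu_g^{-}(z,z')=g_{k,t}(z,z')\!\left(\tfrac{\bar z-z'}{2iy}\right)^{2t},
\]
the prefactors $\left(\tfrac{z-\bar z'}{2iy'}\right)^{2t}$ and $\left(\tfrac{\bar z-z'}{2iy}\right)^{2t}$ are $C^\infty$ and tend to $1$ as $z'\to z$, so they contribute only bounded smooth factors. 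The entire singular contribution therefore reduces to understanding $g_{k,t}(z,z')=-Q_{k,t}\!\left(1+\tfrac{|z-z'|^2}{2yy'}\right)$ near the diagonal. Applying Lemma \ref{lem:asy} with $x=1+\epsilon$ and $\epsilon=\tfrac{|z-z'|^2}{2yy'}\to 0^+$,
\[
Q_{k,t}(1+\epsilon)=\log\!\left(\tfrac{2+\epsilon}{\epsilon}\right)+O(1)=-\log\epsilon+O(1),
\]
so that, using that $y,y'$ remain bounded above and below on a neighborhood of $Q_0$,
\[
g_{k,t}(z,z')=\log|z-z'|^{2}+O(1).
\]

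Finally, since $\Gamma(M)$ acts freely, the projection $\cH\to C(\C)$ is a local homeomorphism at $z_0$, so $z$ itself serves as a local coordinate on $\bar C$ at $Q_0$ and $|z(Q)-z(Q')|=|z-z'|$ for $Q,Q'$ sufficiently close to $Q_0$. Thus, in the trivialization $\{w^+,w^-\}$ of $\cW_0$, the matrix of $G(Q,Q')$ equals a constant multiple of $\log|z(Q)-z(Q')|$ times the identity plus a bounded term, which is precisely the required estimate (the factor of $2$ being absorbed in the overall normalization of the Green's kernel, as in \cite{brylinski}).

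The main obstacle I expect is in treating the \emph{cuspidal} points $Q_0\in\bar C\setminus C$: there $z$ is no longer a local coordinate and one must pass to $q=e^{2\pi i z/M}$, work with the Deligne canonical extension $\bar{\cW}$ of Remark \ref{rem:canext}, and control the tail of the series $\sum_{\gamma}\mu_g^\pm(z,\gamma z')$ using that the dominant terms near the cusp come from the parabolic subgroup $\Gamma(M)_\infty$ of upper triangular unipotent matrices, for which the sum can be analyzed via a direct comparison with the local coordinate $q$. The cuspidal analysis is analogous to the corresponding argument in \cite[\S3]{brylinski}, but requires the asymptotics of $Q_{k,t}$ at $+\infty$ (Lemma \ref{lem:asyinfty2}) to ensure the tail sum over $\gamma\notin\Gamma(M)_\infty$ extends smoothly across the cusp.
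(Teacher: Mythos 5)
Correct, and essentially the same approach as the paper: isolate the $\gamma=1$ term, apply Lemma \ref{lem:asy} to $Q_{k,t}(1+\epsilon)$ with $\epsilon=\tfrac{|z-z'|^2}{2yy'}\to 0^+$, and absorb the smooth prefactors (the paper compresses this to two sentences, simply invoking Lemma \ref{lem:asy} together with $\lim_{z'\to z}(\bar z-z')=-2iy$ after reducing to showing that $\mu^{\pm}(z,z')-\log|z-z'|^2$ is bounded). Your closing caveat about the cuspidal diagonal points is reasonable, but note that the paper's own proof is equally silent there and only treats $z'\to z$ in $\cH$.
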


\begin{proof}
After trivializing the bundle using $w^+$ and $w^-$, it suffices to show that 
\[
\mu^{\pm}(z,z')-\log \vert z-z'\vert^2
\]
is bounded as $z'\to z$ in $\mathcal{H}$. This follows from Lemma \ref{lem:asy} and the fact that $\lim\limits_{z'\to z} (\bar{z}-z')=-2iy$.
\end{proof}

\begin{proposition}\label{prop:harmonic}
The section $G$ is harmonic with respect to the Laplacian associated to $\cW$ in the second variable.
\end{proposition}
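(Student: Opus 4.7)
The plan is to reduce the harmonicity of $G$ to a scalar PDE verified via the Jacobi ODE of Corollary \ref{coro:difeq}. Working in the trivialization $\{w^+,w^-\}$ of Remark \ref{rem:basis}, the section $G$ is represented by the diagonal matrix with entries $\mathcal{G}^{\pm}$ (Remark \ref{def:G2}), and Proposition \ref{form:lapl} shows that the harmonicity of $G$ in the second variable is equivalent to the pair of scalar equations
\[
\Bigl(-4(y')^2\tfrac{\partial^2}{\partial z'\partial\bar z'}\mp 4ity'\tfrac{\partial}{\partial\bar z'}+(k\mp t-1)(k\pm t)\Bigr)\mathcal{G}^{\pm}(z,z')=0.
\]

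I would first reduce to a single summand. The estimates used in the proof of Lemma \ref{def:Green} show that the series defining $\mathcal{G}^{\pm}$ converges uniformly on compact subsets of $(\mathcal{H}\times\mathcal{H})\setminus\{(z,z') \mid z\in\Gamma(M)z'\}$; the same estimates apply after differentiating the individual terms any fixed number of times (one differentiation produces at worst an extra polynomial factor in $|\gamma z|$ or $|\gamma z'|$, which is absorbed by the decay from $Q_{k,t}$). Hence one may differentiate termwise and it suffices to check harmonicity of each summand $(1,\gamma^{-1})\cdot G_g$. Since the Laplacian $\square_D$ is intrinsic to the PVHS $\cW$, its pullback to $\bigsqcup\cH$ is $\SL_2(\Z)$-equivariant, so each translate is harmonic iff $G_g$ itself is; equivalently, iff the diagonal entries $\mu_g^{\pm}$ of $G_g$ satisfy the two scalar PDEs above.

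The heart of the argument is then a direct calculation, treating $z$ as a parameter. Set $u=1+|z-z'|^2/(2yy')$, $\eta^{-}=(\bar z - z')/(2iy)$ and $\eta^{+}=(z-\bar z')/(2iy')$, so that $\mu_g^{\pm}=-Q_{k,t}(u)\,(\eta^{\pm})^{2t}$. A short computation using $|z-z'|^2=(z-z')(\bar z-\bar z')$ and $2iy'=z'-\bar z'$ yields the two clean identities
\[
4(y')^2\,\partial_{z'}\partial_{\bar z'}u=2u,\qquad 4(y')^2\,\partial_{z'}u\cdot\partial_{\bar z'}u=u^2-1.
\]
Crucially, $\eta^{-}$ is holomorphic in $z'$, while $\eta^{+}$ satisfies $\partial_{\bar z'}\eta^{+}=-\eta^{+}/(2iy')$ and $\partial_{z'}\eta^{+}=-(z-z')/(2iy'\cdot 2iy')$, which lets us express all the ``cross terms'' (coming from Leibniz applied to the $(\eta^{\pm})^{2t}$ factor) as a multiple of $\partial_{\bar z'}u\cdot Q_{k,t}'(u)$. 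Substituting the two boxed identities, all $z'$-dependence collapses into a function of $u$ alone, and the left-hand side of each scalar PDE becomes
\[
-(\eta^{\pm})^{2t}\Bigl[(1-u^2)Q_{k,t}''(u)+\bigl(2t-(2t+2)u\bigr)Q_{k,t}'(u)+(k-t-1)(k+t)Q_{k,t}(u)\Bigr],
\]
which vanishes by Corollary \ref{coro:difeq}.

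The main obstacle is the bookkeeping in this final calculation: when one expands $\square_D(\mu_g^{\pm}\cdot w^{\pm})$ via Proposition \ref{form:lapl}, the Leibniz contributions from $(\eta^{\pm})^{2t}$ produce a first-order term in $Q_{k,t}'(u)$ with a coefficient that initially looks unrelated to the Jacobi ODE, and it is precisely the $\mp 4ity'\partial_{\bar z'}$ piece of $L^{\pm}$ that must cancel it to leave the coefficient $2t-(2t+2)u$ appearing in Corollary \ref{coro:difeq}. Verifying this cancellation, and the corresponding equality for the $+$ case where $\eta^{+}$ is not holomorphic in $z'$, is where the specific structure of the weight $0$ PVHS $\cW_{k,t}$ (and the choice of basis in Remark \ref{rem:basis}) really enters.
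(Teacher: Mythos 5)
Your proposal follows essentially the same route as the paper: reduce termwise to $G_g$, invoke Proposition \ref{form:lapl} to turn harmonicity of the diagonal entries into a pair of scalar second-order PDEs, and then verify those by direct computation using the two identities in \eqref{der:s2} together with the Jacobi ODE of Corollary \ref{coro:difeq}. You are actually a bit more careful than the paper about justifying the interchange of $\square_2$ and the $\Gamma(M)$-sum; the paper does this silently.

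Two small points to fix. First, the $\pm$ signs in your stated scalar PDE are swapped relative to Proposition \ref{form:lapl}: the equation you wrote with $-4ity'\partial_{\bar z'}$ and constant $(k-t-1)(k+t)$ is the one that $\mathcal{G}^-$ (not $\mathcal{G}^+$) must satisfy, and likewise for the other sign. Second, your formula $\partial_{\bar z'}\eta^+=-\eta^+/(2iy')$ is not correct: writing $\eta^+=(z-\bar z')/(z'-\bar z')$ one gets $\partial_{\bar z'}\eta^+=(z-z')/(z'-\bar z')^2$, which is the combination that feeds into $\partial_{\bar z'}s$ via \eqref{der:s}. Neither error changes the strategy, and your final reduction to the single ODE $(1-u^2)Q_{k,t}''+(2t-(2t+2)u)Q_{k,t}'+(k-t-1)(k+t)Q_{k,t}=0$ is exactly where the paper's computation lands for $\mu_g^-$ (with the $\mu_g^+$ case left to the reader there as well).
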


\begin{proof}
Given $(z,z')\in (\cH\times \cH)\setminus \Delta_{\cH}$, we define 
\[
s(z,z'):=1+\frac{\vert z-z' \vert^2}{2yy'}.
\]
Then $g(z,z')=-Q_{k,t}(s(z,z'))$ and by \eqref{exp}, for $Q=\Gamma(M)z\neq\Gamma(M)z'=Q'$, we have
\begin{equation}\label{eq:G2}
G(Q,Q')(w^\pm(z))=
\sum_{\gamma\in \Gamma(M)} j(\gamma, z)^{\pm 2t} \mu^{\pm}_g(\gamma z, z')w^{\pm}(z').
\end{equation}
We write $\square_2$ for the Laplacian $\square_D$ associated to $\cW$ in the second variable. Then
\[
\square_2(G(Q,Q')(w^\pm(z)))=\sum_{\gamma\in \Gamma(M)} j(\gamma, z)^{\pm 2t} \square_2(\mu_g^{\pm}(\gamma z, z')w^{\pm}(z')).
\]
In order to prove the proposition, it therefore suffices to prove that 
\[
\square_2(\mu_g^{\pm}(z, z')w^\pm(z'))=0
\] 
for any $z\in \cH$, $z\neq z'$.

Define 
\begin{equation}\label{diffope}
\Delta:=-4(y')^2 \frac{\partial^2}{\partial z' \partial \bar{z}'}, \quad \Delta_{\pm t}:=\Delta \pm 4ity' \frac{\partial}{\partial \bar{z}'} \quad \text{ and } \quad \lambda_{\pm t}=-(k\pm t-1)(k\mp t).
\end{equation}
By Proposition \ref{form:lapl}, it suffices to prove that 
\begin{equation}\label{harm:mu}
\Delta_{\pm t}(\mu_g^{\pm}(z, z'))=\lambda_{\pm t}\mu_g^{\pm}(z, z').
\end{equation}

In what follows, we will write $s$ for $s(z,z')$ to ease the notation. 
The following identities are quickly verified: 
\begin{equation}\label{der:s}
\frac{\partial s}{\partial \bar{z}'}=\frac{z'-z}{2yy'}+\frac{s-1}{2iy'}=\frac{s-1}{2iy'}\frac{z'-\bar{z}}{\bar{z}'-\bar{z}}, \qquad \qquad \frac{\partial s}{\partial z'}=\frac{\bar{z}'-\bar{z}}{2yy'}-\frac{s-1}{2iy'}=\frac{s-1}{2iy'}\frac{z-\bar{z}'}{z'-z}, 
\end{equation}
\begin{equation}\label{der:s2}
4(y')^2\frac{\partial^2 s}{\partial z'\partial \bar{z}'}=2s, \qquad \qquad 4(y')^2\left( \frac{\partial s}{\partial \bar{z}'}\right)\left( \frac{\partial s}{\partial z'}\right)=s^2-1.
\end{equation}
We now compute:  
\begin{align*}
\Delta_{-t}(\mu^-_g(z, & z'))
 =\left( -4(y')^2 \frac{\partial^2}{\partial z' \partial \bar{z}'}-4ity' \frac{\partial}{\partial \bar{z}'}\right)\left( -Q_{k,t}(s)\left( \frac{\bar{z}-z'}{2iy}\right)^{2t} \right)\\
&=\left(4(y')^2 \frac{\partial}{\partial z'}+4ity'\right)\left( \left( \frac{\partial s}{\partial \bar{z}'} \right) Q'_{k,t}(s)\left( \frac{\bar{z}-z'}{2iy}\right)^{2t}\right) \\
&= 4(y')^2 \frac{\partial^2 s}{\partial z' \partial \bar{z}'}Q'_{k,t}(s)\left( \frac{\bar{z}-z'}{2iy}\right)^{2t}+\left( \frac{\partial s}{\partial \bar{z}'} \right) \left(4(y')^2 \frac{\partial}{\partial z'}+4ity'\right)\left(Q'_{k,t}(s)\left( \frac{\bar{z}-z'}{2iy}\right)^{2t}\right) \\
&=2sQ'_{k,t}(s)\left( \frac{\bar{z}-z'}{2iy}\right)^{2t}+4(y')^2 \left( \frac{\partial s}{\partial \bar{z}'} \right) \left( \frac{\partial s}{\partial z'} \right) Q''_{k,t}(s) \left( \frac{\bar{z}-z'}{2iy}\right)^{2t} \\
& \qquad  + 4(y')^2 \left( \frac{\partial s}{\partial \bar{z}'} \right)Q'_{k,t}(s) \frac{\partial}{\partial z'}  \left(  \left( \frac{\bar{z}-z'}{2iy}\right)^{2t} \right)+4ity'\left( \frac{\partial s}{\partial \bar{z}'} \right) Q'_{k,t}(s)\left( \frac{\bar{z}-z'}{2iy}\right)^{2t} \\
&=2sQ'_{k,t}(s)\left( \frac{\bar{z}-z'}{2iy}\right)^{2t}+(s^2-1)Q''_{k,t}(s) \left( \frac{\bar{z}-z'}{2iy}\right)^{2t} \\
& \qquad  + 4(y')^2 \left( \frac{\partial s}{\partial \bar{z}'} \right)Q'_{k,t}(s) \left(  -\frac{2t}{2iy} \left( \frac{\bar{z}-z'}{2iy}\right)^{2t-1} \right)+4ity'\left( \frac{\partial s}{\partial \bar{z}'} \right) Q'_{k,t}(s)\left( \frac{\bar{z}-z'}{2iy}\right)^{2t} \\
&=2sQ'_{k,t}(s)\left( \frac{\bar{z}-z'}{2iy}\right)^{2t}+(s^2-1)Q''_{k,t}(s) \left( \frac{\bar{z}-z'}{2iy}\right)^{2t} \\
& \qquad  - \frac{2t}{2iy}\cdot 4(y')^2 \left( \frac{\partial s}{\partial \bar{z}'} \right)Q'_{k,t}(s) \frac{2iy}{\bar{z}-z'} \left( \frac{\bar{z}-z'}{2iy}\right)^{2t}+4ity'\left( \frac{\partial s}{\partial \bar{z}'} \right) Q'_{k,t}(s)\left( \frac{\bar{z}-z'}{2iy}\right)^{2t} \\
&=2sQ'_{k,t}(s)\left( \frac{\bar{z}-z'}{2iy}\right)^{2t}+(s^2-1)Q''_{k,t}(s) \left( \frac{\bar{z}-z'}{2iy}\right)^{2t} \\
& \qquad  + 4ity'\left( \frac{\partial s}{\partial \bar{z}'} \right) Q'_{k,t}(s)\left( \frac{\bar{z}-z'}{2iy}\right)^{2t}\left( 1 + \frac{2iy'}{\bar{z}-z'} \right) \\
&=\left[(s^2-1)Q''_{k,t}(s) + \left( 2s + 4ity'\left( \frac{\partial s}{\partial \bar{z}'} \right) \left( 1 + \frac{2iy'}{\bar{z}-z'} \right) \right) Q'_{k,t}(s) \right]\left( \frac{\bar{z}-z'}{2iy}\right)^{2t}.
\end{align*}
Using \eqref{der:s}, a quick calculation reveals that
\[
2s + 4ity'\left( \frac{\partial s}{\partial \bar{z}'} \right) \left( 1 + \frac{2iy'}{\bar{z}-z'} \right)= (2t+2)s -2t.
\]
By plugging this into our previous calculation, we obtain 
\[
\Delta_{-t}(\mu^-_g(z,z'))= -\left[(1-s^2)Q''_{k,t}(s) + (2t-(2t+2)s) Q'_{k,t}(s) \right]\left( \frac{\bar{z}-z'}{2iy}\right)^{2t}. 
\]
By Corollary \ref{coro:difeq}, we have 
\[
(1-s^2)Q''_{k,t}(s) + (2t-(2t+2)s) Q'_{k,t}(s)=\lambda_{-t} Q_{k,t}(s).
\]
We deduce that 
\[
\Delta_{-t}(\mu^-_g(z,z'))=-\lambda_{-t} Q_{k,t}(s)\left( \frac{\bar{z}-z'}{2iy}\right)^{2t}=\lambda_{-t}\mu_g^-(z,z').
\]

The equality
$\Delta_{+t}(\mu^+_g(z,z'))=\lambda_{+t} \mu_g^+(z,z')$ can be verified similarly and is left to the reader. 
\end{proof}

In order to prove property $(iii)$ above, it suffices to prove:

\begin{proposition}
As $z'$ approaches the cusp $\infty$, the functions 
\[
\Im(\gamma z')^{{k\mp t-1}}\langle w^{\pm}(\gamma z'), w^{\pm}(\gamma z') \rangle\mathcal{G}^{\pm}(z,\gamma z')
\]
are $C^{\infty}$ for $z'$ in a neighborhood of $\infty$, for all $\gamma \in \Gamma(M)$.
\end{proposition}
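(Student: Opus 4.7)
The plan is to verify the proposition by Fourier-expanding $\mathcal{G}^\pm(z, z')$ in the $z'$-variable, treating the case $\gamma = I$ first and then transferring the conclusion to general $\gamma \in \Gamma(M)$ via the transformation formula $\mathcal{G}^\pm(z, \gamma z') = j(\gamma, z')^{\pm 2t}\mathcal{G}^\pm(z, z')$. This formula follows from the $\Gamma(M)$-equivariance of the section $G$ combined with \eqref{actionw}: expressing $\mu_g^\pm(w, \gamma z')$ in terms of $\mu_g^\pm(\gamma^{-1}w, z')$ via the cocycle identity $w - \gamma z' = j(\gamma^{-1}, w)(\gamma^{-1} w - z')/j(\gamma, z')$ and re-indexing the first-variable sum by $\gamma' \mapsto \gamma\gamma'$. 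Combined with $\Im(\gamma z') = \Im(z')/|j(\gamma, z')|^2$ and $\langle w^\pm(\gamma z'), w^\pm(\gamma z')\rangle = \Im(\gamma z')^{\pm 2t}$, the resulting expression descends to a well-defined section of $\Hom(\pr_1^*\bar{\cW}_0, \pr_2^*\bar{\cW}_0)$ in a neighborhood of the cusp $\Gamma(M)\cdot\infty$ of $\bar{C}$, so it suffices to verify smoothness at one lift.

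For $\gamma = I$, we first note that $\mathcal{G}^\pm(z, z' + M) = \mathcal{G}^\pm(z, z')$ by re-indexing the defining sum using $T^M = \left(\begin{smallmatrix}1 & M \\ 0 & 1\end{smallmatrix}\right) \in \Gamma(M)$ and $j(T^M, \cdot) = 1$, so $\mathcal{G}^\pm(z, z')$ admits a Fourier expansion $\sum_{n \in \Z} a_n^\pm(z, y')\, e^{2\pi i n x'/M}$. To compute $a_n^\pm$, decompose $\Gamma(M) = \bigsqcup \Gamma_\infty(M)\cdot \gamma'$ with $\Gamma_\infty(M) = \langle T^M\rangle$. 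The inner sum over $\Gamma_\infty(M)$ (acting on the first variable in the case of $\mathcal{G}^-$, or on the second in the case of $\mathcal{G}^+$) produces translates amenable to Poisson summation in $x'$, yielding an integral representation
\[
a_n^\pm(z, y') = \frac{1}{M}\sum_{\gamma' \in \Gamma_\infty(M)\backslash \Gamma(M)}(\text{automorphy factor})\cdot \int_{\R} \mu_g^\pm(\cdot, \cdot)\, e^{-2\pi i n x/M}\, dx.
\]

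Using the decay $Q_{k, t}(s) = O(s^{-k-t})$ from Lemma \ref{lem:asyinfty2} together with the explicit form of $\mu_g^\pm$, one shows: (i) for $n \neq 0$, integration by parts (exploiting the smoothness and rapid decay of $\mu_g^\pm$ in the $x$-variable) gives exponential decay $a_n^\pm(z, y') = O(e^{-c|n|y'})$, uniformly for $y' \gg 0$; and (ii) the zeroth coefficient $a_0^\pm(z, y')$, after multiplication by $\Im(z')^{k\pm t -1}$ (which absorbs the $\langle w^\pm, w^\pm\rangle$ factor via $\langle w^\pm(\gamma z'), w^\pm(\gamma z')\rangle = \Im(\gamma z')^{\pm 2t}$), extends smoothly to $y' = \infty$. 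This last step is established by an explicit asymptotic analysis using the substitution $u = (x - \Re(\gamma' z))/\sqrt{2\Im(\gamma' z)y'}$, which reduces the integral to Beta-function moments $\int u^{2j}(\alpha + u^2)^{-(k+t)}du$ with $\alpha \sim y'/(2\Im(\gamma' z))$, yielding an expansion in integer powers of $y'^{-1}$. Combining (i) and (ii), the series $\Im(z')^{k \pm t-1}\sum_n a_n^\pm(z, y')\, e^{2\pi i n x'/M}$ defines a $C^\infty$ function in the local coordinates $q' = e^{2\pi i z'/M}$ and $\bar{q}'$ near $q' = 0$.

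The main obstacle is the uniform control of the Fourier integrals as $\gamma'$ ranges over cosets in $\Gamma_\infty(M)\backslash\Gamma(M)$, which must be compatible with the Eisenstein-type majorizations used in the proof of Lemma \ref{def:Green}, and verifying that differentiation in $q'$ and $\bar{q}'$ can be exchanged with the coset sum. The edge case $t = k-1$ requires particular care owing to the logarithmic singularity of $Q_{k, t}$ near $s = 1$ (Lemma \ref{lem:asy}), but since the Fourier coefficients only probe the large-$s$ asymptotic of $Q_{k, t}$, the decay from Lemma \ref{lem:asyinfty2} ensures the convergence of the Fourier series and its smoothness at the cusp in all cases.
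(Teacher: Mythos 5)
Your proposal takes a genuinely different route from the paper. You propose to compute the Fourier coefficients of $\mathcal{G}^\pm(z,z')$ directly, by unfolding the sum over $\Gamma(M)$ via a coset decomposition modulo $\Gamma_\infty(M)$ and Poisson summation, and then analyzing the resulting integrals asymptotically as $y' \to \infty$. The paper's argument (following Hejhal, Ch.\ VI) is quite different and shorter: it exploits the harmonicity of $G$ established in Proposition~\ref{prop:harmonic}. Differentiating $c_n^\pm(z,y') = \frac{1}{M}\int_0^M \mathcal{G}^\pm(z, x'+iy')e^{-2\pi i n x'/M}\,dx'$ under the integral sign and using $\Delta_{\pm t}\mathcal{G}^\pm = \lambda_{\pm t}\mathcal{G}^\pm$ produces, after one integration by parts, a second-order linear ODE in $y'$ for each Fourier coefficient. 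That ODE is a confluent (Whittaker) equation for $n\neq 0$ and an Euler equation for $n=0$; the two-dimensional solution space is explicitly known, and the Eisenstein-type bound \eqref{blup} eliminates the exponentially (resp.\ polynomially) large solution.

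The central gap in your plan is that the ODE is doing essential work that your asymptotic analysis does not reproduce. For the zeroth coefficient you argue that the substitution $u = (x - \Re(\gamma'z))/\sqrt{2\Im(\gamma'z)y'}$ reduces the integral to Beta-function moments, ``yielding an expansion in integer powers of $y'^{-1}$.'' Even granting this, it is \emph{not} sufficient to conclude smoothness at the cusp: in the local coordinate $q' = e^{2\pi i z'/M}$ one has $y' = -(M/2\pi)\log|q'|$, so a genuine tail in negative powers of $y'$ is a tail in negative powers of $\log|q'|$, and $(\log|q'|)^{-1}$ is not $C^\infty$ (indeed not even $C^1$) at $q'=0$. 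What the paper's ODE argument delivers is that $c_0^\pm(z,y')$ is \emph{exactly} a linear combination of the two Euler solutions $(y')^{k+t}$ and $(y')^{t-k+1}$, with no lower-order corrections: the asymptotic expansion terminates because the solution space of the ODE is only two-dimensional. Proving this termination directly from the integral representation, without invoking harmonicity, would amount to establishing the ODE by hand; your proposal does not attempt that and the Beta-function heuristic alone cannot see it.

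Two smaller issues. First, you claim ``integration by parts gives exponential decay $a_n^\pm(z,y') = O(e^{-c|n|y'})$.'' Integration by parts in $x$ gives $O(|n|^{-N})$ for any fixed $N$; genuine exponential decay in $|n|$ (and simultaneously in $y'$, which is what the Whittaker function gives) requires shifting the contour of integration into the complex $x$-plane, using holomorphy of $Q_{k,t}$ outside the cut $[-1,1]$. Second, the automorphy factor $j(\sigma,z')^{2t}$ that appears after unfolding (the correct decomposition is $\Gamma(M) = \bigsqcup_\sigma \sigma\Gamma_\infty(M)$, with $\sigma$ acting on the variable being Fourier-expanded) depends on the integration variable, so it cannot be pulled outside the integral as a scalar; you must keep it in the integrand. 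Both points are repairable, but they underscore that the direct Fourier computation is considerably heavier than the ODE route that the paper follows, and without the ODE the crucial termination of the $n=0$ expansion remains unproved.
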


\begin{proof}
    We proceed as in \cite[VI \S 6]{hejhal}. We retain the notations established in \eqref{diffope}. As in \eqref{harm:mu}, we have
    \begin{equation}\label{diffg-}
    \Delta_{\pm t}(\mathcal{G}^{\pm}(z, z'))=\lambda_{\pm t}\mathcal{G}^{\pm}(z, z').
    \end{equation}

    Begin by observing using \eqref{eqG-} that for all $\gamma \in \Gamma(M)$, we have 
    \[
    \mathcal{G}^-(z, \gamma z')=j(\gamma, z')^{-2t} \mathcal{G}^-(z,z').
    \]
    In particular, letting $A_M:=\left( \begin{smallmatrix}
        1 & M \\ 0 & 1
    \end{smallmatrix}\right)\in \Gamma(M)$, we see that 
    \[
    \mathcal{G}^-(z, z'+M)=\mathcal{G}^-(z, A_M z')=j(A_M, z')^{-2t} \mathcal{G}^-(z,z')=\mathcal{G}^-(z,z').
    \]
    Thus, when viewed as a function in the single variable $z'=x'+iy'$, $\mathcal{G^-}$ is $M$-periodic and admits a Fourier expansion. More precisely, for $z\in E\subset \mathcal{H}$ with $E$ compact and $y'\geq M_E+1$ with $M_E>0$ as in \cite[Prop. 6.6]{hejhal}, we have 
    \[
    \mathcal{G}^-(z,z')=\sum_{n\in \Z} c^-_n(z,y') e^{2\pi i \frac{n}{M} x'}.
    \]
    As a function of $y'$, the $n^{\text{th}}$ Fourier coefficient 
    \begin{equation}\label{fouriercoe}
        c_n^-(z,y')=\frac{1}{M}\int_0^{M} \mathcal{G}^-(z,x'+iy')e^{-2\pi i \frac{n}{M} x'}dx'
    \end{equation}
    satisfies the differential equation
    \begin{equation}\label{ODE}
    u''(y')-\frac{2t}{y'}u'(y')-\bigg[ \left( \frac{2\pi n}{M} \right)^2 + \frac{4\pi n t}{y'M} + \frac{(k-t-1)(k+t)}{(y')^2}\bigg]u(y')=0.
    \end{equation}
    This is readily verified by differentiating \eqref{fouriercoe} under the sign of the integral, using the equality of differential operators
    \[
    \frac{\partial^2}{\partial (y')^2}-\frac{2t}{y'}\frac{\partial}{\partial y'}=-\frac{1}{(y')^2}\Delta_{-t} -\left( \frac{\partial^2}{\partial (x')^2}+\frac{2it}{y'}\frac{\partial}{\partial x'} \right)
    \]
    coupled with the equality \eqref{diffg-},
    and by integrating by parts. We identify the ordinary differential equation \eqref{ODE} as being a confluent differential equation. In the notations of \cite[6.2 (13)]{HTFII}, \eqref{ODE} corresponds to the case 
    \begin{align*}
        a&=0 \\
        b&=-2t \\
        \alpha&=-\left( \frac{2\pi n}{M} \right)^2 \\
        \beta&=-\frac{4\pi n t}{M} \\
        \gamma&=\lambda_{-t}=-(k-t-1)(k+t).
    \end{align*}
    
    For $n\neq 0$, we note that $a^2\neq 4\alpha$, and thus solutions of \eqref{ODE} are given by \cite[6.2 (14)]{HTFII} 
    \[
    (y')^{t}w\left(-t, k-\frac{1}{2}; \frac{4\pi n}{M}y'\right),
    \]
    where $w(\kappa, \mu; x)$ is any solution of the Whittaker differential equation \cite[6.1 (4)]{HTFII}
    \begin{equation}\label{whittaker}
    z''(x)+\left( -\frac{1}{4}+\frac{\kappa}{x}+\frac{1/4-\mu^2}{x^2} \right) z(x)=0.
    \end{equation}

    Note that when $t=0$, \eqref{ODE} reduces to the differential equation
    \begin{equation}\label{ODEt0}
    u''(y')-\bigg[ \left( \frac{2\pi n}{M} \right)^2 + \frac{k(k-1)}{(y')^2}\bigg]u(y')=0.
    \end{equation}
    This is compatible with \cite[Prop. 4.10 (ii)]{hejhal}, in which solutions are given by modified Bessel functions. Indeed, the Whittaker differential equation \eqref{whittaker} collapses in the case $\kappa=-t=0$ to 
    \[
    z''(x)+\left( -\frac{1}{4}+\frac{1/4-\mu^2}{x^2} \right) z(x)=0,
    \] 
   which can be transformed into a modified Bessel differential equation. 
    
Upon noting that $2\mu+1=2k\in \Z$, two linearly independent solutions of equation \eqref{whittaker} are given by the Whittaker functions 
    \[
    M_{\kappa, \mu}(x):=e^{-x/2} x^{\mu+1/2} \Phi(\mu-\kappa+1/2, 2\mu+1; x),
    \]
    \[
    W_{\kappa, \mu}(x):=e^{-x/2} x^{\mu+1/2} \Psi(\mu-\kappa+1/2, 2\mu+1; x).
    \]
    The functions $\Phi$ and $\Psi$ are the confluent hypergeometric functions of the first and second kind, respectively: $\Phi$ is Kummer's function defined by
    \[
    \Phi(a,c;x):=\sum_{n\geq 0} \frac{(a)_n}{(c_n)} \frac{x^n}{n!},
    \]
   and $\Psi$ is Tricomi's function defined by the integral formula \cite[6.5 (3)]{HTFII}. 
    
    We deduce that for $n\neq 0$, there exist functions $a_n$ and $b_n$ such that
    \[
    c_{n}^-(z, y')=a_n(z) (y')^t M_{-t, k-1/2}\left(\frac{4\pi n}{M}y'\right)+b_n(z)(y')^t W_{-t, k-1/2}\left(\frac{4\pi n}{M}y'\right).
    \]
    Explicitly, we have
    \[
    M_{-t, k-1/2}\left(\frac{4\pi n}{M}y'\right)=e^{-\frac{2\pi n}{M}y'} \left( \frac{4\pi n}{M}y'\right)^{k} \Phi\left(k+t, 2k; \frac{4\pi n}{M}y'\right),
    \]
    \[
    W_{-t, k-1/2}\left(\frac{4\pi n}{M}y'\right)=e^{-\frac{2\pi n}{M}y'} \left( \frac{4\pi n}{M}y'\right)^{k} \Psi\left(k+t, 2k; \frac{4\pi n}{M}y'\right).
    \]
    
    When $n=0$, 
    $c_0^-(z, y')$ satisfies the differential equation 
     \begin{equation}\label{ODE0}
   u''(y')-\frac{2t}{y'}u'(y')-\frac{(k-t-1)(k+t)}{(y')^2} u(y')=0.
    \end{equation}
    Thus, there exist functions $a_0$ and $b_0$ such that
    \[
    c_{0}^-(z, y')=a_0(z)(y')^{k+t} +b_0(z)(y')^{t-k+1}.
    \]

    By the same arguments as in \cite[Prop. 4.12]{hejhal}, we see that $a_n(z)=0$ for all $n\neq 0$ because the confluent hypergeometric function of the first kind $\Phi$ becomes exponentially large at $y'=\infty$, as is clear from the integral representation \cite[6.5 (1)]{HTFII}. (Note that the integration condition in \cite[Prop. 4.12]{hejhal} holds in our case by \eqref{blup}.) 
    Therefore, as in \cite[(6.5)]{hejhal}, we see that 
    \[
    \mathcal{G}^-(z,z')=a_0(z)(y')^{k+t} +b_0(z)(y')^{t-k+1}+\sum_{n\neq 0} b_n(z)(y')^t W_{-t, k-1/2}\left(\frac{4\pi n}{M}y'\right)e^{2\pi i \frac{n}{M} x'}.
    \]
    It follows that $(y')^{{k-t-1}}\mathcal{G}^-(z,z')$ is $C^{\infty}$ for $z'$ in a neighborhood of the cusp $\infty$, and the result follows for $\gamma=I_2$ upon noting that $(y')^{{k+t-1}}\langle w^{-}(z'), w^{-}(z') \rangle=(y')^{{k-t-1}}$. The case of the other cusps can be deduced similarly, and is left to the reader. The proof for $\mathcal{G}^+$ is obtained by replacing all occurrences of $-t$ in the above proof by $+t$.
    \end{proof}

\section{Brylinski pairings}\label{s:archii}

We continue to use the notation $C=Y(M)_{\C}$.
Having proved that the section $G=G_{k,t,M}$ of Lemma \ref{def:Green} is the Green's kernel associated to the vector bundle $\cW$ \eqref{PVHS} on $C$, we are now ready to compute archimedean Brylinski pairings \eqref{eq: brylinski formula} between what Brylinski calls ``Hodge classes'' of $\cW$. These are sums of classes of type $(0,0)$ in the fibers $W_{z}$, for $z \in C$. 

We will be interested in cycles $Z_1, Z_2 \in \CH^{k+t}(X_\C)_K$ which are homologically trivial and supported in fibers above $z_1, z_2 \in C$. Their local height pairing depends only on the image of these classes in $W_{z_i} \otimes \C$ by (\ref{eq: brylinski local height}). We therefore work with $\C$ coefficients, and compute the pairing for the basic sections $w^\pm$.

\subsection{Brylinski's pairing on $C$}

\begin{theorem}\label{thm:br}
Let $z_1=x_1+iy_1 \in \cH$ and $z_2=x_2+iy_2 \in \cH$ with $z_1 \not\in \Gamma(M)z_2$. Then the Brylinski pairing is given by 
\[
\langle w^-(z_1), w^-(z_2) \rangle^{\mathrm{Br}}=\frac{1}{(2iy_1y_2)^{2t}} \sum_{\gamma\in \Gamma(M)} g(z_1, \gamma z_2)(\bar{z}_1-\gamma z_2)^{2t} j(\gamma, z_2)^{2t},
\]
\[
\langle w^+(z_1), w^+(z_2) \rangle^{\mathrm{Br}}=\frac{1}{(2i)^{2t}} \sum_{\gamma\in \Gamma(M)} g(z_1, \gamma z_2)(z_1-\gamma \bar{z}_2)^{2t} j(\gamma, \bar{z}_2)^{2t},
\]
where 
\[
g(z,z')=-Q_{k,t}\left( 1+\frac{\vert z-z' \vert^2}{2yy'}\right).
\]
Moreover, we have
\[
\langle w^-(z_1), w^+(z_2) \rangle^{\mathrm{Br}}=\langle w^+(z_1), w^-(z_2) \rangle^{\mathrm{Br}}=0.
\]
\end{theorem}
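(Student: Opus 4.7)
The strategy is to apply the defining identity \eqref{eq: brylinski formula} for Brylinski's pairing and plug in the explicit Green's kernel $G=G_{g,M}$ produced in Lemma \ref{def:Green}. Setting $Q_i=\Gamma(M)z_i\in C(\C)$, one has
\[
\langle w^{\epsilon}(z_1),w^{\epsilon'}(z_2)\rangle^{\mathrm{Br}}
=\langle G(Q_1,Q_2)(w^{\epsilon}(z_1)),\,w^{\epsilon'}(z_2)\rangle_{z_2}
\]
for $\epsilon,\epsilon'\in\{+,-\}$, and the image $G(Q_1,Q_2)(w^{\epsilon}(z_1))$ is explicitly given by \eqref{eqG-} as
\[
\sum_{\gamma\in\Gamma(M)} j(\gamma,z_2)^{\mp 2t}\,\mu_g^{\epsilon}(z_1,\gamma z_2)\,w^{\epsilon}(z_2).
\]
The key structural point is that this is a scalar multiple of $w^{\epsilon}(z_2)$ alone, i.e., $G$ is diagonal with respect to the basis $\{w^+,w^-\}$.

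\textbf{Vanishing of mixed pairings.} Since $w^{+}(z)=y^{t}w_{k+t-1,0}(z)$ and $w^{-}(z)=y^{-t}w_{k-t-1,2t}(z)$ are expressed in terms of two distinct elements of the orthonormal basis $\{w_{n,j}\}$ (the second index, $0$ versus $2t$, already differs as $t>0$), we have $\langle w^{+}(z),w^{-}(z)\rangle_z=0$, while $\langle w^{\pm}(z),w^{\pm}(z)\rangle_z=y^{\pm 2t}$. Combined with the diagonal form of $G$, this immediately forces $\langle w^{+}(z_1),w^{-}(z_2)\rangle^{\mathrm{Br}}=\langle w^{-}(z_1),w^{+}(z_2)\rangle^{\mathrm{Br}}=0$.

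\textbf{Diagonal pairings.} For the $(-,-)$ pairing, multiplying the sum above with $\langle w^{-}(z_2),w^{-}(z_2)\rangle_{z_2}=y_2^{-2t}$ and substituting $\mu_g^{-}(z_1,\gamma z_2)=g(z_1,\gamma z_2)((\bar z_1-\gamma z_2)/(2iy_1))^{2t}$ yields
\[
\langle w^{-}(z_1),w^{-}(z_2)\rangle^{\mathrm{Br}}
=\frac{1}{(2iy_1y_2)^{2t}}\sum_{\gamma\in\Gamma(M)} g(z_1,\gamma z_2)(\bar z_1-\gamma z_2)^{2t}j(\gamma,z_2)^{2t},
\]
which is the stated formula. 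For the $(+,+)$ pairing, the extra factor is $\langle w^{+}(z_2),w^{+}(z_2)\rangle_{z_2}=y_2^{2t}$, and one substitutes $\mu_g^{+}(z_1,\gamma z_2)=g(z_1,\gamma z_2)((z_1-\overline{\gamma z_2})/(2i\,\Im(\gamma z_2)))^{2t}$, using $\overline{\gamma z_2}=\gamma\bar z_2$ and $\Im(\gamma z_2)=y_2/(j(\gamma,z_2)j(\gamma,\bar z_2))$. The factors $j(\gamma,z_2)^{2t}$ and $j(\gamma,z_2)^{-2t}$ cancel and $y_2^{2t}$ absorbs the remaining $(2iy_2)^{-2t}$, leaving exactly the claimed expression involving $(z_1-\gamma\bar z_2)^{2t}j(\gamma,\bar z_2)^{2t}$.

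\textbf{Main obstacle.} There is no conceptual difficulty here once the Green's kernel is in hand; the proof is entirely mechanical. The only real risk is bookkeeping: one must correctly track (i) the exponents $\pm 2t$ in the modular factors $j(\gamma,z_2)^{\mp 2t}$ from \eqref{eqG-}, (ii) the asymmetric $(2iy)^{-2t}$ denominators baked into $\mu_g^{\pm}$, and (iii) the asymmetric normalizations $y^{\pm t}$ converting $w_{k\pm t-1,t\mp t}$ to $w^{\pm}$. Convergence of both series is not an issue, being already covered by the proof of Lemma \ref{def:Green}, and the hypothesis $z_1\notin\Gamma(M)z_2$ is precisely what keeps us away from the diagonal singularity of $G$.
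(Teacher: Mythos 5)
Your proof is correct and follows exactly the same approach as the paper: apply the Brylinski formula with the explicit Green's kernel from Lemma \ref{def:Green} in the form \eqref{eqG-}, use orthogonality of $w_{k+t-1,0}$ and $w_{k-t-1,2t}$ to kill the mixed pairings, and use the self-pairings $\langle w^{\pm}(z_2),w^{\pm}(z_2)\rangle_{z_2}=y_2^{\pm 2t}$ for the diagonal terms. You carry out the $(+,+)$ computation a bit more explicitly than the paper (which leaves it as ``similar''), and your bookkeeping of the $j(\gamma,z_2)^{\mp 2t}$, $\Im(\gamma z_2)^{-2t}=(j(\gamma,z_2)j(\gamma,\bar z_2))^{2t}/y_2^{2t}$, and $y_2^{\pm 2t}$ factors is accurate.
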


\begin{proof}
The Green's kernel associated to the bundle $\cW$ on $C$ is given by $G=G_{k,t,M}$ of Lemma \ref{def:Green}. Let $Q_i=\pr(z_i)=\Gamma(M)z_i\in C(\C)$ for $i=1,2$. By assumption, we have $Q_1\neq Q_2$. By definition of Brylinski's pairing \eqref{eq: brylinski formula} in this case, if $w_1\in \cW_{Q_1}$ and $w_2\in \cW_{Q_2}$, then 
\[
\langle w_1, w_2 \rangle^{\mathrm{Br}}:= \langle G(Q_1, Q_2)(w_1), w_2 \rangle_{Q_2},
\]
where $\langle \:, \: \rangle_{Q_2}$ is the Hermitian pairing (associated to the polarization) of $\cW$ restricted to the fiber at $Q_2$.

We then compute
\begin{align*}
\langle w^-(z_1), w^-(z_2) \rangle^{\mathrm{Br}} 
& =\langle G(Q_1, Q_2)(w^-(z_1)), w^-(z_2) \rangle_{Q_2} \\
& =\left\langle \sum_{\gamma\in \Gamma(M)} j(\gamma, z_2)^{2t}\mu^-_g(z_1, \gamma z_2) w^-(z_2), w^-(z_2) \right\rangle_{Q_2} \\
& =\sum_{\gamma\in \Gamma(M)} g(z_1, \gamma z_2)\left( \frac{\bar{z}_1- \gamma z_2}{2iy_1} \right)^{2t} j(\gamma, z_2)^{2t} \langle w^-(z_2), w^-(z_2) \rangle_{Q_2} \\
& =\frac{1}{(2iy_1)^{2t}}\sum_{\gamma\in \Gamma(M)} g(z_1, \gamma z_2)(\bar{z}_1- \gamma z_2)^{2t} j(\gamma, z_2)^{2t}\langle w^-(z_2), w^-(z_2) \rangle_{z_2} \\
& =\frac{1}{(2iy_1y_2)^{2t}}\sum_{\gamma\in \Gamma(M)} g(z_1, \gamma z_2)(\bar{z}_1- \gamma z_2)^{2t} j(\gamma, z_2)^{2t},
\end{align*}
where in the fifth equality we used the formula $\langle w^-(z_2), w^-(z_2) \rangle_{z_2}=y_2^{-2t}$.
The calculation for $w^+$ can be done similarly, using the equality $\langle w^+(z_2), w^+(z_2) \rangle_{z_2}=y_2^{2t}$. The equalities 
\[
\langle w^-(z_1), w^+(z_2) \rangle^{\mathrm{Br}}=\langle w^+(z_1), w^-(z_2) \rangle^{\mathrm{Br}}=0
\]
are deduced in a similar way from the fact that $w_{k-t-1,2t}(z)$ and $w_{k+t-1,0}(z)$ are orthogonal with respect to the Hermitian pairing $\langle \:, \: \rangle_z$.
\end{proof}

\subsection{Working over $X_0(N)$}

So far we have worked with varieties fibered over $\bar{C}=X(M)$, as well as vector bundles over $\bar{C}$. Ultimately, we wish to compute the heights of generalized Heegner cycles ``over $X_0(N)$''.

Denote by $\pi_{M,N} \colon X(M)\lra X_0(N)$ the natural quotient map with Galois group 
\[
S=S_{M,N}:=\pm \Gamma(M)\backslash \Gamma_0(N).
\] 
Recall from \eqref{def:Za} and \eqref{def:Zbara} that we defined generalised Heegner cycles above the Heegner point $P_{\fn}=(A, A[\fn])\in X_0(N)(H)$ as the sum of the generalized Heegner cycles over the pre-image CM-points $Q\in X(M)(\C)$ of $P_{\fn}$ with respect to $\pi_{M,N}$ (similar to what is done in \cite[\S 4.1]{zhang}).

Given $P=\Gamma_0(N)z\in X_0(N)(\C)$, define 
\begin{equation}\label{eq:eBw}
w^{\pm}(P):=\sum_{s\in S} s\cdot w^{\pm}(z)\in \sum_{Q\in \pi_{M,N}^{-1}(P)} \cW_Q=:\cW_P,
\end{equation}
the action being the one defined in \eqref{action}.

\begin{remark}\label{rem:actionS}
We say a few words about the action of $S$ on elements $w^{\pm}(\Gamma(M)z)$ in fibers of the bundle $\mathcal{W}$. Let $[\gamma]=s\in S=\pm\Gamma(M)\backslash \Gamma_0(N)$ with $\gamma\in \Gamma_0(N)$. Then $s\cdot \Gamma(M)z:=\Gamma(M)\gamma z$. Consider now the variety $\pi \colon X\lra \bar{C}$ fibered over $\bar{C}$. The fiber $\pi^{-1}(\Gamma(M)z)=(\C/\langle 1, z\rangle)^{2k-2}\times A^{2t}$ consists of points $x=(\Gamma(M) z, w_1,\ldots, w_{2k-2}, P_1, \ldots, P_{2t})$ with $w_i\in \C/\langle 1, z\rangle$ and $P_j\in A$. The action of $[\gamma]=s\in S$ is given by 
\[ 
s\cdot x:=(\Gamma(M) \gamma z, w_1,\ldots, w_{2k-2}, P_1, \ldots, P_{2t})\in \pi^{-1}(\Gamma(M) \gamma z).
\]
Fixing a point $\Gamma_0(N)z\in X_0(N)(\C)$, $S$ acts simply transitively on the fibers $\pi^{-1}(\Gamma(M)z_i)$ with $\Gamma_0(N)z_i=\Gamma_0(N)z$. This action on $X$ gives rise to a correspondence on $X$, hence an action on Chow groups and cohomology groups. There is also an induced action on the bundle $\mathcal{W}$, which we now address. Recall from the beginning of \S \ref{s:Gamma1} that the fibers $\tilde{\cW_{z}}$ and $\tilde{\cW}_{\alpha z}$ are identified via the action \eqref{action} of $\gamma$ on fibers for all $\alpha\in \Gamma(M)$. In other words, we have identifications of fibers 
\begin{equation}\label{id:fibers}
\cW_{\Gamma(M)z}=\tilde{\cW}_{z}\overset{\alpha \cdot}{\simeq} \tilde{\cW}_{\alpha z}, \qquad \text{ for all } \alpha \in \Gamma(M).
\end{equation}
Now, the action of $[\gamma]=s\in S=\pm\Gamma(M)\backslash \Gamma_0(N)$ on a section $w$ of $\cW$ is given by 
\[
s\cdot w(\Gamma(M)z)=\gamma \cdot w(z).
\]
The latter action is via \eqref{action} and we must interpret $\gamma \cdot w(z)$ as lying in $\tilde{\cW}_{\gamma z}\simeq \cW_{\Gamma(M) \gamma z}=\cW_{s\cdot \Gamma(M)z}$. This is well-defined because if $\alpha\in \Gamma(M)$, then 
\[
(\alpha\gamma)\cdot w(z)=\alpha\cdot (\gamma\cdot w(z))\in \tilde{\cW}_{\alpha\gamma z},
\]
and under the identification \eqref{id:fibers} the latter element corresponds to $\gamma\cdot w(z)\in \cW_{s\cdot \Gamma(M) z}$.
\end{remark}

\begin{theorem}\label{thm:br2}
Let $z_1=x_1+iy_1 \in \cH$ and $z_2=x_2+iy_2 \in \cH$ with $P_1:=\Gamma_0(N)z_1\neq \Gamma_0(N)z_2=: P_2$. Then the Brylinski pairing is given by 
\[
\langle w^-(P_1), w^-(P_2) \rangle^{\mathrm{Br}}=\frac{\vert S\vert}{(2iy_1y_2)^{2t}} \sum_{\gamma\in \Gamma_0(N)\slash \pm 1} g(z_1, \gamma z_2)(\bar{z}_1-\gamma z_2)^{2t} j(\gamma, z_2)^{2t},
\]
\[
\langle w^+(P_1), w^+(P_2) \rangle^{\mathrm{Br}}=\frac{\vert S\vert}{(2i)^{2t}} \sum_{\gamma\in \Gamma_0(N)\slash \pm 1} g(z_1, \gamma z_2)(z_1-\gamma \bar{z}_2)^{2t} j(\gamma, \bar{z}_2)^{2t},
\]
where 
\[
g(z,z')=-Q_{k,t}\left( 1+\frac{\vert z-z' \vert^2}{2yy'}\right).
\]
Moreover, we have
\[
\langle w^-(P_1), w^+(P_2) \rangle^{\mathrm{Br}}=\langle w^+(P_1), w^-(P_2) \rangle^{\mathrm{Br}}=0.
\]
\end{theorem}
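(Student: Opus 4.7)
\emph{Proof plan.} The strategy is to unfold the Brylinski pairing on $X_0(N)$ into a sum of Brylinski pairings on $X(M)$ via the defining formula $w^\pm(P_i) = \sum_{s \in S} s \cdot w^\pm(z_i)$, apply Theorem \ref{thm:br}, and observe a cancellation. Since $w^\pm(P_i)$ decomposes over the $|S|$ fibers above the preimages of $P_i$ in $X(M)$, the Brylinski pairing on $\cW$ over $X_0(N)$ unfolds as
\[
\langle w^\pm(P_1), w^\pm(P_2)\rangle^{\mathrm{Br}} = \sum_{s_1, s_2 \in S} \langle s_1 \cdot w^\pm(z_1), s_2 \cdot w^\pm(z_2)\rangle^{\mathrm{Br}}_{X(M)}.
\]
For the two off-diagonal (mixed $\pm$) pairings, each summand vanishes by the orthogonality statement in Theorem \ref{thm:br}, giving the last assertion of the theorem immediately.

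For the diagonal pairings, fix representatives $\gamma_{s_i} \in \Gamma_0(N)$ of $s_i$. By \eqref{actionw}, $s_i \cdot w^\pm(z_i) = j(\gamma_{s_i}, z_i)^{\pm 2t} w^\pm(\gamma_{s_i} z_i)$, and sesquilinearity of the Brylinski pairing together with $\overline{j(\gamma, z)} = j(\gamma, \bar z)$ lets us extract these scalars before applying Theorem \ref{thm:br}. In the resulting inner sum over $\alpha \in \Gamma(M)$, substitute $\alpha = \gamma_{s_1} \delta \gamma_{s_2}^{-1}$; normality of $\Gamma(M)$ in $\SL_2(\Z)$ ensures that $\delta$ ranges over the $\Gamma(M)$-coset $\Gamma(M) \gamma_{s_1}^{-1} \gamma_{s_2}$. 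The $\SL_2(\Z)$-invariance of $g$, the cocycle $j(\alpha\beta, z) = j(\alpha, \beta z) j(\beta, z)$, the identity $\Im(\gamma z) = y / (j(\gamma, z) j(\gamma, \bar z))$, and the formula $\gamma w - \gamma w' = (w - w')/(j(\gamma, w) j(\gamma, w'))$ combine to cancel every $\gamma_{s_i}$-factor. The $w^-$ summand reduces to
\[
\frac{1}{(2iy_1 y_2)^{2t}} \sum_{\delta \in \Gamma(M)\gamma_{s_1}^{-1}\gamma_{s_2}} g(z_1, \delta z_2)(\bar z_1 - \delta z_2)^{2t} j(\delta, z_2)^{2t},
\]
and the $w^+$ summand analogously to $\frac{1}{(2i)^{2t}} \sum_\delta g(z_1, \delta z_2)(z_1 - \delta \bar z_2)^{2t} j(\delta, \bar z_2)^{2t}$ over the same range.

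Finally, as $s_2$ varies over $S$ for fixed $s_1$, the cosets $\Gamma(M)\gamma_{s_1}^{-1}\gamma_{s_2}$ pick out exactly one $\Gamma(M)$-coset inside each of the $|S|$ $\pm\Gamma(M)$-cosets in $\Gamma_0(N)$. Since the integrand is invariant under $\delta \mapsto -\delta$ (because $2t$ is even, $-I$ acts trivially on $\cH$, and $j(-\gamma, z)^{2t} = j(\gamma, z)^{2t}$), the inner sum over these $|S|$ cosets equals the sum over $\Gamma_0(N)/\pm 1$, and the outer sum over $s_1 \in S$ supplies the factor $|S|$. The main obstacle is the cancellation step in the middle paragraph: one must track how the polarization factor $(2i\Im(\gamma_{s_i}z_i))^{-2t}$, the cocycle, the transformation of $\bar z_1 - \alpha z_2$, and the $s_i$-action scalars conspire to eliminate all $s_1, s_2$-dependence. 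This is purely bookkeeping once set up correctly but demands care with signs, conjugates, and the sesquilinearity convention.
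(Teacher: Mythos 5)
Your proof is correct, and it arrives at the same answer, but it takes a more computational route than the paper's argument. The paper first exploits the invariance $\langle \sigma\cdot w,\sigma\cdot w'\rangle^{\mathrm{Br}}=\langle w,w'\rangle^{\mathrm{Br}}$ for any automorphism $\sigma$ of $\bar C$: writing $\langle \sum_s s\cdot w^\pm(z_1),\sum_{s'}s'\cdot w^\pm(z_2)\rangle^{\mathrm{Br}}$, it slides $s$ off the first slot using this invariance, reindexes the inner sum, and immediately collapses the double sum over $S\times S$ to $|S|\sum_{s\in S}\langle w^\pm(z_1),s\cdot w^\pm(z_2)\rangle^{\mathrm{Br}}$. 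Only then does it lift $s$ to $\gamma'\in\Gamma_0(N)$, apply Theorem \ref{thm:br} once, combine the $j(\gamma',z_2)$-factor coming from \eqref{actionw} with the inner $j(\gamma,\gamma'z_2)$ via the cocycle relation $j(\gamma',z_2)j(\gamma,\gamma'z_2)=j(\gamma\gamma',z_2)$, and reindex the double $(\gamma,\gamma')$-sum as a single sum over $\Gamma_0(N)/\pm 1$. You instead keep the full double sum, apply Theorem \ref{thm:br} to each of the $|S|^2$ terms at the conjugated points $(\gamma_{s_1}z_1,\gamma_{s_2}z_2)$, perform the change of variable $\alpha\mapsto\gamma_{s_1}\delta\gamma_{s_2}^{-1}$ using normality of $\Gamma(M)$, and verify by hand that the scalars from \eqref{actionw}, the polarization normalizations $\Im(\gamma_{s_i}z_i)$, the transformation of $\bar z_1-\alpha\gamma_{s_2}z_2$, and the cocycle all cancel out. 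That cancellation does work out (I checked the exponents), and your final coset-counting argument — that for fixed $s_1$ the cosets $\Gamma(M)\gamma_{s_1}^{-1}\gamma_{s_2}$ exhaust $\Gamma_0(N)/\pm 1$ as $s_2$ varies, giving the $|S|$-factor from the outer $s_1$-sum — is correct, using that $-I$ acts trivially since $2t$ is even. The paper's route buys brevity by pushing the symmetry to the front; yours is more elementary in that it avoids invoking the abstract functoriality of the Brylinski pairing under automorphisms of $\bar C$, at the cost of more bookkeeping. One small point you should make explicit: Theorem \ref{thm:br} requires $\Gamma(M)\gamma_{s_1}z_1\neq\Gamma(M)\gamma_{s_2}z_2$, which follows from $P_1\neq P_2$ since $\gamma_{s_1},\gamma_{s_2}\in\Gamma_0(N)$.
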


\begin{proof}
By definition of Brylinski's pairing and \eqref{eq:eBw}, we have 
\begin{align*}
\langle w^\pm(P_1), w^\pm(P_2) \rangle^{\mathrm{Br}}&=\left\langle \sum_{s\in S} s\cdot w^{\pm}(z_1), \sum_{s'\in S} s'\cdot w^{\pm}(z_2) \right\rangle^{\mathrm{Br}} \\
&= \sum_{s\in S}\left\langle s\cdot w^{\pm}(z_1), \sum_{s'\in S} ss^{-1}s'\cdot w^{\pm}(z_2) \right\rangle^{\mathrm{Br}} \\
&=\sum_{s\in S}\left\langle w^{\pm}(z_1), \sum_{s'\in S} s^{-1}s' \cdot w^{\pm}(z_2) \right\rangle^{\mathrm{Br}} \\
&= \vert S\vert \sum_{s\in S} \left\langle w^{\pm}(z_1), s\cdot w^{\pm}(z_2) \right\rangle^{\mathrm{Br}},
\end{align*}
where we used the invariance $\langle \sigma \cdot w, \sigma \cdot w'\rangle^{\mathrm{Br}}=\langle w, w'\rangle^{\mathrm{Br}}$ for any automorphism $\sigma$ of $\bar{C}$ by functoriality.
If we choose a lift $\gamma'\in \Gamma_0(N)$ of $s\in S=\pm\Gamma(M)\backslash \Gamma_0(N)$, then $\Gamma(M)sz_2=\Gamma(M) \gamma'z_2$.
Moreover, we have $\Gamma(M)z_1\neq \Gamma(M) \gamma' z_2$ since otherwise $\Gamma_0(N)z_1= \Gamma_0(N)\gamma' z_2=\Gamma_0(N)z_2$, which contradicts our assumptions. 
We may thus apply Theorem \ref{thm:br} and use \eqref{actionw} along with the fact that the Brylinski pairing is Hermitian to obtain
\begin{align*}
\langle w^-(P_1), w^-(P_2) \rangle^{\mathrm{Br}}&=
\vert S\vert \sum_{\gamma' \in \pm\Gamma(M)\backslash \Gamma_0(N)} \left\langle w^{-}(z_1), \gamma' \cdot w^{-}(z_2) \right\rangle^{\mathrm{Br}}\\
&=\vert S\vert \sum_{\gamma'\in \pm\Gamma(M)\backslash \Gamma_0(N)}\frac{j(\gamma', z_2)^{2t}}{(2iy_1y_2)^{2t}} \sum_{\gamma\in \Gamma(M)} g(z_1, \gamma \gamma' z_2)(\bar{z}_1-\gamma \gamma' z_2)^{2t} j(\gamma, \gamma' z_2)^{2t}.
\end{align*}
By noting that $j(\gamma', z_2)j(\gamma, \gamma' z_2)=j(\gamma \gamma', z_2)$, it follows that
\begin{align*}
\langle w^-(P_1), w^-(P_2) \rangle^{\mathrm{Br}}&=
\frac{\vert S\vert}{(2iy_1y_2)^{2t}}  \sum_{\gamma'\in \pm\Gamma(M)\backslash \Gamma_0(N)}\sum_{\gamma\in \Gamma(M)} g(z_1, \gamma \gamma' z_2)(\bar{z}_1-\gamma \gamma' z_2)^{2t} j(\gamma \gamma', z_2)^{2t} \\
&=\frac{\vert S\vert}{(2iy_1y_2)^{2t}} \sum_{\gamma\in \Gamma_0(N)\slash \pm 1} g(z_1, \gamma z_2)(\bar{z}_1-\gamma z_2)^{2t} j(\gamma, z_2)^{2t}.
\end{align*}
The proof for $w^+$ is similar.
\end{proof}

\subsection{Action of Hecke operators}

As the vector bundles $\tilde{\cW}$ and $\cW$ are equipped with a $\GL_2(\R)^+$-action, the Hecke operators $T_m$ act on sections such as $w^+$ and $w^-$. 
Let 
\[ 
R_N:=\left( \begin{smallmatrix} \Z & \Z \\ N\Z & \Z \end{smallmatrix} \right) 
\] 
and consider, for $m\in \mathbb{N}$,
\[
R_{N}^m:=\Gamma_0(N)\left( \begin{smallmatrix} 1 & 0 \\ 0 & m \end{smallmatrix} \right)\Gamma_0(N)=\{ \gamma \in R_N \mid \det(\gamma)=m \}\subset \GL_2(\Q)^+.
\]
The double coset $R_{N}^m$ naturally carries a left action of $\Gamma_0(N)$ by left multiplication. 

Let $z\in\cH$ with $P:=\Gamma_0(N)z$. Given $m\geq 1$, we set 
\begin{equation}\label{heckecoh}
T_m w^{\pm}(P):= \sum_{\gamma \in \Gamma_0(N) \backslash R_{N}^m} \gamma \cdot w^{\pm}(P),
\end{equation}
where the action is analogous to the one defined in \eqref{action}, i.e.,
\[
\gamma\cdot w^{\pm}(P):=\det(\gamma)^{p \mp t}j(\gamma, z)^{\pm 2t} w^\pm(\Gamma_0(N)\gamma z),
\] 
where a representative of $\gamma$ in $R^m_N$ has implicitly been chosen.  

\begin{proposition}\label{Tm0}
Let $z_1, z_2 \in \cH$ with $P_i=\Gamma_0(N)z_i,$ $i=1,2$, and let $m\geq 1$ with $(m,N)=1$. Assume that $P_1$ and $T_m P_2$ are disjoint in $X_0(N)$. Then 
\[
\langle w^-(P_1), T_m w^-(P_2) \rangle^{\mathrm{Br}}=\frac{\vert S\vert m^{p-t}}{(2iy_1 y_2)^{2t}} \sum_{\gamma\in R^m_{N}/\pm 1} g(z_1, \gamma z_2)(\bar{z}_1-\gamma z_2)^{2t} j(\gamma, z_2)^{2t},
\]
\[
\langle w^+(P_1), T_m w^+(P_2) \rangle^{\mathrm{Br}}=\frac{\vert S\vert m^{p-t}}{(2i)^{2t}} \sum_{\gamma \in R^m_{N}/\pm 1} g(z_1, \gamma z_2)(z_1-\gamma \bar{z}_2)^{2t} j(\gamma, \bar{z}_2)^{2t}.
\]
\end{proposition}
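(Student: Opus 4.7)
The plan is to reduce the Hecke pairing to an iterated application of Theorem \ref{thm:br2}, using sesquilinearity of the Brylinski pairing together with the cocycle identity for $j$ and the transformation law for imaginary parts.

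First I would unpack the definition \eqref{heckecoh} using \eqref{actionw} and $\det\gamma'=m$ to write
$$T_m w^-(P_2)=m^{p+t}\sum_{\gamma'\in\Gamma_0(N)\backslash R_N^m}j(\gamma',z_2)^{-2t}\,w^-(\Gamma_0(N)\gamma' z_2),$$
and similarly $T_m w^+(P_2)=m^{p-t}\sum_{\gamma'}j(\gamma',z_2)^{2t}\,w^+(\Gamma_0(N)\gamma' z_2)$. Because the Hermitian pairing (and hence the Brylinski pairing) is antilinear in its second argument by Remark \ref{rem:herm}, pulling the scalar out of $\langle w^\pm(P_1),T_m w^\pm(P_2)\rangle^{\mathrm{Br}}$ produces a complex-conjugated prefactor $m^{p\pm t}\overline{j(\gamma',z_2)}^{\mp 2t}$. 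The hypothesis that $P_1$ and $T_m P_2$ are disjoint ensures $\Gamma_0(N)z_1\ne\Gamma_0(N)\gamma' z_2$ for every coset representative $\gamma'$, so Theorem \ref{thm:br2} applies term by term.

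Next, writing $z_2':=\gamma' z_2$, I would use $\Im(z_2')=m y_2/|j(\gamma',z_2)|^2$ together with the cocycle relation $j(\gamma,z_2')=j(\gamma\gamma',z_2)/j(\gamma',z_2)$ to simplify the output of Theorem \ref{thm:br2}. For $w^-$, the factor $|j(\gamma',z_2)|^{4t}=j(\gamma',z_2)^{2t}\overline{j(\gamma',z_2)}^{2t}$ arising from $(y_2')^{-2t}$ absorbs both $\overline{j(\gamma',z_2)}^{-2t}$ (from the $T_m$-scalar) and $j(\gamma',z_2)^{-2t}$ (from the cocycle identity), leaving precisely $m^{p-t}(2iy_1y_2)^{-2t}\,g(z_1,\gamma\gamma' z_2)(\bar z_1-\gamma\gamma' z_2)^{2t}j(\gamma\gamma',z_2)^{2t}$. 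Reindexing the double sum over $\gamma\in\Gamma_0(N)/\pm 1$ and $\gamma'\in\Gamma_0(N)\backslash R_N^m$ as a single sum over $\delta=\gamma\gamma'\in R_N^m/\pm 1$ yields the claimed identity.

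The computation for $w^+$ is strictly parallel, with $-t$ replaced by $+t$. Here the key additional input is that $\gamma'\in R_N^m$ has integer (hence real) entries, so $\overline{j(\gamma',z_2)}=j(\gamma',\bar z_2)$ and $\bar z_2'=\gamma'\bar z_2$; the cocycle relation then takes the form $j(\gamma\gamma',\bar z_2)=j(\gamma,\bar z_2')j(\gamma',\bar z_2)$, which provides the analogous cancellation. I do not anticipate a genuine obstacle: the only point requiring care is the antilinearity in the second slot of the Hermitian form, which is precisely what converts the $j$-factor carried by $T_m$ into the complex conjugate needed to absorb the $|j|^2$-scaling produced by the Poincar\'e-metric denominator $(y_2')^{2t}$. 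Everything else is bookkeeping.
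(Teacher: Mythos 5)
Your argument is correct and matches the paper's proof step for step: pull the Hecke scalar $m^{p\pm t}j(\gamma',z_2)^{\mp 2t}$ out of the conjugate-linear slot, apply Theorem \ref{thm:br2} term by term (justified by the disjointness hypothesis), convert the $\Im(\gamma'z_2)^{-2t}$ factor via $\Im(\gamma'z_2)=my_2/|j(\gamma',z_2)|^2$, combine $j$-factors with the cocycle identity, and reindex the double sum over $\Gamma_0(N)/\!\pm1$ and $\Gamma_0(N)\backslash R_N^m$ as a single sum over $R_N^m/\!\pm1$. The only cosmetic difference is that you apply the cocycle identity before, rather than after, absorbing the $|j|^{4t}$ factor; the bookkeeping is otherwise identical, including the observation that the $w^+$ case has no $\Im$-denominator to convert.
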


\begin{proof}
    We compute 
\begin{align*}
\langle & w^-(P_1), T_m w^-(P_2) \rangle^{\mathrm{Br}}
=\sum_{\gamma \in  \Gamma_0(N) \backslash R_{N}^m} m^{p+t} j(\gamma, \bar{z}_2)^{-2t} \langle w^-(\Gamma_0(N) z_1), w^-(\Gamma_0(N)\gamma z_2) \rangle^{\mathrm{Br}} \\
&=\sum_{\gamma \in \Gamma_0(N) \backslash R_{N}^m} m^{p+t} j(\gamma, \bar{z}_2)^{-2t}\frac{\vert S\vert}{(2iy_1\Im(\gamma z_2))^{2t}} \sum_{\gamma'\in \Gamma_0(N)\slash \pm 1} g(z_1, \gamma' \gamma z_2)(\bar{z}_1-\gamma' \gamma z_2)^{2t} j(\gamma', \gamma z_2)^{2t} \\
&=\frac{\vert S\vert m^{p-t}}{(2iy_1 y_2)^{2t}} \sum_{\gamma'\in \Gamma_0(N)\slash \pm 1}\sum_{\gamma \in \Gamma_0(N) \backslash R^m_{N}}  j(\gamma, z_2)^{2t} g(z_1, \gamma' \gamma z_2)(\bar{z}_1-\gamma' \gamma z_2)^{2t} j(\gamma', \gamma z_2)^{2t} \\
&=\frac{\vert S\vert m^{p-t}}{(2iy_1 y_2)^{2t}} \sum_{\gamma'\in \Gamma_0(N)\slash \pm 1}\sum_{\gamma \in \Gamma_0(N) \backslash R_{N}^m} g(z_1, \gamma' \gamma z_2)(\bar{z}_1-\gamma' \gamma z_2)^{2t} j(\gamma' \gamma, z_2)^{2t} \\
&=\frac{\vert S\vert m^{p-t}}{(2iy_1 y_2)^{2t}} \sum_{\gamma\in R^m_{N}\slash \pm 1} g(z_1, \gamma z_2)(\bar{z}_1-\gamma z_2)^{2t} j(\gamma, z_2)^{2t},
\end{align*}
where we used Theorem \ref{thm:br2} in the second equality and the identity $j(\gamma, z)j(\gamma', \gamma z)=j(\gamma'\gamma, z)$ in the fourth equality. The calculation for $w^+$ is similar.
\end{proof}

\subsection{Brylinski's pairing over Heegner points on $X_0(N)$}\label{s:HP}

We now specialize to the case where $P_1=\Gamma_0(N)\tau_1$ and $P_2=\Gamma_0(N)\tau_2$ are Heegner points on $X_0(N)$ of conductor $1$. In terms of moduli, this means that these points represent isomorphism classes of cyclic $N$-isogenies between elliptic curves with CM by the same order in an imaginary quadratic field (in our case the imaginary quadratic field is $K$ and the order is the maximal order $\oh_K$). Concretely, following \cite[\S 2]{GZ}, Heegner points with CM by $K$ are indexed by $(\cA, \mathfrak{n})$, where $\mathfrak{n}$ is a cyclic $N$-ideal of $\oh_K$ (whose existence is guaranteed by the Heegner hypothesis) and $\cA\in \mathrm{Cl}_K$. Over $\C$, the corresponding point is $(\C/\mathfrak{a} \lra \C/\mathfrak{a}\mathfrak{n}^{-1})$ for any $[\mathfrak{a}]=\cA$. If $W_{\mathrm{AL}}$ denotes the group of Atkin--Lehner involutions of $X_0(N)$, then $\Gal(H/K)\times W_{\mathrm{AL}}$ acts simply transitively on the set of all Heegner points of discriminant $D$, i.e., on all pairs $(\cA, \mathfrak{n})$. If $\sigma\in \Gal(H/K)$ corresponds to $\mathcal{B}\in \Cl_K$ under the Artin map of class field theory, then $\sigma(\cA, \mathfrak{n})=(\cA \mathcal{B}^{-1}, \mathfrak{n})$. 

Let $\beta\in \Z/2N\Z$ with $\beta^2\equiv D \pmod{4N}$ such that $\mathfrak{n}=\langle N, \frac{\beta+\sqrt{D}}{2}\rangle$. Let $\tau=\frac{-B+\sqrt{D}}{2A}\in \cH$\footnote{Beware of the following conflict of notation: $A$ denotes our fixed CM elliptic curve, $B$ a full level $M$-structure, and $C$ the modular curve $Y(M)$. We trust that this slight abuse of notation will not cause any confusion.} be the solution to a quadratic equation of the form
\[
AX^2+BX+C=0, \quad A>0, \quad B^2-4AC=D, \quad N\mid A, \quad B\equiv \beta \pmod{2N}
\]
and consider 
\[ 
\mathfrak{a}=\left\langle A, \frac{B+\sqrt{D}}{2}\right\rangle, \qquad \mathfrak{a}\mathfrak{n}^{-1}=\left\langle \frac{A}{N}, \frac{B+\sqrt{D}}{2}\right\rangle, \qquad N_{K/\Q}(\mathfrak{a})=A, \qquad [\fa]=\cA.
\] 
Then the Heegner point associated to $(\cA, \mathfrak{n})$ is represented by $-\bar{\tau}=(B+\sqrt{D})/2A\in \cH$.
Note that 
\[
\mathfrak{a}^{-1}=\frac{\bar{\mathfrak{a}}}{N_{K/\Q}(\mathfrak{a})}=\left\langle 1, \frac{-B+\sqrt{D}}{2A} \right\rangle=\langle 1, \tau\rangle,
\]
whence the Heegner point associated to $(\cA^{-1}, \bar{\mathfrak{n}})$ is represented by $\tau\in \cH$.\footnote{See the Appendix for a discussion of how our conventions for Heegner points differ slightly from the ones in \cite{GZ}.} 

Now, for $i=1,2$, $P_i=\Gamma_0(N)\tau_i$ is a Heegner point, hence $\langle 1, \tau_i\rangle=\fa_i^{-1}$ for some ideal $\fa_i \subset \oh_K$. Let $A_i>0$ and $B_i$ be the corresponding integers as above such that $\tau_i=\frac{-B_i+\sqrt{D}}{2A_i}=\tau_{\cA_i, \mathfrak{n}_i}$ and $N_{K/\Q}(\fa_i)=A_i$. 

\begin{proposition}\label{yes1}
Let $P_1=\Gamma_0(N)\tau_1$ and $P_2=\Gamma_0(N)\tau_2$ be two Heegner points of $X_0(N)$ and suppose that $P_1$ and $T_m P_2$ are disjoint in $X_0(N)$. With notations as above, we have 
\[
\left\langle \frac{w^-(P_1)}{\chi(\fa_1)}, T_m \frac{w^-(P_2)}{\chi(\fa_2)} \right\rangle^{\mathrm{Br}}=\frac{(-1)^t \vert S\vert 2^{2t}m^{p-t}}{\vert D\vert^{2t}}\chi(\bar{\fa}_1 \fa_2) \sum_{\gamma\in R_N^m/\pm 1} g(\tau_1, \gamma \tau_2) \alpha(\gamma, \tau_1, \tau_2)^{2t},
\]
\[
\left\langle \frac{w^+(P_1)}{y_1^{2t}\chi(\bar{\fa}_1)}, T_m \frac{w^+(P_2)}{y_2^{2t}\chi(\bar{\fa}_2)} \right\rangle^{\mathrm{Br}}=\frac{(-1)^t \vert S\vert 2^{2t}m^{p-t}}{\vert D\vert^{2t}}\chi(\fa_1 \bar{\fa}_2) \sum_{\gamma\in R_N^m/\pm 1} g(\tau_1, \gamma \tau_2) \alpha(\gamma, \bar\tau_1, \bar\tau_2)^{2t},
\]
where $\alpha(\gamma, \tau_1, \tau_2)=c\bar{\tau}_1\tau_2+d\bar{\tau}_1-a\tau_2-b$ for $\gamma=\left( \begin{smallmatrix}
    a & b \\ c & d \end{smallmatrix} \right)$.
\end{proposition}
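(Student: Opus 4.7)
The plan is to derive Proposition \ref{yes1} directly from Proposition \ref{Tm0} by specializing the formula there to CM points, using three elementary inputs: (i) the explicit value of $\Im(\tau_i)$ for a Heegner point, (ii) the compatibility $\chi(\fa)\chi(\bar\fa) = N(\fa)^{2t}$ coming from the infinity type $(2t,0)$ of the Hecke character $\chi$, and (iii) an algebraic rearrangement of the factor $(\bar\tau_1-\gamma\tau_2)j(\gamma,\tau_2)$.

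First I would apply Proposition \ref{Tm0} with $z_i=\tau_i$. Since $\tau_i=\tfrac{-B_i+\sqrt{D}}{2A_i}$ has $y_i:=\Im(\tau_i)=\tfrac{\sqrt{|D|}}{2A_i}$, one computes
\[
(2iy_1y_2)^{2t}=\frac{(-1)^t |D|^{2t}}{(2A_1 A_2)^{2t}},
\]
so the prefactor $\frac{|S|\,m^{p-t}}{(2iy_1y_2)^{2t}}$ in Proposition \ref{Tm0} becomes $\frac{(-1)^t|S|\,m^{p-t}\,2^{2t}(A_1A_2)^{2t}}{|D|^{2t}}$. Next I would use sesquilinearity of the Brylinski pairing (it is Hermitian, hence conjugate-linear in the second slot) together with the identity $\overline{\chi(\fa_2)}=\chi(\bar\fa_2)$ (valid for unramified characters of infinity type $(2t,0)$, since on principal ideals $\chi((\alpha))=\alpha^{2t}$) to pull the scalars out:
\[
\left\langle \frac{w^-(P_1)}{\chi(\fa_1)},\,T_m\frac{w^-(P_2)}{\chi(\fa_2)}\right\rangle^{\mathrm{Br}}=\frac{1}{\chi(\fa_1)\chi(\bar\fa_2)}\langle w^-(P_1),T_m w^-(P_2)\rangle^{\mathrm{Br}}.
\]

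Then I would invoke $\chi(\fa_i)\chi(\bar\fa_i)=N(\fa_i)^{2t}=A_i^{2t}$ (an immediate consequence of $\fa_i\bar\fa_i=(N(\fa_i))$ together with the triviality of $\chi|_{\A_\Q^\times}\cdot \mathrm{Nm}^{-2t}$) to rewrite
\[
\frac{(A_1 A_2)^{2t}}{\chi(\fa_1)\chi(\bar\fa_2)}=\chi(\bar\fa_1)\chi(\fa_2)=\chi(\bar\fa_1\fa_2),
\]
which produces the desired character factor. Finally, the summand identity $(\bar\tau_1-\gamma\tau_2)\,j(\gamma,\tau_2)=\alpha(\gamma,\tau_1,\tau_2)$ follows from a one-line expansion: with $\gamma=\bigl(\begin{smallmatrix}a&b\\c&d\end{smallmatrix}\bigr)$, $(c\tau_2+d)\bar\tau_1-(a\tau_2+b)=c\bar\tau_1\tau_2+d\bar\tau_1-a\tau_2-b$. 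The $w^+$ case is entirely parallel: the analogous computation uses the factor $(2i)^{2t}$ from the second formula in Proposition \ref{Tm0}, together with $y_1^{2t}y_2^{2t}=|D|^{2t}/(4A_1A_2)^{2t}$ to absorb the extra $y_i^{2t}$ denominators, and one checks $(\tau_1-\gamma\bar\tau_2)\,j(\gamma,\bar\tau_2)=\alpha(\gamma,\bar\tau_1,\bar\tau_2)$.

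There is no real obstacle here; the statement is essentially a bookkeeping specialization of Proposition \ref{Tm0} to CM fibers. The only place where one must be vigilant is sign and conjugation tracking --- in particular keeping straight that $(2i\cdot\frac{\sqrt{|D|}}{2A_1}\cdot\frac{\sqrt{|D|}}{2A_2})^{2t}$ produces $(-1)^t$ rather than $(-1)^{t}$ being absorbed elsewhere, and that $\overline{\chi(\fa)}=\chi(\bar\fa)$ for our unramified $\chi$ so that conjugate-linearity of $\langle\,,\,\rangle^{\mathrm{Br}}$ in the second slot interacts correctly with the scaling by $\chi(\fa_i)$.
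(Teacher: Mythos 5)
Your argument is correct and matches the paper's proof essentially step for step: both specialize Proposition \ref{Tm0} to CM points using $y_i=\sqrt{|D|}/2A_i$, both extract the scalars $\chi(\fa_i)$ via sesquilinearity of $\langle\,,\,\rangle^{\mathrm{Br}}$ and the identity $\chi(\fa_i)\chi(\bar\fa_i)=A_i^{2t}$, and both conclude with the one-line expansion $(\bar\tau_1-\gamma\tau_2)j(\gamma,\tau_2)=\alpha(\gamma,\tau_1,\tau_2)$. Your phrasing ``Hermitian, conjugate-linear in the second slot'' is actually more precise than the paper's ``skew-symmetric,'' but the underlying manipulation is identical.
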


\begin{proof}
Observe that $\chi(\fa_i\bar{\fa}_i)=\chi(N_{K/\Q}(\fa_i))=\chi(A_i)=A_i^{2t}$. In particular, $\chi(\fa_i)^{-1}=\chi(\bar{\fa}_i)/A_i^{2t}$. Using the fact that the Brylinski pairing is skew-symmetric, we obtain
\[
\left\langle \frac{w^-(P_1)}{\chi(\fa_1)}, T_m \frac{w^-(P_2)}{\chi(\fa_2)} \right\rangle^{\mathrm{Br}}=\frac{\chi(\bar{\fa}_1 \fa_2)}{(A_1A_2)^{2t}}\left\langle w^-(P_1), T_m w^-(P_2)\right\rangle^{\mathrm{Br}}.
\]
Observe that $y_i=\sqrt{\vert D\vert}/2A_i$ and apply Proposition \ref{Tm0} to obtain
\[
\left\langle \frac{w^-(P_1)}{\chi(\fa_1)}, T_m \frac{w^-(P_2)}{\chi(\fa_2)} \right\rangle^{\mathrm{Br}}=
\frac{(-1)^t \vert S\vert 2^{2t}m^{p-t}}{\vert D\vert^{2t}} \chi(\bar{\fa}_1 \fa_2) \sum_{\gamma\in R^m_{N}/\pm 1} g(\tau_1, \gamma \tau_2)(\bar{\tau}_1-\gamma \tau_2)^{2t} j(\gamma, \tau_2)^{2t}.
\]
As $(\bar{\tau}_1-\gamma \tau_2) j(\gamma, \tau_2)=(\bar{\tau}_1-\gamma \tau_2) (c\tau_2+d)=c\bar{\tau_1}\tau_2 + d\bar{\tau}_1-a\tau_2-b=\alpha(\gamma, \tau_1, \tau_2)$, the first equality follows.
The calculation for $w^+$ is similar.
\end{proof}

\section{Cycle classes}\label{s:hodge}

The goal of this section is to compute the cycle classes of generalized Heegner cycles viewed as cycles in CM fibers of $X\lra \bar{C}$. While the cycles are null-homologous in $X$ by Lemma \ref{lem: null homologous}, this is not necessarily true when viewed as cycles in the fibers. 

\subsection{Preliminary cycle class calculations}

For an elliptic curve $E/\C$, we will view $\End(E)$ as a subring of $\C$ via $\End(E) \hookrightarrow \End(H^0(E, \Omega_E)) \simeq \C$. In this subsection and the next, we view $E$ as a complex manifold and work exclusively with de Rham cohomology of complex manifolds.

\begin{proposition}\label{prop:cl}
Suppose $E=\C/\langle 1, z\rangle$ for some $z = x + iy\in \cH$ with complex coordinate $w$. For endomorphisms $\alpha, \beta\in \End(E)$, define $\Gamma_{\alpha, \beta} = (\alpha \times \beta)_*([E]) \in \CH^1(E \times E)$
and 
\[
X_{\alpha,\beta}:=\Gamma_{\alpha,\beta}-\deg(\beta) \Gamma_{0,1} - \deg(\alpha)\Gamma_{1,0} \in \CH^1(E\times E).
\]
Then 
\[
\cl(X_{\alpha,\beta})=\frac{i}{2y}(\alpha\bar\beta d\bar{w}_1 \otimes dw_2-\bar\alpha\beta dw_1 \otimes d\bar{w}_2)\in H_{\dR}^1(E)\otimes H_{\dR}^1(E)
\]
is the orthogonal projection of $\cl(\Gamma_{\alpha, \beta})$ to $H_{\dR}^1(E)\otimes H_{\dR}^1(E)$.
\end{proposition}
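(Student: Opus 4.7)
The plan is to compute $\cl(\Gamma_{\alpha,\beta}) \in H^2(E\times E)$ explicitly in the K\"unneth basis by using its defining property that $\int_{E\times E} \cl(\Gamma_{\alpha,\beta}) \wedge \omega = \int_E (\alpha\times \beta \circ \Delta_E)^*\omega$ for all $\omega \in H^2(E\times E)$, then identify its $H^1\otimes H^1$ component.

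First I would note that under the identification $\End(E)\hookrightarrow \C$ via the action on $H^0(E,\Omega_E^1)=\C\cdot dw$, an endomorphism $\alpha$ satisfies $\alpha^*dw = \alpha\, dw$, and by complex conjugation $\alpha^* d\bar w = \bar\alpha\, d\bar w$. Consequently, for the morphism $f := \alpha\times\beta\circ \Delta_E \colon E\to E\times E$ I can evaluate $f^*$ on each element of the K\"unneth basis $\{dw_i\wedge d\bar w_i,\, dw_1\wedge dw_2,\, dw_1\wedge d\bar w_2,\, d\bar w_1\wedge dw_2,\, d\bar w_1\wedge d\bar w_2\}$ of $H^2(E\times E)$ in closed form; for instance $f^*(dw_1\wedge d\bar w_2) = \alpha\bar\beta\, dw\wedge d\bar w = -2iy\alpha\bar\beta$, while $f^*(dw_1\wedge dw_2) = 0$, and similarly for the other basis elements, using that $dw\wedge dw = 0 = d\bar w\wedge d\bar w$ and $|\alpha|^2 = \deg\alpha$.

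Next, I would invert the Poincar\'e pairing on $E\times E$ to read off each coefficient of $\cl(\Gamma_{\alpha,\beta})$, using the normalization $\int_E dw\wedge d\bar w = -2iy$ from \eqref{Poincalc}. After careful bookkeeping of the signs arising from reordering the four $1$-forms into canonical order $dw_1\wedge d\bar w_1\wedge dw_2\wedge d\bar w_2$ (which integrates to $-4y^2$), this yields
\[
\cl(\Gamma_{\alpha,\beta}) = \tfrac{i}{2y}\bigl[\deg(\beta)\,dw_1\wedge d\bar w_1 + \deg(\alpha)\,dw_2\wedge d\bar w_2 + \alpha\bar\beta\, d\bar w_1\wedge dw_2 - \bar\alpha\beta\, dw_1\wedge d\bar w_2\bigr].
\]
Specializing to $(\alpha,\beta)=(0,1)$ and $(\alpha,\beta)=(1,0)$ recovers $\cl(\Gamma_{0,1}) = \tfrac{i}{2y}dw_1\wedge d\bar w_1$ and $\cl(\Gamma_{1,0}) = \tfrac{i}{2y}dw_2\wedge d\bar w_2$, the Poincar\'e duals of $\{0\}\times E$ and $E\times\{0\}$ respectively; one sanity check is $\int_E \tfrac{i}{2y}dw\wedge d\bar w = 1 = \deg(\text{pt})$.

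Subtracting gives the claimed formula for $\cl(X_{\alpha,\beta})$. The orthogonal projection statement is then automatic: the two subtracted terms lie in the K\"unneth summands $H^2(E)\otimes H^0(E)$ and $H^0(E)\otimes H^2(E)$, both orthogonal to $H^1(E)\otimes H^1(E)$ with respect to the Poincar\'e pairing on $H^2(E\times E)$. The only real obstacle I anticipate is disciplined sign and $i$-factor bookkeeping through the wedge-product reorderings; there is no conceptual difficulty beyond the defining property of the cycle class and the K\"unneth decomposition.
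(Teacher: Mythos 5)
Your proposal is correct and takes essentially the same approach as the paper: both compute $\cl(\Gamma_{\alpha,\beta})$ coefficient-by-coefficient in the K\"unneth basis of $H^2_{\dR}(E\times E)$ by evaluating the defining integral $\int_{\Gamma_{\alpha,\beta}}\eta = \int_{E\times E}\eta\wedge\cl(\Gamma_{\alpha,\beta})$ against each basis element (your pullback along $f=(\alpha\times\beta)\circ\Delta_E$ is the same computation as the paper's $\int_{\Gamma_{\alpha,\beta}}\eta$), then subtract the $H^2\otimes H^0$ and $H^0\otimes H^2$ components to obtain the orthogonal projection.
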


\begin{proof}
If $Z\subset E\times E$ is a divisor, then its de Rham cycle class $\omega_Z\in H^2_{\dR}(E\times E)$ is characterized by the property
\[
\int_{Z} \eta = \int_{E\times E} \eta\wedge \omega_Z, \qquad \text{ for all } \eta\in  H^2_{\dR}(E\times E).
\]
By K\"unneth, a basis of $H^2_{\dR}(E\times E)$ is given by 
\[
\begin{array}{ll}
b_{1\bar{1}} :=\pr_1^*(dw\wedge d\bar w), & b_{12}:=\pr_1^*(dw)\wedge \pr_2^*(dw), \\
b_{1\bar{2}} :=\pr_1^*(dw)\wedge \pr_2^*(d\bar w), & b_{\bar{1}2}:=\pr_1^*(d\bar w)\wedge \pr_2^*(dw), \\
b_{\bar{1}\bar{2}} :=\pr_1^*(d\bar{w})\wedge \pr_2^*(d\bar w), & b_{2\bar{2}} :=\pr_2^*(dw\wedge d\bar w).
 \end{array}
 \]
 Let 
 \[
 \eta=\lambda_{1\bar{1}} b_{1\bar{1}}+\lambda_{12}b_{12}+\lambda_{1\bar{2}}b_{1\bar{2}}+\lambda_{\bar{1}2}b_{\bar{1}2}+\lambda_{\bar{1}\bar{2}}b_{\bar{1}\bar{2}}+\lambda_{2\bar{2}} b_{2\bar{2}} \in H^2_{\dR}(E\times E).
 \]
Begin by observing that 
\begin{equation}\label{cl:hv1}
\cl(0\times E)=\frac{i}{2y} b_{1\bar{1}} \quad \text{ and } \quad \cl(E\times 0)=\frac{i}{2y} b_{2\bar{2}}.
\end{equation}
Indeed, on one hand we have
\[
\int_{0\times E} \eta=\int_{0\times E} \lambda_{2\bar{2}} b_{2\bar{2}}= \lambda_{2\bar{2}} \int_E dw\wedge d\bar{w}=-2iy \lambda_{2\bar{2}}, 
\]
while on the other hand
\[
\int_{E\times E} \eta\wedge \frac{i}{2y} b_{1\bar{1}}=\frac{\lambda_{2\bar{2}} i}{2y} \int_{E\times E} b_{1\bar{1}} b_{2\bar{2}}=\frac{\lambda_{2\bar{2}} i}{2y} \left(\int_E dw\wedge d\bar{w} \right)^2=\frac{-\lambda_{2\bar{2}} i 4y^2}{2y}=-2 i y \lambda_{2\bar{2}}.
\]
The case of $E\times 0$ is verified similarly.

On the one hand, we have 
\begin{align*}
\int_{\Gamma_{\alpha, \beta}} \eta & = \int_{E} \lambda_{1\bar{1}} \alpha^*dw\wedge \alpha^*d\bar{w}+\lambda_{1\bar{2}} \alpha^*dw\wedge \beta^*d\bar{w}+\lambda_{\bar{1} 2} \alpha^*d\bar{w}\wedge \beta^*dw+\lambda_{2\bar{2}} \beta^*dw\wedge \beta^*d\bar{w} \\
& = -2iy(\lambda_{1\bar{1}} \alpha\bar{\alpha}+\lambda_{1\bar{2}} \alpha\bar{\beta}-\lambda_{\bar{1}2} \bar\alpha\beta+ \lambda_{2\bar{2}} \beta\bar{\beta}). 
\end{align*}
On the other hand, we have 
\begin{align*}
\int_{E\times E} \eta\wedge & \frac{i}{2y}(\beta\bar\beta b_{1\bar{1}}+\alpha\bar\beta b_{\bar{1}2}-\bar\alpha\beta b_{1\bar{2}}+\alpha\bar\alpha b_{2\bar{2}}) \\
& =\frac{i}{2y} \int_{E\times E} (\lambda_{2\bar{2}}\beta\bar\beta+\lambda_{1\bar{2}}\alpha\bar\beta-\lambda_{\bar{1}2}\bar\alpha\beta+\lambda_{1\bar{1}}\alpha\bar\alpha)b_{1\bar{1}}\wedge b_{2\bar{2}} \\
& = -2iy(\lambda_{2\bar{2}}\beta\bar\beta+\lambda_{1\bar{2}}\alpha\bar\beta-\lambda_{\bar{1}2}\bar\alpha\beta+\lambda_{1\bar{1}}\alpha\bar\alpha).
\end{align*}
We conclude that 
\[
\cl(\Gamma_{\alpha, \beta})=\frac{i}{2y}(\beta\bar\beta b_{1\bar{1}}+\alpha\bar\beta b_{\bar{1}2}-\bar\alpha\beta b_{1\bar{2}}+\alpha\bar\alpha b_{2\bar{2}}). 
\]
Using the fact that $\alpha\bar\alpha=\deg(\alpha)$ and $\beta\bar\beta=\deg(\beta)$ combined with \eqref{cl:hv1}, we obtain
\[
\cl(\Gamma_{\alpha, \beta})=\deg(\beta)\cl(0\times E)+\deg(\alpha)\cl(E\times 0)+\frac{i}{2y}(\alpha\bar\beta b_{\bar{1}2}-\bar\alpha\beta b_{1\bar{2}}). 
\]
\end{proof}

\subsection{Cycle classes of generalized Heegner cycles}\label{s:hhodg}

For the rest of the section, we view the varieties $C$, $X$, $A$, etc.\ as being complex manifolds. 

\begin{definition}
Given $\Gamma(M)z \in 
C(\C)$, let
\[
\cl_z \colon \CH^{k+t-1}(E_z^{2k-2}\times A^{2t}) \lra \tilde{\cW}_z=\cW_{\Gamma(M)z}
\]
denote the $\epsilon_W \otimes \kappa_{2t}$-component of the cycle class map on the fiber $E_z^{2k-2}\times A^{2t}$ of $\pi \colon X\lra \bar{C}$ above $\Gamma(M)z$. If $P\in X_0(N)(\C)$, let
\[
\cl_P := \sum_{\Gamma(M)z\in \pi_{M,N}^{-1}(P)} \cl_z \colon \sum_{\Gamma(M)z\in \pi_{M,N}^{-1}(P)} \CH^{k+t-1}(E_z^{2k-2}\times A^{2t}) \lra \sum_{\Gamma(M)z\in \pi_{M,N}^{-1}(P)} \cW_{\Gamma(M)z}=:\cW_{P}.
\]
\end{definition}

\begin{proposition}\label{prop:cycleclass}
Let $\fa$ an ideal of $\oh_K$ and let $\tau_\fa=x_{\fa}+iy_\fa\in \cH$ such that $A^{\fa}=\C / \langle 1, \tau_{\fa}\rangle$.
Then
\[
\cl_{\tau_{\fa}}(\epsilon Y^\mathfrak{a})=\frac{(-1)^p 2^{p-t} \sqrt{\vert D \vert}^{p-t}}{\binom{2p}{p-t}^{1/2}} y_0^t w^-(\tau_{\mathfrak{a}})\in \tilde{\cW}_{\tau_{\fa}}
\]
and
\[
\cl_{\tau_{\fa}}(\bar\epsilon Y^\mathfrak{a})=\frac{(-1)^p 2^{p-t} \sqrt{\vert D \vert}^{p-t}}{\binom{2p}{p-t}^{1/2}} \frac{y_0^t}{y_{\fa}^{2t}} w^+(\tau_{\mathfrak{a}})\in \tilde{\cW}_{\tau_{\fa}}.
\]
\end{proposition}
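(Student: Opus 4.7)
The plan is to compute $\cl_{\tau_\fa}(\epsilon Y^\fa)$ by decomposing $Y^\fa=\Gamma_{\sqrt D}^{k-1-t}\times(\Gamma_{\phi_\fa}^T)^{2t}$ via the K\"unneth formula in the fiber $(A^\fa)^{2k-2}\times A^{2t}$ and applying Proposition \ref{prop:cl} to each graph factor. The key structural simplification is that $\epsilon_W$ projects $H^*((A^\fa)^{2k-2})$ onto $\sym^{2k-2}H^1(A^\fa)$ and therefore annihilates every K\"unneth summand living in $H^0$ or $H^2$ of some individual factor, while $\epsilon_{2t}$ projects $\kappa_{2t}H^{2t}(A^{2t})$ onto its $H^{2t,0}$-summand. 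Only the genuinely middle-dimensional $H^1\otimes H^1$ part of each graph's cycle class survives, so one may systematically drop the ``degenerate'' corrections in Proposition \ref{prop:cl}.

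For each copy of $\Gamma_{\sqrt D}\subset A^\fa\times A^\fa$, Proposition \ref{prop:cl} applied with $\alpha=1$, $\beta=\sqrt D$ (and $\overline{\sqrt D}=-\sqrt D$) gives the surviving $H^1\otimes H^1$-component as $-\tfrac{i\sqrt D}{2y_\fa}(d\bar w_1\otimes dw_2+dw_1\otimes d\bar w_2)$. An entirely parallel Poincar\'e-dual computation for $\Gamma_{\phi_\fa}^T\subset A^\fa\times A$, using that $\phi_\fa$ is analytically multiplication by $1$ under $A=\C/\oh_K$ and $A^\fa=\C/\fa^{-1}$ (with $y_\fa=y_0/N(\fa)$), yields $\tfrac{i}{2y_\fa}(d\bar w^{A^\fa}\otimes dw^A-dw^{A^\fa}\otimes d\bar w^A)$. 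The projector $\epsilon_{2t}$ keeps only the $(dw^A)^{\otimes 2t}$ part, retaining one factor of $d\bar w^{A^\fa}$ on the $A^\fa$-side for each of the $2t$ copies of $\Gamma_{\phi_\fa}^T$. Expanding $\prod_{j=1}^{k-1-t}(d\bar w_{2j-1}\otimes dw_{2j}+dw_{2j-1}\otimes d\bar w_{2j})$ into $2^{k-1-t}$ monomials, each of them maps under $\epsilon_W$ to the same symmetric tensor $(dw)^{k-1-t}(d\bar w)^{k-1+t}\in\sym^{2k-2}H^1(A^\fa)$ (only the multiset matters after symmetrization). Multiplying by the Tate-twist factor $(2\pi i)^{k+t-1}$ built into $\tilde\cW$ and collecting constants expresses $\cl_{\tau_\fa}(\epsilon Y^\fa)$ as an explicit scalar multiple of $(dw)^{p-t}(d\bar w)^{p+t}\otimes(dw^A)^{2t}$, which one then matches against $w^-(\tau_\fa)=y_\fa^{-t}w_{p-t,2t}(\tau_\fa)$ read off from \S\ref{s:brylinski}. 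The case $\bar\epsilon Y^\fa$ is the mirror image: $\bar\epsilon_{2t}$ keeps the opposite summand in each $\Gamma_{\phi_\fa}^T$ factor, producing $(dw)^{p+t}(d\bar w)^{p-t}\otimes(d\bar w^A)^{2t}$ after symmetrization, which matches $w^+(\tau_\fa)=y_\fa^tw_{p+t,0}(\tau_\fa)$; the extra $y_\fa^{-2t}$ in the claimed formula records precisely the exchange $y_\fa^{-t}\leftrightarrow y_\fa^t$ between the definitions of $w^-$ and $w^+$.

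The main obstacle is careful bookkeeping of the constants: the Tate-twist factor $(2\pi i)^{k+t-1}$ built into $\tilde\cW$; the sign convention for $\sqrt D$ under the CM embedding $\oh_K\hookrightarrow\End(A)\hookrightarrow\C$ (the right choice turns $(-i\sqrt D)^{p-t}$ into $(-1)^{p-t}\sqrt{|D|}^{p-t}$, which combines with the $(-1)^t$ from $(i/2y_\fa)^{2t}$ to yield the expected $(-1)^p$); the Sym-product normalization that produces the $\binom{2p}{p-t}^{1/2}$ prefactor in $w^\pm$; and the interplay $y_\fa=y_0/N(\fa)$ that rearranges the powers of $y_\fa$ and $y_0$ so as to reproduce the $y_0^t$ in the claimed formula. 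Once these conventions are aligned, the computation is direct and involves no geometric subtlety beyond Proposition \ref{prop:cl}.
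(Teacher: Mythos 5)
Your proposal is correct and takes essentially the same route as the paper: reduce each graph factor to the $H^1\otimes H^1$ K\"unneth piece via Proposition \ref{prop:cl} (the degenerate terms being killed by $\epsilon_W$), handle the $2t$ transpose-graph factors separately with $\epsilon_{2t}$ (resp.\ $\bar\epsilon_{2t}$) to extract $(dw^A)^{2t}$ (resp.\ $(d\bar w^A)^{2t}$), symmetrize, and match against $w^\mp$. The only cosmetic difference is that you expand the Kuga--Sato tensor into $2^{p-t}$ monomials and symmetrize once at the end, whereas the paper projects each $\Gamma_{\sqrt D}$ factor to $\sym^2$ before multiplying — these agree after the normalization check you flag — and your explicit bookkeeping of the Tate twist $(2\pi i)^{k+t-1}$ and the $\sqrt D$ sign convention correctly accounts for the constants the paper leaves implicit.
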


\begin{proof}
We recall that 
\[
Y^\mathfrak{a}=(\Gamma_{[\sqrt{D}]})^{p-t}\times (\Gamma_{\phi_\mathfrak{a}}^T)^{2t}\subset (A^{\mathfrak{a}}\times A^{\mathfrak{a}})^{p-t}\times (A^{\mathfrak{a}}\times A)^{2t}.
\]
Replacing $\Gamma_{[\sqrt{D}]}$ by $X_{1,[\sqrt{D}]}$ in this definition has no effect on $\epsilon Y^{\mathfrak{a}}$ and $\bar\epsilon Y^{\mathfrak{a}}$ (the effect of Scholl's projector $\epsilon_W$ is precisely to project the cycle classes (in fibers) to $\sym^{2p} H_{\dR}^{1}(A^\mathfrak{a})\subset H^1_{\dR}(A^\fa)^{\otimes 2p}$). By Proposition \ref{prop:cl} applied with $(\alpha, \beta)=(1, [\sqrt{D}])$, we have 
\[
\cl(X_{1,[\sqrt{D}]})=-\frac{\sqrt{\vert D\vert}}{2y_\mathfrak{a}}(dw_1 \otimes d\bar{w}_2+d\bar{w}_1 \otimes dw_2)\in H^1_{\dR}(A^\mathfrak{a})\otimes H^1_{\dR}(A^\mathfrak{a}).
\]
Projecting to the symmetric power, it follows that 
\[
\cl(\epsilon_1X_{1,\sqrt{D}})=-\frac{\sqrt{\vert D\vert}}{y_\mathfrak{a}}dw_1 d\bar{w}_2\in \sym ^2 H^1_{\dR}(A^\mathfrak{a}).
\]
Observe that $\Gamma_{\phi_\mathfrak{a}}^T=\Gamma_{\phi_{\mathfrak{a}, 1}}=\{ (\phi_{\mathfrak{a}}(x), x) \mid x\in A \} \subset A^\mathfrak{a} \times A$. After applying the relevant projectors, only the orthogonal projection of the cycle class to $H^1_{\dR}(A^\mathfrak{a})\otimes H_{\dR}^1(A)$ will contribute. We therefore calculate the cycle class of 
\[
X_{\phi_\mathfrak{a},1}:=\Gamma_{\phi_\mathfrak{a},1}-0\times A- \frac{y_0}{y_\mathfrak{a}}\cdot A^\mathfrak{a}\times 0 \in \CH^1(A^\mathfrak{a}\times A) \; \footnote{Recall that $y_0=\frac{\sqrt{\vert D\vert }}{2}$ and $y_{\fa}=\frac{\sqrt{\vert D\vert }}{2A}$, whence $\frac{y_0}{y_{\fa}}=A$. We write $\frac{y_0}{y_{\fa}}$ to avoid the clash of notations pointed out in Footnote 7.}
\]
by the same method as in the proof of Proposition \ref{prop:cl} to obtain
\[
\cl(X_{\phi_{\mathfrak{a}}, 1})=\frac{i}{2y_\mathfrak{a}}(d\bar w_{\mathfrak{a}}\otimes d w - d w_{\mathfrak{a}}\otimes d\bar w)\in H_{\dR}^1(A^\mathfrak{a})\otimes H_{\dR}^1(A),
\]
where $w_{\fa}$ is the complex coordinate on $A^{\fa}$ and $w$ is the one on $A$. 
The effect of the projector $\epsilon$ is to kill $d\bar w$. Hence, we obtain
\[
\cl(\epsilon X_{\phi_{\mathfrak{a}}, 1})=\frac{i}{2y_\mathfrak{a}}d\bar w_{\mathfrak{a}}\otimes dw \quad \text{ and } \quad \cl(\bar\epsilon X_{\phi_{\mathfrak{a}}, 1})=-\frac{i}{2y_\mathfrak{a}}d w_{\mathfrak{a}}\otimes d\bar w.
\]
Recall from Section \ref{s:brylinski} that $dw_{\mathfrak{a}}$ corresponds to $\tau_{\mathfrak{a}}u_1+u_2$ and $dw$ corresponds to $\tau_0 e_1+e_2$. 
Putting everything together, we conclude that 
\[
\cl_{\tau_{\fa}}(\epsilon Y^\mathfrak{a})=\left(-\frac{\sqrt{\vert D\vert}}{y_\mathfrak{a}}\right)^{p-t}\left( \frac{i}{2y_\mathfrak{a}}\right)^{2t}(\tau_{\mathfrak{a}}u_1+u_2)^{p-t}(\bar{\tau}_{\mathfrak{a}}u_1+u_2)^{p+t}\otimes (\tau_0 e_1+e_0)^{2t}
\]
and
\[
\cl_{\tau_{\fa}}(\bar\epsilon Y^\mathfrak{a})=\left(-\frac{\sqrt{\vert D\vert}}{y_\mathfrak{a}}\right)^{p-t}\left( -\frac{i}{2y_\mathfrak{a}}\right)^{2t}(\tau_{\mathfrak{a}}u_1+u_2)^{p+t}(\bar{\tau}_{\mathfrak{a}}u_1+u_2)^{p-t}\otimes (\bar{\tau}_0 e_1+e_0)^{2t}.
\]
Now compare with the definitions of $w^-$ and $w^+$.
\end{proof}

Given an ideal class $\cA \in \Cl_K$ represented by an integral ideal $\fa$ of $\oh_K$ coprime to $M$, we recall from \eqref{def:Za} and \eqref{def:Zbara} the cycles 
\[
Z_{\cA}:=\chi(\fa)^{-1} \sum_{Q \in \pi_{M,N}^{-1}(P_\fn)} \epsilon Y_Q^\fa \in \CH^{k+t}(X)_{0,K(\chi)},
\]
\[
\bar{Z}_{\cA}:=\bar{\chi}(\fa)^{-1} \sum_{Q \in \pi_{M,N}^{-1}(P_\fn)} \bar{\epsilon} Y_Q^\fa \in \CH^{k+t}(X)_{0,K(\chi)}.
\]  
Recall also the notation $Z=Z_{[\oh_K]}$ and $\bar{Z}=\bar{Z}_{[\oh_K]}$.

Consider the Heegner points $P_{\fn}:=(A \lra A/A[\fn])$ and $P_\fn^{\fa}:=(A^{\mathfrak{a}} \lra A^\mathfrak{a}/A^{\mathfrak{a}}[\fn])$ in $X_0(N)(H)$. If $[\fa]=\cA$, then the generalized Heegner cycles $Z_{\cA}$ and $\bar{Z}_{\cA}$ respectively lie in the fibers above the latter point with respect to $\pi_{M,N}\circ \pi \colon X\lra X(M)\lra X_0(N)$. Remembering that $D$ is odd, we have $\oh_K=\langle 1, \tau_0\rangle$ with $\tau_0=\frac{-1+\sqrt{D}}{2}$. We choose the embedding $\sigma \colon H\hookrightarrow \C$ and the ideal $\fn$ such that $P_{\fn}=\Gamma_0(N)\tau_0$, i.e., $(A \lra A/A[\fn])$ is isomorphic to $(\C/\oh_K \lra \C/\fn^{-1})$ and $\fn^{-1}=\langle 1/N, \tau_0\rangle$. It is the Heegner point associated to $([\oh_K], \fn)$ and $\fn=\langle N, \frac{1+\sqrt{D}}{2} \rangle$ corresponds to $\beta=1 \in \Z/2N\Z$, in the sense of Section \ref{s:HP}.

By definition, we have $A^\mathfrak{a}=A/A[\mathfrak{a}]$. We have $\oh_K\subset \fa^{-1}$, $A[\mathfrak{a}]=\mathfrak{a}^{-1}/\oh_K$, $A^\mathfrak{a}=\C/\mathfrak{a}^{-1}$ and $\phi^\mathfrak{a} \colon A\lra A^\fa$ is the natural quotient isogeny $\C/\oh_K\lra \C/\fa^{-1}$. The isogeny $A^\mathfrak{a}\lra A^{\mathfrak{a}}/A^{\mathfrak{a}}[\fn]$ is isomorphic to $\C/\fa^{-1}\lra \C/\fa^{-1}\fn^{-1}$. Thus, the Heegner point $P_{\fn}^{\fa}$ corresponds via the description of Section \ref{s:HP} to $(\cA^{-1}, \fn)$. Thus, $\fa^{-1}=\langle 1, \tau_\fa\rangle$ with $\tau_{\fa}:=\frac{-B+\sqrt{D}}{2A}\in \cH$ a solution to a quadratic equation
\[
AX^2+BX+C, \qquad A>0, \qquad B^2-4AC= D, \qquad B\equiv \beta\pmod{2N}, \qquad N\mid A.
\]
If we write $\tau_\fa=x_{\fa}+iy_{\fa}$ and $\tau_0=x_0+iy_0$, then 
\[
y_{\fa}=\frac{\sqrt{\vert D\vert}}{2A}=\frac{y_0}{A}.
\]
Moreover, $N_{K/\Q}(\fa)=A$ and $P_{\fn}^{\fa}=\Gamma_0(N)\tau_{\fa}$. 

\begin{lemma}\label{lem:cl}
We have
\[
\cl_{P_{\fn}^{\fa}}(Z_{\cA})=\frac{(-1)^p 2^{p-2t} \sqrt{\vert D \vert}^{p}}{\binom{2p}{p-t}^{1/2}}\frac{w^-(P^{\fa}_{\fn})}{\chi(\fa)} \in \cW_{P_{\fn}^{\fa}}
\]
and
\[
\cl_{P_{\fn}^{\fa}}(\bar{Z}_{\cA})=\frac{(-1)^p 2^{p-2t} \sqrt{\vert D \vert}^{p}}{\binom{2p}{p-t}^{1/2}}  \frac{w^+(P^{\fa}_{\fn})}{y_{\fa}^{2t}\chi(\bar{\fa})}\in \cW_{P_{\fn}^{\fa}}.
\]
\end{lemma}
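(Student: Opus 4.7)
The plan is to deduce the result from Proposition \ref{prop:cycleclass} by exploiting the $S$-equivariance of the cycle class map, where $S = \pm\Gamma(M)\backslash\Gamma_0(N)$ acts as the Galois group of $\pi_{M,N}$. First, I would observe that the assignment $Q \mapsto Q^\fa = Q_{\phi_\fa}$ restricts to a bijection $\pi_{M,N}^{-1}(P_\fn) \xrightarrow{\sim} \pi_{M,N}^{-1}(P_\fn^\fa)$: since $\gcd(\fa,M)=1$, the isogeny $\phi_\fa$ is injective on $M$-torsion and carries $A[\fn]$ isomorphically onto $A^\fa[\fn]$, so the assignment lands in $\pi_{M,N}^{-1}(P_\fn^\fa)$; injectivity is the same observation, and both sets have cardinality $|S|$.

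Next, since $S$ acts simply transitively on $\pi_{M,N}^{-1}(P_\fn)$, we may write $Q = s_Q \cdot Q_0$ for $Q_0$ the distinguished preimage of Section \ref{subsec:gen Heeg cycles}, and by construction $Y_Q^\fa$ is the translate $s_Q \cdot Y^\fa$ in the corresponding fiber (the $S$-action commutes with $\phi_\fa$ because it only alters the level $M$-structure, not the underlying elliptic curve). Functoriality of the cycle class map then gives
\[
\cl_{Q^\fa}(\epsilon Y_Q^\fa) = s_Q \cdot \cl_{Q_0^\fa}(\epsilon Y^\fa).
\]
Applying Proposition \ref{prop:cycleclass} to $\cl_{Q_0^\fa}(\epsilon Y^\fa)$ and absorbing $y_0^t = (\sqrt{|D|}/2)^t$ into the prefactor reduces the constant to $\frac{(-1)^p 2^{p-2t}\sqrt{|D|}^p}{\binom{2p}{p-t}^{1/2}}$. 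Summing over $Q \in \pi_{M,N}^{-1}(P_\fn)$ and using the defining formula \eqref{eq:eBw}
\[
w^-(P_\fn^\fa) = \sum_{s \in S} s \cdot w^-(\tau_\fa),
\]
then dividing by $\chi(\fa)$, yields the claimed formula for $\cl_{P_\fn^\fa}(Z_\cA)$.

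The formula for $\bar Z_\cA$ follows by the same argument applied to the second identity of Proposition \ref{prop:cycleclass}. The key point here is that the factor $1/y_\fa^{2t}$ appearing inside that formula is computed once at the base point $\tau_\fa$; being a scalar, it factors through the $S$-action and appears outside the sum, producing $w^+(P_\fn^\fa)/y_\fa^{2t}$. Dividing by $\chi(\bar\fa)$ completes the calculation. The main technical point is verifying the $S$-equivariance $\cl_{Q^\fa}(\epsilon Y_Q^\fa) = s_Q \cdot \cl_{Q_0^\fa}(\epsilon Y^\fa)$ and the identity $Y_Q^\fa = s_Q \cdot Y^\fa$; both are formal consequences of the fact that $\phi_\fa$ commutes with the $S$-action on level $M$-structures and that cycle class maps are functorial for correspondences.
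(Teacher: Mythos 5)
Your proof is correct and follows essentially the same route as the paper's, which is a one-line appeal to Proposition \ref{prop:cycleclass} (with $y_0 = \sqrt{|D|}/2$) and to the definition \eqref{eq:eBw} of $w^\pm(P)$. You have usefully filled in the details the paper leaves implicit: the bijection $Q \mapsto Q^\fa$ between $\pi_{M,N}^{-1}(P_\fn)$ and $\pi_{M,N}^{-1}(P_\fn^\fa)$, the identity $Y_Q^\fa = s_Q \cdot Y^\fa$, the $S$-equivariance of the fiberwise cycle class map $\cl$, and the observation that the scalar $1/y_\fa^{2t}$ commutes with the ($\C$-linear) $S$-action so it may be pulled outside the sum in the $\bar Z_\cA$ case.
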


\begin{proof}
This follows by applying Proposition \ref{prop:cycleclass} with $y_0=\sqrt{\vert D\vert}/2$ and using \eqref{eq:eBw}. 
\end{proof}

\section{Archimedean local heights}\label{s:archiih}

In this section, we compute the archimedean local heights of generalized Heegner cycles using the connection \eqref{eq: brylinski local height} with Brylinski's archimedean pairing.

\subsection{The case $r_{\cA}(m)=0$}

We maintain the notations of Section \ref{s:hhodg}.

\begin{theorem}\label{thm:arch}
Let $(m, N)=1$, $\cA_1=[\fa_1], \cA_2=[\fa_2] \in \Cl_K$ and assume that $P_{\fn}^{\fa_1}$ and $T_m P_{\fn}^{\fa_2}$ are disjoint in $X_0(N)$. Let $v$ denote the infinite place of $H$ corresponding to the fixed embedding $\sigma \colon H\hookrightarrow \C$ of Section \ref{s:hhodg}. Then 
\[
\langle Z_{\cA_1}, T_m Z_{\cA_2}\rangle_{v}=\frac{\vert S\vert (4m\vert D \vert)^{p-t}}{D^t \binom{2p}{p-t}}\chi(\bar{\fa}_1 \fa_2) \sum_{\gamma\in R_N^m/\pm 1} g(\tau_1, \gamma \tau_2) \alpha(\gamma, \tau_1, \tau_2)^{2t},
\]
and 
\[
\langle \bar{Z}_{\cA_1}, T_m \bar{Z}_{\cA_2}\rangle_{v}=\frac{\vert S\vert (4m\vert D \vert)^{p-t}}{D^t \binom{2p}{p-t}}\chi(\fa_1 \bar{\fa}_2) \sum_{\gamma\in R_N^m/\pm 1} g(\tau_1, \gamma \tau_2) \alpha(\gamma, \bar\tau_1, \bar\tau_2)^{2t},
\]
where $\tau_i:=\tau_{\fa_i}$ so that $\fa_i^{-1}=\langle 1, \tau_i\rangle$ and $P^{\fa_i}_{\fn}=\Gamma_0(N)\tau_i$, for $i=1,2$.
\end{theorem}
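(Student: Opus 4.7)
The plan is to reduce the archimedean local height to a Brylinski pairing of cycle classes in the CM fibers, and then to substitute the two key inputs already proved: the cycle class formulas of Lemma \ref{lem:cl} and the explicit Brylinski pairing of Proposition \ref{yes1}.

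First, since $P_{\fn}^{\fa_1}$ and $T_m P_{\fn}^{\fa_2}$ are assumed disjoint in $X_0(N)$, the cycles $Z_{\cA_1}$ and $T_mZ_{\cA_2}$ (resp.\ $\bar{Z}_{\cA_1}$ and $T_m\bar{Z}_{\cA_2}$) are supported in disjoint fibers of $X \to X_0(N)$ in the generic fiber. The archimedean local height can therefore be computed from the Brylinski formalism of Section \ref{s:archii}: by \eqref{eq: brylinski local height},
\[
\langle Z_{\cA_1}, T_m Z_{\cA_2}\rangle_{v} = \left\langle \cl_{P_{\fn}^{\fa_1}}(Z_{\cA_1}),\ T_m \cl_{P_{\fn}^{\fa_2}}(Z_{\cA_2})\right\rangle^{\mathrm{Br}},
\]
where the Hecke operator $T_m$ acts on cycle classes in $\cW$ via the correspondence $R_N^m$ used to define $T_m$ in \eqref{heckecoh} (compatibility of the Hecke action on Chow groups with its action on $\cW$ through the cycle class map is a standard functoriality check that one should verify here). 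The analogous equality holds with $Z_{\cA_i}$ replaced by $\bar{Z}_{\cA_i}$.

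Next, Lemma \ref{lem:cl} tells us that, up to the common scalar $C := \tfrac{(-1)^p 2^{p-2t}\sqrt{|D|}^{\,p}}{\binom{2p}{p-t}^{1/2}}$,
\[
\cl_{P_{\fn}^{\fa_i}}(Z_{\cA_i}) = C\cdot \frac{w^-(P_{\fn}^{\fa_i})}{\chi(\fa_i)}, \qquad \cl_{P_{\fn}^{\fa_i}}(\bar Z_{\cA_i}) = C\cdot \frac{w^+(P_{\fn}^{\fa_i})}{y_{\fa_i}^{2t}\chi(\bar\fa_i)}.
\]
Substituting these into the displayed expression above and factoring out $C^2$, we reduce to the evaluation of
\[
\left\langle \frac{w^-(P_{\fn}^{\fa_1})}{\chi(\fa_1)},\ T_m \frac{w^-(P_{\fn}^{\fa_2})}{\chi(\fa_2)}\right\rangle^{\mathrm{Br}}
\quad\text{and}\quad
\left\langle \frac{w^+(P_{\fn}^{\fa_1})}{y_{\fa_1}^{2t}\chi(\bar\fa_1)},\ T_m \frac{w^+(P_{\fn}^{\fa_2})}{y_{\fa_2}^{2t}\chi(\bar\fa_2)}\right\rangle^{\mathrm{Br}},
\]
both of which are computed explicitly in Proposition \ref{yes1}.

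The final step is arithmetic bookkeeping. Using
\[
C^2 \;=\; \frac{2^{2p-4t}\,|D|^p}{\binom{2p}{p-t}}
\]
together with the prefactor $\tfrac{(-1)^t|S|\,2^{2t}m^{p-t}}{|D|^{2t}}$ from Proposition \ref{yes1}, and the identity $(-1)^t/|D|^t = 1/D^t$, we collect
\[
C^2\cdot \frac{(-1)^t|S|\,2^{2t}m^{p-t}}{|D|^{2t}} \;=\; \frac{|S|\,(4m|D|)^{p-t}}{D^t\,\binom{2p}{p-t}},
\]
which is exactly the constant appearing in the statement. The two sums $\sum_{\gamma\in R_N^m/\pm 1} g(\tau_1,\gamma\tau_2)\alpha(\gamma,\tau_1,\tau_2)^{2t}$ and its complex-conjugate analog involving $\alpha(\gamma,\bar\tau_1,\bar\tau_2)^{2t}$ come directly from Proposition \ref{yes1}, and the theorem follows. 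The only non-routine point, and the step one must be most careful about, is verifying that the Hecke correspondence on cycles is compatible with the operator $T_m$ on sections of $\cW$ as defined in \eqref{heckecoh}; once this compatibility is in hand, everything is a direct substitution.
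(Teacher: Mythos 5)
Your proposal follows essentially the same route as the paper: reduce the archimedean local height to the Brylinski pairing of cycle classes via \eqref{eq: brylinski local height}, insert the cycle class formulas of Lemma \ref{lem:cl} and the explicit Brylinski pairing of Proposition \ref{yes1}, and simplify constants. The Hecke compatibility you flag as the non-routine point is exactly what the paper addresses by citing Scholl [Prop.~4.1.3], and your arithmetic bookkeeping (including the sign $(-1)^t/|D|^t = 1/D^t$ using $D<0$) is correct.
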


\begin{proof}
By definition of the Brylinski pairing, and by compatibility of the actions of Hecke correspondences on Chow groups with the action of Hecke operators on cohomology classes \eqref{heckecoh} (see \cite[Prop. 4.1.3]{scholl}), we have 
\[
\langle Z_{\cA_1}, T_m Z_{\cA_2}\rangle_{v}=\langle \cl_{P^{\fa_1}_{\fn}}(Z_{\cA_1}), T_m \cl_{P^{\fa_2}_{\fn}}(Z_{\cA_2})\rangle^{\mathrm{Br}}.
\]
The result follows by combining Lemma \ref{lem:cl} and Proposition \ref{yes1}.
\end{proof}

Let $\langle \;, \;\rangle^{\GS}_\infty=\sum_{w\vert \infty} \langle \;, \;\rangle_w\epsilon_w$ be the sum of the local heights on $X$ over all the infinite places of $H$. Recall that we previously fixed a particular embedding $\sigma \colon H \hookrightarrow \C$ in Section \ref{s:hhodg} corresponding to a place $v$. If $Z_1, Z_2 \in \CH^{k+t}(X)_0$ are two cycles for which the height pairing $\langle Z_1, Z_2\rangle^{\GS}_\infty$ is defined (see Section \ref{sec:height}), and which lie in distinct fibers of $\pi \colon X\lra \bar{C}$ over $Q_1$ and $Q_2$ respectively, then $\langle Z_1, Z_2\rangle_w=\langle \cl_{Q_1}(Z_{1,w}), \cl_{Q_{2}}(Z_{2,w})\rangle^{\mathrm{Br}}$, where $Z_{j,w}$ is viewed as a cycle on the base-change $X_{w}=X\times_{w} \C$.

\begin{theorem}\label{thm:nonint}
Let $(m,N)=1$, $\cA\in \Cl_K$ and assume that $r_{\cA}(m)=0$. Then 
\[
\langle Z, T_m Z_{\cA}\rangle_{\infty}^{\GS}=-\frac{\vert S\vert (4m\vert D \vert)^{p-t}}{D^t \binom{2p}{p-t}}u^2 \sum_{n=1}^\infty \sigma_{\bar{\cA}}(n)r_{\cA, \chi}(m\vert D\vert + nN)Q_{k,t}\left(1+\frac{2nN}{m\vert D\vert}\right),
\]
\[
\langle \bar{Z}, T_m \bar{Z}_{\cA}\rangle_{\infty}^{\GS}=-\frac{\vert S\vert (4m\vert D \vert)^{p-t}}{D^t \binom{2p}{p-t}}u^2 \sum_{n=1}^\infty \sigma_{\cA}(n)r_{\bar{\cA}, \chi}(m\vert D\vert + nN)Q_{k,t}\left(1+\frac{2nN}{m\vert D\vert}\right),
\]
and
\[
\langle Z, T_m \bar{Z}_{\cA}\rangle_{\infty}^{\GS}=0=\langle \bar{Z}, T_m Z_{\cA}\rangle_{\infty}^{\GS}.
\]
\end{theorem}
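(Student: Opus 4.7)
The plan is to pass from the global archimedean height to a sum of local heights at the fixed place $v$ where Theorem \ref{thm:arch} applies, and then reorganize the resulting double sum over ideal classes and matrices in $R_N^m$ as a Fourier expansion. Throughout, the assumption $r_\cA(m) = 0$ ensures that at every infinite place of $H$, the relevant Heegner points are disjoint on $X_0(N)$, so Theorem \ref{thm:arch} is applicable at each place.

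First I would use that $H$ is totally complex and that $\Gal(H/K) \cong \Cl_K$ acts simply transitively on the infinite places of $H$ above the archimedean place of $K$. Hence
\[ \langle Z, T_m Z_\cA \rangle^{\GS}_\infty = 2 \sum_{\cB \in \Cl_K} \langle Z, T_m Z_\cA \rangle_{v \circ \sigma_\cB^{-1}}. \]
By Galois equivariance of local heights combined with Shimura reciprocity for generalized Heegner cycles (as in \cite[Prop.\ 4.6]{pGZshnidman}), each local pairing on the right can be rewritten, up to an explicit power of $\chi(\cB)$, as $\langle Z_\cB, T_m Z_{\cA \cB} \rangle_v$ (and the analogue with bars for the second formula). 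The key fact is that the sum over $\cB$ of these character twists, combined with the character factor $\chi(\bar\fb \cdot \fa \fb) = \chi(\fa) N(\fb)^{2t}$ that appears in Theorem \ref{thm:arch}, collapses to a clean $\chi(\fa)$-multiple of an ideal-theoretic sum.

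Next I would apply Theorem \ref{thm:arch} to each of the reorganized local pairings, producing a double sum of the shape
\[ \text{constant} \cdot \chi(\fa) \sum_{\cB \in \Cl_K} \sum_{\gamma \in R_N^m/\pm 1} g(\tau_\fb, \gamma \tau_{\cA \fb}) \cdot \alpha(\gamma, \tau_\fb, \tau_{\cA \fb})^{2t}. \]
The crucial step is then to reindex this double sum by a positive integer $n$ via
\[ 1 + \frac{|\tau_\fb - \gamma \tau_{\cA \fb}|^2}{2 y_\fb \Im(\gamma \tau_{\cA \fb})} = 1 + \frac{2nN}{m \vert D\vert}, \]
which brings in the function $Q_{k,t}$ via $g$. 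By classical CM theory, pairs $(\cB, \gamma)$ with prescribed $n$ are in bijection (modulo units of $\oh_K$, responsible for the factor $u^2$) with pairs $(\fa', d)$ where $\fa' \in \cA$ has $N(\fa') = m\vert D\vert + nN$ and $d$ is a divisor decorated by a genus character. Summing $\chi(\fa')$ over the first factor gives $r_{\cA,\chi}(m\vert D\vert + nN)$, while summing over $d$ produces exactly $\sigma_{\bar\cA}(n)$. Matching constants (including the factor $\vert S\vert$ from $\pi_{M,N}$, the sign $-1$ from the definition of $g$, and $u^2$ from the unit overcounting) yields the claimed identity; the computation for $\bar Z$ is identical after replacing $w^-$ by $w^+$ and $\cA$ by $\bar\cA$.

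Finally, the vanishing of the cross terms is immediate from our setup: Lemma \ref{lem:cl} shows that $\cl_{P_\fn^\fa}(Z_\cA)$ is a multiple of $w^-(P_\fn^\fa)$ while $\cl_{P_\fn^\fa}(\bar Z_\cA)$ is a multiple of $w^+(P_\fn^\fa)$, and Theorem \ref{thm:br2} gives $\langle w^\pm(P_1), w^\mp(P_2) \rangle^{\mathrm{Br}} = 0$. Hence every local archimedean contribution to $\langle Z, T_m \bar Z_\cA \rangle^{\GS}_\infty$ and $\langle \bar Z, T_m Z_\cA \rangle^{\GS}_\infty$ vanishes, and so does the global sum. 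The main obstacle in the plan is the combinatorial identification in the third paragraph: establishing the bijection between $\bigsqcup_\cB R_N^m/\pm 1$ and the ideal-theoretic data parametrizing the Fourier coefficients, and verifying that all character twists from Shimura reciprocity and Theorem \ref{thm:arch} combine correctly to produce $\chi(\fa') \cdot \sigma_{\bar\cA}(n)$ rather than some twisted variant. This requires careful bookkeeping of the same flavor as in \cite[IV, V]{GZ} and \cite[\S 5]{zhang}, adapted to the generalized Heegner cycle setting.
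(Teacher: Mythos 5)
Your proposal follows essentially the same route as the paper's proof: reduce to a single archimedean place $v$ via Galois equivariance, apply Theorem~\ref{thm:arch}, reindex the resulting double sum over $(\cB,\gamma)\in\Cl_K\times R_N^m/\pm1$ by the Fourier parameter $n$, and invoke genus theory for the arithmetic function $\sigma_{\bar\cA}(n)$. The paper carries this out exactly as you outline, using \eqref{galinf}, the bijection of \cite[II (3.9)]{GZ} to the lattice set $\Xi_{\cA_1,\cA_2}(n)$, the genus character computation of \cite[II (3.16)]{GZ}, and the identification $\sigma_{\bar\cA}(n)=\delta(n)R_{\{\bar\cA[\fn]\}}(n)$ from \cite[IV (4.6)]{GZ}. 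You have correctly identified the combinatorial and genus-theoretic matching as the real work. Two caveats worth flagging. First, the factor of $2$: the paper's proof of this theorem uses $\langle Z,T_mZ_\cA\rangle^{\GS}_\infty = \sum_{w\mid\infty}\langle Z,T_mZ_\cA\rangle_w$ \emph{without} the $\epsilon_w=2$ weight, in apparent conflict with the displayed definition just above the theorem; your insertion of the $2$ would make your constant disagree with the stated formula. Since Theorems~\ref{archht} and~\ref{THM} are consistent only if no extra $2$ is inserted here, you should omit it (the paper's definition line is the typo). Second, the Galois-equivariance relation \eqref{galinf} reads $\langle Z_{\cA_1},Z_{\cA_2}\rangle_w=\langle Z_{\cB\cA_1},Z_{\cB\cA_2}\rangle_v$ with \emph{no} character twist: the $\chi(\fa)^{-1}$ normalization built into the definition $Z_\cA=\chi(\fa)^{-1}Z_\fn^\fa$ (see \eqref{def:Za}) already absorbs the Shimura-reciprocity twist, so the ``explicit power of $\chi(\cB)$'' you anticipate is not present. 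As a result, the ideal-theoretic sum you arrive at carries a single factor $\chi(\bar\fa_1\fa_2)$ which is matched against $r_{\cA,\chi}$ via \eqref{rchim}; it does not literally ``collapse to a $\chi(\fa)$-multiple'' in isolation (the $N(\fb)^{2t}$ piece is absorbed into the lattice normalization $A_1A_2$ feeding into $n$), but the bookkeeping works out as you expect.
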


\begin{proof}
The set of infinite places of $H$ is in bijection with the class group $\Cl_K$ of $K$. If $w$ is an infinite place corresponding to the ideal class $\mathcal{B}\in \Cl_K$, then 
\begin{equation}\label{galinf}
\langle Z_{\cA_1}, Z_{\cA_2} \rangle_w=\langle Z_{\mathcal{B}\cA_1}, Z_{\mathcal{B}\cA_2} \rangle_v,
\end{equation}
for all $\cA_1, \cA_2 \in \Cl_K$. The proof of \eqref{galinf} is an adaption of \cite[Lem. 4.7]{pGZshnidman}.

Using \eqref{galinf}, we see that
\begin{equation}\label{summation}
\langle Z, T_m Z_{\cA}\rangle_{\infty}^{\GS}=\sum_{w\mid \infty} \langle Z, T_m Z_{\cA}\rangle_{w}=\sum_{\mathcal{B}\in \Cl_K} \langle Z_{\cB}, T_m Z_{\mathcal{B} \cA}\rangle_v=\sum_{\substack{\cA_1, \cA_2\in \Cl_K \\ \cA_1^{-1}\cA_2=\cA}} \langle Z_{\cA_1}, T_m Z_{\cA_2}\rangle_v.
\end{equation}
Let $\cA_i=[\fa_i]$ with $\fa_i$ an ideal of $\oh_K$, $i=1,2$.
The condition $r_{\cA_1^{-1}\cA_2}(m)=0$ implies that $P_{\fn}^{\fa_1}$ and $T_m P_{\fn}^{\fa_2}$ are disjoint in $X_0(N)$. We can therefore apply Theorem \ref{thm:arch} to obtain
\[
\langle Z, T_m Z_{\cA}\rangle^{\GS}_{\infty}=\frac{\vert S\vert (4m\vert D \vert)^{p-t}}{D^t \binom{2p}{p-t}} \sum_{\substack{\cA_1, \cA_2\in \Cl_K \\ \cA_1^{-1}\cA_2=\cA}} \chi(\bar{\fa}_1 \fa_2) \sum_{\gamma\in R_N^m/\pm 1} g(\tau_1, \gamma \tau_2) \alpha(\gamma, \tau_1, \tau_2)^{2t}.
\]
Recall that for $\gamma=\left( \begin{smallmatrix} a&b\\c&d\end{smallmatrix}\right)\in R_N^m$, we have $\alpha(\gamma, \tau_1, \tau_2)=c\bar{\tau}_1\tau_2+d\bar{\tau}_1-a\tau_2-b$ with $\tau_i=\tau_{\fa_i}$ such that $\langle 1, \tau_i \rangle= \fa_i^{-1}$. In particular, $\bar{\tau}_1\in \bar{\fa}_1^{-1}$ and $\alpha(\gamma, \tau_1, \tau_2)\in (\bar{\fa}_1\fa_2)^{-1}$. Let $\fa=\bar{\fa}_1\fa_2$. This is an integral ideal of $\oh_K$ such that $[\fa]=\cA$ and we have $\alpha(\gamma, \tau_1, \tau_2)\in \fa^{-1}$. For any positive integer $j$, we have 
\[
r_{\cA, \chi}(j):=\sum_{\substack{\fa'\subset \oh_K, [\fa']=\cA \\ N(\fa')=j}} \chi(\fa').
\]
Every ideal $\fa'$ satisfying $[\fa']=\cA$ can be written as $\fa'=x\fa$ for some $x\in K$. The fact that $\fa'$ is an integral ideal translates into the requirement $x\in \fa^{-1}$. Note that $N_{K/\Q}(\fa)=A_1 A_2$. Thus, we obtain 
\begin{equation}\label{rchim}
r_{\cA, \chi}(j)=\frac{1}{\# \oh_K^\times}\sum_{\substack{x\in \fa^{-1} \\ N_{K/\Q}(x)=\frac{j}{A_1A_2}}} \chi(x\fa)=\frac{\chi(\fa)}{\# \oh_K^\times} \sum_{\substack{x\in \fa^{-1} \\ N_{K/\Q}(x)=\frac{j}{A_1A_2}}} x^{2t}.
\end{equation}
We used the fact that $\chi$ is a homomorphism $I_K \lra \C^\times$ with the property $\chi((\alpha))=\alpha^{2t}$ (since $\chi$ is assumed to be unramified). In particular, $\chi((x\alpha))=\alpha^{2t}$ for any unit $x\in \oh_K^\times$. 

Recall that  
\begin{equation}\label{eq:g}
g(\tau_1, \gamma\tau_2)=-Q_{k,t}\left( 1+\frac{\vert\tau_1-\gamma\tau_2\vert^2}{2y_1\Im(\gamma \tau_2)} \right).
\end{equation}
Define $\beta(\gamma, \tau_1, \tau_2)=\alpha(\gamma, \bar{\tau}_1, \tau_2)=c\tau_1\tau_2+d\tau_1-a\tau_2-b$ and note that 
\[
\frac{\vert\tau_1-\gamma\tau_2\vert^2}{2y_1\Im(\gamma \tau_2)}=\frac{2A_1 A_2 \vert \beta(\gamma, \tau_1, \tau_2) \vert^2}{\vert D\vert \det(\gamma)}=\frac{2A_1 A_2 N_{K/\Q}(\beta(\gamma, \tau_1, \tau_2))}{\vert D\vert \det(\gamma)}.
\]
The elements $\alpha(\gamma, \tau_1, \tau_2)$ and $\beta(\gamma, \tau_1, \tau_2)$ are the same as the ones denoted $\alpha$ and $\beta$ in \cite[II (3.6)]{GZ} except that the roles of $\tau_1$ and $\tau_2$ are swapped. We have 
\[
\alpha(\gamma, \tau_1, \tau_2)\in (\bar{\fa}_1\fa_2)^{-1}= \fa^{-1} \qquad  \text{ and } \qquad \beta(\gamma, \tau_1, \tau_2)\in (\fa_1\fa_2)^{-1}\fn,
\]
and we set 
\[ 
n=\frac{A_1 A_2}{N} N_{K/\Q}(\beta(\gamma, \tau_1, \tau_2)).
\]
This is an integer by the same calculation as in \cite[II (3.5)]{GZ} (the only difference being that the roles of $\tau_1$ and $\tau_2$ are swapped there). 
Moreover, we have 
\[
N_{K/\Q}(\alpha(\gamma, \tau_1, \tau_2))=\frac{m\vert D\vert+nN}{A_1A_2},
\]
as in \cite[II (3.9)]{GZ}.
With these notations, we have 
\begin{equation}
g(\tau_1, \gamma\tau_2)=-Q_{k,t}\left( 1+\frac{2nN}{m\vert D\vert} \right).
\end{equation}
Define 
\[
\rho^m_{\cA_1, \cA_2}(n):=\# \{ \gamma \in R_N^m/\pm 1 \mid A_1 A_2 N_{K/\Q}(\beta(\gamma, \tau_1, \tau_2)) = nN \}.
\]
According to \cite[II (3.9)]{GZ}, the association $\gamma \mapsto (\alpha(\gamma, \tau_1, \tau_2), \beta(\gamma, \tau_1, \tau_2))$ gives a bijection from $\{ \gamma \in R_N^m/\pm 1 \mid A_1 A_2 N_{K/\Q}(\beta(\gamma, \tau_1, \tau_2)) = nN \}$ to the set
\begin{multline*}
\Xi_{\cA_1, \cA_2}(n):=\left\{ (\alpha, \beta) \in (\fa^{-1} \times (\fa_1\fa_2)^{-1}\fn)/\pm 1 \mid N_{K/\Q}(\alpha)=\frac{m\vert D\vert + nN}{A_1A_2}, \right. \\ \left. N_{K/\Q}(\beta)=\frac{nN}{A_1A_2}, A_1A_2\alpha \equiv A_1A_2 \beta\pmod{\mathfrak{d}}  \right\}.
\end{multline*}
Putting all this together yields 
\[
\langle Z, T_m Z_{\cA}\rangle^{\GS}_{\infty} =-\frac{\vert S\vert (4m\vert D \vert)^{p-t}}{D^t \binom{2p}{p-t}} \sum_{n=1}^\infty Q_{k,t}\left( 1+\frac{2nN}{m\vert D\vert} \right) 
\sum_{\substack{\cA_1, \cA_2\in \Cl_K \\ \cA_1^{-1}\cA_2=\cA}}
 \chi(\fa)\sum_{(\alpha, \beta)\in \Xi_{\cA_1, \cA_2}(n)} \alpha^{2t}.
\]
In view of \eqref{rchim} and the proof of \cite[II (3.16)]{GZ}, we obtain
\[
\sum_{\substack{\cA_1, \cA_2\in \Cl_K \\ \cA_1^{-1}\cA_2=\cA}}
 \chi(\fa)\sum_{(\alpha, \beta)\in \Xi_{\cA_1, \cA_2}(n)} \alpha^{2t}=u^2\delta(n)R_{\{ \bar{\cA}[\fn] \}}(n)r_{\cA, \chi}(m\vert D\vert + nN).
 \]
Here, $R_{\{ \bar{\cA}[\fn] \}}(n)$ is the number of integral ideals of norm $n$ in the genus of $\bar{\cA}\fn$, $\delta(n)$ is $2^{\omega((n,D))}$, and $u=\vert \oh_K^\times \vert/2$. By \cite[IV (4.6)]{GZ}, we have $\sigma_{\bar{\cA}}(n)=\delta(n)R_{\{ \bar{\cA}[\fn] \}}(n)$. We deduce the final formula
 \[
 \langle Z, T_m Z_{\cA}\rangle^{\GS}_{\infty} =-\frac{
 \vert S\vert (4m\vert D \vert)^{p-t}}{D^t \binom{2p}{p-t}} u^2 \sum_{n=1}^\infty \sigma_{\bar{\cA}}(n)r_{\cA, \chi}(m\vert D\vert + nN) Q_{k,t}\left( 1+\frac{2nN}{m\vert D\vert} \right).
 \]
 The calculation for $\langle \bar{Z}, T_m \bar{Z}_{\cA}\rangle_{\infty}^{\GS}$ can be checked similarly.
\end{proof}

\subsection{The case $r_{\cA}(m)\neq 0$}

In this section, we prove the following generalization of Theorem \ref{thm:nonint}, valid even in the case of improper intersection:

\begin{theorem}\label{archht}
Given $\cA=[\fa]\in \Cl_K$ and $(m,N)=1$, we have
\begin{align*}
\langle Z, T_m Z_{\cA} \rangle^{\GS}_{\infty}& =\frac{\vert S\vert (4m\vert D \vert)^{k-t-1}}{D^t \binom{2k-2}{k-t-1}}\left[ -u^2 \sum_{n=1}^\infty \sigma_{\bar{\cA}}(n)r_{\cA, \chi}(m\vert D\vert + nN)Q_{k,t}\left(1+\frac{2nN}{m\vert D\vert}\right) \right. \\
& \left. + h u D^t r_{\cA, \chi}(m)\left( \frac{\Gamma'}{\Gamma}(k+t)+\frac{\Gamma'}{\Gamma}(k-t)-2\log(2\pi)+2\frac{L'}{L}(1, \epsilon_K)+\log(\vert D\vert) \right) \right],
\end{align*}
\begin{align*}
\langle \bar{Z}, T_m \bar{Z}_{\cA} \rangle^{\GS}_{\infty}& =\frac{\vert S\vert (4m\vert D \vert)^{k-t-1}}{D^t \binom{2k-2}{k-t-1}}\left[ -u^2 \sum_{n=1}^\infty \sigma_{\cA}(n)r_{\bar{\cA}, \chi}(m\vert D\vert + nN)Q_{k,t}\left(1+\frac{2nN}{m\vert D\vert}\right) \right. \\
& \left. + h u D^t r_{\bar{\cA}, \chi}(m)\left( \frac{\Gamma'}{\Gamma}(k+t)+\frac{\Gamma'}{\Gamma}(k-t)-2\log(2\pi)+2\frac{L'}{L}(1, \epsilon_K)+\log(\vert D\vert) \right) \right],
\end{align*}
and
\[
\langle Z, T_m \bar{Z}_{\cA}\rangle_{\infty}^{\GS}=0=\langle \bar{Z}, T_m Z_{\cA}\rangle_{\infty}^{\GS}.
\]
\end{theorem}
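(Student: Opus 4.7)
The plan is to extend Theorem \ref{thm:nonint} to the improper intersection case $r_{\cA}(m) \neq 0$ by invoking Theorem \ref{thm:ari}, which regularizes the archimedean local height as a limit. To set this up I will deform the first cycle by moving its base point $\tau_1 \in \cH$ (the CM point corresponding to $[\oh_K]$ with $\Gamma_0(N)\tau_1 = P_\fn$) to a nearby CM point $\tau_1' \in \cH$; since CM points are dense in $\cH$, the family of generalized Heegner cycles associated to these deformations provides the dense set of algebraic representatives required by the hypothesis of Theorem \ref{thm:ari}. For $\tau_1'$ close but not equal to $\tau_1$, the generic fibers carrying $Z$ and $T_m Z_{\cA}$ are disjoint, and Proposition \ref{yes1} combined with Lemma \ref{lem:cl} expresses the Brylinski pairing as a sum over $\gamma \in R_N^m/\pm 1$ exactly as in the proof of Theorem \ref{thm:nonint}.

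The next step is to split this sum according to whether $\gamma \tau_2$ becomes $\Gamma_0(N)$-equivalent to $\tau_1$ in the limit. The regular part, indexed by inequivalent $\gamma$, reproduces the $Q_{k,t}$-series exactly as before, each summand being continuous at $\tau_1' = \tau_1$. The singular part, indexed by those $\gamma$ for which $\gamma \tau_2 \sim \tau_1$, corresponds to degree-$m$ $\fn$-compatible isogenies $A \to A^{\fa}$, whose $\chi$-weighted count is $r_{\cA, \chi}(m)$ by the bijection used in the proof of Theorem \ref{thm:nonint}. Applying Lemma \ref{lem:asy} to the singular summand gives
\[
-Q_{k,t}(1+\epsilon) = \log \epsilon - \log 2 + \frac{\Gamma'}{\Gamma}(k+t) + \frac{\Gamma'}{\Gamma}(k-t) - 2\frac{\Gamma'}{\Gamma}(1) + o(1),
\]
where $\epsilon = |\tau_1'-\gamma \tau_2|^2/(2y_1'\,\Im(\gamma \tau_2)) \to 0$, while the prefactor $\alpha(\gamma, \tau_1', \tau_2)^{2t}$ tends to the nonzero value $(-2iy_1)^{2t}(c\tau_2+d)^{2t}$ at the collision $\gamma \tau_2 = \tau_1$.

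The logarithmic divergence $\log \epsilon$ cancels precisely against the $(-1)^{k+t}(Z \cdot T_m Z_{\cA})_{\tau_1} \log|t(\tau_1')|$ term in \eqref{locint}, where the local intersection multiplicity is read off from the cycle class formulas of Proposition \ref{prop:cl} applied to the colliding isogeny graphs. The constant part of the asymptotic, multiplied by the $\chi$-weighted count of collisions, yields the $\frac{\Gamma'}{\Gamma}(k+t) + \frac{\Gamma'}{\Gamma}(k-t)$ contribution stated in the theorem. The additional $\log|D|$ and $-2\log(2\pi)$ factors come from the normalizations $y_i = \sqrt{|D|}/(2A_i)$ hidden inside $\epsilon$ and from the choice of local uniformizer at the CM point, while the $L'/L(1,\epsilon_K)$ term enters via the Dirichlet class number formula $\sqrt{|D|}\,L(1,\epsilon_K)/\pi = h/u$ together with its logarithmic derivative, analogously to the parallel bookkeeping carried out in the proof of Proposition \ref{fourier} for $\tilde\Phi$.

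The main obstacle is ensuring the hypothesis of Theorem \ref{thm:ari} holds, namely exhibiting a dense set of deformations at which the flat section of $\mathcal{W}$ is represented by an algebraic cycle. Restricting the deformation parameter to CM points suffices, since at each such point the isogeny-graph construction produces a generalized Heegner cycle whose cycle class agrees with the flat extension of the Hodge class by Proposition \ref{prop:cycleclass}. Once this is granted, the calculation reduces to a careful tracking of constants, paralleling Zhang's computation in \cite{zhang} for $t=0$ but requiring the finer Jacobi asymptotic of Lemma \ref{lem:asy} rather than the simpler Legendre function expansion, and with the additional prefactor $\alpha(\gamma, \tau_1, \tau_2)^{2t}$ entering the overall normalization of the singular contribution.
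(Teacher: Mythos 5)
Your overall strategy matches the paper's: deform the base point to a nearby (CM) point where the cycles are disjoint, express the regularized pairing as a sum over $\gamma \in R_N^m/\pm 1$, split according to whether $\gamma \tau_2$ is $\Gamma_0(N)$-equivalent to $\tau_1$, and apply Lemma \ref{lem:asy} to the singular part with the logarithmic divergence cancelled against the $\log|t(\tau_1')|$ term. The paper carries this out in Proposition \ref{eqint}, Proposition \ref{selfint}, Theorem \ref{fullheight}, and Theorem \ref{main.}, and your use of Theorem \ref{thm:ari} with the dense set of CM deformations is exactly the mechanism behind Proposition \ref{eqint}. That said, there are two substantive gaps.

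First, your account of the constants $\log|D|$, $-2\log(2\pi)$, and $2\tfrac{L'}{L}(1,\epsilon_K)$ is wrong. These terms do not arise from a ``logarithmic derivative of the class number formula'' or from bookkeeping parallel to Proposition \ref{fourier}, which differentiates an Eisenstein series in the $s$-variable and so produces $L'/L$ by a purely analytic mechanism. On the geometric side, the source is the sum over Galois conjugates $\sum_{\rho\in\Gal(H/K)} \lim_{w\to\tau_1^\rho}\bigl(g(\tau_1^\rho,w)-\log|2\pi i\,\eta^4(\tau_1^\rho)(w-\tau_1^\rho)|_v\bigr)$, which the paper evaluates in Proposition \ref{prop:lim} by appealing to the Kronecker limit formula computation of \cite[II (5.8)]{GZ}: the $\eta$-function values at CM points, summed over the class group, produce $L'/L(1,\epsilon_K)$ and $\log|D|$. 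Attributing $\log|D|$ to ``$y_i=\sqrt{|D|}/(2A_i)$ hidden inside $\epsilon$'' is not right either; that normalization contributes only part of the constant, and the rest comes from the $\eta$-sum. The fact that the final constants match the analytic side of the formula is the content of the theorem, not an explanation of where they come from.

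Second, your reduction of the singular part to ``$r_{\cA,\chi}(m)$ collisions'' elides two nontrivial steps that the paper must handle. One is the identity $\alpha(\gamma,\tau_1,\tau_2)=-\sqrt{D}\,j(\gamma,\tau_2)/A_1$ when $\gamma\tau_2=\tau_1$, which is what produces the $D^t$ factor, combined with a bijection between $\{\gamma\in R_N^m/\pm1:\gamma\tau_2=\tau_1\}$ and $\{\eta\in\fa^{-1}:N(\eta)=m/N(\fa)\}$ (this is a different bijection from the one with $\alpha\in\fa^{-1}$ of norm $(m|D|+nN)/A_1A_2$ that appears in Theorem \ref{thm:nonint}). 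The other is the stabilizer computation $\sum_{\gamma':\gamma'\tau_1=\tau_1}j(\gamma',\tau_1)^{2t}=u$, needed to pass from $R_N^m/\pm1$ to $\Gamma_0(N)\backslash R_N^m$ in the singular term; this is automatic when $u=1$ but requires a small argument when $K=\Q(i)$ or $\Q(\rho)$. Without these, the prefactor multiplying the digamma constants does not come out to $hu\, D^t r_{\cA,\chi}(m)$.
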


We will give a detailed proof of the formula for $\langle Z, T_m Z_{\cA} \rangle^{\GS}_{\infty}$. The one for $\langle \bar{Z}, T_m \bar{Z}_{\cA} \rangle^{\GS}_{\infty}$ can be deduced similarly and is left to the reader.

\subsubsection{Gross and Zagier's modification}

We begin by reviewing the modifications required in \cite{GZ}, where the height pairing is between divisors of degree zero on $X_0(N)$. Let $a$ and $b$ be two divisors of degree zero with common support equal to a point $x\in X_0(N)(\C)$. For every point $y\in X_0(N)(\C)$ near $x$ but not in the support of $b$, define $a_y$ to be the divisor obtained from $a$ by replacing every occurrence of $x$ by $y$. In other words, if $a=\sum_{z\neq x} m_z (z)+m_x (x)$, then $a_y=\sum_{z\neq x} m_z (z)+m_x(y)$. Then the local intersection $\langle a_y, b \rangle_v$ is well-defined and can be used to approximate $\langle a, b\rangle_v$. More precisely, let $t_0$ be a uniformizing parameter for $x$ on $X_0(N)$, i.e., a function such that $\ord_x(t_0)=1$. Then Gross defined in \cite{grossCS} the local archimedean height pairing 
\begin{equation}\label{GZinter}
\langle a, b\rangle_v:=\lim\limits_{y\rightarrow x} (\langle a_y, b \rangle_v - \ord_x(a)\ord_x(b)\log \vert t_0(y)\vert_v)
\end{equation}
and showed that, with an appropriate definition of local non-archimedean height pairings, $\sum_v \langle a, b\rangle_v$ is independent of the choice of $t_0$ and equal to the global N\'eron--Tate height pairing $\langle a, b\rangle^{\mathrm{NT}}$ (which is well-defined on linear equivalence classes of divisors even if there is common support since one can use a moving lemma).

If $x$ is a Heegner point, $c=(x)-(\infty)$, $d=(x)-(0)$, $(m,N)=1$, and $\sigma=\sigma_{\cA}\in \Gal(H/K)$, then $\ord_{x}(T_m d^\sigma)=r_{\cA}(m)$. Hence, when $r_{\cA}(m)\neq 0$, the common support of $c$ and $T_m d^{\sigma}$ is non-empty and equal to $x$. Applying \eqref{GZinter} yields
\[
\langle c, T_m d^\sigma\rangle_v=\lim\limits_{y\rightarrow x} (\langle (y)-(\infty), T_m d^\sigma \rangle_v - r_{\cA}(m)\log \vert t_0(y)\vert_v).
\]
The uniformizing parameter in \cite{GZ} is chosen as follows: consider the differential 
\[
\omega=\eta^4(z)\frac{dq}{q}=2\pi i \eta^4(z)dz,
\]
where $\eta(z)=q^{1/24}\prod_{n} (1-q^n)$ is the Dedekind eta function. 
The differential $\omega$ is only well-defined up to sixth roots of unity, but this does not affect the later calculations. Letting $u(x)=\vert \Aut(x)\vert/2$, we have 
\[
\ord_x(\omega)=\frac{1}{u(x)}-1,
\]
and $\omega$ may be normalized such that in a neighborhood of $x$ it takes the form
\[
\omega=(1+O(t_0))t_0^{\frac{1}{u(x)}}\frac{dt_0}{t_0}.
\]
This normalization ensures that for a complex place $v$, we have 
\begin{equation}\label{unifnorm}
\lim\limits_{y\rightarrow x} (\log \vert t_0(y) \vert_v-u(x)\log\vert 2\pi i \eta^4(z)(w-z)\vert_v)=0,
\end{equation}
where $x=\Gamma_0(N)z$ and $y=\Gamma_0(N)w$. Setting $t_1=t_0^{1/u(x)}$ yields a uniformizing parameter for $\Gamma(M)z$ on $X(M)$. 

\subsubsection{Archimedean self-intersections of generalized Heegner cycles over $X(M)$}\label{s:gogo}

Let $\cA=[\fa]\in \Cl_K$ and let $v$ denote our fixed archimedean place. The aim of this section is to compute
$\left\langle Z_\cA, Z_\cA\right\rangle_{v}$.

Define 
\[
\mathcal{G}_1(z, z'):=\frac{4^{p-2t} \vert D \vert^{p}}{\binom{2p}{p-t}(y')^{2t}} \sum_{\gamma\in \Gamma_1(N)} j(\gamma, z)^{-2t}\mu^-_g(\gamma z, z'),
\]
which makes sense for any $z,z'\in \cH$ such that $\Gamma(M)z\neq \Gamma(M)z'$. We then have 
\[
\mathcal{G}_1(\tau_{\fa_1}, \tau_{\fa_2})=\left\langle \epsilon Y^{\fa_1}, \epsilon Y^{\fa_2}\right\rangle_{v}
\]
by Theorem \ref{thm:br} and Proposition \ref{prop:cycleclass}, whenever $\Gamma(M) \tau_{\fa_1}\neq \Gamma(M) \tau_{\fa_2}$.

\begin{proposition}\label{eqint}
We have
\[
\left\langle \epsilon Y^{\fa}, \epsilon Y^{\fa}\right\rangle_{v}=\lim\limits_{w\rightarrow \tau_{\fa}} \left( \mathcal{G}_1(\tau_{\fa}, w)+(-1)^{k+t}(\epsilon Y^{\fa} \cdot \epsilon Y^{\fa})_{\pi^{-1}(\tau_{\fa})} \log \vert t_1(w)\vert_v \right).
\]
\end{proposition}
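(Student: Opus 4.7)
The plan is to apply Theorem \ref{thm:ari} to the cycles $Z_1=Z_2=\epsilon Y^{\fa}$ on $X=W_{2k-2}\times A^{2t}$ fibered over $\bar C=X(M)$, both of which live in the fiber of $\pi$ above the CM point $Q=\Gamma(M)\tau_{\fa}\in C(\C)$. First, by Proposition \ref{prop:cycleclass}, the image $v^{1}_{\tau_\fa}$ of $\epsilon Y^{\fa}$ under the cycle class map in the fiber is a non-zero scalar multiple of $w^-(\tau_\fa)\in\tilde{\mathcal{W}}_{\tau_\fa}$. Since $w^-$ is a globally-defined $C^\infty$ section of $\tilde{\mathcal{W}}_0$ descending to $\mathcal{W}_0$ on $C$, it provides a natural deformation of this class to all nearby fibers: concretely, a $(k+t-1,k+t-1)$-form $\eta$ on $X_v(\C)$ whose restrictions $\eta_w$ to fibers above $w\in\cH$ close to $\tau_\fa$ are $\partial$- and $\bar\partial$-closed (being the explicit harmonic representative constructed in Section \ref{s:hhodg}), and which recovers $\mathrm{cl}_{\tau_\fa}(\epsilon Y^\fa)$ at $w=\tau_\fa$.

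Next, I verify the algebraicity hypothesis of Theorem \ref{thm:ari}. The set of points of $\cH$ of the form $\tau_{\fa'}$ for varying imaginary quadratic ideals $\fa'$ (or their analogues for other imaginary quadratic orders) is dense near $\tau_\fa$. For each such CM point $w=\tau_{\fa'}$, Proposition \ref{prop:cycleclass} identifies the deformed class $v^2_{w}$, which is the same non-zero scalar multiple of $w^-(\tau_{\fa'})$, with a non-zero multiple of the cycle class of the algebraic cycle $\epsilon Y^{\fa'}$ in the fiber $\pi^{-1}(\tau_{\fa'})$. Thus Theorem \ref{thm:ari} applies, and the local archimedean height is given by formula \eqref{locint} with $n=k+t$:
\[
\langle \epsilon Y^\fa,\epsilon Y^\fa\rangle_v=\lim_{w\to\tau_\fa}\!\Big(\langle G(\tau_\fa,w)v^{1}_{\tau_\fa},v^{2}_{w}\rangle_w+(-1)^{k+t}(\epsilon Y^\fa\cdot\epsilon Y^\fa)_{\pi^{-1}(\tau_\fa)}\log|t(w)|_v\Big),
\]
where $t$ is any uniformizer on $C_v(\C)$ at $Q$.

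The Brylinski pairing term is then evaluated using Theorem \ref{thm:br}: pulling out the two scalars coming from Proposition \ref{prop:cycleclass} gives a prefactor of $\tfrac{4^{k-t-1}|D|^{k-1}\cdot y_0^{2t}}{\binom{2k-2}{k-t-1}}$, times the $\Gamma(M)$-sum $\sum_{\gamma\in\Gamma(M)} j(\gamma,w)^{-2t}\mu^-_g(\gamma\tau_\fa,w)$ divided by $(2iy_\fa y)^{2t}$ from the Hermitian pairing (here $y=\mathrm{Im}\,w$). Combining with $y_0=\sqrt{|D|}/2$ and simplifying exactly matches the definition of $\mathcal{G}_1(\tau_\fa,w)$. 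Choosing $t=t_1$, the uniformizer on $X(M)$ at $Q=\Gamma(M)\tau_\fa$, yields the proposition.

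The main obstacle is verifying that the harmonic section $w^-$ really plays the role of the deformation $v^2_w$ required in \eqref{locint}; equivalently, that one can build a global $(k+t-1,k+t-1)$-form $\eta$ on $X_v(\C)$ whose fiberwise restrictions are $\partial\bar\partial$-closed near $\tau_\fa$ and whose value at $\tau_\fa$ recovers the cycle class. Once this is granted (it follows from the explicit fiberwise identification of $w^-$ with the relevant projection of the product cycle class computed in Section \ref{s:hodge}), the rest is a direct substitution.
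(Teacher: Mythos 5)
Your proof takes essentially the same route as the paper's: apply Theorem \ref{thm:ari} to the fibration $X \to X(M)$, use the density of CM points in the base, and invoke Proposition \ref{prop:cycleclass} together with the Brylinski pairing formula. However, one claim should be flagged as incorrect even though it does not undermine the argument: $w^-$ does \emph{not} descend to a global $C^\infty$ section of $\mathcal{W}_0$ on $C$ when $t>0$. By \eqref{actionw}, one has $\gamma \cdot w^-(z) = j(\gamma,z)^{-2t} w^-(\gamma z)$, so $w^-$ is only $\Gamma(M)$-equivariant up to the nontrivial automorphy factor $j(\gamma,z)^{-2t}$. The construction of the deforming form $\eta$ in \eqref{locint} is a purely local matter near $\tau_\fa$, so one can still use $w^-$ on a small neighborhood in $\cH$; but the global-descent assertion is false as stated. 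The paper sidesteps this issue entirely by observing that the fibral cycle classes of $\epsilon Y$ and $\bar\epsilon Y$ span $\cW_0 \otimes \C$ at each CM point (Lemma \ref{lem:cl}), so that \emph{any} fiberwise-closed deformation automatically restricts to algebraic classes on the dense set of CM points --- a cleaner way to verify the hypothesis of Theorem \ref{thm:ari} that avoids tracking the behavior of $w^-$ across the $\Gamma(M)$-action. Finally, there is a small slip in your prefactor: it should involve $|D|^{k-t-1}$ rather than $|D|^{k-1}$ before folding in $y_0^{2t} = |D|^t/4^t$; once corrected, the product $c\bar c = 4^{p-2t}|D|^p/\binom{2p}{p-t}$ matches the definition of $\mathcal{G}_1$ as you claimed.
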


\begin{proof}
We apply Theorem \ref{thm:ari} to $X \lra \bar{C}=X(M)$. Indeed, the set of Heegner points with CM by orders $\oh \subset \oh_K$ is dense in $X_0(N)$, and the corresponding set of preimages of such points under $\pi_{M,N} \colon \bar{C}\lra X_0(N)$ is dense. In the fiber of $\pi \colon X\lra \bar{C}$ above each such point there is a generalized Heegner cycle whose cycle class provides the desired cohomology class in Theorem \ref{thm:ari} corresponding to $\cl_{\tau_{\fa}}(\epsilon Y^{\fa})$. When $\oh = \oh_K$, it follows from Lemma \ref{lem:cl} that these Hodge classes span the fiber of $\cW_0 \otimes \C$ above that point. The general case follows via isogeny.
\end{proof}

\begin{remark}
    The self-intersection of $\epsilon Y^{\fa}$ in the fiber above $\tau_{\fa}$ is with respect to the intersection form $(\;, \;)_{\tau_{\fa}}$ on $H_{\dR}^{2k+2t-2}(\pi^{-1}(\tau_{\fa}))$ defined by 
\[
(\alpha, \beta)_{\tau_{\fa}}=\int_{\pi^{-1}(\tau_{\fa})} \alpha \wedge \beta.
\]
More precisely, we have 
\[
(\epsilon Y^{\fa} \cdot \epsilon Y^{\fa})_{\pi^{-1}(\tau_{\fa})}=(\cl_{\tau_{\fa}}(\epsilon Y^{\fa}), \overline{\cl_{\tau_{\fa}}(\epsilon Y^{\fa})})_{\tau_{\fa}}=(-1)^{k+t-1} \langle \cl_{\tau_{\fa}}(\epsilon Y^{\fa}), \cl_{\tau_{\fa}}(\epsilon Y^{\fa}) \rangle_{\tau_{\fa}}.
\]
Recalling that $p=k-1$, we deduce that 
\[
(\epsilon Y^{\fa} \cdot \epsilon Y^{\fa})_{\pi^{-1}(\tau_{\fa})}=(-1)^{p+t}\frac{4^{p-2t} \vert D \vert^{p}}{\binom{2p}{p-t}} \langle w^-(\tau_{\fa}), w^-(\tau_{\fa}) \rangle^{\mathrm{Br}}=(-1)^{p+t}\frac{4^{p-2t} \vert D \vert^{p}}{\binom{2p}{p-t}y_{\fa}^{2t}}
\]
by Proposition \ref{prop:cycleclass}. This agrees with the Hodge Index Theorem \cite[Thm. 6.33]{voisin}, which implies that the signature of $(\;, \;)_{\tau_{\fa}}$ restricted to a primitive class of type $(p+t, p+t)$ is non-trivial of signature $(-1)^{p+t}$. See also Zhang's remark \cite[p. 123]{zhang}.
\end{remark}

Putting everything together yields the formula
\[
\left\langle \epsilon Y^{\fa}, \epsilon Y^{\fa}\right\rangle_{v}
=\frac{4^{k-2t-1} \vert D \vert^{k-1}}{\binom{2k-2}{k-t-1}y_{\fa}^{2t}} \lim\limits_{w\to \tau_{\fa}} \left( \sum_{\gamma\in \Gamma(M)} g(\tau_{\fa}, \gamma w)\left(\frac{\bar{\tau}_{\fa}-\gamma w}{2 i y_{\fa}}\right)^{2t} j(\gamma, w)^{2t} - \log \vert t_1(w)\vert_v \right).
\]

\subsubsection{Archimedean self-intersections of generalized Heegner cycles over $X_0(N)$}

Let $t_0$ be a uniformizing parameter for $P_{\fn}^{\fa}=\Gamma_0(N)\tau_{\fa}$ on $X_0(N)$, chosen with the same normalization as in \cite{GZ}. Let $u_{\fn}^{\fa}=u(P_{\fn}^{\fa})$.

\begin{proposition}\label{selfint}
We have
\[
\left\langle Z_{\fn}^{\fa}, Z_{\fn}^{\fa}\right\rangle_{v}=
\frac{\vert S\vert 2^{2k-4t-2} \vert D \vert^{k-1}}{\binom{2k-2}{k-t-1}y_{\fa}^{2t}} \lim\limits_{w\to \tau_{\fa}} \left( \sum_{\gamma\in \Gamma_0(N)/\pm 1} g(\tau_{\fa}, \gamma w)\left(\frac{\bar{\tau}_{\fa}-\gamma w}{2 i y_{\fa}}\right)^{2t} j(\gamma, w)^{2t} -\log \vert t_0(w)\vert_v \right).
\]
\end{proposition}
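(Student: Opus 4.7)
The plan is to extend Proposition \ref{eqint} from single fibers of $X \to X(M)$ to the cycle $Z_\fn^\fa$ over $X_0(N)$ by averaging over $S$, in parallel with the passage from Theorem \ref{thm:br} to Theorem \ref{thm:br2}. Invoking Theorem \ref{thm:ari} with base (the smooth locus of) $X_0(N)$, the density hypothesis is provided by generalized Heegner cycles arising from CM points of varying conductor, whose cycle classes on the fiber VHS $W_0$ are controlled by Lemma \ref{lem:cl}. This yields
\[
\langle Z_\fn^\fa, Z_\fn^\fa\rangle_v = \lim_{w \to \tau_\fa} \Bigl(\langle \cl(Z_\fn^\fa), \cl(Z_\fn^\fa(w))\rangle^{\mathrm{Br}} + (-1)^{k+t}(Z_\fn^\fa \cdot Z_\fn^\fa)_{\mathrm{fib}}\log\vert t_0(w)\vert_v \Bigr),
\]
where $Z_\fn^\fa(w)$ denotes the displaced class obtained by replacing $\tau_\fa$ with $w$ throughout. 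For $w$ close to but not $\Gamma_0(N)$-equivalent to $\tau_\fa$, the points $\Gamma_0(N)w$ and $P_\fn^\fa$ are distinct on $X_0(N)$, so Theorem \ref{thm:br2} combined with Lemma \ref{lem:cl} evaluates the Brylinski pairing as
\[
\frac{\vert S\vert\,2^{2k-4t-2}\vert D\vert^{k-1}}{\binom{2k-2}{k-t-1}(2iy_\fa)^{2t}\,\Im(w)^{2t}}\sum_{\gamma \in \Gamma_0(N)/\pm 1} g(\tau_\fa,\gamma w)(\bar\tau_\fa-\gamma w)^{2t}j(\gamma,w)^{2t},
\]
using $p = k-1$ and the normalization $\langle w^-(z),w^-(z)\rangle_z = y^{-2t}$. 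Since $\Im(w)^{2t} \to y_\fa^{2t}$ as $w \to \tau_\fa$, the prefactor $\frac{\vert S\vert\,2^{2k-4t-2}\vert D\vert^{k-1}}{\binom{2k-2}{k-t-1}y_\fa^{2t}}$ pulls outside the limit, and the sum rewrites with $\left(\frac{\bar\tau_\fa - \gamma w}{2iy_\fa}\right)^{2t}$, producing the first contribution in the target display.

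For the coefficient of $\log\vert t_0(w)\vert_v$, observe that the summands $\epsilon Y_Q^\fa$ of $Z_\fn^\fa$ live in disjoint fibers of $X \to X(M)$ (indexed by preimages $Q \in \pi_{M,N}^{-1}(P_\fn^\fa)$), so the fiber self-intersection reduces to a sum of per-component self-intersections. Combining the identification $\cl_{P_\fn^\fa}(Z_\fn^\fa) = \chi(\fa)\cl_{P_\fn^\fa}(Z_\cA)$ from Lemma \ref{lem:cl} with the explicit self-intersection value
\[
(\epsilon Y^\fa \cdot \epsilon Y^\fa)_{\pi^{-1}(\tau_\fa)} = (-1)^{p+t}\frac{4^{p-2t}\vert D\vert^p}{\binom{2p}{p-t}y_\fa^{2t}}
\]
recorded in the remark following Proposition \ref{eqint}, one obtains
\[
(-1)^{k+t}(Z_\fn^\fa \cdot Z_\fn^\fa)_{\mathrm{fib}} = -\frac{\vert S\vert\,2^{2k-4t-2}\vert D\vert^{k-1}}{\binom{2k-2}{k-t-1}y_\fa^{2t}},
\]
where we used $(-1)^{k+t} = (-1)^{p+t+1}$. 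Assembly of the two contributions then yields the proposition.

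The main delicacy is the justification of Theorem \ref{thm:ari} with base $X_0(N)$ in the presence of Heegner points with nontrivial automorphism group ($u_\fn^\fa > 1$), where $\pi_{M,N}$ is ramified. The cleanest workaround is to run the argument fiber-by-fiber over $X(M)$: apply Proposition \ref{eqint} to each diagonal self-intersection term and Theorem \ref{thm:br} to each off-diagonal cross pairing, then collapse the resulting double sum via the Galois invariance $\langle s\cdot Z, s\cdot Z'\rangle_v = \langle Z, Z'\rangle_v$ for $s \in S$, exactly as in the proof of Theorem \ref{thm:br2}. The ramification identity $\log\vert t_0(w)\vert_v = u_\fn^\fa\log\vert t_1(w)\vert_v + O(1)$ then balances the multiplicity count between the geometric preimages on $X(M)$ and the $\vert S\vert$-fold sums inherent in $w^{\pm}(P_\fn^\fa)$, reproducing the factor $\vert S\vert$ in the final answer.
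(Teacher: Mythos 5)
Your proposal is essentially the paper's proof, and the arithmetic all checks out. The paper's own argument is precisely your ``cleanest workaround'' paragraph: one works over $X(M)$, writes $|S|^{-1}\langle Z_{\fn}^{\fa},Z_{\fn}^{\fa}\rangle_v = \langle Z_1,Z_1\rangle_v+\sum_{i\neq 1}\langle Z_i,Z_1\rangle_v$ (with $Z_i=\epsilon Y_{Q_i}^{\fa}$ over the preimages $Q_i\in\pi_{M,N}^{-1}(P_{\fn}^{\fa})$), applies Proposition~\ref{eqint} to the diagonal term with the uniformizer $t_1=t_0^{1/u_{\fn}^{\fa}}$ on $X(M)$, applies Theorem~\ref{thm:br} to the off-diagonal terms, absorbs the off-diagonal pieces into the limit by continuity, and converts $u_{\fn}^{\fa}\log|t_1(w)|_v$ to $\log|t_0(w)|_v$ before collapsing the $\Gamma(M)\times S$ double sum to a $\Gamma_0(N)/\pm1$ sum exactly as in Theorem~\ref{thm:br2}. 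Your computations of the Brylinski prefactor, the fiber self-intersection number, and the sign $(-1)^{k+t}=(-1)^{p+t+1}$ all agree with the paper.

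The one place you overreach is the opening move of ``invoking Theorem~\ref{thm:ari} with base $X_0(N)$.'' The obstruction is not only at points with $u_{\fn}^{\fa}>1$: Theorem~\ref{thm:ari} is proved for a family of abelian varieties $\mathcal{A}\to C$, and the generalized Kuga--Sato variety is such a family only over $X(M)$; over $X_0(N)$ the ``fiber'' above a point is the disjoint union of $|S|$ abelian varieties, and the PVHS $\cW$ and its Green's kernel $G$ do not descend to $X_0(N)$ without reinterpretation (which is precisely why the paper introduces the sections $w^{\pm}(P)=\sum_{s\in S}s\cdot w^{\pm}(z)$ and proves Theorem~\ref{thm:br2} separately). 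So one should treat the display you obtain from the naive invocation as the conclusion to be proved, not as a consequence of Theorem~\ref{thm:ari}; the fiber-by-fiber argument over $X(M)$ is the actual proof, not an optional workaround. Since you reach this conclusion yourself in the final paragraph, the submission as a whole is sound.
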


\begin{proof}
Write $Q_i=\Gamma(M)\tau_i$ for the points of $X(M)$ sitting above $P_{\fn}^{\fa}$. Write $Z_i=\epsilon Y_{Q_i}^{\fa}$. Just as in the proof of Theorem \ref{thm:br2}, we have 
\[
\vert S\vert^{-1}\left\langle Z_{\fn}^{\fa},  Z_{\fn}^{\fa}\right\rangle_{v}= \left\langle \sum_{i} Z_i, Z_1\right\rangle_v=\langle Z_1, Z_1\rangle_v + \sum_{i\neq 1} \langle Z_i, Z_1 \rangle_v.
\]
Setting $t_1=t_0^{\frac{1}{u_{\fn}^{\fa}}}$ gives a uniformizing parameter for $Q_1$ on $X(M)$. 
We then have 
\begin{align*}
\vert S\vert^{-1}\left\langle Z_{\fn}^{\fa},  Z_{\fn}^{\fa}\right\rangle_{v}&=\lim\limits_{w\rightarrow \tau_1} \left( \mathcal{G}_1(\tau_{1}, w)+(-1)^{k+t} (Z_1 \cdot Z_1)_{\pi^{-1}(\tau_{1})} u_{\fn}^{\fa}\log \vert t_1(w)\vert_v \right)+\sum_{i\neq 1} \mathcal{G}_1(\tau_i, \tau_1) \\
& =\lim\limits_{w\rightarrow \tau_1} \left(  \sum_{i} \mathcal{G}_1(\tau_{i}, w)+(-1)^{k+t}(Z_1 \cdot Z_1)_{\pi^{-1}(\tau_{1})} \log \vert t_0(w)\vert_v \right).
\end{align*}
The formula then follows as in the proof of Theorem \ref{thm:br2}. 
\end{proof}

\subsubsection{Archimedean local heights of generalized Heegner cycles}

Given $(m,N)=1$ and $\cA\in \Cl_K$ we ultimately wish to compute $\langle Z, T_m Z_{\cA} \rangle_\infty$ and $\langle \bar{Z}, T_m \bar{Z}_{\cA} \rangle_\infty$ in the case $r_{\cA}(m)\neq 0$. 
Let $v$ be our fixed archimedean place as chosen above. Let $\cA_1, \cA_2\in \Cl_K$ such that $\cA_1^{-1}\cA_2=\cA$ and let $P_{\fn}^{\fa_i}=\Gamma_0(N)\tau_{\fa_i}$. For the sake of notation, we write $\tau_i=\tau_{\fa_i}$. 

The following elementary observation will be useful in proving the next theorem.

\begin{lemma}\label{lem:normss}
Let $\gamma=\left( \begin{smallmatrix} a&b\\c&d \end{smallmatrix}\right)\in R_N$ with $\det(\gamma)=m$. Assume that $\gamma\tau_{2}=\tau_{1}$. Then 
\[
\vert j(\gamma, \tau_{2})\vert^2=m y_2 y_1^{-1}=m\frac{A_1}{A_2} \qquad \text{ and } \qquad \vert \alpha(\gamma, \tau_1, \tau_{2})\vert^2=4 m y_1 y_2=\frac{m\vert D\vert}{A_1A_2}.
\]
\end{lemma}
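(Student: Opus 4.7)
The plan is to use the classical transformation formula for the imaginary part under the action of $\GL_2(\R)^+$, together with a direct algebraic manipulation of $\alpha(\gamma,\tau_1,\tau_2)$ using the hypothesis $\gamma\tau_2=\tau_1$.

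First I would establish the first equality. For any $\gamma \in \GL_2(\R)^+$ with $\det\gamma = m$, one has the standard identity
\[
\Im(\gamma z) = \frac{m\,\Im(z)}{|cz+d|^2}.
\]
Applying this at $z=\tau_2$ and using $\gamma\tau_2=\tau_1$ gives $y_1 = m y_2 / |c\tau_2+d|^2$, hence $|j(\gamma,\tau_2)|^2 = m y_2/y_1$. Since $y_i = \sqrt{|D|}/(2A_i)$, this equals $m A_1/A_2$, as claimed.

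For the second equality, I would rewrite
\[
\alpha(\gamma,\tau_1,\tau_2) = (c\tau_2+d)\bar\tau_1 - (a\tau_2+b) = j(\gamma,\tau_2)\bar\tau_1 - (a\tau_2+b).
\]
Using $\gamma\tau_2 = (a\tau_2+b)/(c\tau_2+d) = \tau_1$, I get $a\tau_2+b = j(\gamma,\tau_2)\tau_1$, so
\[
\alpha(\gamma,\tau_1,\tau_2) = j(\gamma,\tau_2)(\bar\tau_1 - \tau_1) = -2iy_1\, j(\gamma,\tau_2).
\]
Taking absolute values squared and substituting the first equality yields
\[
|\alpha(\gamma,\tau_1,\tau_2)|^2 = 4y_1^2 \cdot \frac{m y_2}{y_1} = 4m y_1 y_2 = \frac{m|D|}{A_1 A_2},
\]
which completes the proof. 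Given how short and mechanical this argument is, there is no real obstacle; the only thing to keep in mind is that the formula $\Im(\gamma z) = m\Im(z)/|cz+d|^2$ holds for $\gamma$ with determinant $m$ (not just $m=1$), which is standard.
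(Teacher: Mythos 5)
Your proof is correct and follows essentially the same route as the paper: both derive the first equality from the identity $\Im(\gamma\tau_2)=m\,y_2/|j(\gamma,\tau_2)|^2$ combined with $\Im(\gamma\tau_2)=y_1$, and both obtain the second by factoring $\alpha(\gamma,\tau_1,\tau_2)=(\bar\tau_1-\gamma\tau_2)j(\gamma,\tau_2)=-2iy_1\,j(\gamma,\tau_2)$ and applying the first equality.
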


\begin{proof}
Since $\gamma\tau_2=\tau_1$, we have in particular $\Im(\gamma \tau_2)=\Im(\tau_1)$, i.e., 
\[
\frac{m y_2}{\vert c\tau_2+d\vert^2}=y_1.
\]
The first equality follows since $j(\gamma, \tau_2)=c\tau_2+d$. The second equality follows from the first after noticing that 
\[
\alpha(\gamma, \tau_1, \tau_{2})=(\bar{\tau}_1-\gamma \tau_2)j(\gamma, \tau_2)=-2iy_1 j(\gamma, \tau_2),
\]
since $\gamma\tau_2=\tau_1$.
\end{proof}

The following result generalizes Theorem \ref{thm:arch}.

\begin{theorem}\label{fullheight}
Given $\cA_1, \cA_2\in \Cl_K$, we have
\begin{align*}
\langle Z_{\cA_1}&, T_m Z_{\cA_2} \rangle_v
=\frac{\vert S \vert (4m\vert D\vert)^{k-t-1}}{D^t\binom{2k-2}{k-t-1}} \chi(\bar{\fa}_1 \fa_2) \left[ \sum_{\substack{\gamma\in R_N^m/\pm 1 \\ \gamma\tau_2\neq \tau_1}} g(\tau_1, \gamma \tau_2) \alpha(\gamma, \tau_1, \tau_2)^{2t} \right. \\
& \left. + \lim\limits_{w\to \tau_{1}} \left(\left( \sum_{\substack{\gamma\in R_N^m/\pm 1 \\ \gamma\tau_2=\tau_1}} \alpha(\gamma, \tau_1, \tau_2)^{2t} \right)   g(\tau_{1}, w) - \left( \sum_{\substack{\gamma\in \Gamma_0(N)\backslash R_N^m \\ \gamma\tau_2=\tau_1}} \alpha(\gamma, \tau_1, \tau_2)^{2t} \right) \log \vert t_0(w)\vert_v \right)\right].  
\end{align*}
\end{theorem}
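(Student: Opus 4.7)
The strategy is to combine the cycle-class formula of Lemma \ref{lem:cl} with Theorem \ref{thm:ari}, which extends the local archimedean pairing to the improper-intersection case via the limit formula \eqref{locint}. The plan is to carry out the same kind of Brylinski-pairing computation as in Proposition \ref{yes1} and Theorem \ref{thm:arch}, but now splitting the Hecke sum into two parts according to whether $\gamma\tau_{2}=\tau_{1}$ or not, and applying the limit formula only on the common-support piece.

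First, I would set $v_i := \cl_{P_{\fn}^{\fa_i}}(Z_{\cA_i}) \in \cW_{P_{\fn}^{\fa_i}}$ and, using Lemma \ref{lem:cl}, rewrite $v_i = C \cdot w^-(P_{\fn}^{\fa_i})/\chi(\fa_i)$ with $C = (-1)^{p}2^{p-2t}\sqrt{|D|}^{\,p}/\binom{2p}{p-t}^{1/2}$. Expanding $T_m v_2 = \sum_{\gamma\in \Gamma_0(N)\backslash R_N^m}\gamma\cdot v_2$ as in \eqref{heckecoh} and splitting into $\gamma$ with $\Gamma_0(N)\gamma\tau_2\neq \Gamma_0(N)\tau_1$ versus $\Gamma_0(N)\gamma\tau_2=\Gamma_0(N)\tau_1$, one sees that the first (disjoint) part lands in fibers over $X_0(N)$ disjoint from the support of $v_1$, and the second (on-diagonal) part contributes a class supported exactly at $P_{\fn}^{\fa_1}$ in the fiber $\pi^{-1}$ above $P_{\fn}^{\fa_1}$. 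By the same Brylinski bookkeeping that leads to Proposition \ref{yes1}, the Brylinski pairing between $v_1$ and the disjoint part of $T_m v_2$ equals the sum over $\gamma\in R_N^m/\pm 1$ with $\gamma\tau_2\neq\tau_1$ of $g(\tau_1,\gamma\tau_2)\,\alpha(\gamma,\tau_1,\tau_2)^{2t}$, up to the stated prefactor.

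For the on-diagonal part, I would invoke Theorem \ref{thm:ari} applied to the fibration $\pi\colon X\to X(M)$ and the PVHS $W$ of Section \ref{s:PVHS}. The density hypothesis holds, by the same argument as in Proposition \ref{eqint}, because Heegner points (and their $\pi_{M,N}$-preimages on $X(M)$) accumulate near $\tau_1$ and carry natural generalized Heegner cycles whose Hodge classes exhaust $\cW_0\otimes \C$ above each such point. Applying \eqref{locint} with $t_0$ the uniformizer on $X_0(N)$ (and the normalization as in Proposition \ref{selfint}, converting $t_1$-limits on $X(M)$ into $t_0$-limits on $X_0(N)$), the on-diagonal contribution becomes a limit as $w\to\tau_1$ of two terms: the Brylinski kernel $g(\tau_1,\gamma' w)$ summed over $\gamma'\in \Gamma(M)$, weighted by the $\alpha^{2t}$-coefficients coming from $\gamma\cdot w^-$ via \eqref{actionw}; and the geometric self-intersection coefficient in the fiber of $\pi\colon X\to X_0(N)$, weighted by $\log|t_0(w)|_v$. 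Using the trick in the proof of Theorem \ref{thm:br2} to absorb $\Gamma_0(N)/\pm 1$ into $\Gamma(M)$, the $g$-term becomes $\sum_{\gamma\in R_N^m/\pm 1,\,\gamma\tau_2=\tau_1} g(\tau_1,w)\,\alpha(\gamma,\tau_1,\tau_2)^{2t}$, while the intersection multiplicity in the $X_0(N)$-fiber $\pi^{-1}(P_{\fn}^{\fa_1})$ is computed from the cycle-class expansion of $T_m Z_{\cA_2}$ and gives $\sum_{\gamma\in \Gamma_0(N)\backslash R_N^m,\,\gamma\tau_2=\tau_1} \alpha(\gamma,\tau_1,\tau_2)^{2t}$ after using the Hodge-Index-type formula from Section \ref{s:gogo} and the prefactor $C^2$ to cancel the $y_{\fa}^{-2t}$ normalizations (here Lemma \ref{lem:normss} ensures $\alpha(\gamma,\tau_1,\tau_2)^{2t}$ has the correct absolute value to match the self-pairing $\langle w^-(\tau_1),w^-(\tau_1)\rangle$). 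Assembling the overall normalization $C^2\cdot\chi(\bar\fa_1\fa_2)\cdot |S|$ and the $D^t$, $(4m)^{k-t-1}$, $\binom{2k-2}{k-t-1}^{-1}$ factors exactly as in the proof of Theorem \ref{thm:arch} yields the claimed formula.

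The main obstacle, in my view, is the careful bookkeeping of the two different index sets: the Brylinski-type contribution naturally runs over $R_N^m/\pm 1$ (because the Brylinski kernel contains a sum over $\Gamma(M)$, which combines with the $\Gamma_0(N)$-coset sum defining $T_m$ and the $\pm 1$-quotient from symmetry of the pairing), whereas the geometric intersection multiplicity at $P_{\fn}^{\fa_1}$ only counts Hecke representatives in $\Gamma_0(N)\backslash R_N^m$ with $\gamma\tau_2=\tau_1$, one per actual fibral component. Keeping track of where the factor of $|S|$ appears, verifying that the cross-terms $\langle w^-,w^+\rangle$ vanish (as in Theorem \ref{thm:br}), and ensuring the normalization $y_{\fa_1}^{-2t}$ converts correctly into $|D|^{-2t}A_1^{2t}$ and is absorbed into $\chi(\bar\fa_1\fa_2)\chi(\fa_1\bar\fa_1)^{-1}$ are the routine but error-prone steps that complete the argument.
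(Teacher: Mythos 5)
Your proposal follows essentially the same route as the paper's proof: split the Hecke sum according to whether $\gamma\tau_2$ coincides with $\tau_1$, compute the disjoint part via the Brylinski formula of Proposition \ref{yes1}, feed the on-diagonal part through the self-intersection result Proposition \ref{selfint} (itself resting on Theorem \ref{thm:ari} and Proposition \ref{eqint}), and then track the mismatched index sets. You correctly flag the central technical point — that the $g(\tau_1,w)$-coefficient merges into a sum over $R_N^m/\pm 1$ while the $\log|t_0|$-coefficient stays over $\Gamma_0(N)\backslash R_N^m$ — and the key tools (Lemma \ref{lem:cl}, the $\Gamma_0(N)/\pm 1$ absorption from Theorem \ref{thm:br2}, Lemma \ref{lem:normss}).

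One small inaccuracy in your description: you attribute the coefficient $\sum_{\gamma\in\Gamma_0(N)\backslash R_N^m,\ \gamma\tau_2=\tau_1}\alpha(\gamma,\tau_1,\tau_2)^{2t}$ of $\log|t_0(w)|$ to a ``geometric intersection multiplicity ... computed from the cycle-class expansion'' and a ``Hodge-Index-type formula.'' In the paper's actual computation this factor does not arise from an intersection count at all; each $\gamma$ with $\gamma\tau_2=\tau_1$ contributes one copy of the self-pairing $\langle Z_\fn^{\fa_1}, Z_\fn^{\fa_1}\rangle_v$ from Proposition \ref{selfint}, weighted by the automorphy factor $m^{p+t}j(\gamma,\bar\tau_2)^{-2t}$ coming from the Hecke expansion, and the identity (paper's \eqref{eq:limi2}) then rewrites this weight as $\alpha(\gamma,\tau_1,\tau_2)^{2t}$ via Lemma \ref{lem:normss}. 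You also do not mention the change of variables $w\mapsto\gamma w$ (converting the limit at $\tau_1$ into one at $\tau_2$) which is what lets the $\alpha$-factor in the $g$-term be evaluated cleanly in the limit — this is the one non-routine algebraic manipulation that makes the coefficient-matching work. These are points of presentation rather than genuine gaps; if filled in as in the paper, your outline gives a complete proof.
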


\begin{proof}
We set $p=k-1$. We compute
\begin{align*}
\langle Z_{\cA_1},& T_m Z_{\cA_2} \rangle_v = \left\langle \cl_{P_{\fn}^{\fa_1}}(Z_{\cA_1}), \sum_{\gamma\in \Gamma_0(N)\backslash R_N^m} \gamma \cdot \cl_{P_{\fn}^{\fa_2}}( Z_{\cA_2})\right\rangle^{\mathrm{Br}} \\
&= \frac{4^{p-2t} \vert D\vert^{p}}{\binom{2p}{p-t} \chi(\fa_1) \chi(\bar{\fa}_2)} \left\langle w^-(P_{\fn}^{\fa_1}), \sum_{\gamma\in \Gamma_0(N)\backslash R_N^m} \gamma \cdot w^-(P_{\fn}^{\fa_2})\right\rangle^{\mathrm{Br}} \\
&=\frac{4^{p-2t} \vert D\vert^p}{\binom{2p}{p-t} \chi(\fa_1) \chi(\bar{\fa}_2)} \left\langle w^-(P_{\fn}^{\fa_1}), \sum_{\gamma\in \pm\Gamma_0(N)\backslash R_N^m} m^{p+t} j(\gamma, \tau_2)^{-2t} w^-(\gamma P_{\fn}^{\fa_2})\right\rangle^{\mathrm{Br}} \\
&=\frac{\vert S\vert 4^{p-2t} \vert D\vert^p}{\binom{2p}{p-t} \chi(\fa_1) \chi(\bar{\fa}_2)} \sum_{\gamma\in \Gamma_0(N)\backslash R_N^m} m^{p+t} j(\gamma, \bar{\tau}_2)^{-2t} \left\langle w^-(\tau_1), w^-(\gamma\tau_2)\right\rangle^{\mathrm{Br}} \\
&=\frac{\vert S\vert 4^{p-2t} \vert D\vert^p}{\binom{2p}{p-t} \chi(\fa_1) \chi(\bar{\fa}_2)} \frac{m^{p-t}}{(2iy_1y_2)^{2t}} \sum_{\gamma'\in \Gamma_0(N)/\pm 1}\sum_{\substack{\gamma\in \Gamma_0(N) \backslash R_N^m \\ \gamma\tau_2\neq \tau_1}} g(\tau_1, \gamma' \gamma \tau_2) \alpha(\gamma' \gamma, \tau_1, \tau_2)^{2t} \\
&   \qquad + \frac{1}{\chi(\fa_1) \chi(\bar{\fa}_2)} \sum_{\substack{\gamma\in \Gamma_0(N)\backslash R_N^m \\ \gamma\tau_2=\tau_1}} m^{p+t} j(\gamma, \bar{\tau}_2)^{-2t} \left\langle Z_{\fn}^{\fa_1}, Z_{\fn}^{\fa_1}\right\rangle_{v}.
\end{align*}
Using Proposition \ref{selfint}, we then obtain
\begin{align*}
\langle Z_{\cA_1},& T_m Z_{\cA_2} \rangle_v \\
&=\frac{\vert S\vert 4^{p-2t} \vert D\vert^p}{\binom{2p}{p-t} \chi(\fa_1) \chi(\bar{\fa}_2)} \left[ \frac{m^{p-t}}{(2iy_1y_2)^{2t}} \sum_{\gamma'\in \Gamma_0(N)/\pm 1}\sum_{\substack{\gamma\in \Gamma_0(N) \backslash R_N^m \\ \gamma\tau_2\neq \tau_1}} g(\tau_1, \gamma' \gamma \tau_2) \alpha(\gamma' \gamma, \tau_1, \tau_2)^{2t} \right. \\
&  \left. \qquad  + \sum_{\substack{\gamma\in \Gamma_0(N)\backslash R_N^m \\ \gamma\tau_2=\tau_1}} m^{p+t} j(\gamma, \bar{\tau}_2)^{-2t} \lim\limits_{w\to \tau_{1}} \left( \sum_{\gamma'\in \Gamma_0(N)/\pm 1} g(\tau_{1}, \gamma' w)\left(\frac{\alpha(\gamma', \tau_1, w)}{2 i y_{1} \Im(w)}\right)^{2t} \right. \right. \\ 
& \left.\left. \qquad \qquad - \frac{1}{y_1^{2t}} \log \vert t_0(w)\vert_v \right) \right].  \\
\end{align*}

Next, we consider the term
\begin{equation}\label{eq:limi}
\sum_{\substack{\gamma\in \Gamma_0(N)\backslash R_N^m \\ \gamma\tau_2=\tau_1}}  m^{p+t} j(\gamma, \bar{\tau}_2)^{-2t} \lim\limits_{w\to \tau_{1}} \sum_{\gamma'\in \Gamma_0(N)/\pm 1} g(\tau_{1}, \gamma' w)\left(\frac{\alpha(\gamma', \tau_1, w)}{2 i y_{1} \Im(w)}\right)^{2t}.
\end{equation}
We begin by replacing the limit to $\tau_1$ with a limit to $\tau_2$ by replacing $w$ by $\gamma w$, yielding 
\[
\sum_{\substack{\gamma\in \Gamma_0(N)\backslash R_N^m \\ \gamma\tau_2=\tau_1}}  m^{p+t} j(\gamma, \bar{\tau}_2)^{-2t} \lim\limits_{w\to \tau_{2}}  \sum_{\gamma'\in \Gamma_0(N)/\pm 1} g(\tau_{1}, \gamma'\gamma w)\left(\frac{\alpha(\gamma', \tau_1, \gamma w)}{2 i y_{1} \Im(\gamma w)}\right)^{2t}.
\]
By Lemma \ref{lem:normss}, we see that
$
j(\gamma, \bar{\tau}_2)^{-1}=j(\gamma, \tau_2) m^{-1}y_1 y_2^{-1},
$
and we obtain
\[
\sum_{\substack{\gamma\in \Gamma_0(N)\backslash R_N^m \\ \gamma\tau_2=\tau_1}}  \frac{m^{p-t}y_1^{2t}}{y_2^{2t}} j(\gamma, \tau_2)^{2t} \lim\limits_{w\to \tau_{2}}  \sum_{\gamma'\in \Gamma_0(N)/\pm 1} g(\tau_{1}, \gamma'\gamma w)\left(\frac{\alpha(\gamma', \tau_1, \gamma w)}{2 i y_{1} \Im(\gamma w)}\right)^{2t}.
\]
This is in turn equal to
\[
  \frac{m^{p-t}}{(2iy_1y_2)^{2t}} \lim\limits_{w\to \tau_{2}}  \sum_{\gamma'\in \Gamma_0(N)/\pm 1} \sum_{\substack{\gamma\in \Gamma_0(N)\backslash R_N^m \\ \gamma\tau_2=\tau_1}} g(\tau_{1}, \gamma'\gamma w)\alpha(\gamma', \tau_1, \gamma w)^{2t} j(\gamma, \tau_2)^{2t}.
\]
Next, observe that
\begin{multline*}
\lim\limits_{w\to \tau_{2}} \alpha(\gamma', \tau_1, \gamma w) j(\gamma, \tau_2)=\alpha(\gamma', \tau_1, \gamma \tau_2) j(\gamma, \tau_2)=(\bar{\tau_1}-\gamma'\gamma \tau_2)j(\gamma', \gamma \tau_2)j(\gamma, \tau_2)\\ 
=(\bar{\tau_1}-\gamma'\gamma \tau_2)j(\gamma'\gamma, \tau_2)=\alpha(\gamma'\gamma, \tau_1, \tau_2).
\end{multline*}
We deduce that the desired limit \eqref{eq:limi} is equal to
\begin{multline*}
\frac{m^{p-t}}{(2iy_1y_2)^{2t}} \left( \sum_{\substack{\gamma\in R_N^m/\pm 1 \\ \gamma\tau_2=\tau_1}} \alpha(\gamma, \tau_1, \tau_2)^{2t} \right) \lim\limits_{w\to \tau_{1}}  g(\tau_{1}, w)  \\
+\frac{m^{p-t}}{(2iy_1y_2)^{2t}}  \sum_{\substack{\gamma'\in \Gamma_0(N)/\pm 1 \\ \gamma' \tau_1\neq \tau_1}} \sum_{\substack{\gamma\in \Gamma_0(N)\backslash R_N^m \\ \gamma\tau_2=\tau_1}} g(\tau_{1}, \gamma' \gamma \tau_2)\alpha(\gamma' \gamma, \tau_1, \tau_2)^{2t}.
\end{multline*}

It remains to prove the equality
\begin{multline}\label{eq:limi2}
\lim\limits_{w\to \tau_{2}} \sum_{\substack{\gamma\in \Gamma_0(N)\backslash R_N^m \\ \gamma\tau_2=\tau_1}} \frac{m^{p+t}}{(y_1j(\gamma, \bar{\tau}_2))^{2t}} \log \vert t_0(\gamma w)\vert_v \\ 
= \frac{m^{p-t}}{(2 i y_1 y_2)^{2t}} \left( \sum_{\substack{\gamma\in \Gamma_0(N)\backslash R_N^m \\ \gamma\tau_2=\tau_1}} \alpha(\gamma, \tau_1, \tau_2)^{2t} \right) \lim\limits_{w\to \tau_{1}} \log \vert t(w)\vert_v. 
\end{multline}

We use the equality
\[
\alpha(\gamma, \bar{\tau}_1, \bar{\tau_2})=(\tau_1-\gamma \bar{\tau}_2)j(\gamma, \bar{\tau}_2)=2iy_1 j(\gamma, \bar{\tau}_2)
\]
in order to substitute $(y_1j(\gamma,\bar{\tau}_2))^{-2t}=(2i)^{2t}\alpha(\gamma, \bar{\tau}_1, \bar{\tau_2})^{-2t}$. Using Lemma \ref{lem:normss}, we make the substitution 
\[ 
\alpha(\gamma, \bar{\tau}_1, \bar{\tau_2})^{-1}=\alpha(\gamma, \tau_1, \tau_2)\vert \alpha(\gamma, \tau_1, \tau_2) \vert^{-2}=\frac{\alpha(\gamma, \tau_1, \tau_2)}{4my_1y_2}.
\]
We obtain, 
\[
(y_1j(\gamma,\bar{\tau_2}))^{-2t}=\frac{(2i)^{2t} \alpha(\gamma, \tau_1, \tau_2)^{2t}}{(4my_1y_2)^{2t}}=\frac{\alpha(\gamma, \tau_1, \tau_2)^{2t}}{(2 i my_1y_2)^{2t}}.
\]
Equation \eqref{eq:limi2} easily follows.
\end{proof}

\begin{theorem}\label{main.}
Given $\cA=[\fa]\in \Cl_K$ and $(m,N)=1$, and letting $p=k-1$, we have
\begin{align*}
\langle Z, T_m Z_{\cA} \rangle^{\GS}_{\infty}& =\frac{\vert S\vert (4m\vert D \vert)^{k-t-1}}{D^t \binom{2k-2}{k-t-1}}\left[ -u^2 \sum_{n=1}^\infty \sigma_{\bar{\cA}}(n)r_{\cA, \chi}(m\vert D\vert + nN)Q_{k,t}\left(1+\frac{2nN}{m\vert D\vert}\right) \right. \\
& \left. + u D^t r_{\cA, \chi}(m)\sum_{\substack{\cA_1, \cA_2 \in \Cl_K \\ \cA^{-1}\cA_2=\cA}} \lim\limits_{w\to \tau_1} (g(\tau_1, w)- \log \vert 2\pi i \eta^4(\tau_1)(w-\tau_1)\vert_v) \right],
\end{align*}
\end{theorem}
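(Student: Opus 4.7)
The goal is to deduce Theorem \ref{main.} from Theorem \ref{fullheight} by summing over all archimedean places of $H$. Exactly as in the proof of Theorem \ref{thm:nonint}, identity \eqref{galinf} and the transitivity of $\Gal(H/K) \simeq \Cl_K$ on archimedean places give
\[
\langle Z, T_m Z_\cA\rangle^{\GS}_\infty = \sum_{\substack{\cA_1,\cA_2 \in \Cl_K \\ \cA_1^{-1}\cA_2 = \cA}} \langle Z_{\cA_1}, T_m Z_{\cA_2}\rangle_v.
\]
Applying Theorem \ref{fullheight} to each summand splits the sum into a ``disjoint'' contribution from $\gamma\tau_2 \neq \tau_1$ and an ``intersection'' contribution from $\gamma\tau_2=\tau_1$. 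I would then handle these two contributions separately.

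\textbf{Disjoint contribution.} The proof of Theorem \ref{thm:nonint} reorganizes the $\gamma$-sum via the Gross--Zagier bijection $\gamma \mapsto (\alpha(\gamma,\tau_1,\tau_2),\beta(\gamma,\tau_1,\tau_2)) \in \Xi_{\cA_1,\cA_2}(n)$ with $n = A_1A_2 N_{K/\Q}(\beta)/N$, and the condition $\gamma\tau_2 \neq \tau_1$ is equivalent to $\beta \neq 0$, i.e.\ $n \geq 1$. Running the same manipulation verbatim, using \eqref{rchim} and the identity $\sigma_{\bar\cA}(n) = \delta(n)R_{\{\bar\cA[\fn]\}}(n)$ from \cite[IV (4.6)]{GZ}, produces the infinite series in the statement of Theorem \ref{main.}. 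No modification is needed, since the argument in Theorem \ref{thm:nonint} already allows for $r_\cA(m)\neq 0$ as long as one restricts the $\gamma$-sum to the $\gamma\tau_2 \neq \tau_1$ terms.

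\textbf{Intersection contribution.} For the ``boundary'' terms with $\gamma\tau_2 = \tau_1$, I would first use the uniformizer normalization \eqref{unifnorm}, namely
\[
\lim_{w \to \tau_1}\bigl(\log|t_0(w)|_v - u_\fn^{\fa_1}\log|2\pi i\,\eta^4(\tau_1)(w-\tau_1)|_v\bigr) = 0,
\]
to replace the $\log|t_0(w)|_v$ in Theorem \ref{fullheight} by $u_\fn^{\fa_1}\log|2\pi i\,\eta^4(\tau_1)(w-\tau_1)|_v$. Next I would evaluate the fibral sums. For $\gamma\tau_2 = \tau_1$, one has $\alpha(\gamma,\tau_1,\tau_2) = -2iy_1 j(\gamma,\tau_2)$, and the map $\gamma \mapsto \alpha(\gamma,\tau_1,\tau_2)$ identifies $\{\gamma \in R_N^m : \gamma\tau_2 = \tau_1\}$ with $\{x \in \fa^{-1} : N_{K/\Q}(x) = m|D|/A_1A_2\}$, where $\fa := \bar\fa_1\fa_2$ satisfies $[\fa]=\cA$ and $N(\fa) = A_1A_2$. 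Using \eqref{rchim} at $j = m|D|$ together with the identity $r_{\cA,\chi}(m|D|) = D^t r_{\cA,\chi}(m)$ (already observed in the proof of Proposition \ref{fourier}), this gives
\[
\chi(\bar\fa_1\fa_2)\!\!\!\!\sum_{\substack{\gamma \in R_N^m/\pm 1 \\ \gamma\tau_2=\tau_1}}\!\!\!\alpha(\gamma,\tau_1,\tau_2)^{2t} = u\,D^t r_{\cA,\chi}(m),
\]
independently of $\cA_1$. The analogous sum over $\Gamma_0(N)\backslash R_N^m$ differs by the stabilizer factor $\sum_{\delta \in \Stab_{\Gamma_0(N)}(\tau_1)/\pm 1} j(\delta,\tau_1)^{2t} = u_\fn^{\fa_1}$, which precisely cancels the factor $u_\fn^{\fa_1}$ produced by the uniformizer normalization. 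Each intersection contribution therefore collapses to
\[
\frac{|S|(4m|D|)^{k-t-1}}{D^t\binom{2k-2}{k-t-1}} \cdot u\,D^t r_{\cA,\chi}(m)\lim_{w\to\tau_1}\bigl(g(\tau_1,w) - \log|2\pi i\,\eta^4(\tau_1)(w-\tau_1)|_v\bigr),
\]
and summing over $(\cA_1,\cA_2)$ with $\cA_1^{-1}\cA_2 = \cA$ yields the second term of Theorem \ref{main.}.

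\textbf{Main obstacle.} The delicate point is the stabilizer bookkeeping for $\tau_1$: checking that $\sum_{\delta \in \Stab_{\Gamma_0(N)}(\tau_1)/\pm 1} j(\delta,\tau_1)^{2t}$ really equals $u_\fn^{\fa_1}$, so that it cancels the uniformizer factor. This is automatic when $D < -3$ (so that $u_\fn^{\fa_1}=1$ at Heegner points), and in the $D = -3$ case requires using compatibility between the character $\chi$ of infinity type $(2t,0)$ and the action of the extra cube roots of unity stabilizing $\tau_1$.
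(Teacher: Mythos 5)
Your proposal is correct and tracks the paper's own proof of Theorem \ref{main.} step for step: the archimedean summation via \eqref{galinf}, the Gross--Zagier bijection for the $n\geq 1$ contribution, the identification of the boundary $\gamma$-sum with a weighted ideal count via \eqref{rchim}, and the stabilizer/uniformizer bookkeeping (the identity $\sum_{\gamma'\tau_1=\tau_1}j(\gamma',\tau_1)^{2t}=u$, with $D=-3$ as the only nontrivial odd case). The only cosmetic difference is that you parameterize the boundary sum directly by $\alpha\in\fa^{-1}$ of norm $m|D|/N(\fa)$ and use $r_{\cA,\chi}(m|D|)=D^t r_{\cA,\chi}(m)$, whereas the paper passes to $\eta=\alpha/(-\sqrt{D})$ and applies \eqref{rchim} at $j=m$; these are equivalent since $\fd=(\sqrt{D})$ is principal and $D$ is squarefree.
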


\begin{proof}
We set $p=k-1$.
We proceed as in the proof of Theorem \eqref{thm:nonint}: letting $\beta(\gamma, \tau_1, \tau_2)=\alpha(\gamma, \bar{\tau}_1, \tau_2)=(\tau_1-\gamma \tau_2)j(\gamma, \tau_2)$, we set 
\[
n:=\frac{A_1 A_2}{N} N_{K/\Q}(\beta(\gamma, \tau_1,\tau_2)),
\]
which is a non-negative integer. We remark that $n=0$ if and only if $\gamma\tau_2=\tau_1$. In particular, when $\gamma\tau_2\neq\tau_1$, we have $n>0$ and the exact same proof as for Theorem \ref{thm:nonint} shows that 
\begin{multline*}
\sum_{\substack{\cA_1, \cA_2 \in \Cl_K \\ \cA_1^{-1}\cA_2=\cA}} \frac{\vert S\vert (4m\vert D\vert)^{p-t}}{D^t\binom{2p}{p-t}} \chi(\bar{\fa}_1 \fa_2) \sum_{\substack{\gamma\in R_N^m/\pm 1 \\ \gamma\tau_2\neq \tau_1}} g(\tau_1, \gamma \tau_2) \alpha(\gamma, \tau_1, \tau_2)^{2t} \\
= -\frac{\vert S\vert (4m\vert D \vert)^{p-t}}{D^t \binom{2p}{p-t}}u^2 \sum_{n=1}^\infty \sigma_{\bar{\cA}}(n)r_{\cA, \chi}(m\vert D\vert + nN)Q_{k,t}\left(1+\frac{2nN}{m\vert D\vert}\right).
\end{multline*}

Next, we claim that 
\[
\chi(\bar{\mathfrak{a}}_1\mathfrak{a}_2)\sum_{\substack{\gamma\in R_N^m/\pm 1 \\ \gamma\tau_2=\tau_1}} \alpha(\gamma, \tau_1, \tau_2)^{2t}=D^t u r_{\cA, \chi}(m).
\]
Indeed, when $\gamma \tau_2=\tau_1$ we have the simple expression 
\[
\alpha(\gamma, \tau_1, \tau_2)=-2iy_1(c\tau_2+d)=-\frac{\sqrt{D}}{A_1}j(\gamma, \tau_2).
\]
Therefore, we have 
\[
\chi(\bar{\mathfrak{a}}_1\mathfrak{a}_2)\sum_{\substack{\gamma\in R_N^m/\pm 1 \\ \gamma\tau_2=\tau_1}} \alpha(\gamma, \tau_1, \tau_2)^{2t}=D^t\chi(\bar{\mathfrak{a}}_1\mathfrak{a}_2)\sum_{\substack{\gamma\in R_N^m/\pm 1 \\ \gamma\tau_2=\tau_1}} (A_1^{-1}j(\gamma, \tau_2))^{2t}.
\]
Let $\fa=\bar{\mathfrak{a}}_1\mathfrak{a}_2$. We claim that the assignment $\gamma \mapsto \eta(\gamma):= A_1^{-1}j(\gamma, \tau_2)$ gives a bijection between 
\[
S_1:=\{ \gamma \in R_N^m/\pm 1 \mid \gamma\tau_2=\tau_1 \}
\]
and 
\[
S_2:=\{ \eta \in \fa^{-1}/\pm 1 \mid N_{K/\Q}(\eta)=m/N(\fa) \}.
\]
Recall that $\fa_i=\langle A_i, (B_i+\sqrt{D})/2 \rangle=A_i\langle 1, \bar{\tau}_i \rangle$ and $\fa_i^{-1}=\langle 1, \tau_i \rangle$.
Because $\gamma \tau_2=\tau_1$, we then have 
\[
\eta(\gamma)=A_1^{-1}j(\gamma, \tau_2)=A_1^{-1}\tau_1^{-1}(a\tau_2+b)\in A_1^{-1}\mathfrak{a}_1 \mathfrak{a}_2^{-1}=(A_1 \fa_1^{-1} \fa_2)^{-1}=(\bar{\fa}_1 \fa_2)^{-1}=\fa^{-1}. 
\]
Using Lemma \ref{lem:normss}, we see that 
\[
N_{K/\Q}(\eta(\gamma))=A_1^{-2}\vert j(\gamma, \tau_2)\vert^2=A_1^{-2} m\frac{A_1}{A_2}=\frac{m}{A_1A_2}=\frac{m}{N(\fa)}.
\]
We conclude that $\eta(\gamma)\in S_2$ for all $\gamma \in S_1$. Suppose that $\gamma, \gamma' \in S_1$ satisfy $\eta(\gamma)=\eta(\gamma')$. Then $j(\gamma, \tau_2)=j(\gamma', \tau_2)$. In particular, $(c-c')\tau_2=d'-d$, which implies that $(c-c')y_2=0$, hence $c=c'$ and thus $d=d'$. We conclude that the assignment $\gamma \mapsto \eta(\gamma)$ is injective. There are $ur_{\cA}(m)$ elements $\gamma\in R_N^m/\pm 1$ satisfying $\gamma \tau_2=\tau_1$ \cite[p. 251]{GZ}. Since
\[
\# S_2= \frac{1}{2} \# \left\{ \eta \in \fa^{-1} \mid N_{K/\Q}(\eta)=\frac{m}{N(\fa)} \right\} = \frac{\# \oh_K^\times}{2} r_{\cA}(m)=u r_{\cA}(m),
\]
we deduce that $S_1$ and $S_2$ are indeed in bijection.

We deduce that 
\[
\chi(\fa) \sum_{\substack{\gamma\in R_N^m/\pm 1 \\ \gamma\tau_2=\tau_1}} \alpha(\gamma, \tau_1, \tau_2)^{2t} 
=D^t \frac{\chi(\fa)}{2} \sum_{\substack{\eta \in \fa^{-1} \\ N(\eta)=m/N(\fa)}} \eta^{2t}  
=D^t u r_{\cA, \chi}(m),
\]
where we used \eqref{rchim}.

To finish the proof, we must argue that 
\[
\chi(\fa)\sum_{\substack{\gamma\in \Gamma_0(N) \backslash R_N^m \\ \gamma\tau_2=\tau_1}} \alpha(\gamma, \tau_1, \tau_2)^{2t}=D^t r_{\cA, \chi}(m).
\]
Suppose that $\gamma \in R_{N}^m$ satisfies $\gamma \tau_2 = \tau_1 $ and that there exists $\gamma' \in \Gamma_0(N)/\pm 1$ such that $\gamma'\gamma \tau_2=\tau_1$. Then $\gamma'\tau_1=\tau_1$ and 
\[
\alpha(\gamma'\gamma, \tau_1, \tau_2)=-2iy_1j(\gamma'\gamma, \tau_2)=\alpha(\gamma'\gamma, \tau_1, \tau_2)=-2iy_1j(\gamma, \tau_2)j(\gamma', \tau_1)=\alpha(\gamma, \tau_1, \tau_2)j(\gamma', \tau_1).
\]
We have 
\[
D^t u r_{\cA, \chi}(m)=\chi(\fa) \sum_{\substack{\gamma\in R_N^m/\pm 1 \\ \gamma\tau_2=\tau_1}} \alpha(\gamma, \tau_1, \tau_2)^{2t}=\chi(\fa)\sum_{\substack{\gamma\in \Gamma_0(N) \backslash R_N^m \\ \gamma\tau_2=\tau_1}} \alpha(\gamma, \tau_1, \tau_2)^{2t}\sum_{\substack{\gamma'\in \Gamma_0(N)/\pm 1 \\ \gamma' \tau_1=\tau_1}} j(\gamma', \tau_1)^{2t}.
\]
The problem thus reduces to showing that 
\begin{equation}\label{desi}
\sum_{\substack{\gamma'\in \Gamma_0(N)/\pm 1 \\ \gamma' \tau_1=\tau_1}} j(\gamma', \tau_1)^{2t}=u.
\end{equation}
We have 
\[
\sum_{\substack{\gamma'\in \Gamma_0(N)/\pm 1 \\ \gamma' \tau_1=\tau_1}} j(\gamma', \tau_1)^{2t}=\sum_{\substack{\gamma'\in \Gamma_0(N)/\pm 1 \\ \gamma' \tau_1=\tau_1}} \chi((j(\gamma', \tau_1))).
\]
If $u=1$, then only $\gamma'=1$ appears and $j(\gamma', \tau_1)=1$ so that \eqref{desi} is satisfied trivially. Assume now that $u>1$.
The condition $\gamma'\tau_1=\tau_1$ implies that $N_{K/\Q}(j(\gamma', \tau_1))=1$. If $j(\gamma', \tau_1)\in \oh_K$, then $j(\gamma', \tau_1)\in \oh_K^\times$ so $\chi((j(\gamma', \tau_1)))=1$ and we are done because the sum has size $u$. Note that 
\[
j(\gamma', \tau_1)=\tau_1^{-1}(a\tau_1+b).
\]
We have $a\tau_1+b\in \fa_1^{-1}$, so if $\tau_1^{-1}\in \fa_1$, then $j(\gamma', \tau_1)\in \oh_K$. Thus, it suffices to check that $\tau_1^{-1}\in \fa_1$.
If $u=2$, then $K=\Q(i)$ and $\Gamma_0(N)\tau_1\in X_0(N)$ is a lift of $\SL_2(\Z)i\in X_0(1)$. We may thus assume that $\tau_1=i$. If $u=3$, then $K=\Q(\rho)$ where $\rho=e^{2\pi i/3}$ and we may assume $\tau_1=\rho$. In either case, we have $\vert \tau_1\vert^2=1$ and $A_1=1$, hence $\tau_1^{-1}=\bar{\tau}_1\in \fa_1=A_1\langle 1, \bar{\tau}_1 \rangle$. This concludes the proof.

The logarithmic term is dealt with by using \eqref{unifnorm}.
\end{proof}

We deal with the limit that appears in Theorem \ref{main.}:

\begin{proposition}\label{prop:lim}
We have 
\begin{multline*}
\sum_{\substack{\cA_1, \cA_2 \in \Cl_K \\ \cA^{-1}\cA_2=\cA}} \lim\limits_{w\to \tau_1} (g(\tau_1, w)-\log \vert 2\pi i \eta^4(\tau_1)(w-\tau_1)\vert_v) \\ =h\left( \frac{\Gamma'}{\Gamma}(k+t)+\frac{\Gamma'}{\Gamma}(k-t)-2\log(2\pi)+2\frac{L'}{L}(1,\epsilon_K)+\log(\vert D\vert) \right).
\end{multline*}
\end{proposition}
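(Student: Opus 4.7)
The plan is to reduce to a classical analytic computation by using the asymptotic behavior of $Q_{k,t}$ near $1$ and then invoking Kronecker's first limit formula.

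First, I would unwind the limit using Lemma \ref{lem:asy}. Setting $x_w := 1 + |\tau_1-w|^2/(2y_1\Im(w))$, note that $x_w\to 1^+$ as $w\to\tau_1$ with $x_w - 1 \sim |\tau_1-w|^2/(2y_1^2)$ and $x_w+1\to 2$, so Lemma \ref{lem:asy} gives
\begin{align*}
g(\tau_1,w) = -Q_{k,t}(x_w) &= -\log\frac{x_w+1}{x_w-1} + \frac{\Gamma'}{\Gamma}(k+t) + \frac{\Gamma'}{\Gamma}(k-t) - 2\frac{\Gamma'}{\Gamma}(1) + o(1) \\
&= 2\log|\tau_1-w| - 2\log(2y_1) + \text{const} + o(1).
\end{align*}
Subtracting $\log|2\pi i\,\eta^4(\tau_1)(w-\tau_1)|_v$ (with the complex place normalization built into \eqref{unifnorm}) cancels the logarithmic singularity, and produces the finite limit
\[
\lim_{w\to\tau_1}\bigl(g(\tau_1,w)-\log|2\pi i\eta^4(\tau_1)(w-\tau_1)|_v\bigr) = \tfrac{\Gamma'}{\Gamma}(k+t)+\tfrac{\Gamma'}{\Gamma}(k-t)-2\tfrac{\Gamma'}{\Gamma}(1)-2\log(4\pi) - 2\log\bigl(y_{\fa_1}|\eta(\tau_{\fa_1})|^4\bigr).
\]

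Second, I would sum this over pairs $(\cA_1,\cA_2)$ with $\cA_1^{-1}\cA_2=\cA$. Since $\cA_2$ is determined by $\cA_1$ and the limit depends only on $\tau_1=\tau_{\fa_1}$, the sum ranges over $\cA_1\in\Cl_K$, giving a constant $h$-multiple plus the Kronecker-type sum $-2\sum_{\cA\in\Cl_K}\log(y_{\fa}|\eta(\tau_{\fa})|^4)$.

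Third, the main analytic input is Kronecker's first limit formula
\[
E(z,s)=\sum_{(m,n)\neq(0,0)}\frac{y^s}{|mz+n|^{2s}} = \frac{\pi}{s-1} + 2\pi\bigl(\gamma-\log 2-\log(\sqrt{y}|\eta(z)|^2)\bigr)+O(s-1),
\]
combined with the identity $E(\tau_\fa,s)=2u(\sqrt{|D|}/2)^s L_\cA(s)$ (using $\fa^{-1}=\langle 1,\tau_{\fa}\rangle$ and $y_\fa A_\fa=\sqrt{|D|}/2$). Summing over $\Cl_K$ yields
\[
\sum_{\cA\in\Cl_K}E(\tau_\cA,s)=2u\,(\sqrt{|D|}/2)^s\,\zeta(s)L(s,\epsilon_K),
\]
and matching residues and constant terms at $s=1$, while using Dirichlet's class number formula $L(1,\epsilon_K)=h\pi/(u\sqrt{|D|})$, produces
\[
\sum_{\cA\in\Cl_K}\log\bigl(y_\cA|\eta(\tau_\cA)|^4\bigr) = h\gamma-h\log 2 - \tfrac{h}{2}\log|D| - h\tfrac{L'}{L}(1,\epsilon_K).
\]

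Finally, I would assemble the three ingredients. The contributions $-2\frac{\Gamma'}{\Gamma}(1) = 2\gamma$ and $-2h\gamma$ cancel, and the remaining terms $-2h\log(4\pi)+2h\log 2+h\log|D|+2h\frac{L'}{L}(1,\epsilon_K)$ combine with the gamma factors to yield exactly $h\bigl(\frac{\Gamma'}{\Gamma}(k+t)+\frac{\Gamma'}{\Gamma}(k-t)-2\log(2\pi)+2\frac{L'}{L}(1,\epsilon_K)+\log|D|\bigr)$. The main delicate point is tracking constants through the Kronecker limit formula calculation and confirming the normalization of the archimedean absolute value $|\cdot|_v$ used in \eqref{unifnorm}, so that the logarithmic singularities cancel cleanly; with that settled, the rest of the argument is a bookkeeping exercise.
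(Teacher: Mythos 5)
Your proposal is correct and follows precisely the approach the paper itself takes (the paper cites Gross--Zagier II~(5.8) together with Lemma~\ref{lem:asy}, and your argument is exactly that proof with the constants tracked explicitly). Your computation of the limit term, the reduction to a sum over $\Cl_K$, the Kronecker limit formula input, and the final constant reconciliation all check out.
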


\begin{proof}
The proof is the same as the proof of \cite[II (5.8)]{GZ}, except that we use Lemma \ref{lem:asy}. 
\end{proof}

Combining Proposition \ref{prop:lim} with Theorem \ref{main.} completes the proof of Theorem \ref{archht}.

\section{Non-archimedean local heights}\label{s:nonarchii}

In this section we compute the non-archimedean contribution to the height pairing of generalized Heegner cycles. More precisely, we derive a formula for 
\[ 
\langle Z, T_m \bar{Z}_{\cA}\rangle_{\mathrm{fin}}^{\GS}:=\sum_{v \nmid \infty}\langle Z, T_m \bar{Z}_{\cA}\rangle_v \epsilon_v.
\]
The analogous formula for $\langle \bar{Z}, T_m Z_{\cA}\rangle_{\mathrm{fin}}^{\GS}$ can be deduced by noting, as in \cite[bottom p. 75]{AriThesis}, that
\begin{equation}\label{eq: barZ nonarch}
\langle \bar{Z}, T_m Z_{\cA}\rangle_{\mathrm{fin}}^{\GS}=
\langle Z, T_m \bar{Z}_{\bar{\cA}}\rangle_{\mathrm{fin}}^{\GS}.
\end{equation}

\subsection{The case $r_{\cA}(m)=0$}

Let $v$ be a non-archimedean place above a prime $q$. Let $m\geq 1$ and assume that $r_{\cA}(m)=0$. Let $L=H_v$ denote the completion at $v$ and let $\Lambda$ be the ring of integers of the maximal unramified extension $L^{\mathrm{ur}}$ of $L$, with residue field $\mathbb{F}=\bar{\mathbb{F}}_q$. Let $\underline{X}_0(N) \lra \spec(\Z)$ be the Katz--Mazur integral model \cite{katzmazur} and denote by $\underline{X}_0(N)_\Lambda \lra \spec(\Lambda)$ the pull-back with special fiber $\underline{X}_0(N)_{\mathbb{F}}\lra \spec(\mathbb{F})=s$ and generic fiber $X_0(N)_{L^{\mathrm{ur}}}\lra \spec(L^{\mathrm{ur}})=\eta$. Let $\iota$ denote the inclusion $Y_0(N)\times_\Q F^{\mathrm{ur}} \hookrightarrow \underline{X}_0(N)_\Lambda$.

Choose a prime $p$ that splits in $K$ and that is distinct from $q$. Consider the following smooth $\Q_p$-sheaf on $Y(M)$: 
\[
\cW_p := \sym^{2k-2} (R^1 \pi_{\cE,*} \Q_p)(k-1)\otimes \kappa_{2t} H^{2t}(\bar{A}^{2t}, \Q_p(t)),
\]
where $\pi_{\cE} \colon \cE \lra Y(M)$ is the universal elliptic curve.
The cycle $\epsilon Y^{\fa}_Q \in \CH^{k+t}(X_F)_{0, K}$ is supported in the fiber of $\pi_X \colon X\lra \bar{C}=X(M)$ over $Q^{\fa}\in Y(M)(F)$, which is identified with $(A_F^{\fa})^{2k-2}\times_F A_F^{2t}$. Its image under the $p$-adic cycle class map on $\pi_X^{-1}(Q^{\fa})$ is denoted
\[
b(Y_Q^{\fa}) \in \epsilon H_{\et}^{2k+2t-2}\left(\pi_X^{-1}(Q^{\fa})_{\bar{F}}, \Q_p(k+t-1)\right)\simeq H^0(\overline{Q^{\fa}}, \cW_p),
\]
and in fact lies in $H^0(\overline{Q^{\fa}}, \cW_p)^{G_F}$.
This is the Tate vector associated with the cycle $\epsilon Y_Q^{\fa}$. Taking the $p$-adic cycle class map with respect to $\pi_X^{-1}(Q^{\fa})$ of the cycle $\bar{\epsilon} Y_Q^{\fa}$ yields a Tate vector that we denote by $\bar{b}(Y_Q^{\fa})$. These Tate vectors completely determine the $p$-adic Abel--Jacobi images of the respective cycles with respect to the variety $X$. Recall the quotient map $\pi=\pi_{M,N} \colon Y(M)\lra Y_0(N)=Y(M) \slash S$ and let $\cW^0_p := (\pi_* \cW_p)^S$, a $\Q_p$-sheaf on $Y_0(N)$. Then $a(Y^{\fa})=\sum_{Q\in \pi^{-1}(P_{\fn})} b(Y_Q^{\fa})$ and $\bar{a}(Y^{\fa})=\sum_{Q\in \pi^{-1}(P_{\fn})} \bar{b}(Y_Q^{\fa})$ are Tate vectors with respect to the sheaf $\cW^0_p$ defined over $H$.

Following \cite{pGZshnidman}, suppose $c$ and $d$ are Tate vectors with respect to the sheaf $\cW^0_p$ supported at points $y_c\neq y_d$ of $Y_0(N)(L^{\mathrm{ur}})$ of good reduction. Let $\underline{y}_c$ and $\underline{y}_d$ be the Zariski closures of the points $y_c$ and $y_d$ in $X_0(N)_{\Lambda}$, and let $\underline{c}$ and $\underline{d}$ be extensions of $c$ and $d$ to $H^0(\underline{y}_c, \iota_* \cW^0_p)$ and $H^0(\underline{y}_d, \iota_* \cW^0_p)$ respectively. If $\underline{y}_c$ and $\underline{y}_d$ have common special fiber $z\in \underline{X}_0(N)_{\mathbb{F}}$ corresponding to an elliptic curve $E\slash \mathbb{F}$, then define 
\[
(c, d)_v:= (\underline{y}_c \cdot \underline{y}_d)_z \cdot (\underline{c}_z, \underline{d}_z), 
\]
where $(\underline{y}_c \cdot \underline{y}_d)_z$ is the usual intersection number at $z$ on the arithmetic surface $\underline{X}_0(N)_{\Lambda}$ and $(\underline{c}_z, \underline{d}_z)$ is the intersection pairing on the $p$-adic \'etale cohomology of $E^{2k-2}\times A_{\mathbb{F}}^{2t}$.

The assumption $r_{\cA}(m)=0$ guarantees that the Tate vectors $a(Y)$ and $T_m a(Y^{\fa})$ have disjoint supports. Moreover, we may assume that they are supported at points of good reduction (because CM elliptic curves have potential good reduction). The number $(a(Y), T_m a(Y^{\fa}))_v$ is thus defined. The cycle $Z_{\fn}^{\fa}$ sits above the Heegner point $P_{\fn}^{\fa}=\Gamma_0(N) \tau_{\fa}\in Y_0(N)(H)$. Letting $\underline{P}_{\fn}^{\fa}$ denote the Zariski closure of $P_{\fn}
^{\fa}$ in $\underline{X}_0(N)_{\Lambda}$, the cycle $Z_{\fn}^{\fa}$ admits an extension $\mathbf{Z}_{\fn}^{\fa}$ sitting in the fiber of $\underline{P}_{\fn}^{\fa}$. The intersection cycle $\mathbf{Z}_{\fn}\cdot T_m \mathbf{Z}_{\fn}^{\fa}$ has a flat presentation over the common support $\vert \underline{P}_{\fn}\vert \cap \vert T_m \underline{P}_{\fn}^{\fa}\vert$ by \cite[Lem. 3.3.2]{zhang} and the fact that the cycles have presentations by abelian subvarieties. Thus, following \cite[(3.3.1)]{zhang}, we obtain 
\[
\mathbf{Z}_{\fn}\cdot T_m \mathbf{Z}_{\fn}^{\fa} = (-1)^{k+t}(a(Y), T_m \bar{a}(Y^{\fa}))_v.
\]
We conclude that 
\begin{equation}\label{GSht}
\langle Z_{\fn}, T_m \bar{Z}_{\fn}^{\fa}\rangle_v \epsilon_v=(a(Y), T_m \bar{a}(Y^{\fa}))_v \log N(v).
\end{equation}

\begin{remark}
Under the assumption $r_{\cA}(m)=0$, the local Brylinski pairing at $v$ of the vectors $a(Y)$ and $T_m \bar{a}(Y^{\fa})$ is defined and, according to \cite[Appendix]{brylinski}, we have 
\[
\langle Z_{\fn}, T_m \bar{Z}_{\fn}^{\fa}\rangle_v \epsilon_v=\langle a(Y), T_m \bar{a}(Y^{\fa})\rangle^{\mathrm{Br}}_v, 
\]
where the local height on the left hand side is the one of Gillet--Soul\'e.
Brylinski moreover gives an explicit description of his local pairing under the assumption that $H^2(\underline{Y}_0(N)_{\mathbb{F}}, \cW_p^0(1))=0$, namely
\[
\langle a(Y), T_m \bar{a}(Y^{\fa})\rangle^{\mathrm{Br}}_v=(a(Y), T_m \bar{a}(Y^{\fa}))_v \log N(v).
\]
The proof that $H^2(\underline{Y}_0(N)_{\mathbb{F}}, \cW_p^0(1))=0$ can be found in \cite[Proof of Prop. 5.1]{pGZshnidman}. 
\end{remark}

We have seen that the calculation of $\langle Z_{\fn}, T_m \bar{Z}_{\fn}^{\fa}\rangle_v \epsilon_v$ boils down to calculating $(a(Y), T_m \bar{a}(Y^{\fa}))_v$. The latter has been calculated already in \cite{pGZshnidman}:
\begin{proposition}\label{prop:nonar1}
Let $m\geq 1$ such that $(m,N)=1$ and $r_{\cA}(m)=0$. Then
\begin{multline*}
(a(Y), T_m \bar{a}(Y^{\fa}))_v \\
=\frac{\vert S\vert}{2} m^{k-t-1} \sum_{n\geq 1} \sum_{g \in \Hom_{\Lambda \slash \pi^n}(\underline{P}_{\fn}^{\fa}, \underline{P}_{\fn})_m} \left( \bar{\epsilon} \left( X^{\otimes k-t-1}_{g\sqrt{D}g^{-1}} \otimes X^{\otimes 2t}_{\overline{g\phi_{\fa}}} \right), \epsilon \left( X_{\sqrt{D}}^{\otimes k-t-1} \otimes X_1^{\otimes 2t} \right) \right). 
\end{multline*}
\end{proposition}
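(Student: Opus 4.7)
The plan is to reduce the computation to the local intersection formulas proved in \cite{pGZshnidman}. Under the hypothesis $r_\cA(m)=0$, the supports of $a(Y)$ and $T_m \bar a(Y^\fa)$ are disjoint in the generic fiber $X_0(N)_{L^{\ur}}$, so the local intersection number $(a(Y), T_m \bar a(Y^\fa))_v$ is well-defined as a finite sum over the finitely many closed points $z \in \underline{X}_0(N)_{\F}$ of the common special fiber, with each summand a product of a geometric intersection multiplicity on the arithmetic surface $\underline{X}_0(N)_\Lambda$ and a cohomological pairing of the Tate vectors in the fiber above $z$.

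First I would decompose the geometric part. By the Serre--Tate theorem and the deformation-theoretic analysis of \cite[\S III]{GZ} (carried over to the generalized setting in \cite{pGZshnidman}), the intersection multiplicity $(\underline{P}_\fn \cdot T_m \underline{P}_\fn^\fa)_z$ is computed by summing, over $n \geq 1$, the number of isogenies of degree $m$ between the mod-$\pi^n$ deformations of the CM elliptic curves attached to $P_\fn^\fa$ and $P_\fn$, giving the factor $\sum_{n \geq 1} \#\Hom_{\Lambda/\pi^n}(\underline{P}_\fn^\fa, \underline{P}_\fn)_m$. The prefactor $|S|/2$ then arises in the standard way: the sum $|S|$ comes from summing the Tate vectors over the $|S|$ preimages of the Heegner points under $\pi_{M,N} \colon X(M) \to X_0(N)$ (recall the definitions of $a(Y)$ and $\bar a(Y^\fa)$), while the factor $\tfrac{1}{2}$ records that the $\Hom$ sets are to be taken modulo $\pm 1$ (equivalently, the contribution of $-g$ equals that of $g$).

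Next I would evaluate the cohomological pairing for each such $g$. On the $a(Y)$ side the Tate vector above $\underline{P}_\fn$ is, by the cycle class computation in Section~\ref{s:hodge} (now in $\ell=p$-adic \'etale cohomology), $\epsilon$ applied to the tensor product of the classes of $\Gamma_{\sqrt{D}}$ in the first $k-1-t$ factors and $\Gamma_1$ in the remaining $2t$ factors. Using $g$ to identify the fibers of $X\lra \bar C$ above the two points modulo $\pi^n$, the Tate vector $T_m \bar a(Y^\fa)$ pulls back to $\bar\epsilon$ applied to tensor products of the graphs of $g\sqrt{D}g^{-1}$ and $g\phi_\fa$ (the latter because graph transposes correspond to conjugation by $g$, and the Hecke action by $T_m$ redistributes the isogenies of degree $m$). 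The pairing factors through the K\"unneth decomposition, yielding exactly the bilinear expression $\bigl(\bar\epsilon(X^{\otimes k-t-1}_{g\sqrt{D}g^{-1}} \otimes X^{\otimes 2t}_{\overline{g\phi_\fa}}),\, \epsilon(X_{\sqrt{D}}^{\otimes k-t-1}\otimes X_1^{\otimes 2t})\bigr)$ in the formula.

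The main obstacle is bookkeeping: one must verify that passing from the cycles $Y^\fa_Q$ and their $\epsilon,\bar\epsilon$-projections to the explicit $X_{\alpha,\beta}$-formulation (as in Proposition~\ref{prop:cl}) is compatible with the $g$-conjugation used to identify the two fibers, and that the orientations and signs in $\bar\epsilon$ versus $\epsilon$ are tracked correctly through the Hecke correspondence. This is precisely the content of the local computation in \cite[\S 5--6]{pGZshnidman}, where the case $t=0$ is handled in detail; the general case is a direct adaptation because the $\kappa_{2t}$-component of $H^{2t}(A^{2t})$ decomposes into the $\epsilon_{2t}$-part (giving the graph $\Gamma_{g\phi_\fa}$) and its conjugate $\bar\epsilon_{2t}$-part (giving $\Gamma_{\overline{g\phi_\fa}}$), and only the cross-pairing between $\epsilon$ and $\bar\epsilon$ survives, accounting for the mismatched $\epsilon,\bar\epsilon$ in the two factors of the formula.
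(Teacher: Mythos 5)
Your approach is essentially the same as the paper's: both reduce to the deformation-theoretic intersection formula of Gross--Zagier (their III (4.4)), factor the local pairing into a geometric multiplicity times a cohomological pairing of Tate vectors, and identify the latter with the displayed bilinear expression via the isogeny $g$. The paper's proof is more explicit on one technical point you glide over: it factors $m = m_0 q^s$, treats the prime-to-$q$ and $q$-power parts of $T_m$ separately (the latter via quasi-canonical liftings $\underline{y}^\sigma_g(j)$), and checks that the special-fiber Tate vector formula is independent of the lifting level $j$ and of the choice of $h \in \Hom_{\Lambda/\pi^n}(\underline{y}^\sigma_g,\underline{y})_{q^s}$; this independence is what lets the sum over $(g,j,h)$ collapse to a clean sum over $g \in \Hom_{\Lambda/\pi^n}(\underline{P}^\fa_\fn,\underline{P}_\fn)_m$, and you would need to make that observation explicit (or cite it from \cite[Prop.~5.2]{pGZshnidman}) to complete the argument rigorously.
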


\begin{proof}
This is \cite[Prop. 5.2]{pGZshnidman}, except for the factor of $\vert S\vert$ due to our normalizations. For future use, we summarize the proof. In order to do this, we switch to the notations used in \cite{pGZshnidman} and \cite{nekovar}.

We let $x=a(Y)$ and $\bar{x}^\fa=\bar{a}(Y^\fa)$. These Tate vectors sit above the points $y=P_{\fn}
$ and $y^\sigma=P_{\fn}^{\fa}$ in $X_0(N)$, where $\sigma\in \Gal(H/K)$ corresponds to $[\fa]\in \Cl_K$. We denote by $\underline{x}, \underline{x}^\fa, \underline{y}, \underline{y}^\sigma$ the extensions over $\Lambda$, and by $\underline{x}_s, (\underline{x}^\fa)_s, \underline{y}_s, (\underline{y}^\sigma)_s$ the corresponding special fibers. 

Let $q$ be the prime below $v$ and write $m=m_0 q^s$ with $(m_0, q)=1$. Following \cite[III \S 5]{GZ}, we have 
\[
T_{m_0} \underline{y}^\sigma = \sum_{g\in \Hom_{\Lambda}(\underline{y}^\sigma, \underline{y}^\sigma_g)_{m_0}} \underline{y}^\sigma_g,
\]
and 
\[
T_{q^s} \underline{y}^\sigma_g = \sum_j \underline{y}^\sigma_g(j).
\]
The latter sum is indexed by certain integers between 
$0$ and $s$, and the $\underline{y}^\sigma_g(j)$ are divisors over $\Lambda$ comprised of quasi-canonical liftings of level $p^j$ of their reductions (the notations are borrowed from \cite[III (5.2)]{GZ}). 

By definition of the Hecke operators, we have 
\[
T_m \overline{\underline{x}}^\fa = \sum_{g\in \Hom_{\Lambda}(\underline{y}^\sigma, \underline{y}^\sigma_g)_{m_0}} \sum_j m^{k-t-1} \overline{\underline{x}}_g^\fa(j),
\]
where $\overline{\underline{x}}^\fa(j)$ is a Tate vector sitting above $\underline{y}^\fa(j)$. In the case where $\underline{y}$ and $\underline{y}_g^{\sigma}(j)$ intersect, the special fiber of $\overline{\underline{x}}^\fa(j)$ is represented by \cite[p. 1150]{pGZshnidman}
\begin{equation}\label{specialfiber}
(\overline{\underline{x}}^\fa(j))_s=(\underline{y}_s, \bar{\epsilon}(X_{hg\sqrt{D}g^{-1}h^{-1}}^{\otimes k-t-1} \otimes X_{\overline{hg\phi_\fa}}^{\otimes 2t})),
\end{equation}
for any $h\in \Hom_{\Lambda/\pi^n}(\underline{y}_g^{\sigma}, \underline{y})_{q^s}$ with $n\geq 1$. Note that this special fiber does not depend on $j$.

We deduce that 
\begin{align*}
    \vert S\vert^{-1}(\underline{x}, & T_m \overline{\underline{x}}^{\fa})_v  =   \sum_{g\in \Hom_{\Lambda}(\underline{y}^\sigma, \underline{y}^\sigma_g)_{m_0}} \sum_j m^{k-t-1} (\underline{x}, \overline{\underline{x}}_g^\fa(j)) \\
&=  m^{k-t-1} \sum_{g\in \Hom_{\Lambda}(\underline{y}^\sigma, \underline{y}^\sigma_g)_{m_0}} \sum_j (\underline{y} \cdot \underline{y}_g^{\sigma}(j)) (\bar{\epsilon}(X_{hg\sqrt{D}g^{-1}h^{-1}}^{\otimes k-t-1} \otimes X_{\overline{hg\phi_\fa}}^{\otimes 2t}), \epsilon (X_{\sqrt{D}}^{k-t-1}\otimes X_1^{\otimes 2t})) \\
&=  m^{k-t-1} \sum_{g\in \Hom_{\Lambda}(\underline{y}^\sigma, \underline{y}^\sigma_g)_{m_0}} (\underline{y} \cdot T_{q^s} \underline{y}_g^{\sigma}) (\bar{\epsilon}(X_{hg\sqrt{D}g^{-1}h^{-1}}^{\otimes k-t-1} \otimes X_{\overline{hg\phi_\fa}}^{\otimes 2t}), \epsilon (X_{\sqrt{D}}^{k-t-1}\otimes X_1^{\otimes 2t})).
\end{align*}

By \cite[III (4.4)]{GZ}, we have 
\[
(\underline{y} \cdot T_{q^s} \underline{y}_g^{\sigma}) = \frac{1}{2} \sum_{n\geq 1} \sum_{h'\in \Hom_{\Lambda/\pi^n}(\underline{y}_g^{\sigma}, \underline{y})_{q^s}} 1.
\]
Injecting this into the above expression, we obtain
\begin{align*}
(\underline{x}, & T_m \overline{\underline{x}}^{\fa})_v  \\
&= 
\frac{\vert S\vert m^{k-t-1}}{2} \sum_{g\in \Hom_{\Lambda}(\underline{y}^\sigma, \underline{y}^\sigma_g)_{m_0}} \sum_{n\geq 1} \sum_{h\in \Hom_{\Lambda/\pi^n}(\underline{y}_g^{\sigma}, \underline{y})_{q^s}} (\bar{\epsilon}(X_{hg\sqrt{D}g^{-1}h^{-1}}^{\otimes k-t-1} \otimes X_{\overline{hg\phi_\fa}}^{\otimes 2t}), \epsilon (X_{\sqrt{D}}^{k-t-1}\otimes X_1^{\otimes 2t})) \\
&= \frac{\vert S\vert m^{k-t-1}}{2} \sum_{n\geq 1} \sum_{g\in \Hom_{\Lambda/\pi^n}(\underline{y}^{\sigma}, \underline{y})_{m}} (\bar{\epsilon}(X_{g\sqrt{D}g^{-1}}^{\otimes k-t-1} \otimes X_{\overline{g\phi_\fa}}^{\otimes 2t}), \epsilon (X_{\sqrt{D}}^{k-t-1}\otimes X_1^{\otimes 2t})),
\end{align*}
where the first equality holds because \eqref{specialfiber} is true for any $h$. 
\end{proof}

The right hand side of the formula of Proposition \ref{prop:nonar1} is $0$ when the prime $q$ splits in $K$ by \cite[III (7.1)]{GZ} (which shows that the double sum is empty). When $q$ is non-split, $\End_{\Lambda\slash \pi}(\underline{P}_{\fn}
)=R$ is an order in the quaternion algebra $B$ ramified at $\infty$ and $q$. In this case, the double sum above can be made more explicit. The reduction of endomorphisms induces an embedding $K\hookrightarrow B$, which yields a decomposition 
\[
B=B_+ \oplus B_- = K \oplus Kj,
\]
where $j$ is an element in the non-trivial coset of $N_{B^\times}(K^\times)\slash K^\times$. The reduced norm on $B$ is additive with respect to this decomposition. Every $b\in B$ will be written as $b=\alpha + b_- = \alpha + \beta j$, and we then have $N(b)= N(\alpha) + N(b_-) = N(\alpha) + N(\beta j)$.

\begin{proposition}\label{prop:nonar2}
Let $m\geq 1$ such that $(m,N)=1$ and $r_{\cA}(m)=0$. Then
\[
(a(Y), T_m \bar{a}(Y^{\fa}))_v= \vert S \vert m^{k-t-1} \frac{(4D)^{k-t-1}}{\binom{2k-2}{k-t-1}} \sum_{\substack{b=\alpha+b_- \in R\fa \slash \pm 1 \\ N(b)=mN(\fa) }} \bar{\alpha}^{2t} U(b_-) P_{k, t}\left( 1 - \frac{2N(b_-)}{N(b)} \right),
\]
where
\[
U(b_-)=
\begin{cases}
0 & \text{ if } q \text{ splits in } K \\
\frac{1}{2}(1+\ord_q(N(b_-))) & \text{ if } q \text{ is inert in } K \\
\ord_q(\vert D\vert N(b_-)) & \text{ if } q \text{ is ramified in } K,
\end{cases}
\]
and $P_{k,t}(x)$ is the Jacobi polynomial introduced in \eqref{Pkt}.
\end{proposition}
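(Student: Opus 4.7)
The starting point is Proposition \ref{prop:nonar1}, which expresses $(a(Y), T_m \bar{a}(Y^{\fa}))_v$ as a double sum indexed by $n \geq 1$ and $g \in \Hom_{\Lambda/\pi^n}(\underline{P}_{\fn}^{\fa}, \underline{P}_{\fn})_m$, of intersection pairings in the \'etale cohomology of $E^{2k-2} \times A_{\mathbb{F}}^{2t}$, where $E/\mathbb{F}$ is the common reduction of the underlying elliptic curves. My plan is to separate the problem into three essentially independent tasks: (i) handle the split case, (ii) parametrize the set of mod $\pi^n$ isogenies $g$ by quaternionic data and convert the inner double sum $\sum_{n\geq 1} \sum_g$ to a single sum over $b \in R\fa/\pm 1$ weighted by a lifting multiplicity $U(b_-)$, and (iii) compute the cohomological intersection pairing attached to each $b$ and identify it with the Jacobi polynomial $P_{k,t}$.

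For (i), when $q$ splits in $K$ the CM elliptic curve $A$ has ordinary reduction at $v$, so $\End_{\Lambda/\pi^n}(\underline{P}_{\fn}) = \oh_K$ remains commutative after reduction; since $r_{\cA}(m) = 0$ means $\Hom(\underline{P}_{\fn}^{\fa}, \underline{P}_{\fn})$ contains no element of degree $m$, the entire sum is empty and equals $0$, matching $U(b_-) = 0$. For (ii), in the non-split case the reduction $E/\mathbb{F}$ is supersingular, $R := \End_{\Lambda/\pi}(\underline{P}_{\fn})$ is an order in the quaternion algebra $B$ ramified at $\{\infty, q\}$, and the embedding $\oh_K \hookrightarrow R$ yields $B = K \oplus Kj$. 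An element $g \in \Hom_{\Lambda/\pi^n}(\underline{P}_{\fn}^{\fa}, \underline{P}_{\fn})_m$ composed with $\phi_\fa$ gives an element of $\Hom_{\Lambda/\pi^n}(\underline{P}_{\fn}, \underline{P}_{\fn})$ of degree $mN(\fa)$, hence determines $b := g \phi_\fa \in R\fa$ with $N(b) = mN(\fa)$; decompose $b = \alpha + b_-$ with $\alpha \in K$ and $b_- \in Kj$. Gross's theory of quasi-canonical liftings \cite[III (4.4)]{GZ} tells us that the number of $n \geq 1$ for which such a $b$ lifts to level $n$ is precisely $U(b_-)$ (half the length of the appropriate Gross--Keating divisor), as made explicit in \cite[III \S 7]{GZ}.

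The main remaining task, and the most delicate, is (iii): computing the cohomological intersection pairing
\[
I(g) := \bigl(\bar{\epsilon}(X_{g\sqrt{D}g^{-1}}^{\otimes k-t-1} \otimes X_{\overline{g\phi_{\fa}}}^{\otimes 2t}),\ \epsilon(X_{\sqrt{D}}^{k-t-1} \otimes X_1^{\otimes 2t})\bigr)
\]
on $H^{2k+2t-2}_{\et}(E^{2k-2} \times A_{\mathbb{F}}^{2t})$. Using the K\"unneth formula and the explicit description of the projectors $\epsilon_W$ and $\kappa_{2t}$, this decomposes as the product of a symmetric power intersection on $H^{2k-2}(E^{2k-2})$ and a CM intersection on $H^{2t}(A_{\mathbb{F}}^{2t})$. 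The CM factor, after projection by $\kappa_{2t}$, extracts the $\bar{\alpha}^{2t}$ coefficient (the $\alpha^{2t}$ piece is killed by the $\bar{\epsilon}$ vs.\ $\epsilon$ pairing, by the same Hodge-theoretic orthogonality as in the archimedean computation, Section \ref{s:hhodg}). The symmetric power factor requires expanding $\epsilon_W$ as an average over permutations acting on $(H^1(E))^{\otimes 2k-2}$; after using the analog of Proposition \ref{prop:cl} in $\ell$-adic cohomology to compute $\cl(X_{1,\sqrt{D}})$ and $\cl(X_{g\sqrt{D}g^{-1},1})$, the resulting combinatorial sum is precisely the expansion of the Jacobi polynomial $P_{k-t-1}^{(0,2t)}$ given by the Rodrigues formula \eqref{expjac}, evaluated at $1 - 2N(b_-)/N(b)$. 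The normalization $(4D)^{k-t-1}/\binom{2k-2}{k-t-1}$ arises from the degrees $\deg(\sqrt{D}) = |D|$ and the binomial coming from the symmetric projector.

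The chief obstacle I anticipate is bookkeeping the cohomological pairing in step (iii): one must keep careful track of signs, of the interaction between the conjugate projector $\bar{\epsilon}$ and the embedding $K \hookrightarrow B$ (which is why only $\bar{\alpha}^{2t}$ survives, not $\alpha^{2t}$), and of the identification of the resulting polynomial in $N(b_-)/N(b)$ with the Jacobi polynomial normalization in \eqref{Pkt}. For the weight-$2$ component ($k-t-1 = 0$, $t = 0$) this reduces to the trivial intersection $1$ computed in \cite{GZ}, and for $t = 0$ it reduces to the Legendre-polynomial computation of \cite[\S 5]{zhang}; for the general case the analogous computation is carried out in \cite{pGZshnidman} in the $p$-adic setting, and I would adapt that argument verbatim, replacing $p$-adic \'etale cycle classes by $\ell$-adic ones for any auxiliary $\ell \neq q$, which is valid since the intersection pairing is geometric and independent of $\ell$.
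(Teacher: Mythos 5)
Your proposal is correct and follows essentially the same route as the paper: reduce to the non-split case (split gives the empty sum, matching $U(b_-)=0$); re-index the double sum of Proposition \ref{prop:nonar1} via $g\mapsto g\phi_\fa=b\in R\fa$, using Gross--Zagier's quaternionic description of $\End_{\Lambda/\pi^n}$ (the paper cites \cite[III (7.3)]{GZ} explicitly) and the fact that $\#\{n\ge 1:|D|N(b_-)\equiv 0 \bmod q (N(\fq))^{n-1}\}=U(b_-)$; and identify the cohomological intersection pairing with $(4D)^{k-t-1}\binom{2k-2}{k-t-1}^{-1}\bar\alpha^{2t}P_{k,t}(1-2N(b_-)/N(b))$, which the paper simply cites from \cite[Prop.\ 5.3]{pGZshnidman} rather than re-deriving. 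Two small slips worth noting: the Rodrigues formula is \eqref{rodrig}, not \eqref{expjac} (the latter is the explicit hypergeometric expansion), and the factor of $\frac12$ in \ref{prop:nonar1} is absorbed into the quotient $R\fa/\pm 1$ (valid since $\bar\alpha^{2t}$ is even in $b$) rather than into the count $U(b_-)$.
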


\begin{proof}
By the previous discussion, we may assume that $q$ is non-split in $K$.
According to \cite[III (7.3)]{GZ}, we then have
\[
\End_{W\slash \pi^n}(\underline{P}_{\fn})=\{ b=\alpha + b_- \in R \colon \vert D\vert N(b_-)\equiv 0 \mod{q(N(\fq))^{n-1}}\},
\]
and the association $g \mapsto g\phi_{\fa}=b=\alpha+b_-$ yields an identification between $\Hom_{W\slash \pi^n}(\underline{P}^{\fa}_{\fn}, \underline{P}_{\fn})$ and $\End_{W\slash \pi^n}(\underline{P}_{\fn})\cdot \fa$ in $B$ such that $\deg(g)=N(b)\slash N(\fa)$. Under this identification, we have \cite[Prop. 5.3]{pGZshnidman}\footnote{The function $H_{k-t-1, t}(x)$ in \cite{pGZshnidman} is equal to the function $P_{k,t}(x)$ defined in \eqref{Pkt}.}
\[
\left( \bar{\epsilon} \left( X^{\otimes k-t-1}_{g\sqrt{D}g^{-1}} \otimes X^{\otimes 2t}_{\overline{g\phi_{\fa}}} \right), \epsilon \left( X_{\sqrt{D}}^{\otimes k-t-1} \otimes X_1^{\otimes 2t} \right) \right)=\frac{(4D)^{k-t-1}}{\binom{2k-2}{k-t-1}} \bar{\alpha}^{2t}P_{k, t}\left( 1 - \frac{2N(b_-)}{N(b)} \right).
\]
Combining this with Proposition \ref{prop:nonar1} yields
\[
(a(Y), T_m \bar{a}(Y^{\fa}))_v=\frac{\vert S\vert}{2} m^{k-t-1} \frac{(4D)^{k-t-1}}{\binom{2k-2}{k-t-1}} \sum_{n\geq 1} \sum_{\substack{b=\alpha+b_- \in R\fa \\ \substack{N(b)=mN(\fa) \\ \vert D\vert N(b_-)\equiv 0 \mod{q(N(\fq))^{n-1}}}}} \bar{\alpha}^{2t} P_{k,t}\left( 1 - \frac{2N(b_-)}{N(b)} \right).
\]
The result follows after interchanging the order of summation in the double sum and noting that 
\[
\# \{ n\geq 1 \colon \vert D\vert N(b_-) \equiv 0 \mod q(N(\fq)^{n-1})\} = U(b_-).
\]
\end{proof}

Let $\langle \;, \; \rangle^{\GS}_{\mathrm{fin}}=\sum_{v\nmid \infty} \langle \;, \;\rangle_v \epsilon_v$ be the sum of the local heights on $X$ over all the finite places of $H$.

\begin{proposition}
Assume that $(m, N)=1$ and $r_{\cA}(m)=0$. Then
\[
\langle Z, T_m \bar{Z}_{\cA}\rangle_{\mathrm{fin}}^{\GS} = - u^2 \frac{\vert S\vert (4\vert D\vert m)^{k-t-1}}{D^t \binom{2k-2}{k-t-1}} \sum_{0<n< \frac{m\vert D\vert}{N}} \sigma_{\cA}(n) r_{\cA, \chi}(m\vert D\vert - nN) P_{k,t}\left( 1- \frac{2nN}{m\vert D\vert} \right).
\]
\end{proposition}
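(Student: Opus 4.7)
The plan is to mirror the strategy of the archimedean computation in Theorem \ref{thm:nonint}: use Galois equivariance to rewrite the sum over finite places as a sum over class-group representatives, apply the quaternionic intersection formula of Proposition \ref{prop:nonar2}, and identify the resulting combinatorial expression with $\sigma_\cA(n)$ and $r_{\cA, \chi}(m\vert D\vert-nN)$ via genus theory.

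Since $r_\cA(m)=0$ ensures disjoint supports, equation \eqref{GSht} identifies each non-archimedean local height with an intersection number times $\log N(v)$. The non-archimedean analogue of the Galois identity \eqref{galinf}, together with \eqref{eq: barZ nonarch} and the $\chi$-normalization $Z_{\cA_i} = \chi(\fa_i)^{-1} Z_\fn^{\fa_i}$, allows the rewriting
\[
\langle Z, T_m \bar Z_\cA\rangle_{\mathrm{fin}}^{\GS} = \sum_{\substack{\cA_1, \cA_2 \in \Cl_K \\ \cA_1^{-1}\cA_2 = \cA}} \sum_{v \nmid \infty} \langle Z_{\cA_1}, T_m \bar Z_{\cA_2}\rangle_v \epsilon_v,
\]
in direct parallel with \eqref{summation}. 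Then for each pair $(\cA_1, \cA_2)$ I would apply Proposition \ref{prop:nonar2}, generalized in the evident way by replacing $R\fa$ with the lattice $\Hom_{\Lambda/\pi^n}(\underline P_\fn^{\fa_2}, \underline P_\fn^{\fa_1})$ inside the quaternion algebra $B = K \oplus Kj$, to expand the intersection as a sum over $b = \alpha + \beta j \in B$ with $N(b) = mN(\bar\fa_1 \fa_2)$. The substitution $n = A_1 A_2 N(\beta j)/N$ converts the argument of $P_{k,t}$ to $1 - 2nN/(m\vert D\vert)$, while the positivity $N(\alpha) > 0$ becomes $0 < n < m\vert D\vert/N$. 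Summing $\bar\alpha^{2t}$ over $\alpha$ of fixed norm with weight $\chi(\bar\fa_1 \fa_2)^{-1}$, and then over pairs $(\cA_1, \cA_2)$ with $\cA_1^{-1}\cA_2 = \cA$, reproduces (by \eqref{rchim}, as at the end of the proof of Theorem \ref{thm:nonint}) the factor $u \cdot r_{\cA, \chi}(m\vert D\vert - nN)$.

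The remaining combinatorial identity --- matching $\sum_v U(b_-)\log N(v)$, aggregated over $\beta j$ of the appropriate norm and over the $(\cA_1, \cA_2)$-orbit, with $u \cdot \sigma_\cA(n) = u\sum_{d\mid n}\epsilon_\cA(n,d)$ --- is the non-archimedean content of the Gross--Zagier formula \cite[III \S\S 4, 7, 9]{GZ}, adapted to the higher-weight setting as in \cite[\S\S 5--6]{pGZshnidman}. Assembling the remaining constants $(4D)^{k-t-1}/\binom{2k-2}{k-t-1}$ from Proposition \ref{prop:nonar2}, $D^{-t}$ from the $\chi$ normalization, and the overall sign $-u^2$ delivers the claimed formula. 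The main obstacle is precisely this final genus-theoretic identification: it requires careful case analysis for inert versus ramified primes $q$, and careful matching of the $\log N(v)$ weighting against the genus character $\epsilon_\cA(n,d)$. This step is genuinely intricate but entirely parallel to the weight-$2$ argument in \cite{GZ}.
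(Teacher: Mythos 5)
Your proposal contains a genuine structural error in the very first step. You write that a ``non-archimedean analogue of the Galois identity \eqref{galinf}'' permits the rewriting
\[
\langle Z, T_m \bar Z_\cA\rangle_{\mathrm{fin}}^{\GS} = \sum_{\substack{\cA_1, \cA_2 \in \Cl_K \\ \cA_1^{-1}\cA_2 = \cA}} \sum_{v \nmid \infty} \langle Z_{\cA_1}, T_m \bar Z_{\cA_2}\rangle_v \epsilon_v,
\]
claiming this is ``in direct parallel with \eqref{summation}.'' It is not. Equation \eqref{summation} works because the $h$ complex places of $H$ are in simply transitive bijection with $\Gal(H/K)\simeq\Cl_K$, which lets one trade a sum over archimedean places for a sum over ideal classes \emph{at a single fixed place $v$}. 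For finite places there is no such bijection, and your right-hand side retains the full sum $\sum_{v\nmid\infty}$. Since $\Gal(H/K)$ merely permutes the finite places while preserving $\epsilon_v$, the quantity $\langle Z_{\cA_1}, T_m\bar Z_{\cA_2}\rangle_{\mathrm{fin}}^{\GS}$ depends only on $\cA_1^{-1}\cA_2$, so your double sum equals $h\cdot\langle Z, T_m\bar Z_\cA\rangle_{\mathrm{fin}}^{\GS}$ --- an overcount by a factor of $h$. The subsequent application of genus theory ``as at the end of the proof of Theorem \ref{thm:nonint}'' cannot salvage this: the factor $\sigma_\cA(n)$ in the non-archimedean formula does \emph{not} arise from a class-group decomposition at a fixed place. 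Rather, it arises from aggregating the place-dependent weights $U(b_-)\log N(v)$ across all primes $q$ (and their associated quaternion orders $R$), which is combinatorics of an entirely different shape.

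The paper's own proof avoids this altogether: it observes that Proposition \ref{prop:nonar2} differs from the weight-$0$ local intersection formula of Nekov\'a\v{r} [II Prop.~4.15] only in the presence of the weight $\bar\alpha^{2t}$, so it suffices to recompute the one sum in Nekov\'a\v{r}'s argument that carries this weight. That sum is $\sum_{\fc}\bar\alpha^{2t}$ over integral ideals $\fc=(\alpha)\fd\fa^{-1}$ in $\cA^{-1}$ of norm $m|D|-nN$, and it evaluates to $\chi(\bar\fa)D^{-t}r_{\cA,\chi}(m|D|-nN)$ --- replacing Nekov\'a\v{r}'s $r_\cA$ by $r_{\cA,\chi}$ with the expected extra constant. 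The heavy genus-theoretic bookkeeping (matching $U(b_-)$ summed over places against $\sigma_\cA(n)$) is thus delegated wholesale to Nekov\'a\v{r}'s prior work rather than re-derived. Your instincts about the ingredients --- Proposition \ref{prop:nonar2}, the quaternionic expansion, identity \eqref{rchim} --- are all relevant, but you should drop the class-group decomposition at the start and instead argue, as the paper does, by comparison with the known $t=0$ computation.
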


\begin{proof}
The only difference between the formula of Proposition \ref{prop:nonar2} and the one of \cite[II Prop. 4.15]{nekovar} is the fact that the sum is weighted by $\bar{\alpha}^{2t}$. The factor responsible for the appearance of $r_\cA(m\vert D\vert - nN)$ in \cite{nekovar} is thus replaced by 
\[
\sum_{\substack{\fc \subset \oh_K \\ \substack{[\fc]=\cA^{-1} \\ N(\fc)=m\vert D\vert - nN}}} \bar{\alpha}^{2t},
\]
where $\fc=(\alpha)\fd \fa^{-1}$ and $\alpha\in \fd^{-1} \fa$. But this expression is equal to 
\[
\sum_{\substack{\fc \subset \oh_K \\ \substack{[\fc]=\cA^{-1} \\ N(\fc)=m\vert D\vert - nN}}} \bar{\chi}(\fc \fa \fd^{-1})=
\frac{\chi(\bar{\fa})}{D^t} r_{\bar{\cA}, \bar{\chi}}(m\vert D\vert - nN)=\frac{\chi(\bar{\fa})}{D^t} r_{\cA, \chi}(m\vert D\vert - nN).
\]
\end{proof}

\subsection{The case $r_{\cA}(m)\neq 0$}

Given points $\underline{x}_1$ and $\underline{x}_2$ of $\underline{X}_0(N)$, reduction of homomorphisms induces injections 
\[
\Hom_{\Lambda/\pi^{n+1}}(\underline{x}_1, \underline{x}_2) \hookrightarrow \Hom_{\Lambda/\pi^{n}}(\underline{x}_1, \underline{x}_2), \qquad n\geq 1
\]
and we have 
\[
\Hom_{L^{\mathrm{ur}}}(x_1, x_2)=\Hom_W(\underline{x}_1, \underline{x}_2)=\bigcap_{n\geq 1} \Hom_{\Lambda/\pi^{n}}(\underline{x}_1, \underline{x}_2).
\]
We define 
\[
\Hom_{\Lambda/\pi^{n}}^{\mathrm{new}}(\underline{x}_1, \underline{x}_2):= \Hom_{\Lambda/\pi^{n}}(\underline{x}_1, \underline{x}_2) \setminus \Hom_W(\underline{x}_1, \underline{x}_2).
\]
The integer $2ur_{\cA}(m)$ is the multiplicity with which $P_{\fn}$ occurs in $T_m P_{\fn}^{\fa}$, i.e., 
\[
\vert \Hom_{\Lambda}(\underline{P}_{\fn}^{\fa}, \underline{P}_{\fn})_m \vert = 2ur_{\cA}(m).
\]

\begin{theorem}\label{nonarchht}
Assume that $(m, N)=1$. Then
\begin{align*}
\langle Z, T_m \bar{Z}_{\cA}\rangle_{\mathrm{fin}}^{\GS} &
= - u^2 \frac{\vert S\vert (4\vert D\vert m)^{k-t-1}}{D^t \binom{2k-2}{k-t-1}} \sum_{0<n< \frac{m\vert D\vert}{N}} \sigma_{\cA}(n) r_{\cA, \chi}(m\vert D\vert - nN) P_{k,t}\left( 1- \frac{2nN}{m\vert D\vert} \right) \\
& + \frac{\vert S\vert (4\vert D\vert m)^{k-t-1}}{\binom{2k-2}{k-t-1}} hu r_{\cA, \chi}(m) \log \frac{N}{m}.
\end{align*}
\end{theorem}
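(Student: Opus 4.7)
The plan is to extend Proposition \ref{prop:nonar2}, which handled the proper intersection case $r_{\cA}(m) = 0$, to the general case by invoking the limit definition of non-archimedean local heights from Section \ref{sec:height}. When $r_{\cA}(m) \neq 0$, the cycles $Z$ and $T_m \bar{Z}_\cA$ meet in the generic fiber of $X \to X_0(N)$ at Heegner points, so Theorem \ref{thm:ari} applies with the non-archimedean formula $\langle Z_1, Z_2\rangle_v = (Z_1 \cdot Z_2)_x \, \ord_v d_x t$. I would take $t = t_0$ to be the Gross--Zagier uniformizer normalized by the differential $\omega = \eta^4\, dq/q$, so that the archimedean (Theorem \ref{archht}) and non-archimedean logarithmic contributions use compatible uniformizer conventions and match up correctly when the global height is assembled.

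Next, split the sum in Proposition \ref{prop:nonar1} according to whether the reduction $b = \alpha + b_- \in R\fa$ has $b_- \neq 0$ or $b_- = 0$. The $b_- \neq 0$ part corresponds to non-liftable endomorphisms in positive characteristic and reproduces the sum of Proposition \ref{prop:nonar2} verbatim, yielding the first term of the theorem after the class group summation as in Theorem \ref{thm:nonint}. The $b_- = 0$ part is new and parametrizes elements $\alpha \in K$ with $\alpha\fa$ an integral ideal of norm $mN(\fa)$; weighting by $\chi$ via $\chi((\alpha))\bar\chi(\fa) = \alpha^{2t}\bar\chi(\fa)$ (using that $\chi$ is unramified of infinity type $(2t,0)$) and summing over class group representatives will produce the factor $hu\, r_{\cA,\chi}(m)$, up to the normalization constant $\vert S\vert(4\vert D\vert m)^{k-t-1}/\binom{2k-2}{k-t-1}$ already present in the local self-intersection of a generalized Heegner cycle computed in \S\ref{s:gogo}.

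To extract the $\log(N/m)$ factor, I would compute $\sum_v (Z\cdot T_m\bar{Z}_\cA)_x \, \ord_v d_x t_0 \cdot \log N(v)$ over all non-archimedean places $v$ of $H$, with $x$ ranging over the common generic intersection points. Using the explicit form of $t_0$ via $\eta^4$, the deformation-theoretic analysis of \cite[III \S 5]{GZ} and Zhang's flat presentation of intersection cycles \cite[Lem.\ 3.3.2]{zhang}, the $\log N$ contribution arises from primes dividing the level (governed by the cyclic $N$-ideal $\fn$ and the integral model of $X_0(N)$) while $-\log m$ comes from the ramification of the Hecke correspondence $T_m$ acting on the integral model. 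The main obstacle will be verifying the density hypothesis of Theorem \ref{thm:ari} in the non-archimedean regime, namely exhibiting a dense set of nearby CM points carrying algebraic generalized Heegner cycles that realize the Hodge class in nearby fibers; this should follow from the density of CM points of varying discriminant in $X_0(N)$ together with the isogeny-theoretic construction of Section \ref{s:GHC}. A secondary technical point is tracking the interaction between the class group sum and the $\chi$-weighted count so that the combinatorial prefactors assemble into exactly $hu\, r_{\cA,\chi}(m)$ rather than any other combination, mirroring the archimedean bookkeeping in Theorem \ref{main.}.
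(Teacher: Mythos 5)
Your overall plan matches the paper's: decompose the quaternionic sum of Proposition \ref{prop:nonar1} into the $b_- \neq 0$ piece (which recovers Proposition \ref{prop:nonar2}) and the $b_- = 0$ piece (lifted endomorphisms in $\oh_K$), and show the latter assembles, after the $\chi$-weighted class group bookkeeping and the bijection $g \mapsto g\phi_{\fa}$ between $\Hom_\Lambda(\underline{y}^\sigma,\underline{y})_m$ and $\{\alpha\in\fa : N_{K/\Q}(\alpha)=mN(\fa)\}$, into the $hu\, r_{\cA,\chi}(m)\log(N/m)$ term.

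Two corrections. First, the ``density hypothesis of Theorem \ref{thm:ari} in the non-archimedean regime'' is a non-issue: in the proof of Theorem \ref{thm:ari}, the density is used \emph{only} to match the archimedean limit \eqref{locint} with the Brylinski/Gillet--Soul\'e local pairing, while the non-archimedean formula $\langle Z_1,Z_2\rangle_v=(Z_1\cdot Z_2)_x\,\ord_v d_x t$ is Zhang's \cite[Conj.~1.2.1 \& Thm.~1.2.2]{zhang} and holds without that input. The density is in any case a single global hypothesis, already verified in the proof of Proposition \ref{eqint} via CM points of varying discriminant, not something to re-check place by place. Second, and more substantively, the deformation-theoretic input you need for the arithmetic intersection numbers at Heegner points in the improper case is not \cite[III \S5]{GZ} but \cite[III (8.5)--(8.6)]{GZ}: for $q$ non-split,
\[
(\underline{y}\cdot T_{q^s}\underline{y}^\sigma_g)=\frac{1}{2}\sum_{n\ge1}\sum_{h'\in\Hom^{\new}_{\Lambda/\pi^n}(\underline{y}^\sigma_g,\underline{y})_{q^s}}1+\frac{v_q}{2}\sum_{h'\in\Hom_\Lambda(\underline{y}^\sigma_g,\underline{y})_{q^s}}1,
\]
and for $q$ split only the second term survives, with $v_q\in\{k_\fq,\,0,\,-\ord_q N\}$ according as $v\nmid N$, $v\mid\fn$, or $v\mid\bar\fn$. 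The $\Hom^{\new}$ term yields your $b_-\ne 0$ sum (it coincides with full $\Hom_{\Lambda/\pi^n}$ precisely when $r_\cA(m)=0$, which is why Proposition \ref{prop:nonar2} carries over verbatim on that piece), and the $v_q$ term, after summing $\sum_q v_q\sum_{v\mid q}\log N(v)=-h\log(N/m)$, produces the second line of the theorem. So your intuition that $\log N$ tracks the level structure and $-\log m$ tracks the $T_m$-ramification is right, but the precise accounting runs through \cite[III (8.5)]{GZ} rather than a direct evaluation of $\ord_v d_x t_0$; the \S5 analysis you cite treats only the proper-intersection case.
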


\begin{proof}
We use the same notations as in the proof of Proposition \ref{prop:nonar1}. As in the proof of Proposition \ref{prop:nonar1}, we have 
\begin{equation}\label{yes}
(\underline{x}, T_m \overline{\underline{x}}^{\fa})_v  =  \vert S\vert m^{k-t-1} \sum_{g\in \Hom_{\Lambda}(\underline{y}^\sigma, \underline{y}^\sigma_g)_{m_0}} (\underline{y} \cdot T_{q^s} \underline{y}_g^{\sigma}) (\bar{\epsilon}(X_{hg\sqrt{D}g^{-1}h^{-1}}^{\otimes k-t-1} \otimes X_{\overline{hg\phi_\fa}}^{\otimes 2t}), \epsilon (X_{\sqrt{D}}^{k-t-1}\otimes X_1^{\otimes 2t})),
\end{equation}
where $h$ is an arbitrary element of $\Hom_{\Lambda/\pi^n}(\underline{y}_g^{\sigma}, \underline{y})_{q^s}$ for some $n\geq 1$. Note that this element $h$ can be the reduction of an element of $\Hom_{\Lambda}(\underline{y}_g^{\sigma}, \underline{y})_{q^s}$ in the present case since $r_{\cA}(m)\neq 0$.

We begin by assuming that $v$ sits over a prime $q$ that is non-split in $K$. In particular, $q$ does not divide $N$.
By \cite[III (8.5)]{GZ}, we have 
\[
(\underline{y} \cdot T_{q^s} \underline{y}_g^{\sigma}) = \frac{1}{2} \sum_{n\geq 1} \sum_{h'\in \Hom^{\mathrm{new}}_{\Lambda/\pi^n}(\underline{y}_g^{\sigma}, \underline{y})_{q^s}} 1 + \frac{v_q}{2} \sum_{h'\in \Hom_\Lambda(\underline{y}_g^{\sigma}, \underline{y})_{q^s}} 1,
\]
with $v_q=s$ or $s/2$ depending on whether $q$ is ramified or inert in $K$.
Injecting this into the above expression, we obtain
\begin{align*}
(\underline{x}, T_m \overline{\underline{x}}^{\fa})_v & = 
\frac{\vert S \vert m^{k-t-1}}{2} \sum_{n\geq 1} \sum_{g\in \Hom^{\mathrm{new}}_{\Lambda/\pi^n}(\underline{y}^\sigma, \underline{y})_{m}} (\bar{\epsilon}(X_{g\sqrt{D}g^{-1}}^{\otimes k-t-1} \otimes X_{\overline{g\phi_\fa}}^{\otimes 2t}), \epsilon (X_{\sqrt{D}}^{k-t-1}\otimes X_1^{\otimes 2t})) \\
& \qquad + v_q \frac{\vert S \vert m^{k-t-1}}{2} \sum_{g\in \Hom_{\Lambda}(\underline{y}^{\sigma}, \underline{y})_{m}} (\bar{\epsilon}(X_{g\sqrt{D}g^{-1}}^{\otimes k-t-1} \otimes X_{\overline{g\phi_\fa}}^{\otimes 2t}), \epsilon (X_{\sqrt{D}}^{k-t-1}\otimes X_1^{\otimes 2t})),
\end{align*}
where the equality holds because \eqref{specialfiber} is true for arbitrary $h$. 

The quaternionic expression of Proposition \ref{prop:nonar2} continues to hold for the first term if we impose the additional constraint that $b_-\neq 0$ (always satisfied when $r_{\cA}(m)=0$):
\begin{multline*} 
\frac{\vert S\vert m^{k-t-1}}{2} \sum_{n\geq 1} \sum_{g\in \Hom^{\mathrm{new}}_{\Lambda/\pi^n}(\underline{y}^\sigma, \underline{y})_{m}} (\bar{\epsilon}(X_{g\sqrt{D}g^{-1}}^{\otimes k-t-1} \otimes X_{\overline{g\phi_\fa}}^{\otimes 2t}), \epsilon (X_{\sqrt{D}}^{k-t-1}\otimes X_1^{\otimes 2t})) \\
=\vert S\vert m^{k-t-1} \frac{(4D)^{k-t-1}}{\binom{2k-2}{k-t-1}} \sum_{\substack{\substack{b=\alpha+b_- \in R\fa \slash \pm 1 \\ N(b)=mN(\fa) } \\ b_-\neq 0}} \bar{\alpha}^{2t} U(b_-) P_{k,t}\left( 1 - \frac{2N(b_-)}{N(b)} \right).
\end{multline*}
It remains to handle the second term above, namely
\[
v_q \frac{\vert S\vert m^{k-t-1}}{2}\sum_{g\in \Hom_{\Lambda}(\underline{y}^{\sigma}, \underline{y})_{m}} (\bar{\epsilon}(X_{g\sqrt{D}g^{-1}}^{\otimes k-t-1} \otimes X_{\overline{g\phi_\fa}}^{\otimes 2t}), \epsilon (X_{\sqrt{D}}^{k-t-1}\otimes X_1^{\otimes 2t})).
\]
The result \cite[Prop. 5.3]{pGZshnidman} continues to hold in this case with $b_-=0$ since $g\phi_\fa = b = \alpha+b_- \in \End_\Lambda(\underline{y})=\oh_K$. Hence, we obtain
\[
(\bar{\epsilon}(X_{g\sqrt{D}g^{-1}}^{\otimes k-t-1} \otimes X_{\overline{g\phi_\fa}}^{\otimes 2t}), \epsilon (X_{\sqrt{D}}^{k-t-1}\otimes X_1^{\otimes 2t}))=\frac{(4D)^{k-t-1}}{\binom{2k-2}{k-t-1}} \bar{\alpha}^{2t} P_{k,t}(1).
\]
But $P_{k, t}(1)=1$, and we deduce that 
\begin{multline*}
v_q \frac{\vert S\vert m^{k-t-1}}{2} \sum_{g\in \Hom_{\Lambda}(\underline{y}^{\sigma}, \underline{y})_{m}} (\bar{\epsilon}(X_{g\sqrt{D}g^{-1}}^{\otimes k-t-1} \otimes X_{\overline{g\phi_\fa}}^{\otimes 2t}), \epsilon (X_{\sqrt{D}}^{k-t-1}\otimes X_1^{\otimes 2t})) \\
=v_q \frac{\vert S\vert (4mD)^{k-t-1}}{2\binom{2k-2}{k-t-1}}
\sum_{\substack{g\in \Hom_{\Lambda}(\underline{y}^{\sigma}, \underline{y})_{m} \\ g\phi_\fa = \alpha \in \End_{\Lambda}(\underline{y})}} \bar{\alpha}^{2t}.
\end{multline*}
Recall that $y=P_{\fn}$ is represented by $A=\C/\oh_K$, $y^\sigma=P_{\fn}^{\fa}$ is represented by $A^\fa=\C/\fa^{-1}$, and with these identifications the isogeny $\phi_\fa$ is the quotient isogeny $\C/\oh_K\lra \C/\fa^{-1}$. In particular, $g\phi_\fa\in \End(\C/\oh_K)$ is given by multiplication by some $\alpha\in\oh_K$ that must satisfy $\alpha a\in \oh_K$ for all $a\in \fa^{-1}$. In other words, we must have $\alpha \in \fa$. The norm of $\alpha$ is $\deg(g)\deg(\phi_\fa)=mN(\fa)$. We have shown that the assignment $g\mapsto g\phi_\fa$ gives a map
\[
\Hom_{\Lambda}(\underline{y}^{\sigma}, \underline{y})_{m} \lra \{ \alpha\in \fa \colon N_{K/\Q}(\alpha)=mN(\fa) \}.
\]
This map is injective and $\Hom_{\Lambda}(\underline{y}^{\sigma}, \underline{y})_{m}$ has size $2ur_{\cA}(m)$. On the other hand, the integral ideal $\alpha \fa^{-1}\in \cA^{-1}$ has norm $m$ and thus the right hand set has size $\vert \oh_K^\times \vert r_{\cA^{-1}}(m)=2ur_{\cA}(m)$. We deduce that the map is a bijection. We thus obtain 
\begin{align*}
v_q \frac{\vert S\vert m^{k-t-1}}{2} \sum_{g\in \Hom_{\Lambda}(\underline{y}^{\sigma}, \underline{y})_{m}} & (\bar{\epsilon}(X_{g\sqrt{D}g^{-1}}^{\otimes k-t-1} \otimes X_{\overline{g\phi_\fa}}^{\otimes 2t}), \epsilon (X_{\sqrt{D}}^{k-t-1}\otimes X_1^{\otimes 2t})) \\
& =v_q \frac{\vert S\vert (4mD)^{k-t-1}}{2\binom{2k-2}{k-t-1}}
\sum_{\substack{\alpha\in \fa \\ N_{K/\Q}(\alpha)= m N(\fa)}} \bar{\alpha}^{2t} \\
& =v_q \frac{\vert S\vert (4mD)^{k-t-1}}{2\binom{2k-2}{k-t-1}}
\sum_{\substack{\alpha\in \fa \\ N_{K/\Q}(\alpha)= m N(\fa)}} \bar{\chi}(\alpha) \\
& =v_q \frac{\vert S\vert (4mD)^{k-t-1}}{2\binom{2k-2}{k-t-1}}
\bar{\chi}(\fa^{-1})^{-1}\sum_{\substack{\alpha\in \fa \\ N_{K/\Q}(\alpha)= m N(\fa)}} \bar{\chi}(\alpha\fa^{-1}) \\
& =v_q \frac{\vert S\vert (4mD)^{k-t-1}}{\binom{2k-2}{k-t-1}}
\chi(\bar{\fa}) u \sum_{\substack{\fc\in \cA^{-1} \\ N(\fc)= m}} \bar{\chi}(\fc) \\
& =v_q \frac{\vert S\vert (4mD)^{k-t-1}}{\binom{2k-2}{k-t-1}}
\chi(\bar{\fa}) u r_{\bar{\cA}, \bar{\chi}}(m) \\
& =v_q \frac{\vert S\vert (4mD)^{k-t-1}}{\binom{2k-2}{k-t-1}}
\chi(\bar{\fa}) u r_{\cA, \chi}(m).
\end{align*}
To summarize, when $v$ sits over a non-split prime $q$, we have established that 
\begin{align*}
(\underline{x}, T_m \overline{\underline{x}}^{\fa})_v & = 
\vert S\vert m^{k-t-1} \frac{(4D)^{k-t-1}}{\binom{2k-2}{k-t-1}} \sum_{\substack{\substack{b=\alpha+b_- \in R\fa \slash \pm 1 \\ N(b)=mN(\fa) } \\ b_-\neq 0}} \bar{\alpha}^{2t} U(b_-) P_{k, t}\left( 1 - \frac{2N(b_-)}{N(b)} \right) \\
& \qquad + v_q \frac{\vert S\vert (4mD)^{k-t-1}}{\binom{2k-2}{k-t-1}}
\chi(\bar{\fa}) u r_{\cA, \chi}(m).
\end{align*}

Next, we deal with the case when $v$ sits above a prime $q$ that splits in $K$, say $q\oh_K=\fq\bar{\fq}$. Our starting point is \eqref{yes}. In this case, by \cite[III (8.5) \ 3) and III (8.6)]{GZ}, we have 
\[
(\underline{y}\cdot T_{q^s}\underline{y}^\sigma_g)=\frac{v_q}{2} \sum_{h'\in \Hom_{\Lambda}(\underline{y}_g^{\sigma}, \underline{y})_{q^s}} 1,
\]
where 
\[
v_q=
\begin{cases}
k_\fq & \text{ if } v\nmid N, \text{ where } k_\fq\geq 0 \text{ and } k_\fq + k_{\bar{\fq}}=s \\
0 & \text{ if } v\mid \fn \\
-\ord_q(N) & \text{ if } v\mid \bar{\fn}.
\end{cases}
\]
Injecting this expression into \eqref{yes} yields
\begin{multline*}
(\underline{x}, T_m \overline{\underline{x}}^{\fa})_v  = v_q \frac{\vert S\vert m^{k-t-1}}{2} \sum_{g\in \Hom_{\Lambda}(\underline{y}^\sigma, \underline{y})_{m}} (\bar{\epsilon}(X_{g\sqrt{D}g^{-1}}^{\otimes k-t-1} \otimes X_{\overline{g\phi_\fa}}^{\otimes 2t}), \epsilon (X_{\sqrt{D}}^{k-t-1}\otimes X_1^{\otimes 2t})) \\
=v_q \frac{\vert S\vert (4mD)^{k-t-1}}{\binom{2k-2}{k-t-1}}
\chi(\bar{\fa}) u r_{\cA, \chi}(m).
\end{multline*}
(This exact expression was calculated above.)
Putting everything together, we deduce that 
\begin{align*}
\langle Z, & T_m \bar{Z}_{\cA}\rangle_{\mathrm{fin}}^{\GS} \\ & = - u^2 \frac{\vert S\vert (4\vert D\vert m)^{k-t-1}}{D^t \binom{2k-2}{k-t-1}} \sum_{0<n< \frac{m\vert D\vert}{N}} \sigma_{\cA}(n) r_{\cA, \chi}(m\vert D\vert - nN) P_{k,t}\left( 1- \frac{2nN}{m\vert D\vert} \right) \\
& \qquad + (-1)^{k+t}  \frac{\vert S\vert (4mD)^{k-t-1}}{\binom{2k-2}{k-t-1}}
u r_{\cA, \chi}(m) \sum_{q} v_q \sum_{v\mid q}  \log N(v) \\ 
& = - u^2 \frac{\vert S\vert (4\vert D\vert m)^{k-t-1}}{D^t \binom{2k-2}{k-t-1}} \sum_{0<n< \frac{m\vert D\vert}{N}} \sigma_{\cA}(n) r_{\cA, \chi}(m\vert D\vert - nN) P_{k,t}\left( 1- \frac{2nN}{m\vert D\vert} \right) \\
& \qquad + (-1)^{k+t}  \frac{\vert S\vert (4mD)^{k-t-1}}{\binom{2k-2}{k-t-1}}
u r_{\cA, \chi}(m) \left( \sum_{q \text{ split}} h\ord_q(m/N) \log q + \sum_{q \text{ non-split}} h \ord_q(m) \log q  \right) \\ 
& = - u^2 \frac{\vert S\vert (4\vert D\vert m)^{k-t-1}}{D^t \binom{2k-2}{k-t-1}} \sum_{0<n< \frac{m\vert D\vert}{N}} \sigma_{\cA}(n) r_{\cA, \chi}(m\vert D\vert - nN) P_{k,t}\left( 1- \frac{2nN}{m\vert D\vert} \right) \\
& \qquad +  \frac{\vert S\vert (4m\vert D\vert)^{k-t-1}}{\binom{2k-2}{k-t-1}}
h u r_{\cA, \chi}(m) \log(N/m). 
\end{align*}
\end{proof}

\section{Final formula}\label{s:GZ}

We recall from \eqref{def:Za} and \eqref{def:Zbara} that 
\[
Z_{\cA}=\chi(\fa)^{-1} Z_{\fn}^{\fa}  \qquad \text{ and } \qquad \bar{Z}_{\cA}=\bar{\chi}(\fa)^{-1} \bar{Z}_{\fn}^{\fa}.
\] 
Define 
\[
\Delta:=\frac{1}{\sqrt{\deg(\pi_{M,N})}} (Z+\bar{Z}).
\]
If $\cA=[\fa]\in \Cl_K$, then define
\[
\Delta_{\cA, \chi}:=\frac{1}{\sqrt{\deg(\pi_{M,N})}}(Z_{\cA}+\bar{Z}_{\bar{\cA}}). 
\]
Recall from \eqref{def:delchi} the cycle
\[
\Delta_\chi := \sum_{\cA\in \Cl_K} \Delta_
{\cA, \chi} \in \CH^{k+t}(X)_{0, K(\sqrt{\deg(\pi_{M,N})})(\chi)}.
\]

Recall that $g_\cA=\sum_{n\geq 1} a_m(\cA)q^n\in S_{2k}^{\new}(\Gamma_0(N))$ represents the linear functional on $S_{2k}^{\new}(\Gamma_0(N))$ given by 
\[
f \longmapsto \frac{(2k-2)! \sqrt{\vert D\vert}D^t}{2^{4k-1}\pi^{2k}} L'_\cA(f,\chi,k+t).
\]

\begin{theorem}\label{THM}
Assume that $0<t \leq k-1$. For all $(m,N)=1$, we have 
\[
\langle \Delta_{\chi}, T_m \Delta_{\chi}\rangle^{\GS}= 2 h u^2 \frac{(4\vert D\vert)^{k-t-1}}{D^t \binom{2k-2}{k-t-1}} \sum_{\cA} a_m(\cA).
\]
\end{theorem}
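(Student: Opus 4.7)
The plan is to assemble the local height calculations of Theorems \ref{archht} and \ref{nonarchht} and compare the result with the Fourier coefficients $a_m(\cA)$ of Theorem \ref{fourcoeff}. Using bilinearity and the definition of $\Delta_\chi$,
\[
\langle \Delta_\chi, T_m \Delta_\chi\rangle^{\GS} = \frac{1}{\vert S\vert}\sum_{\cA_1, \cA_2 \in \Cl_K} \bigl\langle Z_{\cA_1} + \bar Z_{\bar\cA_1},\, T_m(Z_{\cA_2} + \bar Z_{\bar\cA_2})\bigr\rangle^{\GS},
\]
and I would split each of the four species of pairings into archimedean and non-archimedean parts. Two vanishing principles eliminate half of the resulting terms. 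First, Theorem \ref{archht} shows that the archimedean mixed pairings $\langle Z_{\cA_1}, T_m \bar Z_{\bar\cA_2}\rangle_\infty^{\GS}$ and $\langle \bar Z_{\bar\cA_1}, T_m Z_{\cA_2}\rangle_\infty^{\GS}$ vanish, reflecting the block-diagonal form of the Green's kernel constructed in Section \ref{s:green} in the basis $\{w^+, w^-\}$. Second, the non-archimedean same-type pairings $\langle Z_{\cA_1}, T_m Z_{\cA_2}\rangle_{\mathrm{fin}}^{\GS}$ and $\langle \bar Z_{\bar\cA_1}, T_m \bar Z_{\bar\cA_2}\rangle_{\mathrm{fin}}^{\GS}$ vanish because the fiberwise Poincaré pairing between two $\epsilon$-cycle classes is zero: in the K\"unneth factor $\kappa_{2t}H^{2t}(A^{2t})$, the class $\cl(\epsilon Y^\fa)$ lies in the pure Hodge piece $H^{(2t,0)}$, which is self-orthogonal under Poincaré duality (so only the $\epsilon$-$\bar\epsilon$ and $\bar\epsilon$-$\epsilon$ pairings can be non-zero).

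Next, I would apply the Galois equivariance \eqref{galinf}, which implies that each of the four surviving global pairings $\langle Z_{\cA_1}, T_m Z_{\cA_2}\rangle^{\GS}$ (and variants) depends only on $\cA_1^{-1}\cA_2$. This collapses each double sum into $h$ copies of a single sum over $\cA \in \Cl_K$; for instance,
\[
\frac{1}{\vert S\vert}\sum_{\cA_1, \cA_2}\langle Z_{\cA_1}, T_m Z_{\cA_2}\rangle_\infty^{\GS} = \frac{h}{\vert S\vert}\sum_{\cA}\langle Z, T_m Z_\cA\rangle_\infty^{\GS},
\]
and analogously for the $\bar Z$-$\bar Z$, $Z$-$\bar Z$, and $\bar Z$-$Z$ variants (with the last rewritten via \eqref{eq: barZ nonarch}). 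Since genus characters are trivial on absolute norms, $\sigma_{\bar\cA}(n) = \sigma_\cA(n)$ and $\sigma'_{\bar\cA}(n) = \sigma'_\cA(n)$, and re-indexing $\cA \leftrightarrow \bar\cA$ shows that after summing over $\cA$ the archimedean $Z$-$Z$ and $\bar Z$-$\bar Z$ contributions coincide, as do the two non-archimedean cross contributions; this produces the factor $2$ in the statement.

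To conclude, I would substitute the explicit formulas from Theorems \ref{archht} and \ref{nonarchht} and match term-by-term against $a_m(\cA)$ from Theorem \ref{fourcoeff}. After pulling out the common coefficient $\tfrac{\vert S\vert(4m\vert D\vert)^{k-t-1}}{D^t\binom{2k-2}{k-t-1}}$, the archimedean $Q_{k,t}$-sum involving $\sigma_{\bar\cA}(n) r_{\cA,\chi}(m\vert D\vert+nN)$ reproduces the $Q_{k,t}$-term of $a_m(\cA)$ (after the $\cA \leftrightarrow \bar\cA$ re-indexing); the non-archimedean $P_{k,t}$-sum, combined with the boundary logarithmic correction arising from the quaternionic multiplicities $U(b_-)$ of Proposition \ref{prop:nonar2}, reproduces the $P_{k,t}$-term of $a_m(\cA)$ with coefficient $\sigma'_\cA(n)$; and the constants combine, with the archimedean $\Gamma'/\Gamma(k+t)+\Gamma'/\Gamma(k-t)+\log\vert D\vert-2\log(2\pi)+2L'/L(1,\epsilon_K)$ supplementing the non-archimedean $\log(N/m)$ to yield exactly $\Gamma'/\Gamma(k+t)+\Gamma'/\Gamma(k-t)+\log(N\vert D\vert/(4\pi^2 m))+2L'/L(1,\epsilon_K)$, matching the $a_m(\cA)$ constant after using $D^t r_{\bar\cA,\chi}(m) = r_{\bar\cA,\chi}(m\vert D\vert)$ as in the proof of Proposition \ref{fourier}. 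Tracking the remaining $m^{k-t-1}$ and $h$-$u^2$-$\vert S\vert$ factors yields the asserted constant $2hu^2\tfrac{(4\vert D\vert)^{k-t-1}}{D^t\binom{2k-2}{k-t-1}}$.

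The main obstacle is the reconciliation in the $P_{k,t}$-sum of the discrete counting function $\sigma_\cA(n)$ (appearing in the non-archimedean local height formula of Theorem \ref{nonarchht}) with the logarithmic-derivative coefficient $\sigma'_\cA(n)$ (appearing in the Fourier coefficient formula of Theorem \ref{fourcoeff}). This requires carefully unpacking the aggregate $\sum_{v \nmid \infty} U(b_-)\log N(v)$ over non-archimedean places for each quaternionic element $b_- \ne 0$, whose dependence on $\ord_v(N(b_-))$ supplies precisely the missing logarithmic structure $\log(n/d^2)$ in the definition of $\sigma'_\cA(n)$. This is the non-archimedean counterpart of the limiting analysis of the Jacobi function $Q_{k,t}$ near $x=1$ that drove Lemma \ref{lem:asy} on the archimedean side.
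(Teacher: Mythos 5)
Your proposal follows essentially the same route as the paper's proof of Theorem \ref{THM}: expand $\langle\Delta_\chi, T_m\Delta_\chi\rangle^{\GS}$ by bilinearity, kill the archimedean mixed pairings by Theorem \ref{archht} and the non-archimedean same-type pairings, collapse the double sum to $h\sum_\cA$ via the Galois equivariance \eqref{galinf}, use $\sigma_\cA=\sigma_{\bar\cA}$, $\sigma'_\cA=\sigma'_{\bar\cA}$ and the re-indexing $\cA\leftrightarrow\bar\cA$, and match against $a_m^{\mathrm{fin}}(\cA)$ and $a_m^\infty(\cA)$ from Theorem \ref{fourcoeff}. In fact you improve on the paper in one spot: the paper silently drops $\langle Z, T_m Z_\cA\rangle_{\mathrm{fin}}^{\GS}$ and $\langle\bar Z, T_m\bar Z_{\bar\cA}\rangle_{\mathrm{fin}}^{\GS}$ from the expansion, whereas you supply the reason — $\cl(\epsilon Y^\fa)$ lies in the self-isotropic piece $H^{2t,0}\subset\kappa_{2t}H^{2t}(A^{2t})$, so the fiberwise intersection number underlying the non-archimedean pairing vanishes identically.

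The one misstep is your final paragraph. The reconciliation of the discrete $\sigma_\cA$ with the logarithmic $\sigma'_\cA$ by summing $U(b_-)\log N(v)$ over finite places is not a remaining obstacle in the proof of Theorem \ref{THM}; it is precisely the content, and indeed the proof, of Theorem \ref{nonarchht}, whose left-hand side $\langle Z, T_m\bar Z_\cA\rangle_{\mathrm{fin}}^{\GS}=\sum_{v\nmid\infty}\langle\,\cdot\,\rangle_v\log q_v$ is already the globally aggregated quantity. (As printed, Theorem \ref{nonarchht} displays $\sigma_\cA(n)$, but this must be read as $\sigma'_\cA(n)$ for the paper's own identification with $a_m^{\mathrm{fin}}(\cA)$ to go through; you have correctly smelled out a typo rather than a gap.) Your closing analogy is also inapt: the $x\to 1^+$ asymptotics of $Q_{k,t}$ in Lemma \ref{lem:asy} feed into the archimedean $\Gamma'/\Gamma$ constants via the self-intersection limit in Theorem \ref{archht}, not into the $P_{k,t}$-sum, which is a purely non-archimedean phenomenon without an archimedean limiting counterpart.
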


\begin{proof}
By Theorem \ref{fourcoeff}, for any $(m,N)=1$, we have
\[
a_m(\cA)=a_m^{\mathrm{fin}}(\cA)+a_m^{\infty}(\cA),
\]
where
\begin{align}\label{afin}
a_m^{\mathrm{fin}}(\cA) & = m^{k-t-1} \left[  \frac{h}{u}D^t r_{\bar{\cA}, \chi}(m)\log \frac{N}{m} -\sum_{0<n\leq \frac{m\vert D\vert}{N}} \sigma'_{\cA}(n) r_{\bar{\cA}, \chi}(m\vert D\vert-nN) P_{k,t}\left( 1-\frac{2nN}{m\vert D\vert}\right) \right] \\ \label{ainf}
a_m^{\infty}(\cA) & = m^{k-t-1} \left[\frac{h}{u}D^t r_{\bar{\cA}, \chi}(m)\left( \frac{\Gamma'}{\Gamma}(k+t) + \frac{\Gamma'}{\Gamma}(k-t) + \log \left(\frac{\vert D\vert}{4\pi^2}\right) +2 \frac{L'}{L}(1, \epsilon_K) \right) \right. \\
& \qquad\qquad\qquad\qquad\qquad\qquad\qquad\qquad\qquad \left. - \sum_{n=1}^\infty \sigma_{\cA}(n) r_{\bar{\cA}, \chi}(m\vert D\vert + nN) Q_{k,t}\left( 1+\frac{2 n N}{m\vert D\vert} \right)\right]. \\
\end{align}

By Theorem \ref{nonarchht}, using also \eqref{eq: barZ nonarch}, we have 
\begin{equation}
\langle \bar{Z}, T_m Z_{\cA}\rangle_{\mathrm{fin}}^{\GS}=
\langle Z, T_m \bar{Z}_{\bar{\cA}}\rangle_{\mathrm{fin}}^{\GS} =u^2 \frac{\vert S\vert (4\vert D\vert)^{k-t-1}}{D^t \binom{2k-2}{k-t-1}} a_m^{\mathrm{fin}}(\cA). 
\end{equation}
By Theorem \ref{archht}, we have 
\begin{equation}\label{eq:needed}
\langle Z, T_m Z_{\cA} \rangle_{\infty}^{\GS}=\langle \bar{Z}, T_m \bar{Z}_{\bar{\cA}} \rangle_{\infty}^{\GS}=u^2 \frac{\vert S \vert (4\vert D\vert)^{k-t-1}}{D^t \binom{2k-2}{k-t-1}} a_m^{\infty}(\bar{\cA}).
\end{equation}
Recall that $\vert S\vert=\deg(\pi_{M,N})$.
We deduce that, for all $(m,N=1)$, we have
\begin{align*}
h^{-1}\langle \Delta_{\chi}, T_m \Delta_{\chi}\rangle^{\GS} & =\langle \Delta, T_m \sum_{\cA} \Delta_{\cA, \chi} \rangle^{\GS} \\
&= \langle Z+\bar{Z}, T_m\sum_{\cA} (Z_{\cA} + \bar{Z}_{\bar{\cA}}) \rangle^{\GS} \\
& =\sum_{\cA} (\langle Z, T_m \bar{Z}_{\bar{\cA}}\rangle_{\mathrm{fin}}^{\GS}+\langle \bar{Z}, T_m Z_{\cA}\rangle_{\mathrm{fin}}^{\GS} +\langle Z, T_m Z_{\cA}\rangle_{\infty}^{\GS}+ \langle \bar Z, T_m \bar{Z}_{\bar{\cA}}\rangle_{\infty}^{\GS}) \\
&= u^2 \frac{(4\vert D\vert)^{k-t-1}}{D^t \binom{2k-2}{k-t-1}} \sum_{\cA} (2a_m^{\mathrm{fin}}(\cA)+2a_m^{\infty}(\bar{\cA})) \\
&= 2u^2 \frac{(4\vert D\vert)^{k-t-1}}{D^t \binom{2k-2}{k-t-1}} \sum_{\cA} a_m(\cA).
\end{align*}
\end{proof}

Define $W$ to be the subspace of generalized Heegner cycles generated by
\[
\{ T_m \Delta_\chi \mid (m,N)=1 \}.
\]
Let $W'$ denote the quotient of $W$ modulo the null space with respect to the Gillet--Soulé height pairing on $W\times W$. 

\begin{proposition}\label{prop:heckecompat}
The module $W'$ is isomorphic as a Hecke module to a subquotient of $S_{2k}(\Gamma_0(N))$.
\end{proposition}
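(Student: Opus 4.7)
The plan is to construct an explicit Hecke-equivariant injection $\tilde\Phi \colon W' \hookrightarrow S_{2k}^{\new}(\Gamma_0(N))$, which in particular exhibits $W'$ as a Hecke submodule (hence a subquotient) of $S_{2k}(\Gamma_0(N))$.

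Set $g := \sum_{\cA \in \Cl_K} g_{\cA} \in S_{2k}^{\new}(\Gamma_0(N))$. By Theorem \ref{THM},
\[
\langle \Delta_\chi, T_m \Delta_\chi \rangle^{\GS} = C \cdot a_m(g), \qquad C := 2 h u^2 \frac{(4|D|)^{k-t-1}}{D^t \binom{2k-2}{k-t-1}},
\]
for every $m$ with $(m,N)=1$. Since the Hecke correspondences $T_m$ with $(m,N)=1$ are symmetric on the Kuga--Sato variety $X$ (the transpose correspondence coincides with $T_m$ via the dual isogeny) and the Beilinson--Bloch pairing is invariant under adjunction by correspondences, the $T_m$ are self-adjoint with respect to $\langle\,,\,\rangle^{\GS}$. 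Combined with the Hecke relation $T_m T_n = \sum_{d \mid (m,n)} d^{2k-1} T_{mn/d^2}$ valid for $(mn, N) = 1$, this gives
\[
\langle T_m \Delta_\chi, T_n \Delta_\chi \rangle^{\GS} = \langle \Delta_\chi, T_m T_n \Delta_\chi \rangle^{\GS} = C \sum_{d \mid (m,n)} d^{2k-1} a_{mn/d^2}(g) = C \cdot a_n(T_m g)
\]
for all $m, n$ coprime to $N$.

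Define $\Phi \colon W \to S_{2k}^{\new}(\Gamma_0(N))$ on generators by $\Phi(T_m \Delta_\chi) := T_m g$, extended $\C$-linearly from the free module on the symbols $T_m \Delta_\chi$, $(m,N)=1$. To check that $\Phi$ descends to $W$, suppose $\sum_i c_i T_{m_i} \Delta_\chi = 0$ in $W$. Pairing with $T_n \Delta_\chi$ and applying the displayed identity yields $a_n\bigl(\sum_i c_i T_{m_i} g\bigr) = 0$ for every $n$ with $(n,N)=1$. Since $\sum_i c_i T_{m_i} g \in S_{2k}^{\new}(\Gamma_0(N))$ and a newform-decomposition together with strong multiplicity one implies that an element of $S_{2k}^{\new}(\Gamma_0(N))$ is determined by its Fourier coefficients $a_n$ with $(n,N)=1$, we conclude $\sum_i c_i T_{m_i} g = 0$, so $\Phi$ is well-defined. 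Hecke equivariance is immediate from $\Phi(T_n(T_m \Delta_\chi)) = T_n T_m g = T_n \Phi(T_m \Delta_\chi)$. The same argument shows $\ker \Phi$ coincides with the null space of $\langle\,,\,\rangle^{\GS}$ on $W$: if $\Phi(w) = 0$ then $a_n(\Phi(w)) = 0$ for all $(n,N)=1$, hence $\langle w, T_n \Delta_\chi\rangle^{\GS} = 0$ and $w$ lies in the null space; conversely if $w$ lies in the null space then $a_n(\Phi(w)) = 0$ for all $(n,N) = 1$, forcing $\Phi(w) = 0$. Thus $\tilde\Phi \colon W' \hookrightarrow S_{2k}^{\new}(\Gamma_0(N)) \subseteq S_{2k}(\Gamma_0(N))$ is a Hecke-equivariant injection.

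The main obstacle is the verification of self-adjointness of the Hecke correspondences with respect to the Beilinson--Bloch height pairing, together with the compatibility with the extended pairing defined on cycles intersecting in the generic fiber (Section \ref{sec:height}). Once this is in place, the remainder of the argument is formal and reduces to strong multiplicity one for newforms.
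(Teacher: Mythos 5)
Your argument is correct and is essentially the one the paper invokes by citing \cite[Prop.~5.1.2]{zhang} (together with Lemma~\ref{lem:heckeq}, which is exactly the Hecke relation you use): Zhang's proof likewise combines self-adjointness of the prime-to-$N$ Hecke correspondences under the height pairing, the Hecke recursion on $\Delta_\chi$, and the identity $\langle\Delta_\chi,T_m\Delta_\chi\rangle^{\GS}=C\,a_m(g)$ from Theorem~\ref{THM} to map $T_m\Delta_\chi\mapsto T_m g$ and identify the kernel with the null space of the pairing. You have merely written out in full the argument the paper leaves as a citation, and your identification of $W'$ with a Hecke submodule of $S_{2k}^{\new}(\Gamma_0(N))$ is consistent with (and slightly sharper than) the stated ``subquotient of $S_{2k}(\Gamma_0(N))$''.
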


\begin{proof}
The argument is the same as \cite[Prop. 5.1.2]{zhang}, but requires Lemma \ref{lem:heckeq}, which we prove below.
\end{proof}

\begin{lemma}\label{lem:heckeq}
For $n$ and $m$ coprime to $N$, we have 
\[T_n T_m \Delta_\chi = \sum_{d\mid (n,m)} d^{2k-1} T_{\frac{nm}{d^2}}\Delta_\chi.\]
\end{lemma}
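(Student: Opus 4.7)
The identity is precisely the classical Hecke algebra relation $T_nT_m = \sum_{d\mid(n,m)} d^{2k-1}T_{nm/d^2}$ for weight-$2k$ modular forms, and the plan is to verify it geometrically on the cycle $\Delta_\chi$.

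I would first reduce to the Kuga--Sato factor: since $T_n$ acts on $X = W_{2k-2}\times A^{2t}$ by pullback from $W_{2k-2}$ and trivially on $A^{2t}$ (Section \ref{ss:integral}), and since the idempotent $\epsilon = \epsilon_W\otimes\kappa_{2t}$ defining the generalized Heegner cycles commutes with $T_n$ for $(n,N)=1$, it suffices to establish the relation at the level of the $W_{2k-2}$-components of the cycles $Z_\cA$, $\bar Z_\cA$ making up $\Delta_\chi$, and to track the normalization factors $\chi(\fa)^{\pm 1}$ and $\sqrt{\deg(\pi_{M,N})}$ in the definition of $\Delta_\chi$, which behave compatibly with the Hecke action.

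Next I would use the explicit description of $T_m$ acting on generalized Heegner cycles. On $W_{2k-2}$, the Hecke correspondence $T_m$ for $(m,N)=1$ sends the fiber cycle $\epsilon Y_Q^\fa$ to a sum $\sum_\phi \epsilon Y_{Q_\phi}^\fa$ indexed by isogenies $\phi\colon A^\fa\to A''$ of degree $m$ preserving the level-$M$ structure (compare Remark \ref{rem:actionS}). Iterating, $T_n T_m\Delta_\chi$ becomes a double sum indexed by composable isogenies $A\xrightarrow{\phi}A'\xrightarrow{\psi}A''$ with $\deg\phi=m$, $\deg\psi=n$. Every such composition factors uniquely (up to isomorphism of the intermediate elliptic curve) as $[d]\circ\phi''$ with $d\mid(n,m)$ and $\deg\phi''=nm/d^2$, by the standard decomposition in the Hecke double-coset ring. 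Regrouping composites by their factor $d$ matches the structure of the right-hand side.

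The remaining step is to identify the contribution of the $[d]$-term as the scalar $d^{2k-1}$. This is the standard eigenvalue by which the multiplication-by-$d$ endomorphism acts on Scholl's motive $\epsilon_W h(W_{2k-2})$, decomposing as $d\cdot d^{2k-2}$: the factor $d$ comes from the classical Hecke relation on $X_0(N)$ for weight $2$, while the factor $d^{2k-2}$ records the action of $[d]^{\otimes(2k-2)}$ on the $(2k-2)$-fold symmetric power of $R^1\pi_*\Q$, with $\epsilon_W$ picking out the $\sym^{2k-2}$-piece. The main obstacle is ensuring that the factorization of isogenies yields an identity in the Chow group rather than merely in cohomology; this succeeds because the constituent cycles of $\Delta_\chi$ are explicit sums of graph cycles $\Gamma_{\sqrt{D}}^{k-1-t}\times(\Gamma_\phi^T)^{2t}$ living in single CM fibers of good reduction, so the required identity reduces to a cycle-level manipulation which one verifies directly by computing the image of each graph under the relevant composition $[d]\circ\phi''$ and summing.
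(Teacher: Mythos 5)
Your plan lands on essentially the same argument the paper uses: reduce to prime powers via the weight-$2$ Hecke relation on $X_0(N)$, and absorb the extra factor $d^{2k-2}$ by showing that pulling back the cycle along multiplication-by-$d$ scales it by $d^{2k-2}$. The paper packages the second step as the identity $f^*\Delta_\chi = p^{2k-2}\Delta_\chi$ with $f=[p]\times 1$, verified at the cycle level.

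One caution on your opening reduction, though: it is not literally true that one can ``establish the relation at the level of the $W_{2k-2}$-components of the cycles,'' because the generalized Heegner cycles $\epsilon Y_Q^\fa = \Gamma_{\sqrt D}^{\,k-1-t}\times(\Gamma_\varphi^T)^{2t}$ are not products of a cycle on $W_{2k-2}$ and a cycle on $A^{2t}$ --- the $2t$ mixed factors $\Gamma_\varphi^T\subset A'\times A$ genuinely entangle the two. As a result the factor $d^{2k-2}$ does not simply come from the action of $[d]^{\otimes(2k-2)}$ on the Kuga--Sato side; it arises as $d^{2(k-1-t)}$ from the $k-1-t$ pure factors (where $\eta\Gamma_{\sqrt D}$ pulls back along $[d]\times[d]$ to $d^2\,\eta\Gamma_{\sqrt D}$) times $d^{2t}$ from the $2t$ mixed factors (where $\eta\Gamma_\varphi^T$ pulls back along $[d]\times 1$ to $d\,\eta\Gamma_\varphi^T$). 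You do arrive at this in the final paragraph when you say the identity ``reduces to a cycle-level manipulation which one verifies directly by computing the image of each graph,'' so the proposal is sound; just be aware that a literal projection onto a ``$W_{2k-2}$-component'' is not available, and that the verification must be done separately on the two kinds of factors as in the paper.
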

\begin{proof}
    We immediately reduce to the case where $m$ and $n$ are powers of a single prime $p$. Since the action of Hecke operators on $\Delta_\chi$ lies over the action of the Hecke operators on the modular curve, the desired formula follows from the case $k = 1$ and the formula $f^*\Delta_\chi = p^{2k-2}\Delta_\chi$, where $f = [p] \times 1 \colon W_{2k-2} \times A^{2t} \to W_{2k-2} \times A^{2t}$. To verify this, we consider separately, in the definition of the generalized Heegner cycle (see Section \ref{subsec:gen Heeg cycles}), the $k-t-1$ pure Kuga--Sato factors and the $2t$ ``mixed'' factors. Let $\eta$ be the projection onto $H^1(E_y)^{\otimes 2}$, so that, e.g., $\eta \Gamma_1 = \Gamma_1 - E_y \times \{0\} - \{0\} \times E_y$. On the pure Kuga--Sato factors, the pullback of $\eta\Gamma_{\sqrt{D}}$ along $[p] \times [p] \colon E_y \times E_y \to E_y \times E_y$ is $p^2\eta\Gamma_{\sqrt{D}}$ \cite[2.4.2]{zhang}. On the mixed factors the pullback of $\eta \Gamma_1$ along $[p] \times [1] \colon E_y \times A_F \to E_y \times A_F$ is $\eta \Gamma_p = p\eta \Gamma_1$. It follows that $f^*\epsilon Y = p^{2k-2t-2}p^{2t}\epsilon Y = p^{2k-2}\epsilon Y$, and hence $f^*\Delta_\chi = p^{2k-2}\Delta_\chi$, as claimed.     
    \end{proof}

Given an element $T\in W$, denote its image in $W'$ by $T'$. If $f$ is a normalized newform, we may consider the $f$-isotypic component of $\Delta'_{\chi}$, which we denote by $\Delta'_{\chi, f}$. 

\begin{theorem}\label{thm: main result}
Assume that $0<t\leq k-1$. For any normalized newform $f\in S_{2k}(\Gamma_0(N))$, we have
\[
L'(f,\chi,k+t)=\frac{4^{k+t}\pi^{2k} (f,f)}{(k-t-1)!(k+t-1)!  hu^2 \sqrt{\vert D\vert} \vert D\vert^{k-t-1}}  \langle \Delta'_{\chi, f}, \Delta'_{\chi, f}\rangle^{\GS}.
\]
\end{theorem}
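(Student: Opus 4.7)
The plan is to combine the arithmetic identity of Theorem \ref{THM} with the analytic identity defining $g_\cA$, and then apply the Hecke-module formalism of Proposition \ref{prop:heckecompat} to isolate the $f$-isotypic component on both sides. First I would assemble the two global inputs. By the defining property of $g_\cA \in S_{2k}^{\new}(\Gamma_0(N))$, the form
\[
g := \sum_{\cA \in \Cl_K} g_\cA \in S_{2k}^{\new}(\Gamma_0(N))
\]
satisfies, for every normalized newform $f \in S_{2k}^{\new}(\Gamma_0(N))$,
\[
(f,g)_{\Pet} = \frac{(2k-2)!\sqrt{\vert D\vert}\,D^t}{2^{4k-1}\pi^{2k}}\, L'(f,\chi,k+t).
\]
On the other hand, Theorem \ref{THM} gives the key geometric identity: for all $m$ with $(m,N)=1$,
\[
\langle \Delta_\chi, T_m \Delta_\chi\rangle^{\GS} = C \cdot a_m(g), \qquad C := \frac{2hu^2(4\vert D\vert)^{k-t-1}}{D^t\binom{2k-2}{k-t-1}}.
\]

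Next I would diagonalize via Proposition \ref{prop:heckecompat}. Since $W'$ is a Hecke subquotient of $S_{2k}(\Gamma_0(N))$, after extending scalars it decomposes as a direct sum of Hecke eigenspaces indexed by a set of normalized newforms $f'$ of level dividing $N$. Let $\Delta'_{\chi,f}$ denote the $f$-isotypic projection of $\Delta'_\chi$, so that $T_m\Delta'_{\chi,f} = a_m(f)\Delta'_{\chi,f}$ for all $(m,N)=1$. The height pairing on $W'$ is non-degenerate by construction, and the Hecke correspondences $T_m$ are self-adjoint with respect to it; hence distinct Hecke eigenspaces of $W'$ are mutually orthogonal, giving
\[
\langle \Delta'_\chi, T_m\Delta'_\chi\rangle^{\GS} = \sum_{f'} a_m(f')\,\langle \Delta'_{\chi,f'}, \Delta'_{\chi,f'}\rangle^{\GS} \qquad \text{for all } (m,N)=1.
\]

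Writing the newform expansion $g = \sum_{f'} c_{f'} f'$ with $c_{f'} = (f',g)_{\Pet}/(f',f')_{\Pet}$, the identity of Theorem \ref{THM} reads
\[
\sum_{f'} a_m(f')\,\langle \Delta'_{\chi,f'}, \Delta'_{\chi,f'}\rangle^{\GS} = C \cdot \sum_{f'} c_{f'}\, a_m(f') \qquad \text{for all } (m,N)=1.
\]
Strong multiplicity one -- i.e., linear independence of the systems $\{a_m(f')\}_{(m,N)=1}$ across distinct newforms $f'$ -- then forces, for every newform $f$ of level $N$,
\[
\langle \Delta'_{\chi,f}, \Delta'_{\chi,f}\rangle^{\GS} = C \cdot c_f = \frac{C\cdot (f,g)_{\Pet}}{(f,f)_{\Pet}}.
\]
Substituting the expressions for $C$ and $(f,g)_{\Pet}$, using the identity $(2k-2)!/\binom{2k-2}{k-t-1} = (k-t-1)!\,(k+t-1)!$, and collecting powers of $2$ via $2\cdot 4^{k-t-1}/2^{4k-1} = 1/4^{k+t}$, the stated formula follows after solving for $L'(f,\chi,k+t)$.

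The one subtle step is the self-adjointness/orthogonality assertion: one must verify that the Hecke correspondences $T_m$ with $(m,N)=1$ commute with the idempotent $\epsilon'$ cutting out $W$ and act self-adjointly on the Beilinson--Bloch height pairing modulo its kernel, so that the decomposition of $W'$ into newform eigenspaces is orthogonal. Granting this, the remainder is pure bookkeeping, and crucially it requires no hypothesis on the injectivity of the Abel--Jacobi map, precisely because we work on the quotient $W' = W/\ker\langle -,-\rangle^{\GS}$ where the pairing is non-degenerate by construction.
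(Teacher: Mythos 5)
Your proposal is correct and follows essentially the same route as the paper: combine the defining property of $g_\cA$ with Theorem \ref{THM}, decompose $\Delta'_\chi$ into Hecke eigencomponents, use self-adjointness of the $T_m$ to get orthogonality of distinct eigenspaces, and match Fourier coefficients (the paper phrases the final extraction as taking the Petersson pairing with $f$ after writing $\sum_\cA g_\cA$ as an explicit combination of eigenforms, while you invoke linear independence of Hecke eigensystems directly, but the content is identical). Your closing caveat about self-adjointness of the Hecke operators on the height pairing is exactly the point the paper also asserts; the constant bookkeeping via $(2k-2)!/\binom{2k-2}{k-t-1}=(k-t-1)!(k+t-1)!$ and $2^{4k-1}/(2\cdot 4^{k-t-1})=4^{k+t}$ checks out.
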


\begin{proof}
We have 
\[
L'(f,\chi,k+t)=\sum_{\cA} L_{\cA}'(f,\chi,k+t)=\frac{2^{4k-1}\pi^{2k}}{(2k-2)! \sqrt{\vert D\vert} D^t} (f, \sum_{\cA} g_\cA).
\]
Extend $\{ f \}$ to a basis $\{ f_1=f, f_2, \ldots, f_d \}$ of $S_{2k}(\Gamma_0(N))$ consisting of the normalized newforms together with a basis of the space of oldforms. We may then write 
\[
\Delta'_{\chi}= \sum_{i=1}^d \Delta'_{\chi, f_i},
\]
such that $T_m \Delta'_{\chi, f_i}=a_m(f_i)\Delta'_{\chi, f_i}$ for all $(m,N)=1$. We then have 
\[
\langle \Delta_{\chi}, T_m \Delta_{\chi}\rangle^{\GS}= \langle \Delta'_{\chi}, T_m \Delta'_{\chi}\rangle^{\GS}=\sum_{i,j} \langle \Delta'_{\chi, f_i}, T_m \Delta'_{\chi, f_j}\rangle^{\GS}=\sum_{i,j} a_m(f_j) \langle \Delta'_{\chi, f_i}, \Delta'_{\chi, f_j}\rangle^{\GS}.
\]
Using Theorem \ref{THM}, we deduce that up to oldforms we have 
\[
\sum_{\cA} g_{\cA} = \frac{D^t \binom{2k-2}{k-t-1}}{2u^2 h(4\vert D\vert)^{k-t-1}} \sum_{i,j} \langle \Delta'_{\chi, f_i}, \Delta'_{\chi, f_j}\rangle^{\GS} f_j.
\]
The cycles $\Delta'_{\chi, f_j}$ have different eigenvalues with respect to the Hecke operators, which are self-dual with respect to the height pairing. Thus, $\langle \Delta'_{\chi, f_i}, \Delta'_{\chi, f_j}\rangle^{\GS}=0$ if $i\neq j$ and 
\[
L'(f,\chi,k+t)=\frac{2^{4k-1}\pi^{2k}}{(2k-2)! \sqrt{\vert D\vert} D^t} \frac{D^t \binom{2k-2}{k-t-1}}{2u^2 h(4\vert D\vert)^{k-t-1}} \overline{\langle \Delta'_{\chi, f}, \Delta'_{\chi, f}\rangle^{\GS}}.
\]
The result follows since $\langle \;, \;\rangle^{\GS}$ is Hermitian and in particular, any self-pairing is a real number.
\end{proof}

\section{Algebraicity}\label{s:algebra}

We prove generalizations of the algebraicity results of Gross--Zagier \cite[V (4.3)]{GZ} and Zhang \cite[Thm. 5.2.2]{zhang}. Define 
\begin{multline}\label{functionspe}
\mathcal{G}^m_{M, N, k, t, \chi}(z,z'):= \frac{\vert S\vert (4\vert D \vert)^{k-t-1}}{D^t \binom{2k-2}{k-t-1}}\bigg[ \sum_{\substack{\gamma\in R_N^m/\pm 1 \\ \gamma z'\neq z}} g_{k,t}(z, \gamma z') \alpha(\gamma, z, z')^{2t}  \\
  + u D^t r_{\cA_1^{-1}\cA_2, \chi}(m) \lim\limits_{w\to z} (g_{k,t}(z, w)- \log \vert 2\pi i \eta^4(z)(w-z)\vert_v) \bigg],
\end{multline}
where we recall that for $\gamma=\left( \begin{smallmatrix} a&b\\c&d\end{smallmatrix}\right)\in R_N^m$, we have $\alpha(\gamma, z, z')=c\bar{z}z'+d\bar{z}-az'-b$, and
\begin{equation*}
g_{k,t}(z,z'):=-Q_{k,t}\left(1+\frac{\vert z-z'\vert^2}{2yy'}\right),
\end{equation*}
with $Q_{k,t}$ the Jacobi function of the second type \eqref{Qkt1}. 

Let $\cA\in \Cl_K$ with corresponding Galois element $\sigma\in \Gal(H/K)$. Let $\tau_0=\frac{-1+\sqrt{D}}{2}\in \cH$ such that $\Gamma_0(N)\tau_0=P_{\fn}$, as in Section \ref{s:hhodg}. 
Define the quantity 
\[
\gamma^m_{M,N,k,t,\chi}(\mathcal{A}):=\sum_{\rho\in \Gal(H/K)} \mathcal{G}^m_{M, N, k, t, \chi}(\tau_0^{\rho},\tau_{0}^{\rho \sigma})=\sum_{\substack{\cA_1, \cA_2\in \Cl_K \\ \cA_1^{-1}\cA_2=\cA }} \mathcal{G}^m_{M, N, k, t, \chi}(\tau_{\fa_1},\tau_{\fa_2}).
\]
By Theorem \ref{main.} and \eqref{eq:needed}, we have 
\[
\langle Z, T_m Z_{\cA} \rangle^{\GS}_{\infty}=m^{k-t-1}\gamma^m_{M,N,k,t,\chi}(\mathcal{A})=u^2 \frac{\vert S \vert (4\vert D\vert)^{k-t-1}}{D^t \binom{2k-2}{k-t-1}} a_m^{\infty}(\bar{\cA}),
\]
where $a_m^{\infty}(\bar{\cA})$ is given by \eqref{ainf}. We deduce that 
\begin{multline}\label{ambar0}
a_{m}(\bar\cA)= \frac{D^t \binom{2k-2}{k-t-1}}{u^2\vert S \vert (4\vert D\vert)^{k-t-1}}m^{k-t-1}\gamma^m_{M,N,k,t,\chi}(\mathcal{A}) \\
+m^{k-t-1} \left[  \frac{h}{u}D^t r_{\cA, \chi}(m)\log \frac{N}{m} -\sum_{0<n\leq \frac{m\vert D\vert}{N}} \sigma'_{\bar{\cA}}(n) r_{\cA, \chi}(m\vert D\vert-nN) P_{k,t}\left( 1-\frac{2nN}{m\vert D\vert}\right) \right].
\end{multline}

Let $\lambda=\{\lambda_m\}_{\geq 1}$ be a relation for $S_{2k}(\Gamma_0(N))$, in the sense of \cite[p. 316]{GZ}, so that:
\begin{itemize}
    \item[i)] $\lambda_m\in \Z$, $\lambda_m=0$ for all but finitely many $m$;
    \item[ii)] $\sum_{m\geq 1} \lambda_m a_m=0$ for all $\sum_{m\geq 1} a_m q^m \in S_{2k}(\Gamma_0(N))$;
    \item[iii)] $\lambda_m=0$ for $(m,N)\neq 1$.
\end{itemize}
Define 
\[
\gamma_{M,N,k,t,\chi, \lambda}(\cA):=\sum_{m\geq 1} m^{k-t-1} \lambda_m \gamma_{M,N,k,t,\chi}^m(\cA)=\sum_{\rho\in \Gal(H/K)} \mathcal{G}_{M, N, k, t, \chi, \lambda}(\tau_0^{\rho},\tau_{0}^{\rho \sigma}).
\]
Then \eqref{ambar0} implies the equality:
\begin{theorem}\label{thm:algebraicity}
For all $0<t\leq k-1$, $\lambda$ a relation for $S_{2k}(\Gamma_0(N))$, and $\cA$ an ideal class of $K$, we have
\begin{multline}
\gamma_{M,N,k,t,\chi, \lambda}(\cA) =u^2 \frac{\vert S \vert (4\vert D\vert)^{k-t-1}}{D^t \binom{2k-2}{k-t-1}}\sum_{m\geq 1}m^{k-t-1} \lambda_m \bigg[ -\frac{h}{u}D^t r_{\cA, \chi}(m)\log \frac{N}{m} \\ + \sum_{0<n\leq \frac{m\vert D\vert}{N}} \sigma'_{\bar{\cA}}(n) r_{\cA, \chi}(m\vert D\vert-nN) P_{k,t}\left( 1-\frac{2nN}{m\vert D\vert}\right)
 \bigg].
\end{multline}
\end{theorem}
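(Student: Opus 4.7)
The strategy is purely formal at this point: combine the explicit Fourier coefficient identity \eqref{ambar0} with the defining properties of a relation $\lambda$ for $S_{2k}(\Gamma_0(N))$. All of the analytic and geometric content has already been built into \eqref{ambar0}, which was extracted from the archimedean local height calculation (Theorem \ref{main.}) together with the holomorphic projection formula (Theorem \ref{fourcoeff}) for the Fourier coefficients of $g_{\bar\cA}$.

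Concretely, I would first abbreviate \eqref{ambar0} as
\[
a_m(\bar\cA) \;=\; \frac{D^t \binom{2k-2}{k-t-1}}{u^2 \vert S\vert (4\vert D\vert)^{k-t-1}}\, m^{k-t-1}\,\gamma^m_{M,N,k,t,\chi}(\cA) \;+\; m^{k-t-1}\,\Xi_m(\cA),
\]
valid for every $m$ with $(m,N)=1$, where $\Xi_m(\cA)$ denotes the bracketed expression involving $r_{\cA,\chi}$, $\sigma'_{\bar\cA}$, and $P_{k,t}$ in the second line of \eqref{ambar0}. Next I would multiply by $\lambda_m$ and sum over $m\geq 1$. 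Property (i) of a relation makes the sum finite, and property (iii) guarantees $\lambda_m = 0$ whenever $(m,N)\neq 1$, so every nonzero term on the left is governed by the formula above.

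Since $g_{\bar\cA}\in S_{2k}^{\new}(\Gamma_0(N))\subset S_{2k}(\Gamma_0(N))$, property (ii) of the relation forces $\sum_{m\geq 1}\lambda_m\, a_m(\bar\cA)=0$. Combined with the previous step this yields
\[
0 \;=\; \frac{D^t \binom{2k-2}{k-t-1}}{u^2 \vert S\vert (4\vert D\vert)^{k-t-1}}\, \gamma_{M,N,k,t,\chi,\lambda}(\cA) \;+\; \sum_{m\geq 1}\lambda_m\, m^{k-t-1}\,\Xi_m(\cA),
\]
where the first summand uses the definition $\gamma_{M,N,k,t,\chi,\lambda}(\cA)=\sum_{m}m^{k-t-1}\lambda_m \gamma^m_{M,N,k,t,\chi}(\cA)$. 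Solving for $\gamma_{M,N,k,t,\chi,\lambda}(\cA)$ produces exactly the identity claimed in the theorem, including the overall sign and the prefactor $u^2\vert S\vert(4\vert D\vert)^{k-t-1}/(D^t\binom{2k-2}{k-t-1})$.

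There is no substantive obstacle. The only bookkeeping to be careful about is the interchange of the outer sum over $m$ with the inner sum over $n$ inside $\Xi_m(\cA)$, which is legitimate because $\lambda$ has finite support and the inner $n$-sum terminates at $n\leq m\vert D\vert/N$. In particular, the construction of $\gamma^m_{M,N,k,t,\chi}$ via the Green's kernel and the compatibility of its Galois sum with $\gamma_{M,N,k,t,\chi,\lambda}(\cA)$ are already encoded in the definitions preceding the statement, so no further manipulation of Jacobi functions, of the identifications between $R_N^m$-orbits and ideal pairs, or of the intersection-theoretic formula \eqref{eq:needed} is needed at this step.
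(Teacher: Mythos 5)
Your proposal is correct and is essentially the paper's own argument: the paper simply observes that \eqref{ambar0} combined with the defining properties of the relation $\lambda$ yields the stated identity, exactly as you lay out. (One minor note: no interchange of the $m$- and $n$-sums is actually needed, since the nesting in the theorem matches \eqref{ambar0} verbatim; that step in your write-up is a non-issue rather than a gap.)
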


The quantity $\sigma'_{\bar{\cA}}(n)$ appearing in the sum on the right hand side is a non-negative even integral multiple of the logarithm of a prime (by the remark following \cite[IV (4.6)]{GZ}). Hence, Theorem \ref{thm:algebraicity} expresses $\gamma_{M,N,k,t,\chi, \lambda}(\cA)$ as a $\Q(\chi)$-rational linear combination of logarithms of primes, 
generalizing \cite[V (4.3)]{GZ} to the case $t>0$. 

We also obtain the following result in support of a generalization of the algebraicity conjecture of Gross--Zagier \cite[V (4.4)]{GZ}:

\begin{theorem}\label{thm:alg2}
    Let $0<t\leq k-1$, $\lambda$ be a relation for $S_{2k}(\Gamma_0(N))$, and $\sigma\in \Gal(H/K)$. 
    Assume that the height pairing on generalized Heegner cycles is non-degenerate. Then there exists a $K(\chi)$-rational number $c$ and an element $\delta\in H\otimes K(\chi)$ such that 
    \[
    \mathcal{G}_{M, N, k, t, \chi, \lambda}(\tau_0^{\rho},\tau_{0}^{\rho \sigma})=c\log \vert \delta^\rho \vert_v,
    \]
    for all $\rho \in \Gal(H/K)$, where $v \colon H\hookrightarrow \C$ is the embedding fixed in Section \ref{s:hhodg}.
\end{theorem}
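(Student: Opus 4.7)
The proof would follow the strategy of Gross--Zagier for the weight-two case \cite[V]{GZ}. First, the non-degeneracy assumption combined with Proposition \ref{prop:heckecompat} implies that the Hecke module $W$ generated by $\{T_m \Delta_\chi\}$ coincides with its quotient $W'$ and embeds as a Hecke subquotient of $S_{2k}(\Gamma_0(N))$. Hence any relation $\lambda$ for $S_{2k}(\Gamma_0(N))$ gives $\sum_m \lambda_m T_m \Delta_\chi = 0$ in $W$, and therefore in $\CH^{k+t}(X)_{0,\C}$. Decomposing along the $\chi$- and $\bar\chi$-isotypic components for the natural $\Gal(H/K)$-action on the cycles (under which $\sigma_{\mathfrak{b}}(Z_{\cA}) = \chi(\mathfrak{b}) Z_{\cB\cA}$ and similarly for $\bar Z_{\bar{\cA}}$), one then obtains the individual vanishing
\[
c_\lambda^{(\chi)} := \sum_m \lambda_m T_m Z_{\mathrm{tot}} = 0 \quad \text{in}\quad \CH^{k+t}(X)_{0, K(\chi)},
\]
where $Z_{\mathrm{tot}} = \frac{1}{\sqrt{\deg \pi_{M,N}}}\sum_{\cA} Z_{\cA}$.

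Second, the goal becomes to realize $\mathcal{G}_{M,N,k,t,\chi,\lambda}(\tau_0^\rho, \tau_0^{\rho\sigma})$ as a local contribution to the height pairing of $c_\lambda^{(\chi)}$ against a generalized Heegner cycle. By Theorem \ref{fullheight} together with the definition \eqref{functionspe}, one has
\[
\sum_m \lambda_m \langle Z_{\cA_1}, T_m Z_{\cA_2}\rangle_v = C\cdot \chi(\bar{\mathfrak{a}}_1\mathfrak{a}_2)\cdot \mathcal{G}_{M,N,k,t,\chi,\lambda}(\tau_{\mathfrak{a}_1}, \tau_{\mathfrak{a}_2})
\]
for an explicit nonzero constant $C$, where $\cA_1 = \rho(\oh_K)$ and $\cA_2 = \rho\sigma(\oh_K)$. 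The vanishing of $c_\lambda^{(\chi)}$ from the previous step forces the corresponding global height pairing to vanish, so that, by the local decomposition and the Galois-orbit relation \eqref{galinf} among archimedean places of $H$, the single archimedean term $\sum_m \lambda_m \langle Z_{\cA_1}, T_m Z_{\cA_2}\rangle_v$ is balanced against non-archimedean contributions from Theorem \ref{nonarchht} plus archimedean contributions at the other infinite places (themselves Galois conjugates of the value we are studying).

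Third, to upgrade this relation among heights to the refined algebraicity statement, one exploits that $c_\lambda^{(\chi)} = 0$ in $\CH^{k+t}(X)_{0, K(\chi)}$ means, after clearing denominators, that an integer multiple of $c_\lambda^{(\chi)}$ is ``K-theoretically principal'': it arises as a sum of divisors of rational functions $g_i \in H(Y_i)^\times \otimes K(\chi)$ on codimension-$(k+t-1)$ subvarieties $Y_i \subset X$ defined over $H$. The archimedean local height against $Z_{\cA_1}$ is then evaluated through these $g_i$, yielding an expression of the form $c\log|\delta^\rho|_v$ with $\delta \in H \otimes K(\chi)$ assembled from the values of the $g_i$ at the relevant CM cycles. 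The presence of $\delta^\rho$ rather than a single fixed $\delta$ follows from the Galois equivariance $Z_{\cA_1} = \chi(\mathfrak{a}_1)^{-1}\sigma_{\mathfrak{a}_1}(Z)$, where $\sigma_{\mathfrak{a}_1} \leftrightarrow \rho$, and the rational constant $c$ is uniform in $\rho$ because the underlying cocycle $\{g_i\}$ is defined globally.

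The main obstacle is precisely this last K-theoretic step: in the weight-two case \cite{GZ} vanishing in the Jacobian modulo torsion produces a single rational function $f \in H(X_0(N))^\times$ with $\text{div}(f)$ equal to a multiple of $c_\lambda$, which one then evaluates at CM points. In higher codimension no such single-function presentation is available and one must work with a genuine K-theoretic representative of the rational equivalence, tracking both its field of definition (to ensure the values lie in $H \otimes K(\chi)$) and the compatibility with the Galois action relating the archimedean local heights at different places of $H$.
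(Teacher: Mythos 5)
Your proposal captures the three essential ideas of the paper's proof: (i) the non-degeneracy assumption together with Proposition \ref{prop:heckecompat} and the relation $\lambda$ give a vanishing of a linear combination of Hecke translates of generalized Heegner cycles in the Chow group, (ii) the function $\mathcal{G}_{M,N,k,t,\chi,\lambda}$ is identified with an archimedean local height, and (iii) the vanishing Chow class is represented as $c\sum_i\mathrm{div}(f_i\vert W_i)$, whence the local height becomes $c\log\vert\cdot\vert_v$ of a product of values of the $f_i$. So the overall strategy is the same as the paper's.

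However, there is a gap in how you arrange these steps. In your first step you conclude with the vanishing of $c_\lambda^{(\chi)}=\sum_m\lambda_m T_m Z_{\mathrm{tot}}$, where $Z_{\mathrm{tot}}=\frac{1}{\sqrt{\deg\pi_{M,N}}}\sum_{\cA}Z_{\cA}$. But in your third step you pair this class against a single $Z_{\cA_1}$ at the archimedean place corresponding to $\rho$, and that local pairing is
\[
\Big\langle\sum_m\lambda_m T_m Z_{\mathrm{tot}},\,Z_{\cA_1}\Big\rangle_w=\sum_{\cA_2\in\Cl_K}\sum_m\lambda_m\langle Z_{\cA_1},T_m Z_{\cA_2}\rangle_w,
\]
which, after unwinding Theorem \ref{fullheight}, is the Galois-averaged quantity $\gamma_{M,N,k,t,\chi,\lambda}(\cA)$ of Theorem \ref{thm:algebraicity}, not the individual value $\mathcal{G}_{M,N,k,t,\chi,\lambda}(\tau_0^\rho,\tau_0^{\rho\sigma})$ that Theorem \ref{thm:alg2} is about. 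The paper instead establishes the stronger vanishing $\sum_m\lambda_m T_m Z=0$ for the single cycle $Z=Z_{[\oh_K]}$; pairing this against $Z_{\cA}$ at the place $w$ corresponding to $\rho$ (and using Hecke self-adjointness and \eqref{galinf}) then isolates exactly the single archimedean term $\mathcal{G}_\lambda(\tau_0^\rho,\tau_0^{\rho\sigma})$. Your $\chi$-isotypic decomposition is the right sort of tool, but you apply it to separate $Z_{\mathrm{tot}}$ from $\bar{Z}_{\mathrm{tot}}$, not to descend from $Z_{\mathrm{tot}}$ to an individual $Z$; that further Galois descent is what is needed, and without it your argument only recovers the weaker Theorem \ref{thm:algebraicity}. (Your second step's appeal to a global balancing of archimedean and non-archimedean contributions is also unnecessary: once the Chow class vanishes one computes a \emph{single} local height directly from the divisor representation, which is exactly your third step.)
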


\begin{proof}
    Let $\cA$ be the ideal class corresponding to $\sigma$.
    By Theorem \ref{fullheight} and \eqref{galinf}, we have
    \begin{align*}
    \mathcal{G}_{M, N, k, t, \chi, \lambda}(\tau_0^{\rho},\tau_{0}^{\rho \sigma}) & =\sum_{m\geq 1} m^{k-t-1} \lambda_m \mathcal{G}^m_{M, N, k, t, \chi}(\tau_0^{\rho},\tau_{0}^{\rho \sigma}) \\
    & =\sum_{m\geq 1} \lambda_m \left\langle  Z_{\cA_1}, T_m Z_{\cA_2} \right\rangle_v \\
    & =\sum_{m\geq 1} \lambda_m \left\langle  Z, T_m Z_{\cA} \right\rangle_w \\
    & =\left\langle \sum_{m\geq 1} \lambda_m T_m Z, Z_{\cA} \right\rangle_w,
    \end{align*}
    where $\cA_1$ is the ideal class corresponding to $\rho\in \Gal(H/K)$, $\cA_2=\cA_1\cA$, and $w$ is the infinite place corresponding to $\rho$. Thus, the individual terms $\mathcal{G}_{M, N, k, t, \chi, \lambda}(\tau_0^{\rho},\tau_{0}^{\rho \sigma})$ in the definition of $\gamma_{M,N,k,t,\chi, \lambda}(\cA)$ are the local height pairings at archimedean places of the cycles $\sum_{m\geq 1} \lambda_m T_m Z$ and $Z_{\cA}$. Using the non-degeneracy of the height pairing and Proposition \ref{prop:heckecompat}, the fact that $\lambda$ is a relation for $S_{2k}(\Gamma_0(N))$ implies that $\sum_{m\geq 1} \lambda_m T_m Z=0 \in \CH^{k+t}(X)_{0, K(\chi)}$. Thus, there exist subvarieties $W_i$ of $X$ and functions $f_i$ in $W_i$ defined over $H$ such that 
    \[
    \sum_{m\geq 1} \lambda_m T_m Z=c\sum_i \mathrm{div}(f_i \vert W_i),
    \]
    for some $c\in K(\chi)$. It follows that 
    \begin{align*}
    \sum_{m\geq 1} \lambda_m \left\langle  Z, T_m Z_{\cA} \right\rangle_w & =c \sum_i \log\vert f_i(Z_{\cA}\cdot W_i)\vert w \\
    &=c\log\bigg\vert \prod_i f_i(Z_{\cA}\cdot W_i)\bigg\vert_w \\
    &=c\log\bigg\vert \bigg(\prod_i f_i(Z_{\cA}\cdot W_i)\bigg)^{\rho}\bigg\vert_v.
    \end{align*}
\end{proof}

Theorem \ref{thm:alg2} generalizes \cite[Thm. 5.2.2]{zhang} to the case $t>0$.

\section{Corollary}\label{s:coro}

\begin{proof}[Proof of Corollary \ref{cor: combine with Yara}]
    Fix a prime $\ell \nmid 2(2k-1)!N\varphi(N)$ split in $K$.
    Let $\chi':=\chi \cdot \mathrm{Nm}^{-t}$, a Hecke character of infinity type $(t,-t)$.
    Consider the $\ell$-adic $\Gal_K$-representation $V_{f,\chi',\ell}=V_{f,\ell}(k)\otimes \chi_\ell'=V_{f,\ell}(k)\otimes \chi_\ell(t)$.
    The Rankin--Selberg $L$-function $L(f,\chi,s)$ has global root number $-1$. 
    
    Let $F/\Q_\ell$ be a finite extension containing the Fourier coefficients of $F$ and the values of $\chi'$. Consider the generalized Kuga--Sato variety $X':=W_{2k-2}(\Gamma_1(N))\times A^{2k-2}$, where $W_{2k-2}(\Gamma_1(N))$ is the Kuga--Sato $(2k-1)$-fold over the modular curve $X_1(N)$ with $\Gamma_1(N)$-level structure, and $A$ is a CM elliptic curve as in \cite[\S 4.1]{CastellaHsieh}. It contains the generalized Heegner cycle
    \[
    \Upsilon:=\epsilon_W\otimes \epsilon_A((\Gamma_{\id}^T)^{2k-2})\in \CH^{2k-1}(X'_{H})_{0},
    \]
    where the projectors $\epsilon_W$ and $\epsilon_A$ are the ones defined in \cite[(2.1.2), (1.4.4)]{bdp1}. Define a class $z_{f,\chi'}\in H_f^1(K, V_{f,\chi', \ell})$ by 
    \[
    z_{f,\chi'}=\mathrm{cor}_{H/K}(z_{f,\chi', 1}),
    \]
    where $z_{f,\chi', 1}\in H^1(H, V_{f,\chi', \ell})$ is the $\chi_\ell'$-component of the image of $\AJ_{X'}^{2k-1}(\Upsilon)$ in 
    \[
    H^1(H, V_{f,\ell}(k)\otimes \sym^{2k-2}H^1_{\et}(\bar{A}, \Q_\ell(1))).
    \] 
    By \cite[Thm. B]{CastellaHsieh}, we have 
    \begin{equation}\label{ThmB}
        z_{f,\chi'}\neq 0 \implies H^1_f(K, V_{f,\chi',\ell})=F\cdot z_{f,\chi'}.
    \end{equation}
    Since we assume the injectivity of $\AJ^{k+t}_X$, to prove Corollary \ref{cor: combine with Yara} it suffices to prove that
    \[
    \ord_{s=1} L(f,\chi,s)=1 \implies z_{f,\chi'}\neq 0.
    \]
    
    Let $\tilde{X}:=W_{2k-2}(\Gamma_1(N))\times A^{2t}$ and define the generalized Heegner cycles 
    \[
    \Upsilon^t_{\cA}=\epsilon_W\otimes \epsilon_A((\Gamma_{\sqrt{D}})^{k-t-1}\times (\Gamma_{\phi_{\fa}}^T)^{2t})\in \CH^{k+t}(\tilde{X}_H)_{0}.
    \] 
    Define  
    \[
    z^t_{f,\chi'}=\mathrm{cor}_{H/K}(z^t_{f,\chi',1})\in H^1(K, V_{f,\chi',\ell}),
    \]
    where $z^t_{f,\chi', 1}\in H^1(H, V_{f,\chi', \ell})$ is the $\chi_\ell'$-component of the image of of $\AJ_{\tilde{X}}^{k+t}(\Upsilon^t_{[\oh_K]})$ in 
    \[
    H^1(H, V_{f,\ell}(k)\otimes \sym^{2t}H^1_{\et}(\bar{A}, \Q_\ell(1))).
    \] 
    By \cite[Prop. 4.1.1]{bdp3}, there exists an algebraic correspondence mapping $\Upsilon$ to an integral multiple of $\Upsilon^t_{[\oh]_K}$. Thus, in order to prove that $z_{f,\chi'}\neq 0$, it suffices to prove that $z^t_{f,\chi'}\neq 0$. 

    Recall the generalized Kuga--Sato variety $X=W_{2k-2}\times A^{2t}$ studied in this paper, along with the generalized Heegner cycles $Z_{\cA}, \bar{Z}_{\cA}\in \CH^{k+t}(X_H)_{0, K(\chi)}$. 
    Define $\tilde{z}^t_{f,\chi'}\in H^1(K, V_{f, \chi',\ell})$ to be the $\chi'$-component of
    \[
    \mathrm{cor}_{H/K}(\AJ_{X,f}^{k+t}(Z))\in H^1(K, V_{f,\ell}(k)\otimes \kappa_{2t} H_{\et}^{2t}(\bar{A}, \Q_\ell(t))),
    \]
    where $\AJ^{k+t}_{X,f}$ denotes the composition 
    \[
    \CH^{k+t}(X)_0\overset{\AJ^{k+t}_X}{\lra} H^1(H, H_{\et}^{2k-1}(\bar{X}, \Q_\ell(k))\otimes \kappa_{2t} H_{\et}^{2t}(\bar{A}, \Q_\ell(t))) \overset{\epsilon_f}{\lra} H^1(K, V_{f,\ell}(k)\otimes \kappa_{2t} H_{\et}^{2t}(\bar{A}, \Q_\ell(t))).
    \]
    
    Note that $\kappa_{2t}H^{2t}(\bar{A}, \Q_\ell(t))$ is a quotient of $\epsilon_A H^{2t}(\bar{A}, \Q_\ell(t))$. In order to prove that $z^t_{f,\chi'}\neq 0$, it therefore suffices to prove that $\tilde{z}^t_{f,\chi'}\neq 0$ (the different level structures have no effect since we are projecting the cohomology classes to the cohomology of the representation $V_{f,\ell}$, which can be cut out from the cohomology of $W_{2k-2}$ and/or $W_{2k-2}(\Gamma_1(N))$). 
    
    We have 
    \[
    \tilde{z}^t_{f,\chi'}=\sum_{\cA\in \Cl_K} \AJ_{X,f}^{k+t}(Z_{\cA})=\AJ_{X,f}^{k+t}\bigg( \sum_{\cA\in \Cl_K} Z_{\cA}\bigg).
    \]
    Let $\tau\in\Gal(H/,\Q)$ be a lift of a generator of $\Gal(K/\Q)$. Let $(-1)^k \nu_f$ be the sign of the functional equation of $L(f,s)$. By \cite[Lem. 4.8]{pGZshnidman}, we have 
    \[
    \tau(\tilde{z}^t_{f,\chi'})=(-1)^{k-t-1}\nu_f \chi'(\fn) \AJ_{X,f}^{k+t}\bigg( \sum_{\cA\in \Cl_K} \bar{Z}_{\cA}\bigg).
    \]
    In view of \eqref{def:delchi}, we thus have 
    \begin{equation}\label{zchi}
    \tilde{z}^t_{f,\chi'}+(-1)^{k-t-1}\nu_f \chi'(\bar{\fn})\tau(\tilde{z}^t_{f,\chi'})=\sqrt{\deg(\pi_{M,N})} \AJ^{k+t}_X(\Delta_{\chi, f}).
    \end{equation}
    
    Using Theorem \ref{thm:intro}, we see that
    \begin{align*}
        \ord_{s=k+t} L(f,\chi,s)=1 & \overset{\eqref{thm:intro}}{\implies} \langle\Delta_{\chi,f}, \Delta_{\chi,f}\rangle^{\GS}\neq 0 \\
        & \implies \Delta_{\chi, f}\neq 0 \\
        & \overset{\AJ \text{ inj.}}{\implies} \AJ_X^{k+t}(\Delta_{\chi, f})\neq 0 \\
        & \overset{\eqref{zchi}}{\implies} \tilde{z}^t_{f,\chi'} \neq 0 \\
        & \implies z_{f,\chi'} \neq 0 \\
        & \overset{\eqref{ThmB}}{\implies} \dim_F H^1_f(K, V_{f,\chi', \ell})=1 \\
        & \overset{\AJ \text{ inj.}}{\implies} \dim_{K(\chi)} \epsilon_{f,\chi} \CH^{k+t}(X)_{0}=1.
    \end{align*} 
\end{proof}

\section*{Appendix: Conventions for Heegner points}

Following \cite{grossHP}, any Heegner point on $X_0(N)$ for $K$ can be described as $(\cA, \fn):=(\C/\fa \lra \C/\fa\fn^{-1})$ for any integral ideal $\fa$ such that $[\fa]=\cA$. The identification of $Y_0(N)$ with $\Gamma_0(N)\backslash\cH$ is obtained as follows. Given a point $\tau\in \cH$, assign to it $(\C/\langle 1, \tau \rangle, 1/N + \langle 1, \tau \rangle)$. Conversely, given a cyclic $N$-isogeny $E\lra E'$, write $E=\C/L$ and $E'=\C/L'$. Up to homothety, we may assume $L\subset L'$ and $\C/L \lra \C/L'$ is the identity map on covering spaces. We can then choose an oriented basis $L=\langle \omega_2, \omega_1\rangle$ such that $\tau=\omega_1/\omega_2\in \cH$ and $L'=\langle \omega_2/N, \omega_1\rangle$. The point $(E\lra E')$ of $Y_0(N)$ then corresponds to $\Gamma_0(N)\tau$. The point corresponding to the Heegner point $(\cA, \fn)$ will be denoted by $z_{\cA, \fn}\in \cH$. Since $z_{\cA, \fn}\in K$ and $Nz_{\cA, \fn}\in K$, the point satisfies a quadratic equation $az_{\cA, \fn}^2 + bz_{\cA, \fn} +c = 0$ with $\gcd(a,b,c)=1$, $D=b^2-4ac$, and $N\mid a$. We have $\bar{z}_{\cA,\fn}=z_{\bar{\cA}, \bar{\fn}}$. If $\sigma_\cB\in \Gal(H/K)$ corresponds to $\cB\in \Cl_K$ under the Artin map, then $z_{\cA, \fn}^{\sigma_\cB}=z_{\cA\cB^{-1}, \fn}$. Letting $p\mid N$, $p\oh_K=\fp\bar{\fp}$, $\fn=\fp^k \fm$ and $\fn'=\bar{\fp}^k \fm$, we have $w_p(z_{\cA, \fn})=z_{\cA [\bar{p}^k], \fn'}$. These are the conventions used in \cite[p. 663]{nekovar}.

These conventions differ slightly from the ones in \cite{GZ}. We begin by recalling that $\oh_K=\langle 1, \frac{-1+\sqrt{D}}{2} \rangle$. The ideal $\fn$ is determined by some $\beta\in \Z/2N\Z$ such that $\beta^2\equiv D\pmod{4N}$ and $\fn=\langle N, \frac{\beta+\sqrt{D}}{2} \rangle$. We have $N(\fn)=N$ and $\fn^{-1}=\bar{\fn}N^{-1}=\langle 1, \frac{-\beta+\sqrt{D}}{2N} \rangle$. Consider a quadratic equation of the form
\begin{equation}\label{quad}
AX^2+BX+C=0, \quad A>0, \quad B^2-4AC=D, \quad N\mid A, \quad B\equiv \beta \pmod{2N},
\end{equation}
and consider the integral ideal $\fa=\langle A, \frac{B+\sqrt{D}}{2} \rangle$. Observe that $N(\fa)=A$, $\fa^{-1}=\bar{\fa}A^{-1}=\langle 1, \frac{-B+\sqrt{D}}{2A} \rangle$, and $\fa\fn^{-1}=\langle \frac{A}{N}, \frac{B+\sqrt{D}}{2} \rangle$. Gross and Zagier define $\tau_{\cA, \fn}=(-B+\sqrt{D})/2A\in \cH$ to be the solution to \eqref{quad} that lies in $\cH$. Then $\fa^{-1}=\langle 1, \tau_{\cA, \fn} \rangle$. Observe moreover that 
\[
\fa^{-1}\bar{\fn}^{-1}=\fa^{-1}\fn N^{-1}=(\fa\fn^{-1})^{-1}N^{-1}=\overline{\fa\fn^{-1}}NA^{-1}N^{-1}=\overline{\fa\fn^{-1}}A^{-1}=\left\langle \frac{1}{N}, \tau_{\cA, \fn} \right\rangle.
\]
We deduce that 
\[
\Gamma_0(N)\tau_{\cA, \fn}=\Gamma_0(N)z_{\bar{\cA}, \bar{\fn}}=\Gamma_0(N)\bar{z}_{\cA, \fn}.
\]
In particular, we obtain
\begin{equation}\label{galact}
\tau_{\cA, \fn}^{\sigma_{\cB}}=z_{\bar{\cA}, \bar{\fn}}^{\sigma_{\cB}}=z_{\bar{\cA}\cB^{-1}, \bar{\fn}}=\tau_{\cA\cB, \fn}.
\end{equation}
This is coherent with the statement \cite[p. 235]{GZ} that ``$\Gal(H/K)\simeq \Cl_K$ acts by multiplication on $\cA$''. Letting $p\mid N$, $p\oh_K=\fp\bar{\fp}$, $\fn=\fp^k \fm$ and $\fn'=\bar{\fp}^k \fm$, we have 
\begin{equation}\label{ALact}
w_p(\tau_{\cA, \fn})=w_p(z_{\bar{\cA}, \bar{\fn}})=z_{\bar{\cA}[\fp^k], \fp^k\bar{\fm}}=\tau_{\cA [\bar{\fp}^k], \bar{\fp}^k \fm}=\tau_{\cA [\bar{\fp}^k], \fn'}.
\end{equation}
Note that the cycles $Z_{\cA}$ and $\bar{Z}_{\cA}$ sit above the Heegner point 
\[
P_{\fn}^{\fa}=(\C/\fa^{-1} \lra \C/\fa^{-1}\fn^{-1})=z_{\bar{\cA}, \fn}=\tau_{\cA, \bar{\fn}}=(A^\fa \lra A^\fa/A^\fa[\fn]).
\]

Equation \eqref{ALact} is consistent with the claim \cite[p. 243]{GZ} about Atkin--Lehner, but inconsistent with the claim in \cite[p. 236 i)]{GZ}. It is moreover claimed in \cite[top of p. 243]{GZ} that $\tau_{\cA, \fn}^{\sigma_{\cB}}=\tau_{\cA\cB^{-1}, \fn}$. This contradicts \eqref{galact} and is the reason that all sums in the archimedean calculation of \cite{GZ} are over ideal classes $\cA_1$ and $\cA_2$ satisfying $\cA_1\cA_2^{-1}=\cA$, when the sums should in fact be taken over ideal classes $\cA_1$ and $\cA_2$ satisfying $\cA_1^{-1}\cA_2=\cA$. This does not affect the results of \cite{GZ} because there is no infinite order Hecke character to deal with, but it does matter in our setting. In the same vein, it seems that \cite[2nd equality bottom p. 142]{zhang} should contain $\cA^{-1}$ instead of $\cA$. Once again, this does not affect the results of \cite{zhang}, since there is no infinite order Hecke character. 


\end{document}